\newtheorem{theorem}{Theorem}[section]
\newtheorem{lemma}[theorem]{Lemma}
\newtheorem{lem}[theorem]{Lemma}
\newtheorem{corollary}[theorem]{Corollary}
\newtheorem{prop}[theorem]{Proposition}
\newtheorem{proposition}[theorem]{Proposition}
\theoremstyle{definition}
\newtheorem{defn}[theorem]{Definition}
\theoremstyle{remark}
\newtheorem{remark}[theorem]{Remark}
\numberwithin{equation}{section}
\newcommand{\Z}{\mathbb Z}
\newcommand{\E}{\mathbb E}
\renewcommand{\P}{\mathbb P}
\newcommand{\R}{\mathbb R}
\newcommand{\e}{\mathrm{e}}
\newcommand{\mb}{\mathcal}
\newcommand{\var}{\mathrm {var}}
\newcommand{\piv}{\mathrm{Piv}}
\newcommand{\G}{\mathcal G}
\newcommand{\N}{\mathbb N}
\newcommand{\Inf}{\mathrm{Inf}}
\renewcommand{\L}{\Lambda}
\newcommand{\ceil}[1]{\lceil #1 \rceil}
\newcommand{\floor}[1]{\lfloor #1 \rfloor}
\newcommand{\lr}[4]{#3\xleftrightarrow[#1]{#2} #4}
\newcommand{\nlr}[4]{#3\mathrel{\mathop{\centernot\longleftrightarrow}_{#1}^{#2}} #4}
\begin{document}

\title[Equality of critical parameters for GFF level-sets]{Equality of critical parameters for percolation of Gaussian free field level-sets}

\author[H.~Duminil-Copin]{Hugo Duminil-Copin}
\address{Institut des Hautes \'Etudes Scientifiques, 35, route de Chartres
 91440 -- Bures-sur-Yvette, France}
 \email{duminil@ihes.fr}
\address{Universit\'e de Gen\`eve,  Section de Math\'ematiques, 2-4, rue du Li\`evre, 1211 Gen\`eve 4, Switzerland}
\email{hugo.duminil@unige.ch}
\thanks{}

\author[S.~Goswami]{Subhajit Goswami}
\address{School of Mathematics, Tata Institute of Fundamental Research, 1, Homi Bhabha Road, Colaba, Mumbai 400005, India.}
\curraddr{}
\email{goswami@math.tifr.res.in}
\thanks{}

\author[P.-F.~Rodriguez]{Pierre-Fran\c{c}ois Rodriguez}
\address{Department of Mathematics, Imperial College London, London SW7 2AZ, United Kingdom}
\curraddr{}
\email{p.rodriguez@imperial.ac.uk}
\thanks{}

\author[F.~Severo]{Franco Severo} 
\address{Departement Mathematik, ETH Z\"urich, CH-8092 Z\"urich, Switzerland}
\curraddr{}
\email{franco.severo@math.ethz.ch}
\thanks{}

\subjclass[2010]{Primary 60K35, 60G60, 82B43, 60G15}

\keywords{Percolation, Gaussian free field, long-range dependence, random walks, renormalization}

\date{\today}

\dedicatory{}

\begin{abstract}
We consider upper level-sets of the Gaussian free field on $\Z^d$, for $d\geq 3$, above a given real-valued height parameter $h$. As $h$ varies, this defines a canonical percolation model with strong, algebraically decaying correlations. We prove that three natural critical parameters associated with this model, respectively describing a well-ordered subcritical phase, the emergence of an infinite cluster, and the onset of a local uniqueness regime in the supercritical phase, actually 
coincide. At the core of our proof lies a new interpolation scheme aimed at integrating out the long-range dependence of the Gaussian free field. Due to the strength of correlations, its successful implementation requires that we work in an effectively critical regime. Our analysis relies extensively on certain novel renormalization techniques that bring into play all relevant scales simultaneously. The approach in this article paves the way to a complete understanding of the off-critical phases for strongly correlated disordered systems.
\end{abstract}

\maketitle

\section{Introduction}
\label{Sec:intro}

\subsection{Motivation}
Percolation has been at the heart of statistical physics for more than sixty years. Its most studied representative is the so-called Bernoulli (independent) percolation model. While the understanding of its critical phase is still incomplete, its behaviour away from criticality, in the sub- and supercritical phases, has been characterized very precisely, see \cite{AizBar87, Men86, GriMar90}.
Motivated by field theory and random geometry considerations, a  whole new class of percolation models, emerging from disordered systems with long-range interactions, has been the object of intense study over the last two decades. A common feature of these models is the strength of the correlations between local observables, which exhibit power law decay like $
|x-y|^{-a}$ as $|x-y|\to \infty$ for a certain (small) exponent $a>0$. This slow, non-summable decay --- often a distinguishing feature of critical phases --- is present throughout the entire parameter range, thus making the study of such models very challenging.

A few cases in point are the following: i) random interlacements on $\Z^d$, $d \geq 3$, see \cite{MR2680403,MR3050507,MR2892408,MR2932978}, which describe the local limit of a random walk trace on $(\Z/N\Z)^d$ as $N \to \infty$ and which relate to various covering and fragmentation problems for random walks; cf.~for instance \@ \cite{MR2520124,MR2561432,MR2838338,MR3563197}; ii) loop-soup percolation \cite{LJ12, lejan2013, ChSa16, Lup16}; iii) the voter percolation model \cite{LS86, LeMa06, RV17}; iv) \nolinebreak level-set percolation of random fields, see \cite{MoSt83, Sa1, An16} and references therein (see also \cite{BG17, RVb, BM18, MV} and \cite{NaSo09, CaSa19, SaWi19}) for Gaussian ensembles relating to various classes of functions, e.g. randomized spherical harmonics (Laplace eigenfunctions) at high frequencies; v) the massless Gaussian free field $\varphi$ on $\Z^d$ for $d \geq 3$. This last model, which will be the focus of the present article, was originally investigated by Lebowitz and Saleur in \cite{LS86} as a canonical percolation model with slow, algebraic decay of correlations. It has received considerable attention since then; see for instance \cite{MR914444,MR2080601,RodriguezSznitman13,MR3417515,DrePreRod,chiarini2018entropic,drewitz2018geometry, sznitman2018macroscopic, AC19.1, AC19.2}, and references below.

Most of these models have a different behaviour than Bernoulli percolation at criticality. Even their off-critical phases represent a challenge for mathematical physicists and probabilists, since their constructions involve correlations between vertices with slow algebraic decay. 
A persistent and fundamental question in this context is to assess whether several natural critical parameters (see Section~\ref{subsec:main}), defining regimes in which renormalization techniques lead to a deep understanding of the model, actually coincide; see \cite[Remark 2.8,1]{RodriguezSznitman13} and \cite[Remark 2.9]{MR3390739}, respectively regarding the sub- and supercritical phases of Gaussian free field level-sets, see also \cite[Remark 4.4,3)]{MR2680403}, \cite[(0.7)]{MR2561432} and \cite{Te11a} for similar questions concerning the vacant set of random interlacements. In the present work, we answer this question affirmatively for the historical example of Gaussian free field level sets; see Theorem \ref{thm:hh1} below. 
To the best of our knowledge, this is the first instance of a \textit{unified} approach towards the understanding of both sub- and supercritical regimes of percolation models.

The core of our proof is a new and delicate interpolation scheme aimed  at removing the long-range (algebraic) dependences intrinsic to the model. This scheme will work in a regime, expected to reduce to criticality, in which connection and disconnection probabilities decay slowly. Under the (a posteriori wrong) assumption that the critical parameters mentioned above do not coincide, this regime fictitiously extends ``away from'' criticality. As a consequence, our interpolation scheme implies the existence of a percolation model with finite-range dependence for which the corresponding critical parameters do not coincide either. This leads to a contradiction thanks to the result of \cite{GriMar90} combined with recent progress in the study of such models \cite{DumRaoTas17c,DumRaoTas17b}. We refer to  \cite{AizGri91}, \cite{pcnontriv18}, for interpolation schemes of a similar flavor, yet within simpler frameworks valid in perturbative regimes, and to \cite{S21} for sharpness results regarding a class of smooth short-range Gaussian fields obtained by interpolation methods. 

In the present context, a  ``bridging lemma'' for the Gaussian free field (see Lemma \ref{lem:bridging} below) will play a central role in allowing for various path reconstructions. Its derivation is obtained by expanding on renormalization ideas from \cite{MR2891880, drewitz2018geometry}, involving \textit{all} renormalized scales at the same time, see for instance Fig.~\ref{F:bridge}. We regard this step as a key progress in the understanding of percolation models that do not enjoy a so-called (uniform) finite-energy property. 

The blend of these interpolation and renormalization ideas represents the most innovative part of this article and we expect the underlying techniques to have great potential for future applications in the study of long-range models of the above class.

\medskip
 In combination with previous results in the literature, our findings have many implications regarding our understanding of the level-set geometry of $\varphi$, both in the subcritical and supercritical regimes. We defer a thorough discussion of these matters  for a few lines and first describe our results.

\subsection{Main result}\label{subsec:main}
We consider the massless Gaussian free field (GFF) on $\Z^d$, for $d \geq 3$, which is a centered, real-valued Gaussian field $\varphi = \{\varphi_x: x \in \Z^d\}$. Its canonical law $\P$ is uniquely determined by specifying that $\varphi$ has covariance function $\E[\varphi_x \varphi_y] = g(x, y)$ for all $x, y \in \Z^d$, where 
\begin{equation}
\label{eq:Green}
g(x, y) \stackrel{\textnormal{def.}}{=} \sum_{k = 0}^\infty P_x\left[X_k = y\right]\,, \ x, y \in \Z^d,
\end{equation}
denotes the Green function of the simple random walk on $\Z^d$. Here, $P_x$ stands for the canonical law of the discrete-time random walk $\{X_k: k \geq 0\}$ on $\Z^d$ with starting point $X_0 = x \in \Z^d$.  
For $h \in \R$, we introduce the level-set above height $h$ as $\{\varphi \geq h\} \stackrel{\textnormal{def.}}{=} \{x \in \Z^d: \varphi_x \geq h\}$ and for any $A,\, B,\, C \subset \Z^d$, let
\begin{equation}
\label{eq:intro_connectionevent}
\{\lr{C}{\varphi \geq h}{A}{B} \}\stackrel{\text{def.}}{=} \big\{A\text{ and }B\text{ are connected in }\{\varphi \geq h\}\cap C\big\}\,.
\end{equation}
The subscript $C$ is omitted when $C=\Z^d$. We also write $\{\lr{}{ \varphi \geq h}{A}{\infty} \}$ for the event that there is an infinite connected component (connected components will also be called {\em clusters}) of $\{\varphi \geq h\}$ intersecting $A$. Note that all previous events are decreasing in $h$. We then define the \emph{critical parameter} $h_*$ of $\{\varphi \geq h\}$ as 
\begin{equation}
\label{eq:h_*}
 h_*(d) \stackrel{\textnormal{def.}}{=} \inf \big\{h \in \R : \;  \P[\lr{}{\varphi \geq h}{0}{\infty}] = 0 \big\}.
\end{equation}

It is known that $0< h_*(d)<\infty$ for all $ d \geq 3$; see \cite{MR914444,RodriguezSznitman13,DrePreRod}. Thus, in particular, the level sets $\{\varphi \geq h\}$ undergo a (nontrivial) percolation phase transition as $h$ varies. Moreover, for all $h<h_*$, $\{\varphi \geq h\}$ has $\P$-a.s.~a unique infinite cluster, whereas for all $h>h_*$, $\{\varphi \geq h\}$ consists a.s.~of finite clusters only. 

Following \cite{RodriguezSznitman13}, we consider an auxiliary critical value $h_{**} \geq h_*$ defined as
\begin{equation}
\label{eq:h_**}
h_{**}(d) \stackrel{\textnormal{def.}}{=} \inf \big\{h \in \R : \inf_{R}\P[\lr{}{\varphi\geq h}{B_R}{\partial B_{2R}} ] = 0 \big\}\,,
\end{equation}
where $B_R\stackrel{\textnormal{def.}}{=}([-R, R]\cap \Z)^d$ and $\partial B_R\stackrel{\textnormal{def.}}{=} \{ y \in  B_R: y \sim z \text{ for some } z\in \Z^d\setminus B_R\}$ stand for the $\ell^{\infty}$-ball of radius $R$ centered at $0$ and its inner boundary, respectively. Here $ y \sim z$ means $y$ and $z$ are nearest-neighbors in $\Z^d$.
The quantity $h_{**}$ is well-suited for certain renormalization arguments in the subcritical phase, by which it was shown in \cite{RodriguezSznitman13} that $h_{**}(d)$ is finite for all $d \geq 3$ and that for all $h > h_{**}$, the level-set $\{\varphi \geq h\}$ is in a \textit{strongly non-percolative regime} 
in the sense that probabilities of connections decay very fast. More precisely, for any $h>h_{**}$ there exist constants $c, \rho \in (0,\infty)$ depending on $d$ and $h$, such that
\begin{equation}\label{eq:h_**2}
\P[\lr{}{\varphi\geq h}{0}{\partial B_R}]\leq e^{-cR^\rho}.
\end{equation}
In fact, one can even take $\rho=1$ for $d \geq 4$, with logarithmic corrections when $d=3$; see \cite{PopovRath15,MR3420516}. The arguments of \cite{RodriguezSznitman13} originally required the probability on the right-hand side of \eqref{eq:h_**} to decay polynomially in $R$ (along subsequences). It was later shown in \cite{PopovRath15,MR3420516} that it suffices for the infimum in \eqref{eq:h_**} to lie below the value $\frac{7}{2d\cdot 21^d}$ in order to guarantee (stretched) exponential decay of the probability that $B_R$ is connected to $\partial B_{2R}$ in $\{\varphi\ge h\}$ for all larger values of $h$, implying in particular the equivalence of these definitions with the one in \eqref{eq:h_**}, which is natural in the present context. It was further shown in \cite{MR3339867} that $h_*(d)\sim h_{**}(d)\sim \sqrt{2 \log d}$ as $d\to \infty$, where $\sim$ means that the ratio of the two quantities on either side converges to $1$, but little was otherwise known prior to this article about the relationship between $h_*$ and $h_{**}$.

Another critical parameter $\bar{h}\leq h_*$ was introduced in \cite{drewitz2018geometry}, inspired by similar quantities defined in \cite{MR3390739,MR3417515}, cf.~also \cite{AntPis96}, which allows to implement certain (static) renormalization arguments in the supercritical phase. 
As a consequence, the geometry of the level-sets $\{ \varphi \geq h\}$ is well-understood at levels $h< \bar h$, as will be explained further below. To define $\bar h$, we first introduce the events, for $\alpha\in\R$,
\begin{align}
&\text{Exist}(R,\alpha)\stackrel{\textnormal{def.}}{=}\left\{\begin{array}{c}  \text{there exists a connected component in }\\ \text{$\{\varphi \geq \alpha \}\cap B_R$ with diameter at least  $R/5$}\end{array}\right\}, \label{eq:EXIST}
\end{align}
and
\begin{align}
&\text{Unique}(R,\alpha)\stackrel{\textnormal{def.}}{=} \left\{\begin{array}{c}\text{any two clusters in $\{\varphi\geq\alpha\}\cap B_{R}$}\\\text{having diameter at least $R/10$ are}\\ \text{connected to each other in $\{\varphi\geq \alpha\}\cap B_{2R}$ } \end{array}\right\}, \label{eq:UNIQUE1}
\end{align}
(throughout the article, the diameter of a set is with respect to the sup-norm). We say that \textit{$\varphi$ strongly percolates up to level $h\in\mathbb{R}$} if there are constants $c,\rho\in (0,\infty)$, possibly depending on $d$ and $h$, such that for all $\alpha \leq h$ and $R \geq 1$,
\begin{align}
\label{eq:barh1}
&\P[\text{Exist}(R,\alpha)]\ge 1-{\e}^{-cR^\rho}\text{ and  }\P\left[\text{Unique}(R,\alpha)\right]\geq 1- {\e}^{-cR^\rho}.\end{align}
We then define
\begin{equation}
\label{eq:barh3}
\bar{h}(d) \stackrel{\textnormal{def.}}{=}  \sup\big\{h\in \R:~\text{$\varphi$ strongly percolates up to level $h$} \big\}.
\end{equation}
It was proved in \cite{MR3390739} that $\bar h~ (\leq h_*)$ is non-trivial, i.e. $\bar h  > -\infty$, and it was recently shown that $\bar h(d)>0$; see \cite{drewitz2018geometry}, which implies in particular that the sign clusters of $\varphi$ percolate. It is easy to see from \eqref{eq:barh1} that $\{\varphi \geq h\}$ has a unique infinite connected component for any $h < \bar h$, and one can show that any finite connected component $\{\varphi \geq h\}$ is necessarily tiny: for instance, the radius of a finite cluster has stretched exponential tails for $h< \bar h$. Let us briefly mention that different notions of $\bar{h}$ have been introduced in the literature, e.g.~\cite{MR3417515,drewitz2018geometry,sznitmanC1_2019}. We chose to consider here the strongest of these notions (resembling the one from \cite{sznitmanC1_2019}) so that our main result directly holds for the other ones as well. 

\bigskip
With $h_*$, $h_{**}$ and $\bar h$ given by \eqref{eq:h_*}, \eqref{eq:h_**} and \eqref{eq:barh3}, our main result is
\smallskip
\begin{theorem}
\label{thm:hh1}
For all $d \geq { 3}$, $\bar{h}(d) =  {h}_*(d) = h_{**}(d)$.
\end{theorem}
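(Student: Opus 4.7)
The bounds $\bar h(d) \leq h_*(d) \leq h_{**}(d)$ are immediate from the definitions, so I plan to prove both converse inequalities by contradiction via a unified scheme. Suppose either $h_*(d) < h_{**}(d)$ or $\bar h(d) < h_*(d)$, and pick $h$ in the resulting open interval. In the first case the crossing probability $\P[\lr{}{\varphi \geq h}{B_R}{\partial B_{2R}}]$ fails to decay stretched-exponentially, and the characterization of $h_{**}$ from \cite{PopovRath15,MR3420516} forces a polynomial lower bound in $R$. In the second case $\{\varphi \geq h\}$ percolates but $\P[\text{Exist}(R,h)]$ and $\P[\text{Unique}(R,h)]$ do not jointly converge to $1$ stretched-exponentially fast, so a renormalization argument dual to the above forces a disconnection or non-uniqueness probability to be polynomially bounded below. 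In either case, the hypothetical failure of the theorem opens a ``slow-decay'' regime on an interval of heights.

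The core of the argument is to rule out such a regime by reducing it to the same question for a finite-range model. I would decompose $\varphi = \psi_L + \chi_L$ as a sum of two independent centered Gaussian fields whose covariances add to $g$, with $\psi_L$ of range of order $L$ (so that its level sets form a finite-range, FKG percolation model) and $\chi_L$ carrying the long-range tail of $g$. I then interpolate continuously from $\varphi$ to $\psi_L$ along a one-parameter Gaussian family $\varphi_t$, differentiate the crossing or uniqueness probability of interest in $t$, and apply Gaussian integration by parts to express the derivative as a sum of contributions localized at pairs of points at distance at least $L$. The typical fluctuations of $\chi_L$ can be absorbed into a vanishingly small shift of the height $h$; this absorption is precisely where the bridging lemma (Lemma~\ref{lem:bridging}) enters, providing via a multiscale renormalization refining \cite{MR2891880,drewitz2018geometry} a way to repair connections in the level sets of $\psi_L$ across atypical regions of $\chi_L$ at the cost of raising the height by an arbitrarily small amount. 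Combining the interpolation and the bridging yields a quantitative comparison between the relevant probabilities at level $h$ for $\varphi$ and at a nearby level $h'$ for the finite-range field $\psi_L$.

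The output is a finite-range, FKG percolation model whose analogues of $h_*$, $h_{**}$, $\bar h$ inherit the hypothetical strict ordering of the corresponding parameters of $\varphi$. For such models, however, sharpness of the subcritical phase via the Duminil-Copin--Raoufi--Tassion machinery \cite{DumRaoTas17c,DumRaoTas17b,DumRaoTas17a} forces the subcritical and crossing thresholds to agree, while the Grimmett--Marstrand slab construction \cite{GriMar90}, adapted to the finite-range dependent setting, forces local uniqueness throughout the supercritical phase. Either conclusion contradicts the hypothetical strict ordering and completes the proof.

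The main obstacle is making the interpolation and bridging quantitative simultaneously near criticality and in the presence of polynomial correlations. Because level-set percolation lacks any uniform finite-energy property, atypically large or small regions of $\chi_L$ cannot be modified locally; the multiscale renormalization scheme required to handle them must balance near-critical scaling, long-range correlations, and the large-field contributions from $\chi_L$. This is the essential new technical input, lifting the perturbative interpolation of \cite{pcnontriv18} to the strongly correlated, near-critical regime needed here.
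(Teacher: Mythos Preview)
Your outline captures the high-level architecture for the subcritical equality $h_*=h_{**}$ reasonably well: interpolate toward a finite-range truncation, use the bridging lemma to control the long-range remainder at the cost of a small sprinkling in $h$, and invoke the OSSS-based sharpness of \cite{DumRaoTas17b,DumRaoTas17a,DumRaoTas17c} for the truncated model. Two technical points you gloss over: the paper's interpolation is not a single step from $\varphi$ to $\psi_L$ but a multi-scale scheme passing through fields $\varphi^{L_n}$ at geometrically growing ranges (this is what keeps the sprinkling summable), and the truncated field must be perturbed by independent noise $T_\delta$ to recover the uniform finite-energy needed for the finite-range sharpness arguments.

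The supercritical side is where your proposal diverges from the paper and runs into a genuine difficulty. You propose to treat $\bar h=h_*$ by the same interpolation-to-finite-range scheme, then appeal to Grimmett--Marstrand. But the interpolation of Proposition~\ref{prop:comparison} requires \emph{two-sided} a priori bounds on the crossing and disconnection probabilities---this is exactly why the paper restricts to the fictitious window $(\tilde h+3\varepsilon,h_{**}-3\varepsilon)$, where both bounds hold by the definitions of $\tilde h$ and $h_{**}$. If you try to run the same interpolation for $h\in(\bar h,h_*)$, you have $h<h_{**}$ so connection is controlled, but you have no a priori lower bound on disconnection probabilities; securing one is essentially equivalent to $\tilde h\le\bar h$, which is what you are trying to prove. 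The paper breaks this circularity by introducing the auxiliary parameter $\tilde h$ (see \eqref{eq:tildeh}), using the interpolation only to show $\tilde h=h_{**}$, and proving $\tilde h\le\bar h$ by an entirely separate argument (Proposition~\ref{prop:supercritical}): a coarse-grained adaptation of Benjamini--Tassion \cite{benjaminitassion17}, combined with the bridging lemma and a static renormalization, to pass from ``every mesoscopic box is connected to distance $R$'' to local uniqueness after sprinkling, and then remove the sprinkling. Grimmett--Marstrand enters only in Proposition~\ref{prop:sharptruncated}, and there it yields $\tilde h(\delta,L)\ge h_*(\delta,L)$ for the \emph{truncated} model via slab percolation, not a statement about $\bar h$.
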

\smallskip
The following is an important consequence of Theorem \ref{thm:hh1}.
\smallskip
\begin{corollary}[Decay of the truncated two-point function except at criticality]
\label{cor:main} 
 For all $d \geq 3$ and $\varepsilon >0$, there exist $c=c(d,\varepsilon)\in (0,\infty)$ and $\rho=\rho(d) \in (0,\infty)$ such that for all $h\notin (h_*-\varepsilon,h_*+\varepsilon)$ and $x, y\in\Z^d$,
\begin{equation}
\label{eq:trunc2pt_decay}
\tau_h(x,y)\stackrel{\textnormal{def.}}{=} \P[\lr{}{\varphi\geq h}{x}{y}, \, \nlr{}{\varphi\geq h}{x}{\infty}]\leq \e^{-c|x-y|^\rho}.
\end{equation}
\end{corollary}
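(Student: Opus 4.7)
The plan is to invoke Theorem~\ref{thm:hh1} to split the range $h \notin (h_* - \varepsilon, h_* + \varepsilon)$ into a strongly subcritical piece $h \geq h_* + \varepsilon > h_{**}$ and a strongly supercritical piece $h \leq h_* - \varepsilon < \bar h$, and then apply the quantitative input already available in each regime. Throughout I write $R := |x-y|$ (sup-norm) and use translation invariance of $\P$ without further mention.

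In the \emph{subcritical} case $h > h_{**}$, the bound \eqref{eq:h_**2} applies with $c,\rho$ depending on $\varepsilon$. Any path in $\{\varphi \geq h\}$ from $x$ to $y$ must reach $\partial B_R(x)$, whence
\[
\tau_h(x,y) \leq \P\big[\lr{}{\varphi \geq h}{x}{\partial B_R(x)}\big] \leq e^{-c R^\rho}.
\]

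In the \emph{supercritical} case $h < \bar h$, \eqref{eq:barh1} holds. I would run a multi-scale argument on dyadic boxes $B_{r_k}(x)$ with $r_k := 2^k R$, setting $A_k := \textnormal{Exist}_x(r_k, h) \cap \textnormal{Unique}_x(r_k, h)$, where the subscript $x$ denotes the $x$-translate of the events in~\eqref{eq:EXIST}--\eqref{eq:UNIQUE1}; a union bound gives $\P[A_k^c] \leq 2 e^{-c r_k^\rho}$. The main claim is that on $\bigcap_{k \geq 0} A_k$ the cluster of $x$ in $\{\varphi \geq h\}$ must be infinite, so that the event defining $\tau_h(x,y)$ is contained in $\bigcup_{k\geq 0} A_k^c$; a geometric-type sum then yields $\tau_h(x,y) \leq \sum_{k\geq 0} 2 e^{-c (2^k R)^\rho} \leq e^{-c' R^\rho}$, as required.

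The claim is proved by induction on the statement $P(k)$: the cluster of $x$ in $B_{r_k}(x) \cap \{\varphi \geq h\}$ has diameter at least $r_k/10$. The base case $P(0)$ holds because the event $\{\lr{}{\varphi \geq h}{x}{y}\}$ forces this cluster to reach $\partial B_R(x)$, so its diameter is at least $R = r_0$. For the inductive step, on $A_k$ the event $\textnormal{Exist}_x(r_k, h)$ supplies a cluster $C' \subset \{\varphi \geq h\} \cap B_{r_k}(x)$ of diameter at least $r_k/5$; either $C'$ equals the cluster of $x$ in $B_{r_k}(x)$ (and we are already done for the next scale), or both clusters have diameter at least $r_k/10$ in $B_{r_k}(x)$ and $\textnormal{Unique}_x(r_k, h)$ forces them to merge inside $B_{2 r_k}(x) = B_{r_{k+1}}(x)$, yielding a cluster of $x$ in $B_{r_{k+1}}(x)$ of diameter at least $r_k/5 = r_{k+1}/10$. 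The only subtlety I foresee is the bookkeeping of the constants $5, 10, 20$ so that Exist and Unique fit together across scales; everything else is immediate from Theorem~\ref{thm:hh1} and the two quantitative definitions~\eqref{eq:h_**2} and~\eqref{eq:barh1}.
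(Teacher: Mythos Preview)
Your proposal is correct and matches the paper's approach. The paper treats the subcritical side identically and, for $h<\bar h$, simply says the bound ``directly follows from \eqref{eq:barh1} and a straightforward union bound''; your dyadic induction using $\text{Exist}_x(r_k,h)\cap\text{Unique}_x(r_k,h)$ on scales $r_k=2^kR$ is precisely that union bound made explicit, and the constants $5,10,2$ do fit together as you check. One minor point: as written, your argument (like the bare statement of \eqref{eq:barh1}) yields $\rho=\rho(d,\varepsilon)$; the paper obtains the sharper $\rho=\rho(d)$ claimed in the corollary only via the additional observation recorded in Remark~\ref{R:exponent}.
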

For $h > h_{**}~(=h_*)$, \eqref{eq:trunc2pt_decay} follows immediately from \eqref{eq:h_**2}. In fact, as mentioned above, one knows in this case that $\rho(d)=1$ whenever $d \geq 4$, with logarithmic corrections in dimension $3$; see \cite{PopovRath15,MR3420516}. For $h< \bar h~(=h_*)$, the bound \eqref{eq:trunc2pt_decay} follows from \eqref{eq:barh1} and a straightforward union bound. Indeed, this can be seen as follows: first note that the event $\bigcap_{n \geq 0} \text{Exist}(2^nR,h) \cap \text{Unique}(2^nR,h) $ implies $ A_R= \{\lr{}{\varphi\geq h}{B_R}{\infty} \}$, hence the latter has probability exceeding $1-{\e}^{-cR^\rho}$ for $h< \bar h$. 
Now, since $\tau_h(x,y)= \tau_h(0,y-x)$, choosing $R= |x-y|$ for $x \in \Z^d$, \eqref{eq:trunc2pt_decay} follows since the joint occurrence of $A_R$ and the event defining $\tau_h(0,y-x)$ imply that $\text{Unique}(2R,h)^c$ occurs. 

Moreover, the uniformity over $h<\bar{h}~(=h_*)$ for $\rho$ in \eqref{eq:barh1} (and therefore in \eqref{eq:trunc2pt_decay}) is a consequence of our proof, see Remark~\ref{R:exponent}.  
The optimal value of $\rho$ (if existing at all) for $h< h_*$, in both \eqref{eq:barh1} and \eqref{eq:trunc2pt_decay}, remains an open problem. 

To the best of our knowledge, the only instances in which a full (i.e.~adressing both sub- and supercritical regimes) analogue of Theorem~\ref{thm:hh1} and Corollary~\ref{cor:main} is known to hold, in all dimensions greater or equal to three, are the random cluster representation of the Ising model \cite{AizBarFer87, DumTas15, Bod05} and the aforementioned case of Bernoulli percolation \cite{AizBar87, Men86, GriMar90}. In particular, the analogue of $\bar h=h_*$ for the random cluster model with generic parameter $q \geq 1$ remains open.

\textit{Note added in the proof:} the questions surrounding the behavior of the two-point function have witnessed considerable progress recently. The precise speed of decay of \eqref{eq:trunc2pt_decay} for all $h \neq h_*$ has been identified in \cite{GRS20}. As it turns out, one has $\rho(d)=1$ for all $h \neq h_*$, but \eqref{eq:trunc2pt_decay} decays sub-exponentially as $R/ \log R$ with $R= |x-y| \to \infty$ in dimension three. We refer the reader to \cite{GRS20} for further details, including more precise leading asymptotics of $\tau_h$ for $d=3$. Furthermore, still for $d=3$ the full scaling behavior of $\tau_h(x,y)$ (quantitative in $|h-h_*|$ and $|x-y|$) was recently shown in \cite{DPR21} for a related bond percolation model corresponding to the excursion sets of the GFF on the corresponding cable graph.

\medskip
We now discuss further consequences of Theorem \ref{thm:hh1}. Various geometric properties of the (unique) infinite cluster $\mathscr{C}_\infty^h$ of $\{\varphi \geq h \}$ have been investigated in the regime $h < \bar h$, all exhibiting the ``well-behavedness'' of this phase. For instance, for $h< \bar h$, the chemical (i.e. intrinsic) distance $\rho$ on $\mathscr{C}_\infty^h$ is comparable to the Euclidean one, and balls in the metric $\rho$ rescale to a deterministic shape \cite{MR3390739}. Moreover, the random walk on $\mathscr{C}_\infty^h$ is known to satisfy a quenched invariance principle \cite{MR3568036} and  mesoscopic balls in $\mathscr{C}_\infty^h$ have been verified to exhibit regular volume growth and to satisfy a weak Poincar\'e inequality; see \cite{MR3650417}. This condition, originally due to \cite{Bar04}, has several important consequences, e.g.~it implies quenched Gaussian bounds on the heat kernel of the random walk on $\mathscr{C}_\infty^h$, as well as elliptic and parabolic Harnack inequalities, among other things. It has also been proved that the percolation function giving for each $h$ the probability that 0 is connected to infinity in $\{\varphi\ge h\}$ is $C^1$ on $(-\infty,\bar{h})$, and even analytic; see \cite{sznitmanC1_2019,panagiotis2021analyticity}. On account of Theorem~\ref{thm:hh1}, all the above results now hold in the entire supercritical regime $h<h_*$.  

The large-deviation problem of disconnection in the supercritical regime 
 has also received considerable attention recently; see \cite{MR3417515,nitzschner2017solidification,nitzschner2018,chiarini2018entropic}. Together with Theorems 2.1 and \nolinebreak5.5 of \cite{MR3417515} and Theorems 2.1 and 3.1 of \cite{nitzschner2018} (relying on techniques developed in \cite{nitzschner2017solidification}), Theorem \nolinebreak\ref{thm:hh1} yields the following: for $A \subset [-1,1]^d$ an arbitrary (not necessarily convex) regular compact set $A$ (regular in the sense that $A$ and its interior have the same Brownian capacity), one has
\begin{multline}
\label{eq:disco1}
\lim_N \frac{1}{N^{d-2}} \log \P[\nlr{}{\varphi\ge h}{(NA) \cap \Z^d}{\partial B_{2N}}] \\=-\frac1{2d}(h_*-h)^2 \text{cap}\left(A\right), \text{ for all $h< h_*$},
\end{multline}
where $\text{cap}(\cdot)$ stands for the Brownian capacity; see also \cite{chiarini2018entropic} for finer results on the measure $\P$ conditioned on the disconnection event above.

Theorem \ref{thm:hh1} also translates to a finitary setting: consider the zero-average Gaussian free field $\Psi$ on the torus $(\Z/N\Z)^d$ as $N\to \infty$; see \cite{abacherli2018local} for relevant definitions. As a consequence of Theorems 3.2 and 3.3 therein and Theorem \ref{thm:hh1} above, one deduces the following with high probability as $N \to \infty$: $\{ \Psi \geq h\}$ only contains connected components of size $o(\log^{\lambda} N)$ for any $h> h_{**}$ and $\lambda >d$, while $\{ \Psi \geq h\}$ has a giant, i.e.~of diameter comparable to $N$, connected component for all $h<h_*$. Plausibly, one could further strengthen these results and determine the size of the second largest component of $\{ \Psi \geq h\}$ for all $h<h_*$. 

Finally, we briefly discuss the \textit{massive} case, in which $g(\cdot,\cdot)$ in \eqref{eq:Green} is replaced by the Green function of the random walk killed with probability $\theta > 0$ at every step (whence correlations for $\varphi$ exhibit exponential decay). Let $h_*(\theta)$, $h_{**}(\theta)$ and $\bar{h}(\theta)$ denote the corresponding critical parameters. The techniques we develop here readily apply to prove that  $\bar{h}(\theta)=h_*(\theta)=h_{**}(\theta)$. Actually, the equality $h_* (\theta)= h_{**}(\theta)$, for all $\theta > 0$, can be obtained in a simpler fashion: one can apply Lemma \nolinebreak3.2 from \cite{DumRaoTas17b} directly to the law of $\{ \varphi \geq h\}$ (which is monotonic in the sense of \cite{DumRaoTas17b}) and combine it with Proposition 3.2 in \cite{MR3719059} to deduce a suitable differential inequality for the one-arm crossing probability. By current methods, the proof of $h_*(\theta)=\bar h(\theta)$ does however require a truncation (as for the case $\theta=0$, see below). One immediate difficulty that arises when attempting to extend the method for subcritical sharpness directly to the case $\theta=0$ is to obtain useful bounds from below for derivatives in $h$ in terms of so-called influences (as in Proposition 3.2 of \cite{MR3719059}). In a loose sense, this is what we achieve in Lemma~\ref{lem:piv_decoupling} below, in a certain (a posteriori fictitious) regime of parameters. The difficulty in deriving such bounds is, yet again, a manifestation of the long-range effects in the massless case.

\subsection{Key strategy: interpolation and renormalization}

We now give an overview of our proof of Theorem \ref{thm:hh1}. In doing so, we gather several stand-alone results that encapsulate the interpolation and renormalization ideas alluded to above, see in particular Propositions~\ref{prop:comparison},~\ref{prop:supercritical} and Lemma~\ref{lem:bridging}. 

 We start by introducing an additional critical parameter $\tilde{h}$ for $\varphi$ which quantifies how small disconnection probabilities are. Formally, let $u(R)\stackrel{\textnormal{def.}}{=}\exp[(\log R)^{1/3}] (\ll R)$ and define
\begin{equation}
\label{eq:tildeh}
\tilde{h}(d) \stackrel{\textnormal{def.}}{=} \sup\{h \in \R : \inf_{R} R^{d} \,\P[\nlr{}{\varphi\geq h}{B_{u(R)}}{\partial B_{R}}] =0\}.
\end{equation}
By \eqref{eq:h_**2} one knows that $\lim_R R^d\P[\lr{}{\varphi\geq h}{B_{u(R)}}{\partial B_{R}}]  =0$ whenever $h>h_{**}$. In view of \eqref{eq:tildeh}, this readily implies that $\tilde{h} \leq h_{**}$. Several reasons motivate the choice of the scale $u(R)$, one of them being the precise form of a certain ``reconstruction cost'' appearing in Lemma \ref{lem:bridging} below (see \eqref{eq:sprinklingintro}). We refer to Remarks~\ref{R:u_1} and \ref{R:u_2} for details on the choice of $u(\cdot)$. 

Our proof is organized in three parts, corresponding to Propositions \ref{prop:sharptruncated}, \ref{prop:comparison} and \ref{prop:supercritical} below: the first two will imply the equality $\tilde{h} =h_{**}$, while the last one will relate $\tilde h$ to $\bar h$.

We first decompose the GFF into an infinite sum of independent stationary Gaussian fields $(\xi^\ell)_{\ell\geq0}$ (see Section \nolinebreak\ref{sec:decompose_GFF} for precise definitions)
with each $\xi^\ell$ having finite range of dependence (in fact, the range of dependence will be exactly $\ell$), and define a \textit{truncated field}
\begin{equation}
\label{eq:intro_phi_L}
\varphi^L \stackrel{\textnormal{def.}}{=} \sum_{0 \leq \ell \leq L } \xi^{\ell}\,.
\end{equation}
The percolation processes $\{\varphi^L \geq h\}$ are natural finite-range approximations for 
$\{\varphi \geq h\}$. Instead of working directly with those, it turns out to be technically more convenient to use slightly {\em noised} versions of these approximations, for which a certain finite-energy property plainly holds (along with finite range, this property is crucially needed to deduce the first equality in Proposition \nolinebreak\ref{prop:sharptruncated} below in a straightforward way). To this end, we introduce for any $\delta \in (0, 1)$ and any
percolation configuration $\omega \in \{0,1\}^{\Z^d} $, a new configuration $T_\delta \omega$ where the state of every vertex is 
resampled independently with probability $\delta$, according to (say) a uniform distribution on $\{0,1\}$ (any non-degenerate distribution on $\{0,1\}$ would do). One can now define the critical parameters $h_*(\delta, L)$, $h_{**}(\delta, L)$ and $\tilde{h}(\delta, L)$ as in \eqref{eq:h_*}, \eqref{eq:h_**} and \eqref{eq:tildeh}, but for the family of processes $\{T_\delta \{\varphi^L\ge h\}: h \in \R\}$ instead of $\{ \{\varphi \ge h\}: h \in \R\}$. 
The next proposition states a sharpness result for these finite-range models.
\begin{prop}
\label{prop:sharptruncated}
For all $d\geq3$, $L\geq0$ and $\delta \in (0, 1)$, we have that $\tilde{h}(\delta, L) = h_*(\delta, L) = 
h_{**}(\delta, L)$.
\end{prop}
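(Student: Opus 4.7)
The family $\{T_\delta \{\varphi^L \geq h\}\}_{h\in \R}$ is a monotonic (in $h$) percolation model enjoying two crucial structural features: (a) a finite range of dependence, inherited from the decomposition $\varphi^L = \sum_{\ell \leq L} \xi^{\ell}$ as a finite sum of finite-range fields; and (b) a uniform finite-energy property, guaranteed by the independent $\delta$-resampling step. Moreover, because the covariances of $\varphi^L$ are non-negative (since the building blocks $\xi^\ell$ come from a decomposition of the non-negative Green function), Pitt's theorem yields the FKG property for the occupation variables $\mathbf{1}\{\varphi^L \geq h\}$, which is preserved under independent noising. One always has $\tilde h(\delta,L) \leq h_*(\delta,L) \leq h_{**}(\delta,L)$: the second inequality is the classical observation that vanishing crossing probabilities rule out an infinite cluster, while the first follows from a routine renormalisation argument, exploiting that $R^d \, \P[\nlr{}{}{B_{u(R)}}{\partial B_R}] \to 0$ along some sequence forces, via a union bound over a grid of translates of $B_{u(R)}$ inside $B_{R/2}$ together with overlapping connections, the existence of an unbounded cluster.

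The substantive content is the two reverse inequalities. For $h_{**}(\delta,L) \leq h_*(\delta,L)$ (sharpness of the phase transition), the plan is to apply the OSSS-based differential-inequality machinery of Duminil-Copin--Raoufi--Tassion \cite{DumRaoTas17c,DumRaoTas17b,DumRaoTas17a}, which is tailored to monotonic, finite-range, positively associated measures enjoying a finite-energy property. This yields the desired dichotomy: for every $h$, either $\P[\lr{}{}{B_R}{\partial B_{2R}}]$ is bounded below uniformly in $R$, in which case $h \leq h_*(\delta,L)$, or it decays (at least) exponentially in $R$, which forces $h \geq h_{**}(\delta,L)$; hence $h_*(\delta,L) = h_{**}(\delta,L)$.

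For $h_*(\delta,L) \leq \tilde h(\delta,L)$, I would run a Grimmett-Marstrand-type slab-percolation argument \cite{GriMar90} adapted to the present finite-range dependent setting, using the uniform finite-energy property to perform the sprinkling and local surgery that replace the Bernoulli independence of the classical proof. The output is that, for any $h < h_*(\delta,L)$, the finite clusters of the noised truncated model have exponentially small diameter (or at least stretched-exponentially small, depending on how tight one is willing to be). A union bound over the $O(u(R)^d)$ vertices of $B_{u(R)}$ then yields a disconnection bound of the form $u(R)^d \, e^{-c(R - u(R))^{\rho}}$ which, since $u(R) = \exp[(\log R)^{1/3}]$, is comfortably $o(R^{-d})$ for large $R$, thereby establishing $h \leq \tilde h(\delta,L)$.

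The main obstacle is verifying that these two pieces of technology (DRT17 sharpness and Grimmett-Marstrand slab percolation) genuinely transfer from Bernoulli percolation to the present \emph{correlated} model: both rely in their original formulations on microscopic product-measure independence, and one has to check that the combination of finite range of dependence with uniform finite energy (provided by $\delta$-noising) is an adequate substitute. For the sharpness step this is essentially built into the framework of \cite{DumRaoTas17a}; for the supercritical step, an adaptation in the spirit of \cite{MR3390739} should suffice without requiring any fundamentally new ideas, the freedom to resample any fixed set of vertices being exactly what is needed to extend local clusters to the slab-level infinite cluster.
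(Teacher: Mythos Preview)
Your high-level strategy matches the paper's: Grimmett--Marstrand for $h_*\le\tilde h$, OSSS/DRT for $h_{**}\le h_*$. However, there is a real gap in your treatment of the supercritical side.

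You write that the Grimmett--Marstrand argument goes through ``using the uniform finite-energy property to perform the sprinkling and local surgery''. Finite energy is not enough. The GM argument crucially requires that a small change of parameter dominates an \emph{independent} Bernoulli sprinkling: one needs $\omega_h \succ \omega_{h'}\vee\eta_\varepsilon$ for $h<h'$, with $\eta_\varepsilon$ i.i.d.\ and independent of $\omega_{h'}$. For Bernoulli percolation this is automatic; for a correlated (even finite-range) model it is not, and finite energy only tells you that you can flip a \emph{fixed} finite set of sites at bounded cost under the same measure. The paper isolates this as a separate ``sprinkling property'' and proves it by a careful conditioning argument, progressively replacing $h$ by $h'$ along translates of a well-separated sublattice and exploiting the interplay between the $\delta$-resampling and the i.i.d.\ encoding of $\varphi^L$. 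This step occupies roughly a page and is not a formality; without it the GM machinery does not start. Your reference to \cite{MR3390739} does not help here: that paper assumes an abstract decoupling/sprinkling hypothesis rather than deriving one.

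Two smaller remarks. First, once slab percolation is established, the paper goes directly from it to the disconnection bound by stacking $\asymp u(R)/(M+L)$ independent slabs through $B_{u(R)}$; you instead pass through exponential tails for finite clusters, which is a valid but more roundabout route. Second, for the OSSS step the paper does not differentiate in $h$: it fixes $h$ and runs the DRT argument on the auxiliary one-parameter family $\gamma_\varepsilon=\omega_h\vee\eta_\varepsilon$, combined with the sprinkling property to transfer the conclusion back to the $h$-parametrisation. Your description (``monotonic measures'' framework in $h$) is morally right but hides this reparametrisation, and in particular Lemma~\ref{lem:influence_compare} comparing influences to the $\varepsilon$-derivative is where finite energy genuinely enters on the subcritical side.
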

This proposition is a fairly standard adaptation of known results (a result of Grimmett-Marstrand \cite{GriMar90} on one side, and proofs of sharpness using the OSSS-inequality developed in \cite{DumRaoTas17b, DumRaoTas17a, DumRaoTas17c} on the other side); see Section~\ref{sec:sharpness} for  details. Nonetheless, Proposition \ref{prop:sharptruncated} offers a stepping stone for our argument, which, roughly speaking, will consist of carrying over the sharpness for these finite-range models to a sharpness result for level-sets of the full GFF by comparing the two 
models for parameter values $h\in(\tilde{h},h_{**})$ (notice that this interval can a priori be empty). The core of our strategy is therefore encapsulated in the following proposition.
\begin{prop}
	\label{prop:comparison}
	For every $d\ge3$ and $\varepsilon>0$, there exist $c, C>0$, $\delta \in (0,1)$ and an integer $L \geq 1$, all depending on $d$ and $\varepsilon$ only,
	such that for all $h \in (\tilde{h} + 3\varepsilon, 
	h_{**} - 3\varepsilon)$ and $R \geq 2r>0$, 
\begin{align}
	 &\P[\lr{}{T_{\delta}\{\varphi^L\ge h\}}{B_{r}}{\partial B_{R}}] \geq \P[\lr{}{\varphi\geq h+\varepsilon}{B_{r}}{\partial B_{R}}]  - C\exp(-\e^{c(\log r)^{1/3}}),\, \label{eq:comparison1} \\
	 &\P[\lr{}{T_{\delta}\{\varphi^L\ge h\}}{B_{r}}{\partial B_{R}}] \leq \P[\lr{}{\varphi\geq h-\varepsilon}{B_{r}}{\partial B_{R}}] + C\exp(-\e^{c(\log r)^{1/3}}). \label{eq:comparison2}
\end{align}
\end{prop}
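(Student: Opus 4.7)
The plan is to prove both inequalities by interpolating between the full GFF $\varphi$ and the truncated field $\varphi^L$, one block of scales at a time, using the decomposition $\varphi = \sum_{\ell \geq 0} \xi^\ell$ into independent finite-range increments. I describe the argument for the lower bound \eqref{eq:comparison1}; the upper bound \eqref{eq:comparison2} follows symmetrically by running the interpolation in the opposite direction. Introduce a chain of intermediate fields $\varphi = \Phi^{(0)}, \Phi^{(1)}, \dots, \Phi^{(K)} = \varphi^L$, where $\Phi^{(k+1)}$ is obtained from $\Phi^{(k)}$ by removing a geometric block of scales and $K$ is of order $\log R$; match this chain with thresholds decreasing linearly from $h + \varepsilon$ to $h$, and introduce the noise $T_\delta$ at the final step. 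The goal is then to chain together $K$ single-step comparisons.

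For a single step, let $\Delta^{(k)} := \Phi^{(k)} - \Phi^{(k+1)}$, a stationary Gaussian field of controlled range, and partition $B_R$ into mesoscopic boxes at the scale $u(R) = \exp((\log R)^{1/3})$. Call a box \emph{good} if $|\Delta^{(k)}| \leq \varepsilon/K$ throughout and \emph{bad} otherwise. Gaussian tails combined with the finite-range structure of $\Delta^{(k)}$ yield stretched-exponential rarity of bad boxes and quasi-independence of well-separated ones. Across a good box a connecting path is preserved automatically at the slightly lowered threshold; the difficulty is concentrated on bad boxes, where a large-field region breaks the path. This is the main obstacle: the GFF has no uniform finite-energy property, and overcoming it is precisely the role of Lemma \ref{lem:bridging}. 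The hypothesis $h < h_{**} - 3\varepsilon$ gives stretched-exponential decay of connection probabilities above $h_{**}$, and $h > \tilde h + 3\varepsilon$ supplies, via \eqref{eq:tildeh}, super-polynomial decay of disconnection probabilities from $B_{u(R)}$ to $\partial B_R$; together with the independent resampling provided by $T_\delta$, these two inputs allow one to bridge across each bad box using noised vertices, at a cost of order $\exp(-e^{c(\log r)^{1/3}})$ per bridge. The scale $u(R) = \exp((\log R)^{1/3})$ in \eqref{eq:tildeh} is tuned so that summing over the $K \sim \log R$ interpolation steps preserves the target form of the error.

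On the complement of the rare events that (i) a connecting path meets a bad box that cannot be bridged and (ii) the remaining tail $\sum_{\ell > L_K} \xi^\ell$ fails to be uniformly small on $B_R$, a connection in $\{\varphi \geq h + \varepsilon\}$ translates, step by step along the chain, into a connection in $T_\delta\{\varphi^L \geq h\}$. A union bound over the $K$ failure events yields the stated additive error $C\exp(-e^{c(\log r)^{1/3}})$ and proves \eqref{eq:comparison1}. For \eqref{eq:comparison2}, the same scheme is run in reverse: starting from a connection in $T_\delta\{\varphi^L \geq h\}$, one progressively adds back the scales $\xi^\ell$ while raising the threshold to $h - \varepsilon$; the roles of connection and disconnection probabilities are interchanged but both remain controlled by the two-sided hypothesis on $h$, and Lemma \ref{lem:bridging} applies in its dual form to reconstruct paths in the full field around the resampled vertices.
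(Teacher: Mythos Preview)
Your proposal has a genuine gap at the heart of the single-step comparison. You allocate sprinkling $\varepsilon/K$ per step with $K \sim \log R$, so the threshold shift at each step tends to $0$ as $R \to \infty$. But at the last few steps of your chain (those removing scales just above $L$), the increment $\Delta^{(k)} = \Phi^{(k)} - \Phi^{(k+1)}$ has variance of order $L^{-(d-2)/2}$, which is a fixed positive number independent of $R$. Hence $\P[|\Delta^{(k)}_x| > \varepsilon/K] \to 1$ as $R \to \infty$, and essentially \emph{every} mesoscopic box is bad. Your ``good boxes preserve the path automatically'' mechanism therefore carries no weight at these steps, and the entire burden falls on the bridging, which you have not quantified: you would need to bridge across a macroscopic number of bad boxes, each at cost at least $e^{-C(\log L_n)^2}$, and this cannot be summed.

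The paper's argument is structurally different. Rather than trying to preserve a specific path through a good/bad box dichotomy, it works with the crossing probability $\theta(t,h) = \P[\lr{}{\chi^t \geq h}{B_r}{\partial B_R}]$ for a continuous interpolation $\chi^t$ and proves a differential inequality $|\partial_t \theta| \leq -2e^{-t}\partial_h\theta + \text{(error)}$. Both derivatives are sums over pivotal points; the issue is that $\partial_t\theta$ carries an extra factor $\psi^t_x$ correlated with $\piv_x$ through the field. The decoupling (Lemma~\ref{lem:piv_decoupling}) is achieved by passing to a \emph{coarse} pivotality event at scale $L_T \gg L_t$, inserting a disconnection event in the intervening annulus (this is where $h > \tilde h$ enters, supplying the factor $\Cr{piv_cm0}L_T^{-d}$), and then \emph{reconstructing} a genuine pivotal point from the coarse one via a bridge (Lemma~\ref{lem:piv_compare}); this reconstruction is where $h < h_{**}$ enters, through Lemma~\ref{lem:twopointsbound}. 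The bridging lemma is thus used to recreate pivotals, not to patch paths across bad regions. Finally, the noise $T_\delta$ plays no role in the interpolation at all: it is added in a separate short step (Section~\ref{sec5.3}) once \eqref{eq:comparison1delta=0}--\eqref{eq:comparison2delta=0} are established for $\delta=0$, and its only purpose is to endow the truncated model with finite energy so that Proposition~\ref{prop:sharptruncated} applies. Your description of $T_\delta$ as the source of ``independent resampling'' to bridge bad boxes mislocates its role entirely.
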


Proposition \ref{prop:comparison} is truly the heart of the paper. Note that eventually, we show that $\tilde h=h_{**}$, so that the interval $(\tilde h+3\varepsilon,h_{**}-3\varepsilon)$ corresponds to a {\em fictitious regime}, in the sense that the interval in question is in fact empty as a consequence of Theorem \ref{thm:hh1} (a similar fictitious regime was introduced in \cite{DumRaoTas17c} to study Boolean percolation). 

The proof of Proposition \ref{prop:comparison} is based on an interpolation argument, inspired to some extent by \cite{pcnontriv18} and more remotely by \cite{AizGri91}, enabling us to remove the long-range dependences of the full model at the cost of slightly {\em varying} the parameter $h$. More precisely, we will define a family of Gaussian fields $\chi^t$ indexed by $t\ge0$ satisfying the following properties: for each integer $n \geq 0$, the field $\chi^n$ will be equal to $\varphi^{L_n}$ for a certain integer $L_n$ (henceforth referred as the $n$-th scale, see \eqref{eq:bridge1} below) and $\chi^t$ will interpolate linearly between $\chi^{\floor t}$ and $\chi^{\ceil t}$. Then, we will show that the functions
$$f_{\pm}(t)\stackrel{\textnormal{def.}}{=}\theta(t,h\pm 2e^{-t},r,R)\mp C\exp(-\e^{c(\log r)^{1/3}})e^{-t},$$
where $\theta(t,h,r,R)\stackrel{\textnormal{def.}}{=}\P[\lr{}{\chi^t\ge h}{B_r}{\partial B_R}]$, are increasing and 
decreasing respectively. This will follow from a careful comparison of the partial derivatives $\partial_t \theta$ and $\partial_h\theta$. One important step in this comparison will be the (re-)construction of suitable ``pivotal points'' from corresponding coarse-grained ones, cf.~Fig.~\ref{F:reconstruction}, which will involve an instance of a ``bridging lemma'', akin to Lemma \ref{lem:bridging} below, in order to (re-)construct various pieces of paths in $\{ \varphi \geq h\}$ for $h<h_{**}$. The arguments involved in the derivative comparison will repeatedly rely on the assumption that various connection and disconnection events are not too unlikely, as guaranteed by the assumption that $h \in (\tilde{h} + 3\varepsilon, h_{**} - 3\varepsilon)$, cf.~\eqref{eq:h_**} and \eqref{eq:tildeh}. This motivates the introduction of such a (fictitious) regime. A more thorough discussion of the interpolation argument underlying the proof of Proposition \ref{prop:comparison} goes beyond the scope of this introduction and is postponed to Section \ref{sec5.1}. 

As a straightforward consequence of Propositions~\ref{prop:sharptruncated} and \ref{prop:comparison}, one deduces that $\tilde h= h_{**}$ for every $d \geq 3$ as 
follows. On account of the discussion immediately following \eqref{eq:tildeh}, it suffices to argue that $h_{**} \leq \tilde h$. Suppose on the contrary that the interval $(\tilde h, h_{**})$ is non-empty 
and consider $L$ and $\delta$ provided by Proposition~\ref{prop:comparison} with 
$\varepsilon \stackrel{\textnormal{def.}}{=} (h_{**} -  \tilde h) / 8$. It then follows by Proposition 
\ref{prop:comparison} that the intervals $(h_{**}(\delta, L), \infty)$ and $(\tilde 
h+3\varepsilon, h_{**}-3\varepsilon)$ have empty intersection. Indeed, otherwise one 
could pick an $h \in (h_{**}(\delta, L), \infty) \cap (\tilde h+3\varepsilon, 
h_{**}-3\varepsilon)$ and \eqref{eq:comparison1} would yield that $\inf_R 
\P[\lr{}{\varphi\geq h+\varepsilon}{B_{R/2}}{\partial B_{R}}]=0$, thus violating the 
fact that $h+\varepsilon < h_{**}$, cf.~\eqref{eq:h_**}. A similar reasoning using 
\eqref{eq:comparison2} yields that $(-\infty, \tilde h(\delta, L)) \cap (\tilde 
h+3\varepsilon, h_{**}-3\varepsilon) =\emptyset$. But both $(h_{**}(\delta, L), 
\infty)$ and $(-\infty, \tilde h(\delta, L))$ having empty intersection with $(\tilde 
h+3\varepsilon, h_{**}-3\varepsilon)$ contradicts the equality $\tilde h(\delta, L) = 
h_{**}(\delta, L)$, which is implied by Proposition~\ref{prop:sharptruncated}.

All in all, the discussion of the previous paragraph shows that Theorem~\ref{thm:hh1} follows immediately from the Propositions~\ref{prop:sharptruncated} and \ref{prop:comparison}, combined with the following one.
\begin{prop}
\label{prop:supercritical}
For all $d\geq 3$, $\tilde{h}(d) \leq \bar{h}(d)$.
\end{prop}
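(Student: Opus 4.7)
The plan is to derive strong percolation at every level $h' < \tilde h$ from the polynomial disconnection bound at levels close to $\tilde h$, via a multi-scale renormalization with sprinkling. Fix $h' < h < \tilde h$. By the definition \eqref{eq:tildeh} applied at $h$, there is a sequence $R_n \to \infty$ along which $\P[\nlr{}{\varphi \geq h}{B_{u(R_n)}}{\partial B_{R_n}}] = o(R_n^{-d})$; by monotonicity of $\{\varphi\ge\alpha\}$ in $\alpha$, the same bound holds at every level $\alpha \leq h$. This is the seed input for the renormalization.

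I would set up a cascade of geometrically growing scales $L_0 \ll L_1 \ll \cdots$, with $L_{k+1}$ a fixed large multiple of $L_k$, and a decreasing sequence of levels $\alpha_k := h' + 2^{-k}(h - h')$ interpolating from $\alpha_0 = h$ down to $h'$, so that the total sprinkling budget $\sum_k (\alpha_k - \alpha_{k+1}) = h - h'$ is finite. The induction hypothesis at stage $k$ would be that the events $\text{Exist}(L_k,\alpha)$ and $\text{Unique}(L_k,\alpha)$ hold, for every $\alpha \leq \alpha_k$, with probability at least $1 - e^{-cL_k^{\rho}}$ for some $\rho>0$ independent of $k$. The base case is provided by the seed, after choosing $L_0$ from the subsequence $R_n$ sufficiently far out.

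For the induction step, I would tile $B_{L_{k+1}}$ by sub-boxes of side $L_k$ and call a sub-box \emph{good} when the local crossing / uniqueness events of stage $k$ occur there. A standard Gaussian decoupling inequality in the spirit of \cite{RodriguezSznitman13,MR3390739} converts the joint occurrence of disjoint good sub-boxes into a near-independent event at the cost of a microscopic additional sprinkling. The bridging lemma (Lemma~\ref{lem:bridging}) is then invoked to stitch the crossings provided by a chain of good sub-boxes into a global cluster across $B_{L_{k+1}}$ at the reduced level $\alpha_{k+1}$, the sprinkling $\alpha_k-\alpha_{k+1}$ being precisely what pays for the reconstruction cost. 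The Unique event at scale $L_{k+1}$ is treated analogously: two competing clusters of diameter $L_{k+1}/10$ must each traverse a chain of good sub-boxes within $B_{2L_{k+1}}$, and two applications of the bridging lemma at a common hub sub-box merge them. A cascading inequality in the resulting failure probabilities gives the $e^{-cL_{k+1}^{\rho}}$ bound. Iterating the induction and letting $h' \uparrow \tilde h$ yields $\bar h \geq \tilde h$.

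The main obstacle, and the reason this is not a routine renormalization, is that $\{\varphi \geq \alpha\}$ does not satisfy a uniform finite-energy property: in regions where $\varphi$ takes very negative values one cannot modify a path locally at bounded cost. The bridging lemma is what circumvents this, by showing that such large-field regions are sparse enough for path reconstruction to remain feasible after a controlled sprinkling. The scale $u(R) = \exp[(\log R)^{1/3}]$ appearing in the definition of $\tilde h$ is precisely calibrated to the reconstruction cost of that lemma, so the seed supplied by \eqref{eq:tildeh} is exactly what is needed to start the cascade. Verifying this calibration at each scale, and in particular propagating the Unique event without exhausting the sprinkling budget, is the delicate technical step.
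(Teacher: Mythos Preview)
Your outline has the right ingredients but contains two genuine gaps.

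\textbf{The base case for uniqueness.} The input from \eqref{eq:tildeh} is that every box of radius $u(R)$ inside $B_R$ is connected to $\partial B_R$ with probability $1-o(R^{-d})$. This is an \emph{existence} statement: it says macroscopic clusters are $u(R)$-dense, not that they coincide. Your sentence ``The base case is provided by the seed'' therefore initializes $\text{Exist}(L_0,\alpha_0)$ but gives no control on $\text{Unique}(L_0,\alpha_0)$, and without a uniqueness seed the renormalization cannot start. Producing local uniqueness from the dense-cluster picture is in fact the main work of the whole argument, carried out in Proposition~\ref{prop:uniq} via a coarse-grained Benjamini--Tassion iteration: one shows that the number of distinct macroscopic clusters in $B_R$ drops by a constant factor each time one sprinkles across a thin annular shell, because in each shell one can find $\sim \sqrt{R}/u(R)$ well-separated boxes where two distinct clusters come within distance $u(R)$ of each other, and the bridging lemma (Lemma~\ref{lem:sprinkling}) makes each such box an independent merging opportunity of cost $e^{-C(\log u(R))^2}$. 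This is where the specific growth of $u(\cdot)$ is used (Remark~\ref{R:u_1}). Your proposal invokes the bridging lemma only as a gluing device in the induction step, not as the mechanism that manufactures uniqueness from existence at the seed scale; that mechanism is missing.

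\textbf{Removing the sprinkling.} Even granting a base case, the bridging lemma connects sets revealed at level $h$ by a path at level $h-\varepsilon$; any argument built on it naturally outputs a sprinkled uniqueness event (clusters of $\{\varphi\ge\alpha_k\}$ merged in $\{\varphi\ge\alpha_{k+1}\}$), not $\text{Unique}(L_{k+1},\alpha_{k+1})$ itself, which requires clusters at level $\alpha_{k+1}$ to merge at that \emph{same} level. The paper addresses this explicitly (Remark~\ref{R:hbar_alternative}): the renormalization of Section~\ref{subsec:renormalization} yields only the weaker $\bar h'\ge\tilde h$; the strong version demanded by \eqref{eq:barh3} needs the separate argument of Section~\ref{subsec:supercriticalproof}, which conditions on $\varphi-\varphi^0$ and the coarse level-set data, so that what remains is an inhomogeneous Bernoulli percolation with uniformly positive parameters on the ubiquitous set $\mathscr{S}_N$, and an exploration then forces any large cluster to hit $\mathscr{C}$ without further sprinkling.
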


The proof of this proposition will be rather different from that of the previous proposition. Our starting point is a result of Benjamini and Tassion \cite{benjaminitassion17}, stating that in Bernoulli percolation, for every $\varepsilon>0$, the probability that a graph spanning the whole box $B_R$ does not become connected after opening every edge independently with probability $\varepsilon>0$ is extremely small provided that $R$ is sufficiently large. In the present case, for $h<\tilde h$, one sees from \eqref{eq:tildeh} that the probability that every box of size $u(R)$ in $B_R$ is connected to $\partial B_R$ can be taken arbitrarily close to $1$ provided that $R$ is chosen large enough. From this, we perform a coarse-grained version of the Benjamini-Tassion argument to prove that the probability of $\text{Unique}(R,\beta)$ converges to 1 (along subsequences) for all $\beta<h$; see Proposition \ref{prop:uniq}. Then, we bootstrap  this estimate via a renormalization argument to show that the probabilities of $\text{Unique}(R,\alpha)$ and $\text{Exist}(R,\alpha)$ tend to~1 stretched-exponentially fast for $\alpha<\beta$. 

Implementing this scheme will raise a number of difficulties. First, the model has long-range dependence, a fact which forces us to use renormalization techniques, pioneered in \cite{MR2891880} (see also references therein) in the context of random interlacements, rather than elementary coarse-graining usually harvested in Bernoulli percolation. Second, the model does not enjoy uniform bounds on conditional probabilities that a vertex is in $\{\varphi\ge h\}$ or not. When conditioning on a portion of $\{\varphi\ge h\}\setminus\{x\}$, the stiffness of the field may force $\varphi_x\ge h$ or $\varphi_x<h$ in a very degenerate fashion. These {\em large-field effects} are difficult to avoid, as one can see for instance by observing that the probability that $\varphi_0\geq h$ conditioned on the event that $\varphi_x< h$ for every $x\in B_R\setminus\{0\}$ decays polynomially in $R$; see \cite{bolthausen1995, deuschel1999entropic}. This means that, when implementing a coarse-grained version of the argument from \cite{benjaminitassion17}, we will rely on yet another ``bridge construction'' to argue that decreasing $h$ by $\varepsilon$ indeed creates connections between nearby clusters.

We now describe more precisely a version of the ``bridging lemmas'' used in the proofs of both Propositions \ref{prop:comparison} and \ref{prop:supercritical}, which are needed in order to cope with the large-field effects of $\varphi$ alluded to above (a glance at Figure~\ref{F:bridge} in Section \ref{subsec:bridges1} 
might also help). For simplicity, we introduce an example of a useful statement asserting that it is still possible, outside of events of stretched-exponentially small probability to connect two large (connected) subsets $S_1$ and $S_2$ of $B_R$ at a reasonable cost, even when conditioning on $\varphi_x$ for  $x\notin B_R$ and on $1_{\varphi_x\ge h}$ for every $x\in S_1\cup S_2$. We refer to Lemma~\ref{lem:sprinkling} and Remark~\ref{remark:sprinkling} (see also \eqref{eq:yz}) below for results similar to Lemma~\ref{lem:bridging} but tailored to the proofs of Propositions \ref{prop:supercritical} and \ref{prop:comparison}, respectively. 

\begin{lemma}[Bridging lemma]\label{lem:bridging}
	For every $d\ge3 $ and $\varepsilon >0$, there are positive constants $c=c(d,\varepsilon), C=C(d,\varepsilon)$ and $\rho=\rho(d)$ such that the following holds for all $R\geq1$. There exist events 
	$\mathcal{G}(S_1,S_2)$ indexed by $S_1, S_2 \subset B_R$ satisfying
	\begin{equation}\label{eq:nicelikelyintro}
	\P \Big[\bigcap_{S_1,S_2} \mathcal{G}(S_1,S_2)\Big]\geq 1-e^{-cR^\rho}
	\end{equation}	
	with the additional property that, for all sets $S_1, S_2\subset B_R$ connected with diameter larger than $R/10$, all  $h<h_{**}-2\varepsilon$ and all events $D\in \sigma(1_{\varphi_x\geq h};~x\in S_1\cup S_2)$ and $E\in \sigma(\varphi_x;x\notin B_R)$,
	\begin{equation}
	\label{eq:sprinklingintro}
	\P\big[\lr{B_R}{\varphi\geq h-\varepsilon}{S_1}{S_2} ~\bigl\vert\,  D\cap E \cap \mathcal{G}(S_1,S_2) \,\big]\geq e^{-C(\log  R)^2}
	\end{equation}  
	 whenever $\P[D\cap E \cap \mathcal{G}(S_1,S_2)] > 0$.
\end{lemma}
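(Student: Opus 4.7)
\smallskip\noindent\textbf{Overall strategy.} My plan combines the Gibbs--Markov property of the GFF with a hierarchical construction of the bridge, in the spirit of the renormalization schemes of \cite{MR2891880, drewitz2018geometry}. The hypothesis $h < h_{**} - 2\varepsilon$ combined with \eqref{eq:h_**2} applied at level $h+\varepsilon$ will furnish, at every dyadic scale $\ell$, stretched-exponentially likely ``reference clusters'' of $\{\varphi \geq h+\varepsilon\}$ of diameter at least $\ell/5$ in every sub-box of $B_R$ of side $\ell$. A bridge from $S_1$ to $S_2$ at level $h-\varepsilon$ will be built by chaining reference clusters at $O(\log R)$ successive dyadic scales, each chaining step accomplished by forcing the Gaussian field to exceed $h-\varepsilon$ on a short sequence of $O(\log R)$ intermediate sites. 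Multiplying $O(\log R)$ such chaining steps, each carrying $O(\log R)$ constant-cost single-site Gaussian tail probabilities, produces the target $e^{-C(\log R)^2}$.

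\smallskip\noindent\textbf{The good event.} I would define $\mathcal{G}(S_1,S_2)$ (which will in fact be independent of $(S_1,S_2)$ in the simplest instantiation) as the intersection of: (i) the event that $\text{Exist}(\ell, h+\varepsilon)$ (cf.~\eqref{eq:EXIST}) holds in every sub-box of $B_R$ of dyadic side $\ell \in [\log R, R]$, each localized event having failure probability $\leq e^{-c\ell^\rho}$ by \eqref{eq:h_**2} applied at $h+\varepsilon < h_{**}-\varepsilon$; a union bound over the $O(R^d)$ sub-boxes and $O(\log R)$ scales gives overall failure $\leq e^{-cR^\rho}$ after adjusting constants; and (ii) the Borell--TIS-type bound $\sup_{x \in B_R}|\varphi_x| \leq R^{\rho/4}$, with failure probability $\leq e^{-cR^{\rho/2}}$. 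Since (i) and (ii) do not depend on the pair $(S_1,S_2)$, \eqref{eq:nicelikelyintro} follows immediately.

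\smallskip\noindent\textbf{Construction of the bridge.} Given $\mathcal{G}(S_1,S_2) \cap D \cap E$, fix a deterministic rule selecting anchor points $y_1 \in S_1, y_2 \in S_2$ and a Markov envelope $U \subset B_R$ meeting $S_1 \cup S_2$ only in unit neighborhoods of $y_1, y_2$. Apply the Gibbs--Markov decomposition $\varphi|_U = h_U + \varphi^U$ with $\varphi^U$ a Dirichlet GFF on $U$ independent of $\varphi|_{U^c}$; crucially, $\varphi^U$ is independent of $D \cap E$ up to the minute overlap between $U$ and $S_1 \cup S_2$ near the two anchor points. Recursively at scales $\ell_k = R/2^k$, $k=0,\dots,\log_2(R/\log R)$: use a reference cluster of $\{\varphi \geq h+\varepsilon\}$ at scale $\ell_k$ guaranteed by (i) to halve the bridging task, connecting the endpoints of each sub-task to the reference cluster by forcing $\varphi^U \geq h-\varepsilon - h_U$ on $O(\log R)$ designated sites per halving. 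By (ii) and the Gaussian density, each single-site event has probability $\geq e^{-c}$, and $O(\log R)$ halving levels with $O(\log R)$ forced sites per level combine to give \eqref{eq:sprinklingintro}.

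\smallskip\noindent\textbf{Main obstacle.} The hardest part is the ``large-field effect'' from conditioning on $D \in \sigma(1_{\varphi_x \geq h}: x \in S_1 \cup S_2)$, which can in principle shift the conditional mean of $\varphi^U$ by an amount that is not a priori bounded in $R$. The resolution exploits the fact that $U$ was chosen to meet $S_1 \cup S_2$ only in unit neighborhoods of $\{y_1, y_2\}$: the conditional mean shift on $\varphi^U$ then propagates from this tiny overlap via a random-walk Poisson kernel, which decays polynomially away from $\{y_1, y_2\}$ and is therefore $O(1)$ throughout the bulk of $U$ where the forced bridge sites are located. This Poisson-kernel localization of the large-field effect is precisely the renormalization input developed in \cite{MR2891880, drewitz2018geometry}; once it is in place, the $O(1)$ residual shift is absorbed into the $\varepsilon$-sprinkling from $h+\varepsilon$ down to $h-\varepsilon$, and the Gaussian forcing estimates above go through uniformly in $D, E$.
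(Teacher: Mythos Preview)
Your proposal has two genuine gaps.

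\textbf{Circular use of the Exist event.} Your good event (i) requires $\P[\text{Exist}(\ell, h+\varepsilon)^c] \leq e^{-c\ell^\rho}$ for $h+\varepsilon < h_{**}-\varepsilon$, and you invoke \eqref{eq:h_**2} for this. But \eqref{eq:h_**2} is a bound on \emph{connection} probabilities \emph{above} $h_{**}$; it says nothing about $\text{Exist}$ below $h_{**}$. A stretched-exponential bound on $\P[\text{Exist}(\ell,\alpha)^c]$ for $\alpha < h_{**}$ is precisely the statement $\bar h \geq h_{**}$, i.e.\ (half of) the main theorem. At the stage where Lemma~\ref{lem:bridging} is needed, the only input available from the assumption $h < h_{**}$ is the \emph{polynomial lower bound} on box-to-box (and hence point-to-point) connection probabilities, cf.\ the definition \eqref{eq:h_**} and Lemma~\ref{lem:twopointsbound}. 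The paper's bridge is built entirely from this polynomial input: the cost of crossing an $m$-box is $L_m^{-C}$, and the multi-scale structure {\bf B4} keeps the number of boxes per scale bounded by $2K$, so the product over scales $0\leq m\leq n$ yields exactly $e^{-C(\log R)^2}$. Your reference clusters are not available.

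\textbf{Large-field effects under conditioning on $D$.} Your Gibbs--Markov decomposition $\varphi|_U = h_U + \varphi^U$ does not decouple from $D$ in the way you claim. The event $D$ depends on all of $S_1\cup S_2$, and the boundary of $U$ necessarily contains a portion of $S_1\cup S_2$ of diameter $\sim R$ (since you want $U \subset B_R$ to contain a path from $S_1$ to $S_2$ but avoid the bulk of $S_1\cup S_2$). Conditioning on $D$ biases $\varphi$ on this boundary portion, which shifts $h_U$ throughout $U$ via the Poisson kernel; your good event (ii) allows $|h_U|$ up to $R^{\rho/4}$, making each single-site forcing cost $e^{-cR^{\rho/2}}$, not $e^{-c}$. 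The paper sidesteps this entirely by using the white-noise decomposition $\varphi = \sum_\ell \xi^\ell$ (Section~\ref{sec:decompose_GFF}) rather than Gibbs--Markov: property {\bf B3} guarantees that every $m$-box in the bridge is at distance $\geq \kappa L_m$ from $S_1\cup S_2$, so the finite-range field $\varphi^{L_m}$ restricted to that box is \emph{genuinely independent} of $\mathbf{Z}(S_1\cup S_2)$, hence of $D$. The good events $F_{0,x}$, $H_{m,x}$ then control $\varphi-\varphi^{L_m}$ on the box, and FKG for the i.i.d.\ variables $\mathbf{Z}$ (not for $\varphi$) handles the remaining correlations. No Poisson-kernel argument is needed or used.
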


Note that the assumption that $h<h_{**}$ is necessary since otherwise, already for the unconditioned measure the probability in \eqref{eq:sprinklingintro} to connect two sets $S_1$ and $S_2$ at a distance of order $R$ of each other is decaying stretched exponentially fast as soon as $h>h_{**}$, cf.~the discussion following~\eqref{eq:h_**}. 

We now explain the nature of the events $\mathcal{G}(S_1,S_2)$. The argument yielding \eqref{eq:sprinklingintro} will require (re-)constructing pieces of paths in $\{ \varphi \geq h\}$ for $h<h_{**}$ to connect $S_1$ and $S_2$ at an affordable cost. 
The paths in question will be built inside so-called \textit{good bridges}, introduced in Definitions \ref{def:bridge} and \ref{def:goodbridge}; see also Fig. \ref{F:bridge} below. Roughly speaking, a good bridge is formed by a concatenation of boxes at multiple scales $L_n$ ($n \geq 0$) defined by 
\begin{equation}
\label{eq:bridge1}
L_n \stackrel{\textnormal{def.}}{=} \ell_0^n L_0, \text{ for some } L_0 \geq 100, \ell_0 \geq 1000,
\end{equation}  
 in which $\varphi$ has certain desirable (good) properties. Together with the assumption that $h<h_{**}$, these conditions on $\varphi$ will allow to deduce the bound \eqref{eq:sprinklingintro}. Their precise form may however vary depending on the specific situation in which a bridge construction is applied. 

Apart from just connecting two sets of interest, bridges satisfy two important geometric constraints: i) any box at scale $L_k$ which is part of a bridge does not get closer than distance $\approx L_k$ to the two sets connected by the bridge, and ii) a bridge does not involve too many boxes at any scale $L_k$. The former will allow us to retain some independence when exploring the clusters that need to be connected while the latter is key in order to keep the reconstruction cost under control.

The events $\mathcal{G}(S_1,S_2)$ appearing in Lemma \ref{lem:bridging} then correspond to the existence of a good bridge linking the sets $S_1$ and $S_2$. Their likelihood, as implied by \eqref{eq:nicelikelyintro}, will follow from Theorem \nolinebreak\ref{T:bridge1}, derived in the next section. It asserts that good bridges (for a generic underlying notion of goodness, see e.g.~\eqref{eq:bridge10}--\eqref{eq:bridge11}) can be found with very high probability between any two sufficiently large sets. This result will then be applied in Sections \ref{sec:supercritical} and \ref{sec:comparison} with different choices of good events involving a decomposition of $\varphi$ into a sum of independent fields with range $L_n$ for $n \geq 0$, alluded to in \eqref{eq:intro_phi_L} and introduced in Section~\ref{sec:decompose_GFF}, to  yield \eqref{eq:nicelikelyintro} and \eqref{eq:sprinklingintro} as a surrogate ``finite-energy property'' for $\varphi$. The definition of bridges as well as the statement of Theorem~\ref{T:bridge1} are fairly technical and postponed to Section \ref{sec:bridges}. The proof of Theorem~\ref{T:bridge1} is based on renormalization ideas for $\varphi$ developed in \cite{RodriguezSznitman13, MR3390739, drewitz2018geometry}. Interestingly, and in contrast to these works, our main tool in the present context, introduced in the next section, is a geometric object (the good bridge) which involves \textit{all} scales $L_k$, $0\leq k \leq n$, for a given macroscopic scale $L_n$, cf.~Fig.~\ref{F:bridge} in Section \ref{subsec:bridges1} and Fig.~\ref{F:reconstruction} in Section \ref{sec5.2}.

\subsection*{Organization of the paper.}
Section~\ref{sec:bridges} contains the renormalization scheme and the notion of good bridges that will be used in several places later on. The statements and proofs have been made independent of the model. Section~\ref{sec:decompose_GFF} introduces the decomposition of the GFF into finite-range Gaussian processes and presents the proof of Lemma~\ref{lem:bridging}. Section~\ref{sec:supercritical} and Section~\ref{sec:comparison} are respectively devoted to the proofs of Propositions~\ref{prop:supercritical} and~\ref{prop:comparison}. The last section contains the proof of Proposition~\ref{prop:sharptruncated} and is independent of the rest of the paper.

\subsection*{Notation.} For $x\in\Z^d$, let $B_R(x)\stackrel{\textnormal{def.}}{=}x+B_R$ and $\partial B_R(x) \stackrel{\textnormal{def.}}{=}x+\partial B_R$, with $B_R$ and $\partial B_R$ as defined below \eqref{eq:h_**}. Except otherwise stated, distances are measured using the $\ell^\infty$-norm, which is denoted by $\vert\cdot\vert$. We use $d(U,V)$ to denote the $\ell^\infty$-distance between sets $U,V \subset \Z^d$.


We write $c,c',C,C'$ for generic numbers in $(0,\infty)$ which may change from line to line. They may depend implicitly on the dimension $d$. Their dependence on other parameters will always be explicit. Numbered constants $c_0, c_1,C_0,C_1,\dots$ refer to constants that are used repeatedly in the text; they are numbered according to their first appearance.

\section{Multi-scale bridges}\label{sec:bridges}
 
In this section, we introduce the notion of good (multi-scale) bridge which will be later used. The main result is Theorem \ref{T:bridge1}, which asserts that good bridges  connect any two ``admissible'' sets with very high probability when certain conditions are met. The proof of Theorem \ref{T:bridge1} appears in Section \ref{sec:bridge2.1}. It involves a suitable renormalization scheme, and revolves around Lemma~\ref{L:bridge1}, which is proved in a separate section (Section \ref{sec:bridge2.2}).

\subsection{Definition of a bridge and statement of Theorem~\ref{T:bridge1}}
\label{subsec:bridges1}

Recall the definition of scales $L_n$, $n\geq0$, from \eqref{eq:bridge1}. For $n\ge0$, let $\mathbb{L}_n \stackrel{\textnormal{def.}}{=} (2L_n+1)\Z^d$ and call a box of the form $B_{L_n}(x)$ for $x\in \mathbb L_n$ the \textit{$n$-box (attached to $x$)}. Note that for each $n \geq 0$, every point $y \in \Z^d$ is contained in exactly one $n$-box. We call a nearest-neighbor path in $\mathbb{L}_n$ any sequence of vertices in $\mathbb{L}_n$ such that any two consecutive elements are at $\ell^{1}$-distance $2L_n+1$ on~$\Z^d$. 
 
 We introduce two parameters
$\kappa \geq 20$ and $K \geq 100
$ which will respectively govern the ``separation scale'' and the ``complexity'' of a bridge, see \eqref{B3} and \eqref{B4} below. These parameters correspond to the geometric features i) and ii) highlighted in the introduction, see below \eqref{eq:bridge1}.

In what follows, for $n \geq 0$, we consider the triplet of domains $(\underline{\Lambda}_n,\Lambda_n,\Sigma_n)$ where $\Sigma_n \stackrel{\textnormal{def.}}{=} B_{ 9\kappa L_n } \setminus B_{\lfloor 8.5\kappa L_n\rfloor }$ and  
\begin{equation}
\label{eq:annuli}
(\Lambda_n,\underline{\Lambda}_n ) \stackrel{\textnormal{def.}}{=} (B_{10\kappa L_n}, B_{8 \kappa L_n })  \text{ or } ( B_{10\kappa L_n}\setminus B_{\kappa L_n} ,  B_{8 \kappa L_n }\setminus B_{\kappa L_n}). 
\end{equation}


\begin{defn}[Bridge]\label{def:bridge} For any $S_1,S_2 \subset \Lambda_n$, a bridge between $S_1$ and $S_2$ inside $\Sigma_n$ is a finite collection $\mathcal{B}$ of subsets of $\Sigma_n$ with the following properties:
\begin{align}
&\begin{array}{l}\text{every $B\in \mathcal{B}$ is an $m$-box, $0\leq m\leq n$, included in $\Sigma_n$}\\
\text{and $\bigcup_{B\in \mathcal{B}} B$ is a connected set;} \end{array}  \label{B1}  \tag{\bf B1}\\[0.3em]
&\begin{array}{l}\text{there exist $0$-boxes $B_1, B_2 \in \mathcal{B}$ such that $B_i \cap S_i \neq \emptyset$, $i=1,2$}\\
\text{and for all $B \in \mathcal{B} \setminus \{ B_1,B_2\}$, $B\cap (S_1\cup S_2)=\emptyset$;} \end{array}  \label{B2}  \tag{\bf  B2} \\[0.3em]
&\begin{array}{l}\text{for every $m$-box $B\in \mathcal{B}$ with $1\leq m\leq n$,}\\
\text{one has $d(B,S_1\cup S_2)\geq \kappa L_m$;} \end{array}  \label{B3}  \tag{\bf  B3} \\[0.3em]
&\begin{array}{l}\text{for every $m\ge0$, the number of $m$-boxes in $\mathcal{B}$ is at most $2K$. } \end{array}  \label{B4}  \tag{\bf  B4} 
\end{align}
\end{defn}
We now introduce ``good events'' which will be later chosen according to specific needs. For the remainder of this section, we simply consider, on some probability space $(\Omega, \mathcal{F}, \P)$, families of events $F=\{F_{0,x}: x \in \mathbb{L}_0 \}$ and $H_n = \{H_{n,x}: x \in \mathbb{L}_n \}$, for $n \geq 1$. 

\begin{defn}[Good bridge] \label{def:goodbridge} A bridge as defined above is \textit{good} if
\begin{align}
&\begin{array}{l}\text{for every $0$-box $B_{L_0}(x) \in \mathcal{B}$, the event $F_{0,x}$ occurs;}\end{array}  \label{G1}  \tag{\bf  G1} \\[0.3em]
&\begin{array}{l}\text{for every $m$-box $B_{L_m}(x) \in \mathcal{B}$, $0\leq m \leq n$, the events}\\
\text{$H_{j,\,y(x,j)}$ occur for every $j \geq m \vee 1$, where $y(x,j)$ is}\\\text{the unique element of $\mathbb L_j$ with $x \in B_{L_j}(y(x,j))$.} \end{array}  \label{G2}  \tag{\bf  G2} 
\end{align}
\end{defn}

The property \eqref{G2} ensures that every $m$-box in a good bridge 
sits inside a ``tower'' of good events attached to the $j$-boxes 
containing its center, for all $j \ge m$. Good bridges will be used to connect 
a certain class of sets. A set $S \subset \Lambda_n$ (not necessarily 
connected) is \textit{admissible} if each connected component of $S$ 
intersects $\partial B_{10\kappa L_n}$ and at least one connected 
component of $S$ intersects $B_{8\kappa L_n}$. We are interested in 
the event, for $n\ge0$,
\begin{multline}
\label{eq:bridge.GOOD}
\mathcal{G}_{n}\stackrel{\textnormal{def.}}{=}\{\text{there exists a good bridge }\\\text{inside $\Sigma_n$ between every admissible $ S_1, S_2 \subset \Lambda_n$}\},
\end{multline}
see also Figure \ref{F:bridge}. We define $\mathcal{G}_{n,x}$, $x\in \mathbb{L}_n$, as the event corresponding to \eqref{eq:bridge.GOOD} when one replaces the triplet $(\underline{\Lambda}_n,\Lambda_n,\Sigma_n)$ in \eqref{eq:annuli} and Definitions \ref{def:bridge} and \ref{def:goodbridge} by $(x+\underline{\Lambda}_n,x+\Lambda_n,x+\Sigma_n)$.
\begin{figure}[h!]
  \centering 
  \includegraphics[scale=0.7]{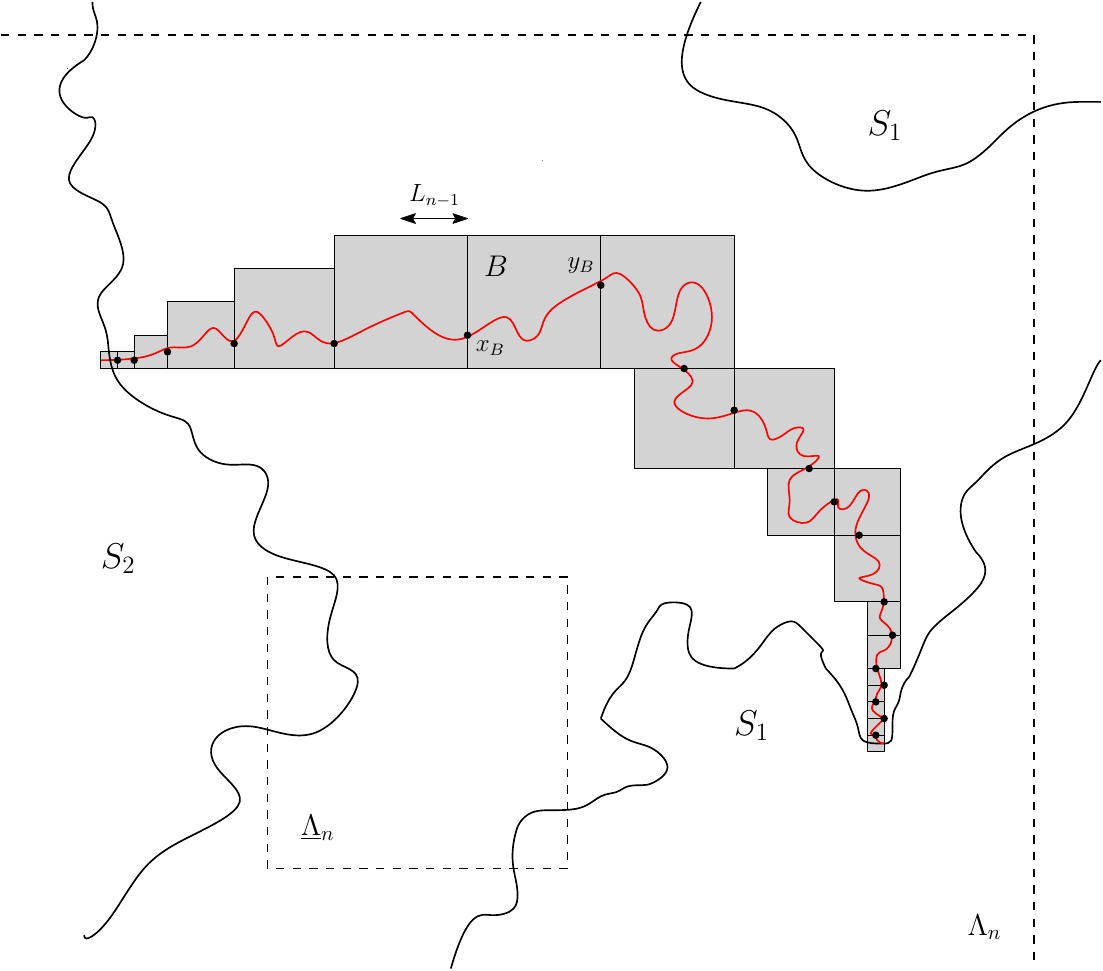}
  \caption{An illustration of the event $\mathcal{G}_n$: depicted is a pair of admissible sets $(S_1,S_2)$ and the good bridge (in light gray) connecting them. Later in Section~\ref{sec:decompose_GFF}, the underlying good events will guarantee that the sets $S_1$ and $S_2$ can be linked by a certain path (in red) inside a good bridge. Albeit not required by the definition, our construction of a good bridge on certain good events actually yields a ``croissant-type'' shape. More precisely, one can define two sequences of boxes, starting with the $0$-boxes $B_1$ and $B_2$ intersecting $S_1$ and $S_2$ (see \eqref{B2}), respectively, and corresponding to the two \textit{arches} in the proof of Lemma \ref{L:bridge1}, which comprise all but the largest boxes involved in the bridge construction and whose side lengths are non-decreasing. The largest boxes in the bridge have side length $L_{n-1}$ and comprise the \textit{deck}.}
  \label{F:bridge}
\end{figure}

Recall that the event $\mathcal{G}_{n}$ depends implicitly on the four parameters, namely $L_0,\ell_0,\kappa , K$, as well as on the choice of families $F$ and $H_n$, $n \geq 1$. In referring to $F$ (and similarly, $H_n$) as independent in the sequel, we mean that  $\{ 1_{F_{0,x}} : x \in \mathbb{L}_0\}$ is a collection of independent random variables. We now state the main result of this section.

\begin{theorem} \label{T:bridge1} For each $\kappa \geq 20$, $\ell_0 \geq C(\kappa)$, there exist $K=K(\kappa,\ell_0)$ and $\Cl[c]{bridges2}=\Cr{bridges2}(\kappa,\ell_0) \in (0,1)$ such that for all $L_0 \geq \Cl{bridges1}(\kappa,\ell_0)$ the following hold: if the families of events $F=\{F_{0,x}: x \in \mathbb{L}_0 \}$ and $H_n = \{H_{n,x}: x \in \mathbb{L}_n \}$ (for $n \geq 1$) satisfy
\begin{align}
&\text{ the families $F ,H_n, n \geq 1$ are independent,}  \label{C1}  \tag{\bf C1} \\[0.3em]
&\begin{array}{l}\text{for any set $U \subset \subset \Z^d$ such that $|y-z| \geq \frac{\kappa}{2}$ for all $y,z \in U$} \\ 
\text{with $y \neq z$, the events $F_{0,x}$, $x \in (2L_0+1)U$, are independent and} \\ \text{the events $H_{n,x}$, $x \in (2L_n+1)U$, are independent.} \end{array} \label{C2} \tag{\bf C2} \\[0.3em]
&\text{ for $x\in \mathbb L_0$, $\P\left[F_{0,x}^{\rm c}\right] \le c_1$ and for $n\ge1$, $x\in \mathbb L_n$,  $ \P[H_{n,x}^{\rm c}] \leq \Cr{bridges2}2^{-2^{n}}$},  \label{C3}  \tag{\bf C3}
\end{align}
then for every $n\ge0$ and $x \in \mathbb{L}_n$,
\begin{equation}
\label{eq:bridge8}
\P\left[\mathcal{G}_{n,x}\right]\geq 1- 2^{-2^n}.
\end{equation}
\end{theorem}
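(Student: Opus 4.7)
The plan is to prove \eqref{eq:bridge8} by induction on $n$, matching the doubly-exponential shape $2^{-2^n}$: a failure at scale $n+1$ should be forced by at least two genuinely independent failures at scale $n$, so that squaring the inductive estimate produces the tolerance needed at the next scale. The geometric backbone, which I would isolate as Lemma~\ref{L:bridge1}, is a \emph{pair-independent} landscape statement: within $\Sigma_{n,x}$ one designates a family of candidate ``waypoint'' $m$-boxes at each scale $0\le m\le n$, and shows that provided only few of them are defective, every admissible pair $(S_1,S_2)\subset\Lambda_{n,x}$ can be joined by a good bridge of the croissant shape sketched in Fig.~\ref{F:bridge}. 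Making the landscape not depend on $(S_1,S_2)$ is what replaces the prohibitive union bound over admissible pairs.

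For the base case $n=0$, a bridge in $\Sigma_0$ consists only of $0$-boxes together with the tower $\{H_{j,\cdot}\}_{j\ge 1}$ at the enclosing boxes. I would fix a finite family of $0$-box necklaces separating $B_{\lfloor 8.5\kappa L_0\rfloor}$ from $\partial B_{9\kappa L_0}$ inside $\Sigma_0$, noting that every admissible pair is automatically connected along such a necklace, and then use \textbf{C1}--\textbf{C3} to arrange that at least one necklace is entirely good with probability $\ge 1/2 = 1-2^{-2^0}$. This step fixes $c_1$ small and imposes a first smallness requirement on $\Cr{bridges2}$.

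For the inductive step, I would partition $\Sigma_{n+1,x}$ into sub-annuli at each scale $m\le n$, each populated by a $\kappa L_m/2$-separated family of $m$-box waypoints inside of which the event $\mathcal{G}_{m,\cdot}$ is required. By the inductive hypothesis and condition \textbf{C2}, the waypoint events at scale $m$ are independent and each fails with probability at most $2^{-2^m}$. Declare the landscape event to hold if at each scale $m\le n$ at most one waypoint per sub-annulus is defective, and the finitely many enclosing $(n+1)$-box events $H_{n+1,\cdot}$ all occur. Lemma~\ref{L:bridge1} would then assert deterministically that on this landscape every admissible $(S_1,S_2)$ admits a good bridge, with the two arches rerouted around the at most one bad waypoint at each scale while respecting \textbf{B3} and \textbf{B4}; this is where $K=K(\kappa,\ell_0)$ gets fixed.

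The main obstacle is twofold. The combinatorial content of Lemma~\ref{L:bridge1} --- that a single defective waypoint per scale can always be bypassed by an admissible arch, \emph{uniformly} over $(S_1,S_2)$ --- is where the croissant geometry of stepping up through scales one at a time does real work; I would defer its proof to Section~\ref{sec:bridge2.2}. On the probabilistic side, with $N_m \le C(\kappa,\ell_0)$ waypoints per scale, the probability of two simultaneous defects at scale $m$ is at most $\binom{N_m}{2}(2^{-2^m})^2$, and summing over $m\le n$ together with the $\Cr{bridges2}\,2^{-2^{n+1}}$ contribution from the top scale yields the desired $2^{-2^{n+1}}$ bound. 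The delicate ordering is that $K$ must be fixed first (forced by the geometry), then $\Cr{bridges2}$ (to control the top-scale term), and finally $L_0\ge \Cr{bridges1}(\kappa,\ell_0)$ (to absorb the polynomial prefactors from the $N_m^2$).
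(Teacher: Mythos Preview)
There is a genuine gap in the probabilistic estimate. You claim that
\[
\sum_{m=0}^{n}\binom{N_m}{2}\bigl(2^{-2^m}\bigr)^2 \;+\; \Cr{bridges2}\,2^{-2^{n+1}} \;\le\; 2^{-2^{n+1}},
\]
but the sum is dominated by the \emph{smallest} scale, not the largest: already the $m=0$ term contributes $\binom{N_0}{2}(2^{-2^0})^2=\binom{N_0}{2}\cdot\tfrac14$, which is of order~1 and certainly not $\le 2^{-2^{n+1}}$. The doubly-exponential decay cannot be obtained by requiring ``at most one defect per scale'' across all scales $m\le n$ simultaneously; two defects at a low scale are simply not rare. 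What does work is a \emph{one-step} recursion: one introduces an auxiliary notion of $n$-goodness $G_{n,x}$ (depending only on $F$ and $H_1,\dots,H_n$) defined so that $n$-badness forces either an $H_n$-failure or two well-separated $(n{-}1)$-bad boxes among the $O((\kappa\ell_0)^d)$ boxes at level $n{-}1$ inside $\Lambda_n$. This yields $q_n\le \Gamma^2 q_{n-1}^2+\Gamma\Cr{bridges2}2^{-2^n}$ with $\Gamma=\Gamma(\kappa,\ell_0)$, and \emph{iteration} of the squaring produces $q_n\le \tfrac12 2^{-2^n}$ once $c_1$ is small. The deterministic Lemma~\ref{L:bridge1} then shows that $G_{n,x}$ together with the tower $\bigcap_{m>n}H_{m,\cdot}$ implies $\mathcal{G}_{n,x}$; the tower is handled separately by~\eqref{C3}.

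A second, related issue: your waypoint events $\mathcal{G}_{m,\cdot}$ are \emph{not} independent for $\tfrac{\kappa}{2}L_m$-separated centers, because \textbf{G2} makes each $\mathcal{G}_{m,x}$ depend on $H_{j,\cdot}$ for \emph{all} $j\ge 1$, and for large $j$ these are shared across distant $m$-boxes. Condition~\eqref{C2} gives independence only within a fixed family $H_j$, not across $j$. This is precisely why one must work with a truncated, scale-local surrogate of $\mathcal{G}_{m,\cdot}$ (the $G_{m,x}$ above) rather than with $\mathcal{G}_{m,\cdot}$ itself. Finally, your remark that $L_0$ is chosen last ``to absorb the polynomial prefactors from the $N_m^2$'' has no content here: the prefactors $\Gamma$ depend on $(\kappa,\ell_0)$ only, and $L_0$ enters solely through the geometric hypothesis $L_0\ge\Cr{bridges1}(\kappa,\ell_0)$ needed for Lemma~\ref{L:bridge1}.
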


\begin{remark}\label{R:bridges} 1) Careful inspection of the proof below reveals that one could in fact replace \eqref{G1} and \eqref{G2} by the property that for any box $B=B_{L_n}(x)\in \mathcal{B}$, any $n\leq m \leq n_{\text{max}}$, where  
\begin{equation}
\label{eq:nmax}
n_{\text{max}}=n_{\rm max}(\Lambda)\stackrel{\textnormal{def.}}{=} \max\{k : B_{L_k}(x) \subset \Lambda \text{ for some $ x\in \mathbb{L}_k$} \},
\end{equation}
 and any $y \in \mathbb{L}_m$ such that $x \in B_{L_m}(y)$, the event $G_{m,y}(\Lambda_{n_{\text{max}}})$ occurs, together with the events $H_{m, y(x,m)}$ for $m > n_{\text{max}}$, and the conclusions of Theorem \ref{T:bridge1} continue to hold. 
\medbreak
\noindent 2) One could also replace the condition of admissibility of the sets $S_1,S_2 \subset \Lambda_n$ (cf.~above \eqref{eq:bridge.GOOD}) in Definition~\ref{def:goodbridge} by the requirement that $S_1$ and $S_2$ are any two connected subsets of $\underline{\Lambda}_n$ of diameter at least $\kappa L_n$ and connect them by a good bridge in ${\Lambda}_n$, i.e.~replacing $\Sigma_n$ by $\Lambda_n$ in Definition~\ref{def:bridge}; see Remark~\ref{R:0-bridge},1) below for the necessary small adjustments to the proof.
\end{remark}

\subsection{Proof of Theorem \ref{T:bridge1}}
\label{sec:bridge2.1}
The proof involves a multi-scale argument and a corresponding notion of ``goodness at level $n$'' for every $n \geq 0$, that we now introduce. For $\Lambda \subset \Z^d$ finite, let $\mathbb{L}_n(\Lambda) \stackrel{\textnormal{def.}}{=} \{ x \in \mathbb{L}_n : B_{L_n}(x) \cap \Lambda \neq \emptyset\}$ as well as for $x\in \mathbb L_0$,
\begin{equation}
\label{eq:bridge10}
G_{0,x}(\Lambda)\stackrel{\textnormal{def.}}{=} \bigcap_{y \in\mathbb{L}_0(\Lambda \cap B_{10\kappa L_0}(x)) } F_{0,y} \end{equation}
and for every $x\in \mathbb{L}_n$ with $n\ge1$, 
\begin{equation}
\label{eq:bridge11}
\begin{split}
G_{n,x}(\Lambda)\stackrel{\textnormal{def.}}{=} &\bigcap_{\substack{y,y' \in \mathbb{L}_{n-1}\left(\Lambda \cap B_{10\kappa L_n}(x)\right):\\ |y-y'|_{\infty} \geq 22\kappa L_{n-1}} }  (G_{n-1,y}(\Lambda) \cup G_{n-1,y'}(\Lambda)) \\
&  \cap \bigcap_{z \in \mathbb{L}_{n}\left(\Lambda \cap B_{10\kappa L_n}(x)\right)} H_{n,z}.
\end{split}
\end{equation}
A vertex $x \in \mathbb{L}_n$ will be called \textit{$n$-good} if the event $G_{n,x}(\Lambda_n)$ occurs with $\Lambda_n$ given by either choice of $(\Lambda_n,\underline\Lambda_n)$ in \eqref{eq:annuli}, and \textit{$n$-bad} otherwise. 
In words, $x$ is $n$-bad if either $H_{n,z}$ does not occur for some $z \in \mathbb{L}_{n}\left(\Lambda_n \cap B_{10\kappa L_n}(x)\right)$, or $\mathbb{L}_{n-1}\left(\Lambda_n \cap B_{10\kappa L_n}(x)\right)$ contains two distant vertices that are {\em both} $(n-1)$-bad.

The reason for introducing the notion of $n$-goodness is the following key deterministic lemma, which yields that $n$-goodness implies the occurrence of $\mathcal{G}_{n,0}$.
\begin{lemma}\label{L:bridge1} For all $\kappa \geq 20$, provided that $\ell_0 \geq C(\kappa)$ and $K \geq C' (\kappa, \ell_0)$, we have that for every $n\ge0$, $x \in \mathbb{L}_n$ and $L_0\ge100$, if the events $H_{m,y(x,m)}$, $m >n$, and $G_{n,x}(\Lambda_n)$ all occur, then so does $\mathcal{G}_{n,x}$.
\end{lemma}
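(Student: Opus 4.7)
The argument would be by induction on $n\geq 0$.

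\medskip
\emph{Base case $n=0$.} Since $S_1,S_2\subset\Lambda_0$ are admissible, each has a connected component that crosses the shell $\Sigma_0=B_{9\kappa L_0}\setminus B_{\lfloor 8.5\kappa L_0\rfloor}$. I would pick $0$-boxes $B_1,B_2\in\mathbb L_0$ lying in $\Sigma_0$ and intersecting $S_1,S_2$ respectively, and join them by a chain of at most $C(\kappa)\kappa^{d-1}$ many $0$-boxes inside $\Sigma_0$, exploiting the connectedness of $\Sigma_0$ as an annular shell of thickness $\asymp\kappa L_0$. The event $G_{0,0}(\Lambda_0)$ supplies the $F_{0,y}$-occurrences required by \textbf{G1}; \textbf{B3} is vacuous at this level and \textbf{B4} is absorbed by taking $K\geq C'(\kappa)$.

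\medskip
\emph{Inductive step, $n\geq1$.} Assume the lemma holds at level $n-1$. The hypothesis $G_{n,0}(\Lambda_n)$ unpacks into two facts: (a) the ``bad'' set
$$
\mathrm{Bad}:=\{y\in\mathbb L_{n-1}(\Lambda_n\cap B_{10\kappa L_n}):\, G_{n-1,y}(\Lambda_n)\text{ fails}\}
$$
has $\ell^\infty$-diameter strictly less than $22\kappa L_{n-1}$, and (b) the events $H_{n,z}$ occur for every $z\in\mathbb L_n(\Lambda_n)$. For $\ell_0\geq C(\kappa)$ large, the cluster $\mathrm{Bad}$ is dwarfed by the shell $\Sigma_n$, whose thickness is $\asymp\kappa\ell_0 L_{n-1}$, while the admissible-crossing components of $S_1,S_2$ necessarily cut through $\Sigma_n$. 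I would then assemble the bridge at level $n$ in three pieces. \emph{(i)} Pick an $(n-1)$-good vertex $y_1^\star\in\mathbb L_{n-1}\cap\Sigma_n$ positioned so that, after trimming $S_1$ to its admissible crossing component and selecting a suitable auxiliary ``handle'' $T_1\subset y_1^\star+\Lambda_{n-1}$, the pair $(S_1\cap(y_1^\star+\Lambda_{n-1}),T_1)$ is admissible in $y_1^\star+\Lambda_{n-1}$; this is feasible because $\mathrm{Bad}$ is small and $S_1$ occupies a macroscopic region of $\Sigma_n$. Applying the inductive hypothesis at $y_1^\star$ yields a good sub-bridge $\mathcal B_1$ at scale $n-1$ linking $S_1$ to $T_1$. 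To verify its applicability one checks that $G_{n-1,y_1^\star}(y_1^\star+\Lambda_{n-1})$ coincides with $G_{n-1,y_1^\star}(\Lambda_n)$ whenever $y_1^\star+B_{10\kappa L_{n-1}}\subset\Lambda_n$, and that the $H$-tower at levels $m\geq n$ attached to each sub-bridge box is supplied by the $H_n$-part of $G_{n,0}(\Lambda_n)$ combined with the assumed $H_{m,y(0,m)}$ for $m>n$. \emph{(ii)} Symmetrically construct $y_2^\star$ and $\mathcal B_2$. \emph{(iii)} Route a ``spine'' of $(n-1)$-boxes (with at most a handful of $n$-boxes for detouring $\mathrm{Bad}$ if needed) through the middle of $\Sigma_n$, using only $(n-1)$-good vertices, joining $T_1$ to $T_2$. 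The concatenation $\mathcal B_1\cup(\text{spine})\cup\mathcal B_2$ then satisfies \textbf{B1}--\textbf{B3}: property \textbf{B3} is inherited for boxes in the sub-bridges from the inductive statement, and holds for spine boxes because the spine runs through the interior of $\Sigma_n$, at distance $\geq\kappa L_m$ from $S_1\cup S_2$ once $\ell_0\geq C(\kappa)$.

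\medskip
\emph{Main obstacle.} Two delicate points need attention. The first is the \emph{uniform} bookkeeping in \textbf{B4}: the number of $m$-boxes must stay below the same constant $2K$ for every $n$. The key is that the step from level $n-1$ to $n$ only contributes $O(\kappa^{d-1})$ boxes at the top scale $n-1$ (the spine) and at most a bounded number of $n$-boxes (for the detour around $\mathrm{Bad}$), while the recursive sub-bridges contribute at strictly smaller scales; picking $\kappa,\ell_0$ and then $K=K(\kappa,\ell_0)$ in that order makes this recursion close. The second, more subtle point is the \emph{alignment} between scales: for each bridge box with center $c$ and each $j>n$ one needs $H_{j,y(c,j)}$, which is only automatic if the bridge fits inside the common intersection of the $j$-tiles containing $0$. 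This forces the placement of $y_1^\star,y_2^\star$ and the spine to respect not only the geometry of $\Sigma_n$ but also the tile hierarchy, so that $y(c,j)=y(0,j)$ for every $j>n$ and every bridge-box center $c$, and is what ultimately dictates the lower bound $\ell_0\geq C(\kappa)$ appearing in the statement.
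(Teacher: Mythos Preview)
Your outline has the right shape and parallels the paper's Case~3 (a spine of $(n-1)$-boxes plus two end pieces), but the \textbf{B4} bookkeeping contains a real gap. If you invoke the bridge induction hypothesis \emph{twice} --- once at each end to produce $\mathcal B_1$ and $\mathcal B_2$ --- then each sub-bridge, being a full bridge at level $n-1$, may carry up to $2K$ boxes at every scale $m\leq n-1$. At any scale $m<n-1$ the two together already contribute up to $4K$, violating \textbf{B4}; this doubling compounds at every inductive step, so no fixed $K=K(\kappa,\ell_0)$ can close the recursion. Your assertion that ``the recursive sub-bridges contribute at strictly smaller scales'' does not help: the issue is the \emph{multiplicity} of recursive calls, not the scale range they occupy. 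The paper's remedy is to introduce a separate, tighter recursive object, the \emph{arch} (Lemma~\ref{L:arch}): an arch at level $k$ connects a set to a single $k$-box, carries at most $K$ (not $2K$) boxes per scale, and is built from a path of $(k-1)$-boxes together with exactly \emph{one} sub-arch at level $k-1$. Because only one recursive call is made per step, the bound $K$ is stable under the induction, and the full bridge, assembled as a deck of at most $K$ many $(n-1)$-boxes plus two arches, lands at $\leq 2K$ per scale. The paper also separates out a ``close'' case in which a single bridge call already suffices.

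Your second obstacle is overstated. Since every bridge box lies in $\Sigma_n\subset B_{9\kappa L_n}$ and the hypothesis $\ell_0\geq C(\kappa)$ guarantees $9\kappa L_n<L_{n+1}\leq L_j$ for all $j>n$, every bridge-box center $c$ automatically satisfies $c\in B_{L_j}(0)$ and hence $y(c,j)=y(0,j)=0$ for all $j>n$. No placement constraint arises; the paper disposes of this in a single sentence at the start of the proof.
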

We will prove Lemma \ref{L:bridge1} in the next section and now focus on the proof of Theorem~\ref{T:bridge1}.
 
 \begin{proof}[Proof of Theorem \ref{T:bridge1}]
 For simplicity, we assume that $x=0$. For a given $\kappa \geq 20$, consider $\ell_0,K,L_0$ such that the previous lemma holds true. Now, let 
\begin{equation}
\label{eq:bridge2.003}
q_n \stackrel{\textnormal{def.}}{=} \sup_{x\in\mathbb L_n} \P[ x\text{ is $n$-bad}\, ], \quad n \geq 0
\end{equation} 
and observe that by the previous lemma, it suffices to show, provided that $c_1$ such that \eqref{C3} holds is chosen small enough, that 
\begin{equation}\sum_{m > n} \P[H_{m,y(x,m)}^c]\leq  \frac12 2^{-2^n}\text{ and }q_n\le \frac12 2^{-2^n}
\end{equation} hold. The former is immediate by \eqref{C3}. 

With the choice that $\ell_0 \geq 22\kappa$, one deduces from \eqref{C1}--\eqref{C2} and the definition of goodness that the events $G_{n-1,y}(\Lambda_n)$ and $G_{n-1,y'}(\Lambda_n)$ are independent for any $|y-y'|_{\infty} \geq 22\kappa L_{n-1}$. Hence, the definition of goodness and the union bound yield that
\begin{equation}
\label{eq:recursion}
q_{n} \leq | \mathbb{L}_{n-1}(\Lambda_n)|^2q_{n-1}^2 + |\mathbb L_n(\Lambda_n)|\sup_x\P[H_{n,x}^{c}] 
\leq \tfrac14\Gamma^2 q_{n-1}^2+\tfrac12\Gamma c_12^{-2^n},
\end{equation}
for all $n \geq 1$, where in the second inequality we introduced $\Gamma= \Gamma(\kappa,\ell_0)\ge 2| \mathbb{L}_{n-1}\left(\Lambda_n\right)|$ (for all $n$), and we used \eqref{C3}. Since \eqref{C3} implies that $q_0\le \tfrac12\Gamma c_1$, a simple induction using \eqref{eq:recursion} implies that for every $n\ge1$,
$q_n\le c_1\Gamma 2^{-2^n}\le \frac12 2^{-2^n}$ as soon as $\tfrac14c_1\Gamma^3\le \frac12$ and $c_1\Gamma \leq \frac12$.\end{proof}

For later reference, we also collect the following consequence of the above setup.
For $\Sigma \subset \Lambda_n = B_{10\kappa L_n}$, define $\mathcal{B}^0$ to be a $0$-bridge inside $\Sigma$ between two sets $S_1,S_2\subset\Lambda_n $ if  $\mathcal{B}^0$ consists of $0$-boxes only and $\bigcup_{B \in \mathcal{B}^0} B$ is a connected subset of $\Sigma$ intersecting both $S_1$ and $S_2$, and call $\mathcal{B}^0$ good if for every $B=B_{L_0}(x) \in \mathcal{B}^0$, \eqref{G1} and \eqref{G2} occur. Let
\begin{multline}
\label{eq:bridge.GOOD_0}
\mathcal{G}_{n}^0 \stackrel{\textnormal{def.}}{=} \{\text{there is a good $0$-bridge inside $\widetilde{\Sigma}_n$}\\
\text{between any admissible sets $ S_1, S_2 \subset \Lambda_n$}\}
\end{multline}
where ``admissible'' can be either i) as defined above \eqref{eq:bridge.GOOD}, in which case one chooses $\widetilde{\Sigma}_n\stackrel{\textnormal{def.}}{=} {\Sigma}_n$, or ii) as defined in the previous paragraph with $\widetilde{\Sigma}_n\stackrel{\textnormal{def.}}{=} \Lambda_n$.
\begin{corollary}
\label{cor:0bridge}
Under the assumptions of Theorem \ref{T:bridge1}, 
$\P\left[\mathcal{G}_n^0\right]\geq 1-2^{-2^n}$, for all $n\geq 0$.\end{corollary}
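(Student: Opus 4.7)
My plan is to leverage directly the proof of Theorem~\ref{T:bridge1}. The probabilistic part is immediate: that proof establishes, via the recursion~\eqref{eq:recursion} and the union bound, that under the assumptions~\eqref{C1}--\eqref{C3}, the event
\[
E_n := \bigcap_{m > n} H_{m, y(0, m)} \cap G_{n, 0}(\Lambda_n)
\]
has probability at least $1 - 2^{-2^n}$. It thus suffices to show the deterministic inclusion $E_n \subseteq \mathcal{G}_n^0$: on $E_n$, any pair of admissible sets $S_1, S_2 \subset \Lambda_n$ is connected by a good $0$-bridge inside $\widetilde{\Sigma}_n$. Both variants of admissibility (cf.~Remark~\ref{R:bridges}(2)) are treated in parallel.

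For the deterministic step, I start from the multi-scale good bridge $\mathcal{B}$ produced by Lemma~\ref{L:bridge1} on $E_n$. By Remark~\ref{R:bridges}(1), I may in fact assume that each box $B = B_{L_j}(x) \in \mathcal{B}$ carries the strengthened goodness: for every $j \leq m \leq n_{\max}$, the event $G_{m, y(x,m)}(\Lambda_{n_{\max}})$ occurs, together with $H_{m, y(x,m)}$ for $m > n_{\max}$. I then refine $\mathcal{B}$ iteratively, from scale $m = n$ down to $m = 1$, replacing each remaining $m$-box $B$ by a connected family of $(m-1)$-good $(m-1)$-subboxes of $B$ that meets up with the refined subpaths in its bridge neighbors. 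This is possible because the avoidance clause in definition~\eqref{eq:bridge11} of $G_{m, y(x,m)}(\Lambda_{n_{\max}})$ confines the $(m-1)$-bad subboxes in the relevant neighborhood to a single cluster of diameter at most $22\kappa L_{m-1}$, and the choice $\ell_0 \gg \kappa$ (implicit in Theorem~\ref{T:bridge1}) leaves ample room inside $B$ to route such an $(m-1)$-good subpath around this small bad cluster. After $n$ rounds, every box in the resulting family is a $0$-box and, by construction, inherits $F_{0,\cdot}$ (i.e.\ \textbf{G1}) together with the full tower $H_{m, y(\cdot, m)}$, $m \geq 1$ (i.e.\ \textbf{G2}), from the strengthened goodness propagated along its chain of ancestor boxes during the refinement.

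The main technical obstacle is pure bookkeeping: one needs to verify that the refinement preserves (i) containment inside the boxes being refined (so the resulting $0$-bridge stays in $\widetilde{\Sigma}_n$), (ii) correct matching at the interfaces between neighboring boxes of $\mathcal{B}$, and (iii) the inheritance of the upper tower $H_m$, $m > n_{\max}$, which holds because for such large $m$ one has $y(y', m) = y(0, m)$ for every $y' \in \Lambda_n$ by elementary lattice geometry (using $\ell_0 \geq 10\kappa$), so $H_{m, y(y', m)} = H_{m, y(0, m)}$ is provided by $E_n$. All three points follow from the choice $\ell_0 \geq C(\kappa)$ and the separation properties \textbf{B1}--\textbf{B3} built into the original multi-scale bridge, so that no new probabilistic input is required beyond what Theorem~\ref{T:bridge1} already supplies.
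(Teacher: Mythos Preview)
Your proposal is correct and follows essentially the same approach as the paper's own (sketched) proof: both start from the multi-scale good bridge $\mathcal{B}$ produced by Theorem~\ref{T:bridge1} (with the strengthened goodness of Remark~\ref{R:bridges}(1)) and refine it down to $0$-boxes by excising the small clusters of bad subboxes guaranteed by the structure of $G_{m,\cdot}(\Lambda_n)$. The only cosmetic difference is that the paper removes all bad subboxes at all lower levels simultaneously via the formula $B^0 = B \setminus \bigcup_{k'<k}\bigcup_{y:\,G_{k',y}(\Lambda_n)^c} B_{L_{k'}}(y)$ and then argues by induction on $k$, whereas you refine one scale at a time from $m=n$ down to $m=1$; these two procedures are equivalent and your bookkeeping points (i)--(iii) match what the paper leaves implicit.
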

We simply sketch the argument. Using Theorem \ref{T:bridge1}, one may obtain $\mathcal{B}^0$ from the good bridge $\mathcal{B}$ as follows: one replaces each box $B=B_{L_k}(x) \in \mathcal{B}$ for $k \geq 1$ by the set 
$$B^0\stackrel{\textnormal{def.}}{=} B \setminus  \Big( \bigcup_{0\leq k' < k} \ \ \bigcup_{y \in \mathbb{L}_{k'}(B): G_{k',y}(\Lambda_n)^{c} \text{ occurs}}B_{L_{k'}}(y) \Big)$$ 
and verifies that the $0$-boxes forming $B^0$ as $B$ ranges over all boxes in $\mathcal{B}$ contain a set $\mathcal{B}^0$ with the desired properties. We briefly sketch how to carry this out at the end of the proof of Lemma~\ref{L:bridge1}, cf.~Remark~\ref{R:0-bridge},2) below.

Alternatively, one can also prove Corollary \ref{cor:0bridge} directly, i.e.~without resorting to the existence of $\mathcal{B}$, by following the lines of the proof of Lemma~8.6 in \cite{drewitz2018geometry}, with suitable modifications (in particular, involving a different notion of $n$-goodness, due to the presence of $H_n$, $n\geq1$ in \eqref{eq:bridge11} replacing a sprinkling of the parameters to define the cascading events in (7.3) of \cite{drewitz2018geometry}).

\subsection{Proof of Lemma~\ref{L:bridge1}}
\label{sec:bridge2.2}

We now present the proof of Lemma \ref{L:bridge1}, which could be skipped at first reading. We insist on the fact that this proof is purely deterministic.
We distinguish two cases depending on whether $S_1$ and $S_2$ are close to each other, i.e.~at a distance at most $c(\kappa)L_{n-1}$, or not. The former case can be dealt with inductively (over $n$) and one can in fact create a good bridge involving $k$-boxes at levels $k \leq n-2 $ only, essentially by recreating the picture of $\mathcal{G}_n$ at level $n-1$ well inside $\Sigma_n$. The case where $S_1$ and $S_2$ are further apart requires more work. In this case, the good bridge is constructed by concatenating three pieces: a ``horizontal'' \textit{deck} and two \textit{arches} (the terms will be introduced in the course of the proof). Roughly speaking, the deck consists of good boxes at level $n-1$ only, which goes most of the distance between $S_1$ and $S_2$, leaving only two ``open'' ends. The ends are filled by two \textit{arches}  joining $S_1$ and $S_2$, respectively, to a nearby good $(n-1)$-box from the deck. The arches are constructed hierarchically and consist of boxes at lower levels, which, among other things, need to satisfy the conditions \eqref{B3} and \eqref{B4}. This requires a good deal of care. 

Throughout the proof, we set $\ell=22\kappa$ and assume for simplicity that $x=0$. Also, we introduce the notation $B_L(A)\stackrel{\textnormal{def.}}{=}\cup_{x\in A}B_L(x)$ for any subset $A$ of $\mathbb Z^d$. Recall that $d(\cdot,\cdot)$ refers to the $\ell^{\infty}$-distance between sets and let $\text{diam}(\cdot)$ denote to the $\ell^{\infty}$-diameter of a set. We first observe that, since the events $H_{m,0}$, $m > n$ occur by assumption in Lemma \ref{L:bridge1} it is  sufficient to build a good bridge as in Definition \ref{def:goodbridge} but with \eqref{G2} only required to hold for all $m$ satisfying $n \vee 1 \leq m \leq n_{\text{max}}$, rather than all $n \vee 1 \leq m$ (cf. \eqref{eq:nmax} for the definition of $n_{\text{max}}$ and note that $n_{\text{max}}(\Lambda)=n$ when $\Lambda =\Lambda_n$). 

\subsection*{Case 1:} {\em the $n=0$ case.}

\begin{proof} For admissible $S_1,S_2 \subset \Lambda_0$, consider a nearest-neighbor path $\gamma\subset\{ x \in \mathbb{L}_0: B_{L_0}(x) \subset \Sigma_0 \}$ of minimal length with starting point $y$ such that $B_{L_0}(y) \cap S_1 \neq \emptyset$ and endpoint $z$ such that $B_{L_0}(z) \cap S_2 \neq \emptyset$. The collection $\mathcal{B}= \{ B_{L_0}(x) : x \in \gamma \}$ plainly satisfies \eqref{B1}-\eqref{B3}, and \eqref{B4} holds for all $K \geq c \kappa^d$,
thus $\mathcal{B}$ is a bridge between $S_1$ and $S_2$ in $\Lambda_0$. Moreover if $0$ is $0$-good, i.e. $G_{0,0}(\Lambda_0)$ occurs, then by \eqref{eq:bridge10} $F_{0,x}$ occurs for each box $B_{L_0}(x) \in \mathcal{B}$, whence \eqref{G1} follows, and \eqref{G2} holds trivially since $n_{\text{max}}=0$. This concludes this case.\end{proof}

We now proceed by induction. From now on, we suppose that $n \geq 1$ and that the conclusion of the lemma is true for $n-1$ and consider any two admissible sets $S_1,S_2 \subset {\Lambda}_n$ (and assume for simplicity that $x=0$). Define  the random set 
\begin{equation}
\label{eq:bridge16}
\text{Bad}\stackrel{\textnormal{def.}}{=} \text{Fill}\Big( \bigcup_{x \in \mathbb{L}_{n-1}(\Lambda_n):\, G_{n-1,x}^{\,c}(\Lambda_n) \text{ occurs }} B_{L_{n-1}}(x)\Big)\,\subset\mathbb Z^d,
\end{equation} 
where for any $U \subset \Z^d$, $\text{Fill}(U)$ refers to the smallest set $V \supseteq U$ such that for every point $z \in \partial V$, there exists an unbounded nearest-neighbor path in $\Z^d \setminus V$ starting in $z$. Let
\begin{equation}
\label{eq:bridge16.01}
\begin{split}
&V_1 \stackrel{\textnormal{def.}}{=} \bigcup_{x: \,|x| = \lfloor8.5\kappa L_n\rfloor + 10\ell L_{n-1}} B_{\ell L_{n-1}}(x), \qquad V_1' \stackrel{\textnormal{def.}}{=} B_{\ell L_{n-1}}(V_1),\\
&V_2 \stackrel{\textnormal{def.}}{=} \bigcup_{x: \,|x| = \lfloor8.5\kappa L_n\rfloor + 20\ell L_{n-1}} B_{\ell L_{n-1}}(x), \qquad V_2' \stackrel{\textnormal{def.}}{=} B_{\ell L_{n-1}}(V_2).
\end{split}
\end{equation}
Provided $\ell_0$ is large enough, we may ensure that $0.5 \kappa \ell_0 \geq 30\ell$, so that $V_1'$ and $V_2'$ are both subsets of $\Sigma_n$, and $d(V_1',V_2') \geq 5 \ell L_{n-1}$. Thus, if $0$ is $n$-good, we have by \eqref{eq:bridge11} that $
\text{diam}(\text{Bad}) \leq \left(\ell+2\right) L_{n-1}$. Hence,
 there exists $V\in\{V_1,V_2\}$ such that
for every $x \in \mathbb{L}_{n-1}(V)$, the event $G_{n-1,x}(\Lambda_n)$ occurs.

Now, by definition of admissibility, both $S_1$ and $S_2$ must intersect $V$.
We now distinguish two cases, depending on whether $S_k \cap V$, $k=1,2$, are very close to each other (i.e.~at distance roughly of order $L_{n-1}$) or not. The former case, which we deal with next, can then essentially be dispensed with by ``zooming in'' and applying the induction hypothesis directly, which provides a bridge with the desired properties. 

\subsection*{Case 2:} $n \geq 1, \, d(S_1\cap V, S_2\cap V) < 15\kappa L_{n-1}$.
\medbreak
\begin{proof}
By considering a path $\gamma$ of $(n-1)$-boxes intersecting $V$ joining $S_1\cap V$ and $S_2\cap V$ of minimal length, we find a box $B=B_{L_{n-1}}(x)\in \gamma$ with $x \in \mathbb{L}_{n-1}(V)$ such that $\widetilde{\underline{\Lambda}}_{n-1} \stackrel{\textnormal{def.}}{=}B_{8\kappa L_{n-1}}(x)$ intersects both $S_1$ and $S_2$. Let $\widetilde{\Lambda}_{n-1} \stackrel{\textnormal{def.}}{=} B_{10 \kappa L_{n-1}}(x)$. 

Since $G_{n-1,x}(\Lambda_n)$ occurs for every $x \in \mathbb{L}_{n-1}(V)$ and since $G_{n,x}(\Lambda) \subseteq G_{n,x}(\Lambda')$ whenever $\Lambda' \subseteq \Lambda$ (this can be checked easily by induction), if $0$ is $n$-good then the event $G_{n-1,x}(\widetilde{\Lambda}_{n-1})\supseteq G_{n-1,x}(\Lambda_n)$ occurs, hence the induction assumption implies that there exists a good bridge $\mathcal{B}$ between $\widetilde{S}_1$ and $\widetilde{S}_2$ in $\widetilde{\Sigma}_{n-1}$, where $\widetilde{S}_i = S_i \cap \widetilde{\Lambda}_{n-1}$, $i=1,2$, and $\widetilde{\Sigma}_{n-1}= B_{ 9\kappa L_{n-1} }(x) \setminus B_{\lfloor 8.5\kappa L_{n-1}\rfloor }(x)$ (to apply the induction hypothesis, one observes that the sets $\widetilde{S}_1$ and $\widetilde{S}_2$ are admissible for $(\widetilde{\Lambda}_{n-1}, \widetilde{\underline{\Lambda}}_{n-1},\widetilde{\Sigma}_{n-1} )$).

We proceed to verify that the bridge $\mathcal{B}$ hereby constructed is in fact a good bridge between $S_1$ and $S_2$ in $\Sigma_n$. First, as we now explain, $\mathcal{B}$ is a bridge between $S_1$ and $S_2$. Indeed, \eqref{B1}, \eqref{B2} and \eqref{B4} are easy to check. For \eqref{B3}, since $B \in \widetilde{\Sigma}_{n-1}$ for any $B \in \mathcal{B}$, it follows that
$$
d\big(B, \bigcup_{i=1,2} (S_i \setminus \widetilde{S}_i) \big) \geq d\big(\widetilde{\Sigma}_{n-1}, \widetilde{\Lambda}_{n-1}^{\rm c}, \big)\geq \kappa L_{n-1},
$$
hence ``adding back'' $\bigcup_{i=1,2} (S_i \setminus \widetilde{S}_i)$ to form $S_1 \cup S_2$ does not produce additional constraints on the size of the boxes $B \in \mathcal{B}$ in \eqref{B3}. Thus $\mathcal{B}$ is a bridge between $S_1$ and $S_2$ inside $\Sigma_n$ since $G_{n-1,x}(\Lambda_n)$ occurs for every $x\in \mathbb L_{n-1}(V)$. 

It remains to argue that $\mathcal{B}$ is good. By definition of $G_{n,0}(\Lambda_n)$, the event $H_{n,z}$ occurs for the unique $z \in  \mathbb{L}_{n}(\Lambda_n)$ such that $x \in B_{n,z}$. Together with the induction assumption,  this implies \eqref{G1} and \eqref{G2}. This yields that $\mathcal G_n$ occurs as soon as $0$ is $n$-good and concludes this case.\end{proof}

\subsection*{Case 3:} $n \geq 1, \, d(S_1\cap V, S_2\cap V) \ge 15\kappa L_{n-1}$.
\begin{proof}
In this case, we have that if $W\stackrel{\textnormal{def.}}{=}V \setminus \bigcup_{x \in \partial V}B_{3\kappa L_{n-1}}(x )$ and $\widehat{S}_i\stackrel{\textnormal{def.}}{=} B(S_i\cap V, \kappa L_{n-1})$ (note that $\widehat{S}_i \cap W \neq \emptyset$ for $i=1,2$), then
$
d(\widehat{S}_1 \cap W, \widehat{S}_2 \cap W) \geq10\kappa L_{n-1}$. Using the fact that $ G_{n-1,x}(\Lambda_n)$ occurs for every $x\in V$ and that $G_{n,x}(\Lambda) \subseteq G_{n,x}(\Lambda')$ whenever $\Lambda' \subseteq \Lambda$, on the event $G_{n,0}(\Lambda_n)$ we can find a nearest-neighbor path $\gamma=(\gamma_1,\dots, \gamma_N)$ of vertices in $\mathbb{L}_{n-1}(W)$ of minimal length such that, if $B_i\stackrel{\textnormal{def.}}{=}B_{L_{n-1}}(\gamma_i)$,
\begin{align}
&\begin{array}{l}\text{$\bigcup_{i} B_i$ is connected, $B_i\cap( \widehat{S}_1 \cup \widehat{S}_2) =\emptyset$ for all $1 \leq i \leq N$,} \\ \text{and $d(B_1, \widehat{S}_1), d(B_N, \widehat{S}_2)\leq  3L_{n-1}$,} \end{array} \label{P1}  \tag{\bf P1} \\[0.3em]
&\begin{array}{l}\text{the events $G_{n-1,\gamma_i}(B_{3\kappa L_{n-1}}(\gamma_i))$ occur for all $1\leq i\leq N$.} \end{array} \label{P2}  \tag{\bf P2}
\end{align}
It remains to construct two suitable connections joining $S_1$ to  $B_1$ and $S_2$ to $B_N$, respectively. This will be done via two (good) \textit{arches}, defined below, whose existence is shown in Lemma~\ref{L:arch}. Together with the path $\gamma$, these will then yield the existence of a bridge $\mathcal{B}$ with the desired properties. 

Let $B$ be a $k$-box.  We say that a collection $\mathcal{A}$ of $n$-boxes with $0\leq n \leq k$ is an \textit{arch} between $U$ and $B$ in $\Sigma$ if \eqref{B1} holds with $\Sigma$ in place of $\Sigma_n$, 
 $B \in \mathcal{A}$ is the only $k$-box in $\mathcal{A}$, \eqref{B2} and \eqref{B3} both hold with $S_1=S_2=U$ and $B_1=B_2$ (and with $\mathcal A$ instead of $\mathcal B$), and \eqref{B4} holds with $K$ in place of $2K$. An arch $\mathcal{A}$ will be called {\em good} if \eqref{G1} and \eqref{G2} hold (with $\mathcal{A}$ in place of $\mathcal{B}$). 
 
 Set $L_k^+\stackrel{\textnormal{def.}}{=}L_k +L_{(k-1)\vee 0}$. The following lemma yields the existence of good arches.

\begin{lemma}
\label{L:arch}
For every $k \geq 0$ and $z \in \mathbb{L}_k$, if $G_{k,z}(B_{3\kappa L_k}(z))$ occurs, then with $B=B_{L_k}(z)$, for any set $U$ with the property that
\begin{equation}
\label{eq:condU}
\begin{split}
&\text{$\kappa L_k\le d(U, B) \le (\kappa+3)L_k$ and every con-}\\
&\text{nected component of $U$ intersects $\partial B_{2\kappa L_k^+}(z)$,}
\end{split}
\end{equation}
 there exists a good arch between $U$ and $B$ in $B_{2\kappa L_k}(z)$.
\end{lemma}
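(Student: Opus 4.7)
The plan is to prove Lemma~\ref{L:arch} by induction on $k\ge 0$.

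For the base case $k=0$, the event $G_{0,z}(B_{3\kappa L_0}(z))$ directly forces $F_{0,y}$ at every $y\in\mathbb{L}_0(B_{3\kappa L_0}(z))$ by \eqref{eq:bridge10}. Since $d(U,B)\le(\kappa+3)L_0$, I extract a minimal nearest-neighbor path of $0$-boxes inside $\mathbb{L}_0(B_{2\kappa L_0}(z))$ running from a $0$-box meeting $U$ to $B$ itself. This path has length at most $C\kappa$, so \textbf{B1}--\textbf{B4} hold provided $K\ge C\kappa$; \textbf{G1} is the $F$-event just noted, and \textbf{G2} is vacuous since $n_{\max}=0$.

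For the inductive step $k\ge 1$, I read off two consequences of $G_{k,z}(B_{3\kappa L_k}(z))$ from \eqref{eq:bridge11}: (a) $H_{k,w}$ holds for every $w\in\mathbb{L}_k(B_{3\kappa L_k}(z))$; and (b) any two $(k-1)$-vertices of $\mathbb{L}_{k-1}(B_{3\kappa L_k}(z))$ at $\ell^\infty$-distance at least $22\kappa L_{k-1}$ cannot both be $(k-1)$-bad, so the $(k-1)$-bad set has diameter at most $22\kappa L_{k-1}\ll L_k$ (using $\ell_0\ge C(\kappa)$). Pick a connected component $\mathcal{C}$ of $U$ realising $d(U,B)$; by the hypothesis on $U$, $\mathcal{C}$ stretches from within $(\kappa+3)L_k$ of $B$ out to $\partial B_{2\kappa L_k^+}(z)$. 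Greedily build a nearest-neighbor path $\gamma=(\gamma_1,\dots,\gamma_m)$ in $\mathbb{L}_{k-1}$ such that $B_{L_{k-1}}(\gamma_1)$ is adjacent to $B$; each $\gamma_i$ is $(k-1)$-good; $B_{L_{k-1}}(\gamma_i)\subset B_{2\kappa L_k}(z)$ and $d(B_{L_{k-1}}(\gamma_i),U)\ge \kappa L_{k-1}$ for all $i$; and $\kappa L_{k-1}\le d(\mathcal{C},B_{L_{k-1}}(\gamma_m))\le(\kappa+3)L_{k-1}$. The trajectory follows a near-straight line toward $\mathcal{C}$ and detours around the (small) $(k-1)$-bad cluster; the resulting length is $m\le C(\kappa)\ell_0$.

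Now apply the inductive hypothesis with centre $\gamma_m$ and ``$U$'' replaced by $U''$, defined as the union of the connected components of $U\cap B_{2\kappa L_{k-1}^+}(\gamma_m)$ that meet $\partial B_{2\kappa L_{k-1}^+}(\gamma_m)$; this set is admissible because $\mathcal{C}$ passes through the ball and exits it (its global diameter being $\gtrsim\kappa L_k\gg L_{k-1}$), and $\gamma_m$ is $(k-1)$-good so that $G_{k-1,\gamma_m}(B_{3\kappa L_{k-1}}(\gamma_m))$ holds. The output is a good sub-arch $\mathcal{A}'$ inside $B_{2\kappa L_{k-1}}(\gamma_m)\subset B_{2\kappa L_k}(z)$. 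Setting $\mathcal{A}:=\{B\}\cup\{B_{L_{k-1}}(\gamma_i):1\le i\le m\}\cup\mathcal{A}'$, the properties \textbf{B1} and \textbf{B3} are immediate from the construction, \textbf{B4} holds whenever $K\ge C(\kappa,\ell_0)$ (the only level with many boxes is $k-1$, with $m$ boxes), and \textbf{G1}--\textbf{G2} combine the inductive conclusion for $\mathcal{A}'$, the nested $(k-1)$-goodness of each $\gamma_i$ (which forces $H_j$ at $y(\gamma_i,j)$ for $1\le j\le k-1$ via the cascading $G$-events), and fact (a) for the topmost event $H_k$.

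The main obstacle lies in the combined geometric bookkeeping when constructing $\gamma$: it must avoid the $(k-1)$-bad cluster, stay inside $B_{2\kappa L_k}(z)$ at distance at least $\kappa L_{k-1}$ from $U$, terminate at a $(k-1)$-good vertex $\gamma_m$ at \emph{exactly} the right distance from $\mathcal{C}$, and do so within $O(K)$ boxes. A further subtlety is ensuring \textbf{B2} for the final arch: the sub-arch $\mathcal{A}'$ must not create additional contact points with $U\setminus U''$, which I would arrange by choosing $\gamma_m$ so that only the component $\mathcal{C}$ enters the ball $B_{2\kappa L_{k-1}^+}(\gamma_m)$. Both aspects are made feasible by the scale separation $L_k/L_{k-1}=\ell_0\ge C(\kappa)$.
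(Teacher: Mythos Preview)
Your approach is essentially the paper's: induction on $k$, with the inductive step building a chain of $(k-1)$-good boxes from $B$ toward $U$ while avoiding the small bad region, then invoking the hypothesis at the terminal box. Two minor corrections are worth recording. First, your stated range ``$1\le j\le k-1$'' for the $H_j$-events is inverted: \textbf{G2} for a $(k-1)$-box requires $H_{j,y(\gamma_i,j)}$ for $j\ge k-1$, not $j\le k-1$; what you actually supply ($H_{k-1}$ via $G_{k-1,\gamma_i}$ and $H_k$ via (a)) is exactly what is needed, so the verification goes through. Second, your closing concern about \textbf{B2} is unnecessary: since every component of $U$ reaches $\partial B_{2\kappa L_k^+}(z)$, every piece of $U$ that enters $B_{2\kappa L_{k-1}^+}(\gamma_m)$ must exit it, so $U''=U\cap B_{2\kappa L_{k-1}^+}(\gamma_m)$ automatically, and the sub-arch $\mathcal{A}'\subset B_{2\kappa L_{k-1}}(\gamma_m)$ cannot touch $U\setminus U''$.

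Where the paper differs slightly is in the construction of the path $\gamma$. Rather than aiming a ``near-straight line'' at a distinguished component $\mathcal C$ of $U$ (which forces you to argue separately that no other component of $U$ obstructs the route), the paper sets $\overline B=B_{\kappa(2L_k-3L_{k-1})}(z)$, defines $V_2:=\big(B_{(\kappa+3)L_{k-1}}(U)\setminus B_{\kappa L_{k-1}}(U)\big)\cap(\overline B\setminus\mathrm{Bad}_{\overline B})$, and takes $\gamma$ to be a \emph{minimal} path in $\mathbb{L}_{k-1}(\overline B\setminus\mathrm{Bad}_{\overline B})$ from $B$ to $V_2$. Minimality then forces $B_{L_{k-1}}(\gamma_i)\cap B_{\kappa L_{k-1}}(U)=\emptyset$ for all $i$ (the path would otherwise have entered $V_2$ earlier), which delivers the \textbf{B3} separation from \emph{all} of $U$ in one stroke and makes the choice of $\gamma_m$ automatic.
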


Assuming Lemma \ref{L:arch} holds, we first complete the proof of Lemma~\ref{L:bridge1} in Case 3 (and with it that of Theorem \ref{T:bridge1}). One applies Lemma \ref{L:arch} twice for $k=n-1$, with
$B= B_1$ and $U= U_1 \stackrel{\textnormal{def.}}{=} S_1\cap V \cap \tilde B_1$ where $\tilde B_1\stackrel{\textnormal{def.}}{=}B_{2\kappa L_k^+}(\gamma_1)$, respectively $B= B_N$ and $U= U_2 \stackrel{\textnormal{def.}}{=} S_2\cap V \cap \tilde B_2$ where $\tilde B_2\stackrel{\textnormal{def.}}{=}B_{2\kappa L_k^+}(\gamma_N)$. This is justified since the events $G_{k,\gamma_1}(B_{3\kappa L_k}(\gamma_1))$, $G_{k,\gamma_N}(B_{3\kappa L_k}(\gamma_N))$ occur by \eqref{P2}   and both $U_1$, $U_2$ satisfy \eqref{eq:condU} due to \eqref{P1}, 
and the admissibility of the sets $S_1$ and $S_2$. Thus, Lemma \ref{L:arch} yields the existence of a good arch $\mathcal{A}_1$ between $U_1$ and $B_1$, as well as a good arch $\mathcal A_2$ between $U_2$ and $B_N$. Let
\begin{equation}\label{eq:final_B}
\mathcal{B} \stackrel{\textnormal{def.}}{=} \{B_j : 1\leq j \leq N\} \cup \mathcal{A}_1 \cup \mathcal{A}_2.
\end{equation}
We proceed to check that the collection $\mathcal{B}$ is the desired good bridge between $S_1$ and $S_2$. Properties \eqref{B1}
and \eqref{B2} follow immediately from the corresponding properties of the arches $\mathcal{A}_1$, $\mathcal{A}_2$ and the definition of $\mathcal B$; in particular, $\bigcup_{B\in \mathcal{B}}B$ is connected. 
Property \eqref{B4} holds in the same way, noting that the number of $(n-1)$-boxes in $\mathcal{B}$ equals $ N \leq |\mathbb{L}_{n-1}(W)| \leq c (\kappa \ell_0)^d \leq K$ provided $K$ is chosen large enough (as a function of $\kappa$ and $\ell_0$).

We now turn to \eqref{B3}. The $(n-1)$-boxes $\{B_j : 1\leq j \leq N\} $ in $\mathcal{B}$ are at a distance greater than $\kappa L_{n-1}$ from $(S_1 \cup S_2)\cap V$ thanks to \eqref{P1}  and from $(S_1\cup S_2) \setminus V$ due to the fact that, by definition of $W$, $d(B_i, (S_1\cup S_2) \setminus V) > \kappa L_{n-1}$ for all $1\leq i \leq N$. The boxes at lower levels inherit the corresponding property from the arch they belong to, as we now explain. Consider a box $B \in \mathcal{A}_i \setminus \{ B_j, 1\leq j \leq N\}$. Thus $B$ is a $k$-box for some $k \leq n-2$. But $U_i = (S_1 \cup S_2) \cap \tilde B_i$ since $\tilde B_i \subset V$ (as follows from the definitions), and $S_j \cap \tilde B_i=\emptyset$ for $j \neq i$ since $\tilde B_i \cap S_i\cap V \neq \emptyset$, $\tilde{B}_i$ has radius smaller than $3\kappa L_{n-1}$ and $d(S_1\cap V, S_2\cap V) \ge 15\kappa L_{n-1}$. Thus, 
$$d(B,U_i) = d(B,  (S_1 \cup S_2) \cap \tilde{B}_i) \stackrel{\eqref{eq:condU}}{\geq} \kappa L_k $$
and since $B \subset B_{2\kappa L_{n-1}}(\gamma_1) \cup B_{2\kappa L_{n-1}}(\gamma_N) $ by construction $d(B, (S_1 \cup S_2) \setminus \tilde B_i) \geq \kappa L_{n-2}$. It follows that $\kappa L_k\le d(B, S_1 \cup S_2)$ as desired.

Finally, \eqref{G1}-\eqref{G2} are a consequence of the corresponding properties for the arches $\mathcal{A}_1$ and $\mathcal{A}_2$, \eqref{P2}, and the fact that $G_{n,0}(\Lambda_n)$ occurs (the latter to deduce that all the relevant events in $H_n$ also do). This completes the proof of the third case, and therefore of Theorem \ref{T:bridge1} (subject to Lemma~\ref{L:arch}).
\end{proof}

We conclude this section with the proof of Lemma \ref{L:arch}.

\begin{proof}[Proof of Lemma \ref{L:arch}] We assume for simplicity that $z=0$. Set $B\stackrel{\textnormal{def.}}{=}B_{L_k}$, $\Sigma\stackrel{\textnormal{def.}}{=}B_{2\kappa L_{k}}$ and $\tilde B\stackrel{\textnormal{def.}}{=}B_{2\kappa L_k^+}$. We proceed by induction over $k$. 

For $k=0$, the collection $\mathcal{A}$ of $0$-boxes corresponding to any nearest-neighbor path of $0$-boxes in $\Sigma$ joining $U$ to $B$ is an arch between $U$ and $B$ (note that $\Sigma \cap U \neq \emptyset$ by \eqref{eq:condU}). Moreover, in view of \eqref{eq:bridge10}, since $G_{0,z}(B_{3\kappa L_0})$ occurs by assumption, all the events $F_{0,x}$, $x \in \mathbb{L}_0(\Sigma)$ simultaneously occur, so that \eqref{G1} is satisfied. Since \eqref{G2} holds trivially (as $n_{\text{max}}=0$), $\mathcal{A}$ is a good arch.

We now assume that $k\geq 1$, and that the conclusions of the lemma hold for any $(k-1)$-box. Define
$
\overline B = B_{\kappa(2L_k-3L_{k-1})},
$
so that $B \subset \overline B \subset {\Sigma}$, and
\begin{equation}
\label{eq:arch10}
\text{Bad}_{\overline B} \stackrel{\textnormal{def.}}{=} \text{Fill}\Big( \bigcup_{y \in \mathbb{L}_{k-1}(\overline B):\, G_{k-1,y}^{c}(B_{3\kappa L_{k-1}}(y)) \text{ occurs }} B_{L_{k-1}}(y)\Big).
\end{equation}
Bounding the diameter of Bad as we did in Case 2, and noting that $B_{3\kappa  L_{k-1}}(y) \subset B_{3\kappa  L_k}$ for any $y \in \mathbb{L}_{k-1}(\overline B)$, one deduces from \eqref{eq:bridge11} that $\text{diam}(\text{Bad}_{\overline B}) \leq \left(\ell+2\right) L_{k-1}$ on the event $G_{k,0}(B_{3\kappa  L_k})$. For $U$ satisfying \eqref{eq:condU}, consider the disjoint sets  
\begin{equation}
\label{eq:arch11}
\begin{split}
&V_1 \stackrel{\textnormal{def.}}{=}  B , \\
&V_2\stackrel{\textnormal{def.}}{=} ((B_{(\kappa +3)L_{k-1}}(U) \setminus B_{\kappa L_{k-1}}(U)) \cap( \overline B \setminus  \text{Bad}_{\overline B}). 
\end{split}
\end{equation}
The upper bound on $\text{diam}(\text{Bad}_{\overline B})$ implies that, whenever $G_{k,0}(B_{3\kappa L_k})$ occurs (which will henceforth be assumed implicitly), $\overline B \setminus \text{Bad}_{\overline B} $ contains a connected component that intersects both $V_1$ and $V_2$ (for the latter, note that $\text{diam}(V_i) \geq L_k$ for $i=1,2$ thanks to \eqref{eq:condU}).
Hence, by \eqref{eq:arch10}, there exists a path $\gamma$ in $\mathbb{L}_{k-1}(\overline B \setminus \text{Bad}_{\overline B} )$ such that $\bigcup_{y \in \gamma} B_{L_{k-1}}(y)$ intersects both $V_1$ and $V_2$ and the events $G_{k-1,y}\left(B_{3\kappa L_{k-1}}(y)\right)$ occur for $y \in \gamma$. By choosing $\gamma$ to have minimal length, none of the boxes $B_{L_{k-1}}( y)$ with $y \in \gamma$ intersect $B_{\kappa L_{k-1}}(U)$.  For later purposes, record the collection 
\begin{equation}
\label{eq:arch12}
\begin{split}
\mathcal{A}' \stackrel{\textnormal{def.}}{=} & \{ V_1\} \cup \{ B_{L_{k-1}}(y) : y \in \gamma \}.
\end{split}
\end{equation}
Now, fix a vertex $y_0 \in \gamma$ such that $B'\stackrel{\textnormal{def.}}{=}B_{L_{k-1}}(y_0) \cap V_2 \neq \emptyset$
and consider $\tilde B'\stackrel{\textnormal{def.}}{=}B_{2\kappa L_{k-1}^+}(y_0)$. Since $y_0 \in \overline B $, we obtain that  $\tilde B'\subset \Sigma$.
The set $U'  \stackrel{\textnormal{def.}}{=}U \cap \overline B'$ is easily seen to satisfy \eqref{eq:condU} with $k-1$ in place of $k$ and $B'$ replacing $B$. Because $G_{k-1,y_0}\left(B_{3\kappa L_{k-1}}(y_0)\right)$ occurs, the induction assumption implies the existence of a good arch $\mathcal{A}''$ connecting $U' (\subset U)$ and $B'$ inside $\overline B' (\subset \Sigma)$. 

We claim that $\mathcal{A}= \mathcal{A}' \cup  \mathcal{A}'' $  has the desired properties, i.e.~it is a good arch between $U$ and $B$ inside ${\Sigma}$. Accordingly, we now argue that the (modified) conditions \eqref{B1}-\eqref{B4} and  \eqref{G1}-\eqref{G2} for arches hold. Condition \eqref{B1} is immediate by construction. So is \eqref{B2} since $\mathcal{A}''$ is a good arch between $U'$ and $B'$, $U' \subset U$ and any box $B \in \mathcal{A}'$ does not intersect $B_{\kappa L_{k-1}}(U)$. Condition \eqref{B3} follows readily from the induction assumption (applied to the boxes in $\mathcal{A}''$) and the fact that, except for $B=V_1$ which is at the correct distance from $U$, $\mathcal{A}'$, cf.~\eqref{eq:arch12}, only consists of $(k-1)$-boxes, none of which intersects $B_{\kappa L_{k-1}}(U)$, by definition of $V_2$ in \eqref{eq:arch11} and construction of $\gamma$. For \eqref{B3}, the bound on $N_m$, $m \leq k-2$ follows by the induction assumption, and $N_{k-1} =|\gamma| \leq K$ provided $K$ is chosen large enough, where we used that the boxes in $\gamma$ are all contained in $\overline B$.

Finally, the modified conditions \eqref{G1} and \eqref{G2} for $n \leq k-2$ and $m \leq k-1$ are immediate (by the induction hypothesis), and the remaining cases i) $n=k-1$, $m=k-1$, and ii) $m=k$ (and $n$ arbitrary) for \eqref{G2} follow from the occurrence of the events $G_{k-1,y}\left(B_{3\kappa L_{k-1}}(y)\right)$, $y \in \gamma$, and $G_{k,z}\left(B_{3\kappa L_{k}}\right)$. 
Overall, $\mathcal{A}$ is a good arch between $U$ and $B$ inside $\tilde B$, which completes the proof.
\end{proof}
\begin{remark}
\label{R:0-bridge} 1) In order to prove Lemma~\ref{L:bridge1} with the notion of admissibility put forth in Remark~\ref{R:bridges},2), i.e.,~assuming $S_1$ and $S_2$ to have diameter at least $\kappa L_n$, one follows the lines of the above proof, but replaces the sets $V_1'$ and $V_2'$ introduced in \eqref{eq:bridge16.01} by $\ell L_{n_1}$-thickenings of two disjoint nearest-neighbor paths of minimal length, each connecting $S_1$ and $S_2$. Importantly, the assumption on the diameter of $S_1$ and $S_2$ ensures that one can pick these paths in such a way that $d(V_1',V_2') \geq 5 \ell L_{n-1}$, where $V_k'=B_{\ell L_{n-1}}(V_k)$. As before, if $0$ is $n$-good, i.e.~if $G_{n,0}(\Lambda_n)$ occurs, then by~\eqref{eq:bridge11}
 there exists $V\in\{V_1,V_2\}$ such that
for every $x \in \mathbb{L}_{n-1}(V)$, the event $G_{n-1,x}(\Lambda_n)$ occurs.
Continuing as in the proof of Lemma~\ref{L:bridge1}, the bridge $\mathcal{B}$ is then constructed ``along'' this set $V$.

\medskip
\noindent 2) We briefly sketch how to extract the $0$-bridge of Corollary~\ref{cor:0bridge} from the above construction. Proceeding inductively, one proves that for all $n$, i) $G_{n,0}(\Lambda_n) \subset \mathcal{G}_n^0$ (the objective), and, along with it, ii) if $U$ is admissible, $B=B_{L_n}(z)$, $G_{n,z}(B_{3\kappa L_n}(z))$ occurs and \eqref{eq:condU} holds with $k=n$, then there exists a $0$-bridge between $U$ and $B$. 

For $n=0$, i) and ii) are readily verified (much as in Case~1 above). To carry i), ii) from level $n-1$ to $n$, one argues as follows. We will focus on Case~3 which is generic (Case~2 is simpler to handle). To show i) at scale $n$, let $\mathcal{B}$ be the bridge in \eqref{eq:final_B} joining $S_1$ and $ S_2$. For any of the $(n-1)$-boxes $B_i$ comprised in $\mathcal{B}$, $1\leq i \leq N$, fix a reference path $\gamma_i$ containing the center $z_i$ of $B_i$ and having diameter $\kappa L_{n-1}$. Note that all events $G_{n-1,z_i}(B_{10\kappa L_{n-1}}(z_i))$ occur by construction (see also Remark~\ref{R:bridges},1) above). Now applying the induction hypothesis ii), one obtains a $0$-bridge $\mathcal{B}_0'$ between $S_1$ and $B_1$ (as well as $\mathcal{B}_0''$, a $0$-bridge between $S_2$ and $B_N$). Moreover, by \eqref{eq:condU}, $\mathcal{B}_0'$ constitutes a subset of $B(z_1,8 \kappa L_{n-1})$ of diameter exceeding $\kappa L_{n-1}$. Thus applying hypothesis i), which is in force since $G_{n-1,z_1}(B_{10\kappa L_{n-1}}(z_1))$ occurs, to the box $B_1$, it follows that $\mathcal{B}_0'$ extends to a zero bridge between $S_1$ and $\gamma_1$. Continuing in this way, one deduces that $\mathcal{B}_0'$ and $\mathcal{B}_0''$ are in fact connected by a $0$-bridge, yielding i). 

One deduces ii) at level $n$ similarly, using Lemma~\ref{L:arch} to obtain an arch $\mathcal{A}$ between $B$ and $U$, applying the induction hypothesis ii) to a suitable sub-arch of $\mathcal{A}$ (in fact, the arch $\mathcal{A}''$ below \eqref{eq:arch12}, with $k \equiv n$), using that $G_{n-1,y}\left(B_{3\kappa L_{n-1}}(y)\right)$ occurs for the remaining $(n-1)$-boxes comprising $\mathcal{A}$, i.e.~for all of $B_{L_{n-1}}(y)$, $y \in \gamma$ in \eqref{eq:arch12}, and arguing similarly as above using i) for each $y \in \gamma$ to make $\mathcal{B}_0'''$ reach $U$.

\end{remark}

\section{Decomposition of $\varphi$ and ``bridging lemma''}
\label{sec:decompose_GFF}

In this section, we gather several results that will be needed for both the proofs of Proposition~\ref{prop:supercritical} (Section \ref{sec:supercritical}) and Proposition \ref{prop:comparison} (Section \ref{sec:comparison}).  Among other things, we set up a certain decomposition of the free field $\varphi$ (Lemma \ref{lem:white_noise_representation}) which will be used throughout, and prove a modified form of the ``gluing'' Lemma~\ref{lem:bridging} (Lemma \ref{lem:sprinkling}). The likelihood of the notion of ``goodness'' involved in the statement, see \eqref{eq:sprinkling}, will be guaranteed by an application of Theorem \ref{T:bridge1}.

\subsection{Decomposition of $\varphi$}
\label{subsec:decomposition}

Consider the graph with vertex set $\widetilde{\Z}^d=\Z^d \cup \mathbb{M}^d$, where $\mathbb{M}^d$ denotes the set of midpoints $\frac{x+y}{2}$, for $x,y\in \Z^d$ neighbors, with an edge joining every midpoint $m \in \mathbb{M}^d$ to each of the two vertices in $\Z^d$ at distance $\frac12$ from $m$ (each original edge is thereby split into two). Note that $\widetilde{\Z}^d$ is bipartite. Let $\widetilde{Q}$ be the transition operator (acting on $\ell^2(\widetilde{\Z}^d)$) for the simple random walk on $\widetilde{\Z}^d$, with transition kernel
\begin{equation}
\tilde{q}(\tilde{x}, \tilde{y}) = \frac{1}{|\{\tilde{z} \in  \widetilde{\Z}^d: \text{$\tilde{z}\sim\tilde{x}$}\}|} 1\{ \tilde{x} \sim \tilde{y} \},
\end{equation}
 for $\tilde{x}, \tilde{y} \in \widetilde{\Z}^d$,
where $\tilde{x} \sim \tilde{y}$ means that $\tilde{x}$ and $\tilde{y}$ are neighbors in $\widetilde{\Z}^d$, and write $\tilde{q}_{\ell}( \tilde{x}, \tilde{y})= (\widetilde{Q}^{\ell}1_{ \tilde{y}})( \tilde{x})$, for $\ell \geq 0$. 

Let $\mb{Z}= \{ \mb Z_{\ell}(\tilde{z}): \tilde{z} \in \widetilde{\Z}^d, \, \ell \geq 0  \}$, denote a family of independent, centered, unit variance Gaussian random variables under the probability measure $\P$. For later reference, let $\tau_x$, $x \in \Z^d$, denote the shifts on this space induced by $(\tau_x \mb Z_{\ell})(\tilde{z})=  \mb Z_{\ell}(x+\tilde{z})$, for $\tilde{z} \in \widetilde{\Z}^d$, $\ell \geq 0$. We define the processes $\xi^{\ell}$, $\ell \geq 0$ (and $\varphi$) alluded to in the introduction in terms of $\mb{Z}$ as follows. For each $\ell \in \{ 0,1,2,\dots\}$ and $x \in \Z^d$, let 
\begin{equation}
\label{eq:white_noise_representation}
\xi_x^\ell \stackrel{\textnormal{def.}}{=} c(\ell) \sum_{\tilde{z} \in \widetilde{\Z}^d}\tilde{q}_\ell(x,\tilde{z})\mb{Z}_\ell(\tilde{z}),
\end{equation}
with $c(\ell) = \sqrt{d/2}$ if $\ell$ is odd and $c(\ell) = \sqrt{1/2}$ if $\ell$ is even. Note that $\tilde{q}_0(\tilde y,\tilde{z})=\delta (\tilde y,\tilde{z})$ so $\xi^0_{\cdot}=\mb{Z}_0(\cdot)/\sqrt{2}$ is an i.i.d.~field indexed by $\Z^d$. The fields $\xi_{\cdot}^\ell$, $\ell \geq 0$ are independent, translation invariant centered Gaussian fields that have finite range:\begin{align}
& \label{range.G} \text{$\E[\xi_{x}^\ell\xi_{y}^\ell]=0$ for any $x,y\in  \Z^d$ with $|x-y|> \ell$,}
\end{align} 
which follows readily from \eqref{eq:white_noise_representation}. Our interest in $\xi_{\cdot}^\ell$ stems from the following orthogonal decomposition of $\varphi$.

\begin{lemma}
	\label{lem:white_noise_representation}
For every $\ell\geq 1$ and $x\in \Z^d$, $\var(\xi^\ell_x)\leq C \ell^{-\frac{d}{2}}$. In particular, the series
\begin{equation}
\label{eq:phi_workingdef}
\varphi \stackrel{\textnormal{def.}}{=} \sum_{\ell \geq 0} \xi^{\ell}
\end{equation}
converges (pointwise in $x$) in $L^2(\P)$. Moreover, the convergence also holds $\P$-a.s.~and the field $\varphi$ is a Gaussian free field under $\P$.
\end{lemma}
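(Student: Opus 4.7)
The plan is to treat the four assertions in sequence. For the variance bound, I would start from the definition
\[
\mathrm{var}(\xi_x^\ell) \;=\; c(\ell)^2 \sum_{\tilde z \in \widetilde{\mathbb{Z}}^d} \tilde q_\ell(x,\tilde z)^2
\]
and exploit two structural features of the walk on $\widetilde{\mathbb{Z}}^d$: (i) it is reversible with respect to the degree measure $\pi$ (with $\pi=2d$ on $\mathbb{Z}^d$ and $\pi=2$ on $\mathbb{M}^d$), and (ii) it is bipartite, so that for $x\in\mathbb{Z}^d$ the sum over $\tilde z$ is supported on $\mathbb{Z}^d$ when $\ell$ is even and on $\mathbb{M}^d$ when $\ell$ is odd. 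Writing $\tilde q_\ell(\tilde z,x)=\frac{\pi(x)}{\pi(\tilde z)}\tilde q_\ell(x,\tilde z)$ and carrying out the parity-by-parity bookkeeping, the constants $c(\ell)^2 \in \{1/2,\,d/2\}$ are precisely the ones that make both cases collapse to the single identity
\[
\mathrm{var}(\xi_x^\ell) \;=\; \tfrac12\, \tilde q_{2\ell}(x,x).
\]
A direct check then shows that $\tilde q_{2\ell}(\cdot,\cdot)$ restricted to $\mathbb{Z}^d\times\mathbb{Z}^d$ is exactly the $\ell$-step kernel of the lazy simple random walk on $\mathbb{Z}^d$ with laziness $1/2$, to which the standard local CLT (or Nash/heat-kernel bounds) applies, yielding $\tilde q_{2\ell}(x,x)\le C \ell^{-d/2}$ for $\ell\ge1$. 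This gives the first assertion.

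For the $L^2$ convergence, since the $\xi^\ell$ are independent and centered, the partial sums $\varphi^L=\sum_{\ell\le L}\xi^\ell$ are Cauchy in $L^2$ iff $\sum_\ell \mathrm{var}(\xi_x^\ell)<\infty$, which holds because $d\ge3$ makes $\sum_\ell \ell^{-d/2}$ summable. Almost sure convergence then follows from the Kolmogorov/Lévy theorem for series of independent Gaussians (equivalently, $(\varphi^L_x)_{L\ge 0}$ is a martingale bounded in $L^2$, hence converges $\mathbb{P}$-a.s.). The a.s.\ limit coincides with the $L^2$-limit and will be called $\varphi_x$.

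For the identification with the GFF, since $\varphi$ is the a.s.\ (and $L^2$) limit of a sequence of centered Gaussian vectors, it is itself a centered Gaussian field, so it suffices to compute its covariance. By independence of the $\xi^\ell$ and the same reversibility/parity computation as above,
\[
\mathbb{E}[\xi_x^\ell \xi_y^\ell] \;=\; c(\ell)^2 \sum_{\tilde z} \tilde q_\ell(x,\tilde z)\,\tilde q_\ell(y,\tilde z)
\;=\; \tfrac12\, \tilde q_{2\ell}(x,y).
\]
Summing over $\ell$, and using that the lazy walk's Green function is related to the simple random walk's by $\sum_{\ell\ge 0}\tilde q_{2\ell}(x,y)=2g(x,y)$ (which follows from a binomial identity: a lazy walk at time $k$ is a simple walk at an independent $\mathrm{Bin}(k,1/2)$-distributed time, and $\sum_{k\ge j}\binom{k}{j}2^{-k}=2$), one obtains $\mathbb{E}[\varphi_x\varphi_y]=g(x,y)$. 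The exchange of sum and expectation in passing from $\sum_\ell \mathbb{E}[\xi_x^\ell\xi_y^\ell]$ to $\mathbb{E}[\varphi_x\varphi_y]$ is legitimate by $L^2$ convergence and the continuity of the inner product.

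The only delicate point I foresee is the parity bookkeeping leading to the clean identity $\mathrm{var}(\xi_x^\ell)=\frac12 \tilde q_{2\ell}(x,x)$ (and its two-point analogue): the specific values $c(\ell)^2\in\{1/2,d/2\}$ are not arbitrary but are reverse-engineered precisely so that the weight $\pi(\tilde z)/\pi(x)\in\{1,1/d\}$ arising from reversibility is absorbed and both parities produce the same prefactor. Once this identity is in place and the walk on $\widetilde{\mathbb{Z}}^d$ is recognized as encoding the lazy simple random walk on $\mathbb{Z}^d$, the heat-kernel estimate and Green-function identification are routine.
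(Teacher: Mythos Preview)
Your proposal is correct and follows essentially the same route as the paper. Both arguments hinge on the identity $\E[\xi_x^\ell\xi_y^\ell]=\tfrac12\,\tilde q_{2\ell}(x,y)$, the recognition of $\tilde q_{2\ell}$ restricted to $\Z^d\times\Z^d$ as the $\ell$-step lazy simple-random-walk kernel, the local CLT bound $\tilde q_{2\ell}(x,x)\le C\ell^{-d/2}$, and the Green-function identity $\sum_{\ell\ge0}\tilde q_{2\ell}(x,y)=2g(x,y)$; the only cosmetic difference is that you derive the covariance identity via reversibility of $\widetilde Q$ with respect to the degree measure, whereas the paper phrases the odd-$\ell$ case as the operator identity $\widetilde Q\widetilde Q^*=\tfrac1d\,\widetilde Q^2$, which is the same computation.
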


\begin{proof} One verifies that $\E[\xi_{x}^\ell\xi_{y}^\ell]= \frac12\tilde{q}_{2\ell}(x,y)$, for all $\ell \geq 0$ and $x, y \in \Z^d$, using in case $\ell$ is odd that $\widetilde{Q}\widetilde{Q}^* = \frac1d \widetilde{Q}^2$, where $\widetilde{Q}^*$ denotes the adjoint of $\widetilde{Q}$, with kernel $\tilde{q}^*(\tilde{x},\tilde{y})=\tilde{q}(\tilde{y},\tilde{x})$. One naturally identifies $q_{\ell}(x,y) \stackrel{\textnormal{def.}}{=} \tilde{q}_{2\ell}(x,y)$, for $x,y \in \Z^d$ as the transition kernel of a lazy simple random walk on $\Z^d$, which stays put with probability $\frac12$ and otherwise jumps to a uniformly chosen neighbor  at every step. 
 One knows from the local central limit theorem that 
\begin{equation}		
\label{eq:bound.decomp}
q_\ell(x,x)\leq \frac{C}{\ell^{d/2}}, \quad \text{for all $\ell \geq0$ and $x\in  \Z^d$,}
\end{equation}
which implies the convergence in $L^2(\P)$ in \eqref{eq:phi_workingdef}. The $\P$-a.s.~convergence is then standard (e.g.~as a consequence of Kolmogorov's maximal inequality). Finally, the previous observation also implies that
\begin{equation}
\label{decomp.G}
g(x,y)=\tfrac{1}{2}\sum_{\ell\geq 0}q_\ell(x,y) \stackrel{\eqref{eq:phi_workingdef}}{=} \E[\varphi_x\varphi_y] , \text{ for all $x,y\in  \Z^d$}
\end{equation}	 
with $g(\cdot, \cdot)$ as defined in \eqref{eq:Green}, so $\varphi$ defined by \eqref{eq:phi_workingdef} is indeed a Gaussian free field.
\end{proof}

We will tacitly work with the realization of $\varphi$ given by \eqref{eq:phi_workingdef}, \eqref{eq:white_noise_representation} throughout the remainder of this article. 
We now gather a few elementary properties of this setup. Denote the sequence of partial sums of $\xi^\ell$'s as
\begin{equation}
\label{eq:phi^k}
\varphi^L \stackrel{\textnormal{def.}}{=} \sum_{0 \leq \ell \leq L} \xi^{\ell}\,
\end{equation}
and define for $\Lambda \subset \Z^d$,
\begin{equation}
\label{Z_Lambda}
\mathcal{Z}(\Lambda)\stackrel{\textnormal{def.}}{=}\big\{\mathcal{Z}_{\ell}(\tilde{z}): \, (\ell, \tilde{z}) \text{ s.t.~} \tilde{q}_\ell(x,\tilde{z}) \neq 0 \text{ for some } x \in \Lambda\big\}.
\end{equation}
By \eqref{eq:white_noise_representation} and \eqref{eq:phi_workingdef}, $(\varphi_x)_{ x \in \Lambda} $ is measurable with respect to $\mathcal{Z}(\Lambda)$. Moreover, on account of \eqref{range.G}, for any $L \geq 0$,
\begin{equation}
\label{eq:phi_L_indep}
\text{$(\varphi_x^L)_{x \in U}$ is independent of $\mathcal{Z}(V)$ whenever $d(U,V)>L$}.
\end{equation}

We state below two simple lemmas which will be used repeatedly afterwards. The first one says that, up to a certain scale, it is easy to compare $\varphi$ and $\varphi^L$; while the second gives a lower bound for point-to-point connections in a box at levels below $h_{**}$. 

\begin{lemma}
	\label{lem:def_infty}
	There exist $c,C>0$ such that for every $\varepsilon>0$ and $L,R\geq1$,
	\begin{equation}
	\label{eq:infty_control2}
	\P[|\varphi_x-\varphi^L_x|<\varepsilon~,~\forall x\in B_R]\geq 1-CR^d e^{-c\varepsilon^2 L^{\frac{d-2}{2}}}.
	\end{equation}
\end{lemma}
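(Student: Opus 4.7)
The plan is to write $\varphi_x - \varphi^L_x = \sum_{\ell > L} \xi^\ell_x$ (which converges a.s.~by Lemma \ref{lem:white_noise_representation}) and observe that, as a sum of independent centered Gaussians, this is itself a centered Gaussian variable. A pointwise tail estimate together with a union bound over $B_R$ will then yield \eqref{eq:infty_control2}.

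The first step is to control $\sigma_L^2 := \var(\varphi_x - \varphi^L_x)$ uniformly in $x$. By translation invariance this quantity does not depend on $x$, and by independence of the $\xi^\ell$'s and the variance bound $\var(\xi^\ell_x) \leq C \ell^{-d/2}$ already established in Lemma \ref{lem:white_noise_representation}, one gets
\[
\sigma_L^2 = \sum_{\ell > L} \var(\xi^\ell_x) \leq C \sum_{\ell > L} \ell^{-d/2} \leq C' L^{-(d-2)/2},
\]
using $d \geq 3$ to ensure convergence and to identify the correct tail rate via comparison with the integral $\int_L^\infty t^{-d/2}\,dt$.

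The second step is a standard Gaussian tail bound: for each fixed $x \in B_R$,
\[
\P\bigl[|\varphi_x - \varphi^L_x| \geq \varepsilon\bigr] \leq 2\exp\!\left(-\frac{\varepsilon^2}{2\sigma_L^2}\right) \leq 2 \exp\bigl(-c\varepsilon^2 L^{(d-2)/2}\bigr).
\]
A union bound over the $|B_R| \leq C R^d$ vertices of $B_R$ yields
\[
\P\bigl[\exists x \in B_R: |\varphi_x - \varphi^L_x| \geq \varepsilon\bigr] \leq C R^d \exp\bigl(-c\varepsilon^2 L^{(d-2)/2}\bigr),
\]
which is exactly \eqref{eq:infty_control2} after taking complements.

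There is no real obstacle here: the only point requiring attention is that the exponent $(d-2)/2$ appearing in the statement matches the tail of $\sum_{\ell > L} \ell^{-d/2}$, which is why the condition $d \geq 3$ is needed (otherwise the series defining $\varphi$ would not even converge, and the estimate would be vacuous). Everything else is a direct application of elementary Gaussian concentration combined with the already-established variance decay of the finite-range components $\xi^\ell$.
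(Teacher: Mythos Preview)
Your proof is correct and follows exactly the same approach as the paper: bound the variance of the Gaussian $\varphi_x-\varphi^L_x$ by $CL^{-(d-2)/2}$, apply a standard Gaussian tail estimate, and take a union bound over $B_R$. The paper's proof is simply a one-sentence summary of what you have written out in detail.
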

\begin{proof} Since $\varphi_x-\varphi^L_x$ is a centered Gaussian variable and, using \eqref{eq:phi_workingdef},\eqref{eq:phi^k} and independence of $\xi^{\ell}_x$, $\ell \geq 0$, in the first equality below and recalling the bound \eqref{eq:bound.decomp}, its variance is bounded by
\begin{equation*}
\var(\varphi_x-\varphi^L_x)\stackrel{}{=} \sum_{\ell > L}\E[ (\xi^{\ell}_x)^2] \leq C \sum_{\ell > L} \ell^{-\frac{d}2} \leq C' \sum_{k \geq \lfloor\log L\rfloor} \e^{-\frac{k(d-2)}{2}} \leq C''L^{-\frac{d-2}{2}},
\end{equation*}
the result follows from a simple union bound and a standard Gaussian tail estimate.
\end{proof}

\begin{remark}
We note that the exact value $(d-2)/2$ of the exponent appearing in \eqref{eq:infty_control2} is immaterial for our arguments (cf.~for instance the proof of Lemma~\ref{lem:piv_decoupling}, in particular \eqref{def:alphat'}-\eqref{def:alphat}). In fact, this exponent is in a sense sub-optimal: using the Markov property of $\varphi$ at scale $L$, see for instance \cite[Lemma~1.2]{RodriguezSznitman13}, one can decompose $\varphi= \tilde{\varphi}^L + \eta^L$, into a sum of Gaussian fields, where $\tilde{\varphi}^L_{\cdot}$ has range of dependence $L$ and $\var(\eta^L_x) \leq CL^{-(d-2)}$. The missing factor of $2$ in \eqref{eq:infty_control2} stems from the fact that, while the walk may reach distance $L$ in as many steps (thus giving rise to a hard bound of the same order for the range of dependence of $\varphi^L$), its typical displacement is of order $\sqrt{L}$, and $\varphi^L$ starts to decorrelate at that scale.
\end{remark}

\begin{lemma}\label{lem:twopointsbound}
	For every $h< h_{**}$, there exist $\Cl{Ctwopoints}=\Cr{Ctwopoints}(d)>0$ and $\Cl[c]{ctwopoints}=\Cr{ctwopoints}(d,h)>0$ such that for every $L\geq 1$ and $x,y\in B_L$,
	\begin{equation}\label{eq:twopointsbound}
	\P[\lr{B_{2L}}{\varphi^{L}\geq h}{x}{y}]\geq \Cr{ctwopoints}L^{-\Cr{Ctwopoints}}.
\end{equation}
\end{lemma}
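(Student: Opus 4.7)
My plan is to first reduce the statement to an analogous point-to-point connection estimate for the full Gaussian free field $\varphi$ via Lemma~\ref{lem:def_infty}, then to derive the reduced estimate from the crossing bound available below $h_{**}$ together with the FKG inequality for $\varphi$.

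For the reduction, I fix $h' \in (h, h_{**})$ and set $\varepsilon := h' - h$. On the event $\{|\varphi_z - \varphi_z^L| < \varepsilon \text{ for all } z \in B_{2L}\}$, every $\{\varphi \geq h'\}$-connection inside $B_{2L}$ is automatically a $\{\varphi^L \geq h\}$-connection. Applying Lemma~\ref{lem:def_infty} with $R = 2L$, one obtains
$$\P\bigl[\lr{B_{2L}}{\varphi^L \geq h}{x}{y}\bigr] \;\geq\; \P\bigl[\lr{B_{2L}}{\varphi \geq h'}{x}{y}\bigr] - C L^d \exp\bigl(-c\varepsilon^2 L^{(d-2)/2}\bigr),$$
and the subtracted error is super-polynomially small in $L$ since $d \geq 3$. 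It therefore suffices to prove a polynomial lower bound $\P[\lr{B_{2L}}{\varphi \geq h'}{x}{y}] \geq \Cr{ctwopoints} L^{-\Cr{Ctwopoints}}$ uniformly in $x, y \in B_L$.

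For this reduced estimate, the two main ingredients are (a) the crossing bound $\inf_{R \geq 1,\, z \in \Z^d} \P[\lr{}{\varphi \geq h'}{B_R(z)}{\partial B_{2R}(z)}] \geq c_0(h') > 0$, which follows from $h' < h_{**}$ combined with the translation invariance of $\varphi$, and (b) the FKG inequality for increasing functions of $\varphi$, valid since $g(\cdot,\cdot) \geq 0$. Combining (a) and (b), I would chain crossing events at a geometric sequence of scales along a path from $x$ to $y$ inside $B_{2L}$, with $O(\log L)$ links each contributing a constant factor through FKG, to produce the desired polynomial lower bound. The main obstacle I anticipate is that, in contrast to the two-dimensional case, annulus-crossings in $d \geq 3$ do not form circuits, so the crossings at successive scales in the chain are not automatically connected; I would resolve this by constructing the chain from overlapping \emph{thick} boxes and imposing auxiliary crossing events at each overlap which force consecutive cluster fragments to merge, then taking the conjunction of all these increasing events via FKG. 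The universality of the exponent $\Cr{Ctwopoints}$ in $h$ reflects that the $h$-dependence of $c_0(h')$ affects only the multiplicative prefactor $\Cr{ctwopoints}$.
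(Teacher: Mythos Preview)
Your reduction via Lemma~\ref{lem:def_infty} to a point-to-point estimate for the full field $\varphi$ at a slightly higher level $h'\in(h,h_{**})$ is exactly what the paper does, and your identification of the two ingredients (the uniform annulus-crossing bound below $h_{**}$ and the FKG inequality) is correct. The gap is in the step that converts the annulus-crossing bound into a point-to-point bound in dimension $d\geq 3$: your proposed ``chain of overlapping thick boxes with auxiliary crossing events forcing merges'' is not a working argument as stated. No finite conjunction of box-crossing events (even in several directions) forces the resulting crossing clusters to coalesce when $d\geq 3$; for instance, simultaneous crossings of a cube in all $d$ coordinate directions can be realized by $d$ pairwise disjoint paths. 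So the obstacle you correctly anticipate is genuine, but the resolution you sketch does not close it, and it is precisely this step that carries the content of the lemma.

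The paper handles this differently. From the crossing bound $\P[\lr{}{\varphi\geq h'}{B_L}{\partial B_{2L}}]\geq c(h')$ it first extracts, via a union bound over the starting point in $B_L$ and translation invariance, the one-arm estimate $\P[\lr{}{\varphi\geq h'}{0}{\partial B_L}]\geq c'(h')L^{-(d-1)}$. It then invokes an argument of Cerf (Lemma~6.1 of \cite{cerf2015}), which relies only on FKG together with invariance under lattice reflections and permutations of coordinates, to upgrade the one-arm bound to $\P[\lr{B_{2L}}{\varphi\geq h'}{x}{y}]\geq c''(h')L^{-\Cr{Ctwopoints}}$ uniformly in $x,y\in B_L$. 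This reflection/symmetry mechanism is what actually achieves the gluing in higher dimensions; it is not the chaining of box-crossings you propose, and supplying it (or an equivalent device) is what your argument is missing.
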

\begin{proof}
	For arbitrary $h<h_{**}$, let $\varepsilon\stackrel{\textnormal{def.}}{=} (h_{**}-h)/2$. By definition of $h_{**}$, see \eqref{eq:h_**}, we have 
	$$\P[\lr{}{\varphi\geq h+\varepsilon}{B_L}{\partial B_{2L}}]\geq c(h)>0~~~\forall L\geq 1.$$ A union bound over $x\in \partial B_L$ and translation invariance thus imply that
	$$
	\P[\lr{}{\varphi\geq h+\varepsilon}{0}{\partial B_{L}}]\geq c'(h)L^{-(d-1)}~~~\forall L\ge1.
$$
	By using arguments akin to those appearing in the proof of \cite[Lemma~6.1]{cerf2015}, which involve only the FKG inequality and the invariance under reflections and permutation of coordinates, we deduce that
		\begin{equation}\label{eq:twopointsproof2}
	\P[\lr{B_{2L}}{\varphi\geq h+\varepsilon}{x}{y}]\geq c''(h) L^{-\Cr{Ctwopoints}} ~~~\forall x,y\in B_L.
	\end{equation}
Finally, Lemma~\ref{lem:def_infty} enables us to replace $\{\varphi\ge h+\varepsilon\}$ by $\{\varphi^L\ge h\}$ provided $L$ is chosen large enough. This concludes the proof.
\end{proof}
\begin{remark}\label{rem:twopointsbound}
	Following the same lines as the proof above, one can show that for any percolation model $\omega$ satisfying an FKG inequality and invariance under reflections and permutation of coordinates, if one has $\P[\lr{}{\omega}{B_L}{\partial B_{2L}}]\geq a>0$ for all $L\leq R$, then for all  $x,y\in B_L$ and $L\leq R$, 
	\begin{equation}
	\P[\lr{B_{2L}}{\omega}{x}{y}]\geq  \Cl[c]{ctwopointsbis}(a) L^{-\Cr{Ctwopoints}}.
	\end{equation} This will be useful in Section~\ref{sec:comparison}.
\end{remark}

\subsection{The ``bridging lemma''}
\label{subsec:bridging}

We now borrow the notation from Section~\ref{sec:bridges}. Recall the definition of the scales $L_n$, $n \geq 0$, from \eqref{eq:bridge1}. We first choose
$\kappa=20$ and $\ell_0, K$ with $\ell_0 \geq 10\kappa$ large enough such that the conclusions of Theorem~\ref{T:bridge1} hold whenever $L_0 \geq  \Cr{bridges1}(\kappa,\ell_0)$. The parameters $\kappa,\ell_0$ and $K$ will remain fixed throughout the remainder of this article. This will guarantee that all exponents $\rho$ appearing in the following statements depend on $d$ only.

For the rest of this section, we use the notation $\Lambda_n\stackrel{\textnormal{def.}}{=}B_{10\kappa L_n}$ as appearing in \eqref{eq:annuli}, along with the corresponding notion of admissible sets, see above \eqref{eq:bridge.GOOD}. The use of annuli will not be necessary until Section \ref{sec:comparison}. We now prove a result which is slightly different from Lemma~\ref{lem:bridging} (see Remark \ref{R:bridginglemma},~2) below for a comparison between the two) and tailored to our later purposes.

\begin{lemma}[Bridging]
	\label{lem:sprinkling}
	For every $\varepsilon>0$ and $L_0 \geq \Cl{CL_0}(d,\varepsilon)$, there exist positive constants $\rho=\rho(d)>0$, $\Cl[c]{c:bridging}=\Cr{c:bridging}(d,\varepsilon,L_0)>0$ and  $\Cl{C:bridging}=\Cr{C:bridging}(d,\varepsilon, L_0)>0$ such that the following holds. For all  $n\geq0$, there is a family of events $\mathcal{G}(S_1,S_2)$ indexed by $S_1,S_2 \subset \L_n$, measurable and increasing with respect to ${\mb Z}(\L_n)$, such that 
	\begin{equation}\label{eq:nicelikely}
	\P \Big[\bigcap_{S_1,S_2} \mathcal{G}(S_1,S_2)\Big]\geq 1-  e^{-\Cr{c:bridging}L_n^\rho},
	\end{equation}
	 and for every $h\leq h_{**}-2\varepsilon$, every admissible $S_1, S_2$ and all events $D\in \sigma(1_{\varphi_x\geq h};~x\in S_1\cup S_2)$ and $E\in \sigma({\mb Z}(\Lambda_n^c))$, 
	 \begin{equation}
	\label{eq:sprinkling}
	\P\big[\lr{\Lambda_n}{\varphi\geq h-\varepsilon}{S_1}{S_2} ~\bigl\vert D\cap E \cap \mathcal{G}(S_1,S_2)\big]\geq e^{-\Cr{C:bridging} (\log  L_n)^2},
	\end{equation}
	whenever $\P[ D\cap E \cap \mathcal{G}(S_1,S_2)] > 0$.
\end{lemma}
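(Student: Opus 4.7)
The plan is to set $\mathcal{G}(S_1,S_2)$ to be the event that a good multi-scale bridge (Definitions~\ref{def:bridge} and~\ref{def:goodbridge}) between $S_1$ and $S_2$ exists inside $\Lambda_n$, and to derive the conditional lower bound~\eqref{eq:sprinkling} by stitching together local connections along such a bridge, paying a polynomial cost at each scale via Lemma~\ref{lem:twopointsbound}.

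I would first instantiate Theorem~\ref{T:bridge1} using the scale-disjoint decomposition $\varphi=\sum_{\ell\ge 0}\xi^{\ell}$ from Lemma~\ref{lem:white_noise_representation}. Fix a summable sequence $\varepsilon_m:=\varepsilon/(C_0m^2)$ with $\sum_{m\ge 1}\varepsilon_m\le\varepsilon/2$. Take $F_{0,x}$ to be a suitable increasing local ``good crossing'' event for $\varphi^{L_0}$ at level $h_{**}-\varepsilon/2$ in a neighborhood of $x$ (e.g.\ the existence of a cluster of $\{\varphi^{L_0}\ge h_{**}-\varepsilon/2\}$ of diameter $\ge L_0/10$ crossing $B_{L_0}(x)$, possibly defined after a mild renormalization at a slightly larger scale to make its failure probability small), and set
\begin{equation*}
H_{n,y}:=\Big\{\sum_{L_{n-1}<\ell\le L_n}\xi^{\ell}_z \ge -\varepsilon_n\text{ for all }z\in B_{10\kappa L_n}(y)\Big\},\qquad n\ge 1.
\end{equation*}
Both are increasing in $\mathbf{Z}$. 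Since $F_{0,x}$ only involves $\{\mathbf{Z}_\ell:\ell\le L_0\}$ while $H_{n,y}$ only involves $\{\mathbf{Z}_\ell:L_{n-1}<\ell\le L_n\}$, the families $F$ and $H_n$ use disjoint blocks of independent white noise, giving~\eqref{C1}, and spatial independence within each family gives~\eqref{C2}. For~\eqref{C3}: the bound $\P[F_{0,x}^{\rm c}]\le c_1$ is achieved by taking $L_0$ large, using Lemma~\ref{lem:def_infty} to pass between $\varphi^{L_0}$ and $\varphi$ together with a standard renormalization/FKG boost at the strictly subcritical level $h_{**}-\varepsilon/2<h_{**}$; the bound $\P[H_{n,y}^{\rm c}]\le C 2^{-2^n}$ follows from Gaussian concentration and a union bound, since $\mathrm{Var}\big(\sum_{L_{n-1}<\ell\le L_n}\xi^{\ell}_z\big)\lesssim L_{n-1}^{-(d-2)/2}$ by~\eqref{eq:bound.decomp} decays fast enough in $n$ when $\ell_0$ and $L_0$ are chosen large.

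With $\mathcal{G}(S_1,S_2):=\{\text{a good bridge between }S_1\text{ and }S_2\text{ in }\Lambda_n\text{ exists}\}$, measurability and monotonicity with respect to $\mathbf{Z}(\Lambda_n)$ are inherited from the good events. Since $\bigcap_{S_1,S_2}\mathcal{G}(S_1,S_2)\supseteq\mathcal{G}_n$, estimate~\eqref{eq:nicelikely} follows from~\eqref{eq:bridge8} together with $2^{-2^n}\le e^{-cL_n^{\rho}}$ for $\rho:=(\log 2)/(\log\ell_0)>0$, which depends on $d$ only since $\ell_0$ was fixed as a function of $d$. For~\eqref{eq:sprinkling}, fix a good bridge $\mathcal{B}$ on $\mathcal{G}(S_1,S_2)$. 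Condition~\textbf{G2} ensures that every $m$-box $B_{L_m}(x)\in\mathcal{B}$ is covered by the tower $(H_{j,y(x,j)})_{j\ge m\vee 1}$, which forces $\sum_{\ell>L_m}\xi^{\ell}\ge-\varepsilon/2$ on the box. In each $m$-box with $m\ge 1$, Lemma~\ref{lem:twopointsbound} provides a crossing in $\{\varphi^{L_m}\ge h_{**}-\varepsilon/2\}\cap B_{2L_m}(x)$ between appropriate entry/exit points at polynomial cost $\ge cL_m^{-C}$; combined with the tail bound this lifts to a crossing in $\{\varphi\ge h_{**}-\varepsilon\}\subseteq\{\varphi\ge h-\varepsilon\}$, while the $0$-boxes are handled directly by $F_{0,x}$. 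Concatenating the local crossings along $\mathcal{B}$---using~\textbf{B4} to bound the number of boxes at scale $m$ by $2K$ over at most $n+1$ scales---yields a connection from $S_1$ to $S_2$ with conditional probability at least $\prod_{m=1}^n L_m^{-2KC}=\exp(-C'(\log L_n)^2)$, since $\sum_{m=1}^n\log L_m=O((\log L_n)^2)$.

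The main obstacle is to retain the above product-of-independent-local-estimates structure under the conditioning on $D$ and $E$. The event $E\in\sigma(\mathbf{Z}(\Lambda_n^{\rm c}))$ can be absorbed by conditioning on it and working with the centered Gaussian field $\varphi-\mathbb{E}[\varphi\,|\,\mathbf{Z}(\Lambda_n^{\rm c})]$, which still admits the same scale-disjoint decomposition on $\mathbf{Z}(\Lambda_n)$. The event $D$ is more delicate: it constrains $\varphi\vert_{S_1\cup S_2}$ through sign indicators at level $h$, and the two endpoint $0$-boxes $B_1,B_2\in\mathcal{B}$ necessarily touch $S_1$ and $S_2$. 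By~\textbf{B3}, every $m$-box in $\mathcal{B}$ with $m\ge 1$ lies at distance $\ge\kappa L_m$ from $S_1\cup S_2$, so its local noise is essentially independent of $D$, and the polynomial bound from Lemma~\ref{lem:twopointsbound} goes through with negligible modification. For the two endpoint $0$-boxes one would exploit fresh noise inside $B_i\setminus S_i$ via a Gibbs--Markov decomposition of the Gaussian field, or alternatively use FKG (noting that $\mathcal{G}$, $D$ and the connection event are all monotone in $\mathbf{Z}$), at the price of an additional polynomial-in-$L_0$ factor absorbed into $\Cr{C:bridging}$.
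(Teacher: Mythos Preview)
Your overall architecture matches the paper's: instantiate Theorem~\ref{T:bridge1}, set $\mathcal{G}(S_1,S_2)$ to be existence of a good bridge, stitch local connections at polynomial cost per scale via Lemma~\ref{lem:twopointsbound}, and sum $\sum_{m\le n}\log L_m=O((\log L_n)^2)$ using~\textbf{B4}. But the handling of the conditioning on $D$ has a genuine gap, and it stems from your choice of $F_{0,x}$. First, $D\in\sigma(1_{\varphi_x\ge h};x\in S_1\cup S_2)$ is \emph{not} monotone in $\mathbf{Z}$ (it may specify $\varphi_x<h$ at some points), so your FKG suggestion fails outright. The paper instead conditions on all of $\mathbf{Z}(\Lambda_n^c\cup S_1\cup S_2)$, rendering $D\cap E$ deterministic, and applies FKG only among the remaining increasing events $\{\mathcal{B}\text{ good}\}$ and $(A_B)_{B\in\mathcal{B}}$. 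Second---and this is the heart of the matter---after conditioning on $\mathbf{Z}(S_1\cup S_2)$ the endpoint boxes $B_1,B_2$ still touch $S_1\cup S_2$, and any finite-range field such as $\varphi^{L_0}$ restricted to $B_i\setminus S_i$ is \emph{not} independent of this conditioning; neither a ``Gibbs--Markov decomposition'' nor your crossing-type $F_{0,x}$ produces usable fresh noise there.

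The paper's fix is to take $F_{0,x}:=\{\varphi_y^{L_0}-\varphi_y^0\ge -M+\varepsilon\text{ on }B_{L_0}(x)\}$, a bound on the non-i.i.d.\ part which trivially satisfies~\eqref{C3} for $M=M(L_0)$ large (your crossing event at level $h_{**}-\varepsilon/2$ has probability bounded away from $0$ but not, a~priori, close to~$1$), and to define the scale-$0$ connection event $A_B$ in the \emph{i.i.d.}\ field $\{\varphi^0\ge h+M\}$. Then along a path $\pi_{B_i}\subset B_i\setminus(S_1\cup S_2)$ the variables $\varphi^0_z=\mathbf{Z}_0(z)/\sqrt{2}$ are genuinely independent of $\mathbf{Z}(\Lambda_n^c\cup S_1\cup S_2)$, giving $\P[A_{B_i}\,|\,\mathbf{Z}(\Lambda_n^c\cup S_1\cup S_2)]\ge c(L_0)$. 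The same i.i.d.\ layer handles the actual endpoints $s_i\in S_i$ (which you do not address): conditioning on everything except $\mathbf{Z}_0(s_1),\mathbf{Z}_0(s_2)$ and using $\varphi_{s_i}-\varphi^0_{s_i}\ge -M$ on the good event, one gets $\P[\varphi_{s_i}\ge h-\varepsilon\,|\,1_{\varphi_{s_i}\ge h}]\ge c(\varepsilon,M)>0$, which is precisely where the $\varepsilon$-sprinkling enters.
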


\begin{remark}\label{R:bridginglemma} 1) We will apply Lemma~\ref{lem:sprinkling} in Section~\ref{sec:supercritical} in order to connect, after sprinkling, two families of clusters $\mathcal{C}_1$ and $\mathcal{C}_2$ inside of a ball $\L_n$ whenever the event $\mathcal{G}(S_1,S_2)$ occurs; cf.~also Fig.~\ref{F:bridge}. In this context, $S_1$ and $S_2$ will represent the explored regions of $\L_n$ when discovering $\mathcal{C}_1$ and $\mathcal{C}_2$ (i.e. $S_i=B(\mathcal{C}_i,1)\cap\L_n$, $i=1,2$, where $B(\mathcal{C}_i,1)$ is the $1$-neighborhood of the set $\mathcal{C}_i$ for the $\ell^{\infty}$-norm); $D$ will represent the information discovered inside this region; and $E$ will represent all the information outside of $\L_n$. \\[-0.5em]

\noindent 2) One can derive Lemma~\ref{lem:bridging} by following the proof of Lemma \ref{lem:sprinkling}, with minor modifications. The differences between the two are the following: in Lemma \ref{lem:sprinkling}, we require i) a certain measurability and monotonicity property of the events $\mathcal{G}(S_1,S_2)$ with respect to the $\sigma$-algebra $\mb Z(\Lambda_n)$ and ii) the bound \eqref{eq:sprinkling} on the connection probability to hold for admissible sets, rather than sets with large diameter, cf. Remark \ref{R:bridges},~2). 
\end{remark}


	\begin{proof}
	We start by defining events $\mathcal{G}(S_1,S_2)$ for which \eqref{eq:nicelikely} holds. Let $\varepsilon >0$ and $M,L_0 \geq 1$ to be chosen later. In the framework of Section~\ref{sec:bridges}, consider the event $\mathcal{G}_n$ (see \eqref{eq:bridge.GOOD})  given by the following choice of families of events $H$ and $F$:
	\begin{align}
	&F_{0,x}\stackrel{\textnormal{def.}}{=}\{\varphi_y^{L_0}-\varphi_y^0\geq -M+\varepsilon, ~~\forall y\in B_{L_0}(x)\}, \label{eq:gluinggood1} \\
	&H_{m, x} \stackrel{\textnormal{def.}}{=} \big\{\varphi^{L_m}_y  - \varphi^{L_{m-1}}_y \geq -\frac{6\varepsilon}{(\pi m)^2},~~\forall y\in  B_{2L_m}(x)\big\}. \label{eq:gluinggood2}
	\end{align}
Note for later purposes that the events \eqref{eq:gluinggood1}-\eqref{eq:gluinggood2} are in fact increasing in the $\mathcal{Z}$-variables, cf.~\eqref{eq:white_noise_representation} and \eqref{Z_Lambda}. Now, for any pair of admissible subsets $S_1, S_2$ of $\L_n$, define
	\begin{equation}\label{eq:defG}
	\mathcal{G}(S_1,S_2)\stackrel{\textnormal{def.}}{=} \bigcup_{\mathcal{B}} \{\text{$\mathcal{B}$ is good}\},
	\end{equation}
	where the union is taken over all the bridges between $S_1$ and $S_2$ inside $\Sigma_n$, see Definition \ref{def:goodbridge} and around \eqref{eq:annuli} for the relevant notions. For non-admissible $S_1, S_2$, set $\mathcal{G}(S_1,S_2)=\Omega$ (the full space on which $\P$ is defined). The events $\mathcal{G}(S_1,S_2)$ have the desired monotonicity property. For later reference, we note that 
\begin{equation}
\label{G_nchoice}
 \bigcap_{\substack{S_1, S_2}} \mathcal{G}(S_1,S_2) = \mathcal{G}_n,
 \end{equation}
 with $\mathcal{G}_n$ as defined in \eqref{eq:bridge.GOOD} for the choice of families $F$ and $H$ in \eqref{eq:gluinggood1}--\eqref{eq:gluinggood2}. We then assume (tacitly from here on) that $L_0 \geq \Cr{bridges1} \vee C(\varepsilon)$, so that the bounds in \eqref{C3} are respectively satisfied when choosing $M=(\log L_0)^2$ in \eqref{eq:gluinggood1}. Indeed, the bound for $\P[F_{0,x}^c]$ then simply follows by a union bound and a Gaussian tail estimate, noting that $\mathbb E[(\varphi_y^{L_0}-\varphi_y^0)^2] \leq C$ uniformly in $y \in \Z^d$ and $L_0 \geq 1$; to bound the probability of $H_{m,x}$, one proceeds similarly and uses \eqref{eq:bound.decomp}, or one applies \eqref{eq:infty_control2} twice. Any choice of $M=M(L_0)$ in \eqref{eq:gluinggood1} yielding  \eqref{C3} for the collection $F$ would work. It follows from \eqref{eq:white_noise_representation} and \eqref{range.G} that the families $F$ and $H$ in \eqref{eq:gluinggood1} and \eqref{eq:gluinggood2} satisfy \eqref{C1} and \eqref{C2} (recall that $\kappa=20$). Hence, Theorem~\ref{T:bridge1} applies and yields \eqref{eq:nicelikely}.

	The choices \eqref{eq:gluinggood1}, \eqref{eq:gluinggood2} and \eqref{eq:defG}, along with \eqref{G1}, \eqref{G2} in Definition~\ref{def:goodbridge} imply the following property, which will be used repeatedly in the sequel. For any good bridge $\mathcal{B}$ and any $m$-box $B=B_{L_m}(x) \in \mathcal{B}$ ($m\geq0$), the following holds:
	\begin{align}\label{eq:niceness}
	\begin{split}
		&\varphi_z-\varphi_z^0\geq -M, ~~\forall z\in B_{L_m}(x),  ~~\text{if $m=0$,}\\
		&\varphi_z-\varphi_z^{L_m}\geq -\varepsilon, ~~\forall z\in B_{2L_m}(x), ~~\text{if $m\geq1$}.
	\end{split}
	\end{align}


	We now turn to the proof of \eqref{eq:sprinkling}.
	Consider a bridge $\mathcal{B}$ between a pair of (admissible) sets ${S_1}$ and ${S_2}$ in $\Lambda_n$. 
	It follows directly from Definition \ref{def:bridge} that one can find vertices $s_1\in S_1\cap B_1$, $s_2\in S_2\cap B_2$ (recall $B_1$ and $B_2$ from \eqref{B2}) and $x_B, y_B \in B$ for each $B\in\mathcal{B}$ so that for any family of paths $(\pi_B)_{B\in\mathcal{B}}$ between $x_B$ and $y_B$, the union of $s_1, s_2$ and $(\pi_B)_{B\in\mathcal{B}}$ forms a path connecting $S_1$ and $S_2$, cf. also Fig.~\ref{F:bridge}. We can further impose that, for $B\in\{B_1,B_2\}$, the vertices $x_B, y_B$ are chosen in such a way that there exists a path $\pi_B \subset B\setminus (S_1\cup S_2)$ between $x_B$ and $y_B$ (in particular, $x_B, y_B \notin S_1\cup S_2$).
	For each $B=B_{L_m}(x)$, $x \in \mathbb{L}_m$, consider the event 
	\begin{equation} \label{eq:defA_B}
	A_B\stackrel{\textnormal{def.}}{=}
	\begin{cases} 
	\{\lr{B_{L_m}(x)}{\varphi^{0}\geq h+M}{x_B}{y_B}\}, &  \text{if $m=0$},\\
	\{\lr{B_{2L_m}(x)}{\varphi^{L_m}\geq h+\varepsilon}{x_B}{y_B}\}, &  \text{if $m\geq1$}.
	\end{cases}
	\end{equation}
	Then it follows directly from \eqref{eq:niceness} that $x_B$ and $y_B$ are connected in $\{\varphi\ge h\}\cap B_{2L_m}(x)$ if $B\in \mathcal{B}$ and $A_B$ occurs. 
	
	By these observations, we deduce that for any pair of admissible sets $S_1,S_2$, any events $D,E$ as above \eqref{eq:sprinkling}, and any bridge $\mathcal{B}$ inside $\Sigma_n$ between ${S_1}$ and ${S_2}$,
	\begin{multline}
		\label{eq:gluing1}
			\P\Big[\{\lr{\Lambda_n}{\varphi\geq h-\varepsilon}{S_1}{S_2}\}\cap D\cap E\cap \mathcal{G}(S_1,S_2) \Big]\\
			\geq \P\Big[\{\lr{\Lambda_n}{\varphi\geq h-\varepsilon}{S_1}{S_2}\}\cap D\cap E\cap\{\text{$\mathcal{B}$ is good}\}\Big]\\
			\geq \P\Big[ D\cap E\cap\{\text{$\mathcal{B}$ is good}\}\cap\{\varphi_{s_1},\varphi_{s_2}\geq h-\varepsilon\} \cap \bigcap_{B\in\mathcal{B}} A_B\Big]
	\end{multline}
	(also, note for the last inequality that the path in $\{\varphi \geq h -\varepsilon\}$ connecting $S_1$ and $S_2$ in the second line is indeed contained in $\Lambda_n$ since $\mathcal B$ itself lies in $\Sigma_n$, cf.~\eqref{eq:annuli} and \eqref{eq:defA_B}). Let  $\mathcal{F}$ denote the $\sigma$-algebra generated by the random variables $1_{\varphi_x\geq h}$, $x\in S_1\cup S_2$, as well as $ \mb Z_{\ell}(\tilde{z})$, $\tilde{z} \in \widetilde{\Z}^d$, $\ell \geq 0$, except $\mathcal{Z}_{0}(s_1)$ and $\mathcal{Z}_{0}(s_2)$. On account of \eqref{eq:white_noise_representation} and \eqref{eq:phi^k}, the latter are proportional to $\varphi_{s_1}^0$ and $\varphi_{s_2}^0$ respectively. Conditioning on $\mathcal{F}$ and on the $\mathcal{F}$-measurable event that $\varphi_{s_i}-\varphi_{s_i}^0\geq -M$ for $i=1,2$, one finds that
	$$ \P[\varphi_{s_i}  \geq h-\varepsilon\,|\, \mathcal{F}] \geq \inf_{a\geq -M} \inf_{h\leq h_{**}} \P[\varphi^0_0+a\geq h-\varepsilon \,\vert \, \varphi^0_0+a<h] =\Cl[c]{c:bridgelastpoint}(d,\varepsilon,L_0) >0.$$
Noticing that $\mathcal{B}$ being good implies that $\varphi_{s_i}-\varphi_{s_i}^0\geq -M$ for $i=1,2$ and that the event $D\cap E\cap\{\text{$\mathcal{B}$ is good}\} \cap \bigcap_{B\in\mathcal{B}} A_B$ is $\mathcal{F}$-measurable, one readily deduces that 
	\begin{multline}
		\label{eq:gluing2}
			\P\Big[ D\cap E\cap\{\text{$\mathcal{B}$ is good}\}\cap\{\varphi_{s_1},\varphi_{s_2}\geq h-\varepsilon\} \cap \bigcap_{B\in\mathcal{B}} A_B \Big]\\\geq \Cr{c:bridgelastpoint}^2 \P\Big[ D\cap E\cap\{\text{$\mathcal{B}$ is good}\} \cap \bigcap_{B\in\mathcal{B}} A_B \Big].
	\end{multline} 
Now, by \eqref{eq:white_noise_representation}, \eqref{eq:phi^k} and \eqref{eq:niceness}, \eqref{eq:defA_B}, conditionally on ${\mb Z}(\Lambda_n^c\cup{S_1}\cup{S_2})$, the events $\{\mathcal{B} \text{ is good}\}$ and $(A_B)_{B\in\mathcal{B}}$ are all increasing in the remaining random variables from ${\mb Z}$. Also, $D\cap E$ is measurable with respect to ${\mb Z}(\Lambda_n^c\cup{S_1}\cup{S_2})$ and $A_B$ is independent of ${\mb Z}(\Lambda_n^c\cup{S_1}\cup{S_2})$ for all $B\in\mathcal{B} \setminus\{B_1,B_2\}$ because of \eqref{B2} and \eqref{B3}. Together with the FKG-inequality for the i.i.d.~random variables in ${\mb Z}$, these observations imply that for suitable $c', c''$ depending on $d$, $\varepsilon$, and $L_0$,
	\begin{multline}
		\label{eq:gluing3}
\P\Big[ D\cap E\cap\{\text{$\mathcal{B}$ is good}\} \cap  \bigcap_{B\in\mathcal{B}} A_B\Big]		\\
=\E\Big[1_{D\cap E} ~ \P\Big[\{\text{$\mathcal{B}$ is good}\}\cap   \bigcap_{B\in\mathcal{B}} A_B \Big\vert {\mb Z}(\Lambda_n^c\cup{S_1}\cup{S_2})\Big]\Big] \\
 \geq \E\Big[1_{D\cap E} \, \P[\text{$\mathcal{B}$ is good}\,\vert \,{\mb Z}(\Lambda_n^c\cup{S_1}\cup{S_2})]\prod_{B\in\mathcal{B}} \P[A_B\vert {\mb Z}(\Lambda_n^c\cup{S_1}\cup{S_2})]\Big]\\
 \geq c'\P[ D\cap E\cap\{\text{$\mathcal{B}$ is good}\} ] \prod_{B\in\mathcal{B}\setminus \{ B_1,B_2\}} \P[A_B] \\
			 \geq  c''e^{-C(\varepsilon)(\log  L_n)^2} ~\P[ D\cap E\cap\{\text{$\mathcal{B}$ is good}\}],
	\end{multline}
	where in the fourth line, we used that $\prod_{i=1,2} \P[A_{B_i}\vert {\mb Z}(\Lambda_n^c\cup{S_1}\cup{S_2})] \geq c'$, which follows from the existence of a path $\pi_{B_i}\subset B_i\setminus(S_1\cup S_2)$ between $x_{B_i}$ and $y_{B_i}$ together with the fact that $\varphi^0$ is an i.i.d.~field. In the last line we used that for any $m$-box $B$ with $m \geq 1$ one has
	$$\P[A_B]\geq \Cr{ctwopoints}(L_m)^{-\Cr{Ctwopoints}}=\Cr{ctwopoints}\e^{-\Cr{Ctwopoints}(\log L_m)},$$
	where $\Cr{ctwopoints}=\Cr{ctwopoints}(d,h_{**}-\varepsilon)>0$ and $\Cr{Ctwopoints}>0$ are given by Lemma~\ref{lem:twopointsbound} (remind that $h+\varepsilon \leq h_{**}-\varepsilon$); while for $m=0$, we simply bounded $\P[A_B] \geq c(L_0, d, \varepsilon)>0$ using the finite energy of $\varphi^0$. The last line of \eqref{eq:gluing3} then follows from the fact that
	$$\sum_{\substack{B\in\mathcal{B}\\ m\text{-box}}} \Cr{Ctwopoints} \log L_m \leq 2 \Cr{Ctwopoints} K \sum_{0\leq 3m\leq n} (\log L_m) \leq C(\log  L_n)^2,$$ which relies on \eqref{B4}. 
	Combining \eqref{eq:gluing1}, \eqref{eq:gluing2} and \eqref{eq:gluing3}, we conclude that 
	\begin{align*}
	\P[\{\lr{\Lambda_n}{\varphi\geq h-\varepsilon}{S_1}{S_2}\}\cap D\cap E\cap \mathcal{G}(S_1,S_2) ]&\geq e^{-C(\log  L_n)^2} ~\P[ D\cap E\cap\{\text{$\mathcal{B}$ is good}\} ],
	\end{align*}
	where $C$ depends on $d$, $\varepsilon$ and $L_0$.
	Summing this inequality over the at most $$ \prod_{0\leq k \leq n} (CL_n/L_k)^{2dK} \leq \exp[C'(\log  L_n)^2]$$ possible bridges $\mathcal{B}$ between ${S_1}$ and ${S_2}$ gives
	\begin{align*}
		&\P[\{\lr{\Lambda}{\varphi\geq h-\varepsilon}{S_1}{S_2}\}\cap D\cap E\cap \mathcal{G}(S_1,S_2)  ]
\geq e^{-C{''}(\log  L_n)^2} ~\P[ D\cap E\cap \mathcal{G}(S_1,S_2) ],
	\end{align*}
	as desired. 
\end{proof}
\begin{remark}
	\label{remark:sprinkling}
	Retracing the steps of the above proof and imposing the occurrence of $\bigcap_{B\in\mathcal{B}} A_B$ but not of $\{\varphi_{s_1},\varphi_{s_2}\geq h-\varepsilon\}$ implies a connection between the $1$-neighborhoods of $S_1$ and $S_2$ \textit{without} an $\varepsilon$-sprinkling. In other words, Lemma \ref{lem:sprinkling} continues to hold with \eqref{eq:sprinkling} replaced by
	\begin{multline}
		\label{eq:finite_energy_wo_sprinkle}
		\P\Big[\bigcup_{\substack{s_1 \in S_1\\ s_2 \in S_2}} \big\{\lr{\L_n\setminus(S_1\cup S_2)}{\varphi\geq h}{\mathcal{N}(s_1)}{\mathcal{N}(s_2)}\big\} \cap \mathcal H_{s_1}\cap 
		\mathcal H_{s_2} ~\Bigl\vert   F\cap\mathcal{G}(S_1,S_2)\Big]\\\geq e^{-\Cr{C:bridging}(\log  L_n)^2},\,
	\end{multline}
	for all $h< h_{**}-2\varepsilon$, $L_0\geq C(\varepsilon)$, and $F\in\sigma(\mathcal{Z}(\Lambda_n^c\cup S_1\cup S_2))$, where $\mathcal H_y\stackrel{\textnormal{def.}}{=}\{\varphi_y-\varphi_y^0\geq -M\}$ with $M=M(L_0)$ suitably large (as chosen below \eqref{eq:defG}) and $\mathcal{N}(x)\stackrel{\textnormal{def.}}{=}\{y \in \Z^d: |y-x|_1 \leq 1\}$. We will use the kind of events appearing in \eqref{eq:finite_energy_wo_sprinkle} in Section~\ref{sec:comparison}. 
\end{remark}

\section{Local uniqueness regime}\label{sec:supercritical}

This section deals with Proposition~\ref{prop:supercritical}, whose proof is split into three parts. In Section~\ref{subsec:uniqueness} we prove Proposition~\ref{prop:uniq}, which roughly asserts that for $h<\tilde h$ with $\tilde h$ given by \eqref{eq:tildeh},  
crossing clusters inside an annulus are typically connected in $\{\varphi \geq h-\varepsilon \}$. This is then used in Section~\ref{subsec:renormalization} to trigger a renormalization and thereby deduce that $\{\varphi\geq h\}$ has a ``ubiquitous'' cluster inside large boxes for all values of $h<\tilde{h}$ with very high probability, see Proposition~\ref{prop:ubiq}. Finally in Section~\ref{subsec:supercriticalproof}, we use the previous result in order to conclude the proof of Proposition~\ref{prop:supercritical} by proving the desired stretched-exponential decay of the probabilities defining $\bar h$ in \eqref{eq:barh1}. Some care is needed because one ultimately wants to avoid any sprinkling for the local uniqueness event. The last part of the argument would simplify if one worked with a weaker notion of $\bar h$ as in \cite{MR3417515} involving sprinkling for the uniqueness event \eqref{eq:UNIQUE1}, see Remark \ref{R:hbar_alternative} below.

\subsection{From connection to local uniqueness}\label{subsec:uniqueness}

We start by defining a certain ``unique crossing'' event $\mathcal{E}$: given any $\alpha>\beta$, let 
\begin{multline}
\label{eq:defcross}
\mathcal{E}(N,\alpha,\beta)\stackrel{\textnormal{def.}}{=} \{\lr{}{\varphi\geq \alpha}{B_N}{\partial B_{6N}}\}\,\cap\\\Big\{\begin{array}{c}\text{all clusters in $\{\varphi\geq\alpha\}\cap B_{4N}$ crossing $B_{4N}\setminus B_{2N}$} \\ \text{are connected to each other in $\{\varphi\geq \beta\}\cap B_{4N}$} \end{array}\Big\}.
\end{multline}
Notice that unlike ${\rm{Unique}}(2N,\alpha)$ defined in  \eqref{eq:UNIQUE1}, the corresponding event in $\mathcal{E}(N,\alpha,\beta)$ involves a sprinkling. 
\begin{prop}
	\label{prop:uniq}
	For every $\varepsilon>0$ one has
	\begin{equation}\label{eq:propuniq2}
	\limsup_{N\to \infty} \inf_{h\leq \tilde{h}-2\varepsilon} \P[\mathcal{E}(N,h,h-\varepsilon)]=1.
	\end{equation}
\end{prop}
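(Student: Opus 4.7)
The event $\mathcal{E}(N,h,h-\varepsilon)$ combines two requirements: a macroscopic crossing $B_N\leftrightarrow\partial B_{6N}$ in $\{\varphi\ge h\}$, and the property that all clusters crossing $B_{4N}\setminus B_{2N}$ in $\{\varphi\ge h\}$ merge in $\{\varphi\ge h-\varepsilon\}\cap B_{4N}$. For the first part, we use the definition of $\tilde{h}$ directly: since $h\le\tilde{h}-2\varepsilon<\tilde{h}$, one can extract a subsequence $N_k\uparrow\infty$ along which $(6N_k)^d\,\P[\nlr{}{\varphi\ge h}{B_{u(6N_k)}}{\partial B_{6N_k}}]\to 0$. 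A union bound over the $O(N_k^d)$ translates of $B_{u(6N_k)}$ lying in $B_{N_k}$ then shows that, with probability tending to $1$, every such small box is linked to $\partial B_{6N_k}$ in $\{\varphi\ge h\}$, yielding in particular the required crossing. This ``polynomial density of connections'' estimate also provides the key geometric input to the uniqueness step below.

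The uniqueness part implements a coarse-grained Benjamini--Tassion sprinkling argument, along the lines sketched in the introduction. Fix a slowly growing mesoscopic scale $L_n=L_n(N)$, say with $\ell_0^n\asymp e^{(\log N)^{1/2}}$, and a family of $\asymp (N/L_n)^d$ pairwise disjoint boxes $\Lambda_n^{(j)}=y_j+\Lambda_n$ contained in the annulus $B_{4N}\setminus B_{2N}$. If two distinct clusters $\mathcal{C}_1,\mathcal{C}_2$ in $\{\varphi\ge h\}\cap B_{4N}$ both cross this annulus, the connectedness input from the previous step, applied at intermediate scales between $u(L_n)$ and $L_n$, implies via a pigeonhole argument that a polynomial (in $N/L_n$) number of boxes $\Lambda_n^{(j)}$ are ``doubly admissible'', meaning that the traces $S_i^{(j)}:=B(\mathcal{C}_i,1)\cap \Lambda_n^{(j)}$ are admissible sets in the sense of Section~\ref{sec:bridges} for both $i=1,2$. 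For each such box, the bridging lemma (Lemma~\ref{lem:sprinkling}; cf.\ Remark~\ref{R:bridginglemma},~1)) applied with $D\in\sigma(\mathbf{1}_{\varphi_x\ge h}:x\in S_1^{(j)}\cup S_2^{(j)})$ recording the exploration of $\mathcal{C}_1,\mathcal{C}_2$ on $\Lambda_n^{(j)}$ and $E\in\sigma(\mathbf{Z}((\Lambda_n^{(j)})^c))$ recording the field outside $\Lambda_n^{(j)}$, yields a conditional probability of at least $e^{-C(\log L_n)^2}$ of linking $\mathcal{C}_1$ to $\mathcal{C}_2$ inside $\Lambda_n^{(j)}$ in $\{\varphi\ge h-\varepsilon\}$, on the high-probability event $\bigcap_{S_1,S_2}\mathcal{G}(S_1,S_2)$. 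Exploring $\mathcal{C}_1,\mathcal{C}_2$ away from $\bigsqcup_j\Lambda_n^{(j)}$ first and then handling the boxes sequentially, the per-box bridging attempts become conditionally independent by the $\mathbf{Z}(\Lambda_n^{(j)})$-measurability of $\mathcal{G}$. The probability that every one of the $\asymp (N/L_n)^d$ attempts fails is then at most $\bigl(1-e^{-C(\log L_n)^2}\bigr)^{c(N/L_n)^d}$, which is $o(1)$ for the chosen $L_n$; a union bound or induction over the at most polynomially many distinct crossing clusters upgrades the conclusion from two clusters to all of them.

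The chief obstacle in carrying out this plan is the geometric pigeonhole step: quantifying that two disjoint macroscopic crossings of the annulus must collide inside a polynomial number of mesoscopic boxes in an admissible way. This is precisely where the coarse-grained Benjamini--Tassion insight needs to be adapted to the present strongly correlated setting, using the density of small-scale connections afforded by $h<\tilde{h}$. A subtler issue is the design of the exploration revealing $\mathcal{C}_1$ and $\mathcal{C}_2$: it must stay away from the variables $\mathbf{Z}(\Lambda_n^{(j)})$ for the boxes $\Lambda_n^{(j)}$ to be used for bridging, so that the sprinkling attempts remain conditionally independent. Finally, the $e^{-cL_n^\rho}$ failure probability of $\mathcal{G}$ requires $L_n$ to be large, while the $(N/L_n)^d$ number of bridging attempts requires $L_n\ll N$; reconciling these two constraints is where the specific (subpolynomial but super-logarithmic) choice $\ell_0^n\asymp e^{(\log N)^{1/2}}$ enters.
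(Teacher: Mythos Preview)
Your overall strategy is the same as the paper's: use the definition of $\tilde h$ to force a density event (every mesoscopic ball is connected far out), then attempt to glue clusters via the bridging lemma in many disjoint mesoscopic boxes. However, two of the steps you flag as ``obstacles'' are genuine gaps that the paper resolves by a rather different mechanism than what you sketch.

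First, the ``union bound or induction over the at most polynomially many crossing clusters'' does not work as stated. To apply Lemma~\ref{lem:sprinkling} in a box $\Lambda_n^{(j)}$ you may condition only on $\sigma(\mathbf 1_{\varphi_x\ge h}:x\in S_1\cup S_2)$ inside the box; if a third cluster $\mathcal C_3$ traverses $\Lambda_n^{(j)}$ you cannot record this information without spoiling the hypotheses. Hence one is forced to work with a \emph{bipartition} of all crossing clusters into two groups $\widetilde C_1,\widetilde C_2$ and a union bound over such bipartitions. There can be up to $cN^{d-1}$ crossing clusters, so the number of bipartitions is a priori exponential in $N^{d-1}$, which no bridging estimate of the type you describe can absorb. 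The paper circumvents this by an iterative halving scheme: it introduces a sequence of partially sprinkled configurations $\omega_0\le\omega_1\le\cdots$ (sprinkling only outside shrinking boxes $V_{2i}$) and counts of equivalence classes $U_i$, and proves (Lemma~\ref{lem:uniqreduct}) that $U_{i+a}\le 1\vee 2U_i/a$ with very high probability. The point of this reduction is that at each step one only needs a bipartition with one side of size at most $a\le N^{1/4}$, so the union bound has at most $(CN^{d-1})^{N^{1/4}}$ terms, which \emph{is} beaten by the bridging bound.

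Second, the ``doubly admissible'' pigeonhole is not a generic consequence of two clusters crossing the annulus: two disjoint crossings can remain macroscopically far apart and never come within $L_n$ of each other. What makes it work in the paper is precisely the density event $A$: the \emph{union} of all clusters is $u(N)$-dense in the relevant annulus, so after bipartitioning, every vertex has its $L_{n_0}$-ball hit by $\widetilde C_1$ or $\widetilde C_2$; a two-colouring argument on concentric shells $\mathbb S_\ell$ then locates, on each shell, a pair of neighbouring vertices with different colours, yielding one doubly-admissible box per shell and hence $k\asymp\sqrt N/u(N)$ such boxes. Finally, your scale choice $\log L_n\asymp(\log N)^{1/2}$ gives $(\log L_n)^2\asymp\log N$ and hence a merely polynomial bridging cost $e^{-C(\log L_n)^2}=N^{-C}$ with an uncontrolled $C$; this is exactly at the boundary of the constraint $(\log u(N))^2\ll\log N$ identified in Remark~\ref{R:u_1} and does not suffice. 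The paper takes the bridging scale to be $L_{n_0}\asymp u(N)=\exp[(\log N)^{1/3}]$, making the per-box cost $e^{-C(\log N)^{2/3}}$ subpolynomial.
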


\vspace{0.3cm}

The idea of the proof is roughly the following. We first require that all the balls of size $u(N)$  inside $B_{4N}$ are connected to distance $N$, which happens with probability  converging to~1 along a subsequence of values of $N$ since $h<\tilde{h}$. On this event, the picture we see at level $h$ inside the ball $B_{N}$ is that of an ``almost everywhere percolating'' subgraph: every vertex is at distance at most $ u(N) \ll N$ from some macroscopic cluster in $B_N$. In other words, the union of all macroscopic clusters form a $ u(N)$-dense subset of the ball $B_{4N}$. The goal is then to adapt the techniques from \cite{benjaminitassion17} in order to show that after an $\varepsilon$-sprinkling, all such clusters will be connected together. In order to implement this adaptation we need some kind of ``sprinkling property'' stating that conditionally on the configuration at level $h$, there is a decent probability of making extra connections at level $h-\varepsilon$. As explained in the introduction, the level sets of $\varphi$ do \textit{not} have such a property and this issue will be overcome by applying Lemma~\ref{lem:sprinkling}. 

\begin{proof}[Proof of Proposition~\ref{prop:uniq}] 
Fix $\varepsilon>0$ and take any $h\leq\tilde{h}-2\varepsilon$.
We import the notation and definitions from Section~\ref{sec:decompose_GFF}. We fix $L_0=L_0(\varepsilon)$ large enough 
such that the conclusions of Lemma~\ref{lem:sprinkling} hold (recall that $h\leq \tilde{h}-2\varepsilon\leq h_{**}-2\varepsilon$). We say that a ball $\L_n=B_{10\kappa L_n}(x)$ for some $x \in \Z^d$ is \textit{good} if the event $ \mathcal{G}_{n,x}$ defined below \eqref{eq:bridge.GOOD} occurs for the families of events $F$ and $\{H_m, m \geq 1\}$ given by \eqref{eq:gluinggood1}-\eqref{eq:gluinggood2}. Throughout the remainder of this section, all constants $c,C$ may depend implicitly on $\varepsilon$.

Recall the function $u(\cdot)$ defined above \eqref{eq:tildeh}, let $n_0\stackrel{\textnormal{def.}}{=} \min\{n: L_n\geq u(10N)\}$, $h \leq \tilde{h}-2\varepsilon$ and consider the events
\begin{align}
A&\stackrel{\textnormal{def.}}{=}\{\lr{}{\varphi\geq h}{B_{L_{n_0}}(x)}{\partial B_{6N}} ~\text{ for all } x \text{ s.t. } B_{L_{n_0}}(x) \subset B_{4N}\} \label{eq:supercritA}, \\
G&\stackrel{\textnormal{def.}}{=}\{\text{$B_{10\kappa L_{n_0}}(x)$ is good for all $x\in \mathbb{L}_{n_0}( B_{4N})$}\}  \label{eq:supercritG}
\end{align}
(see above \eqref{eq:bridge10} for notation). Clearly, if $A$ does not occur, then there must be $x\in B_{4N}$ such that $B_{u(10N)}(x)$ is not connected to $\partial B_{6N}$ in $\{\varphi \geq h \}$. Now since $B_{6N} \subset B_{10N}(x)$ for any $x \in B_{4N}$, it follows directly that $B_{u(10N)}(x)$ is not connected to $\partial B_{10N}(x)$ for some $x\in B_{4N}$ on the complement of $A$. 
Consequently, by translation invariance, 
\begin{equation}
\label{eq:boundA}
\P[A]\geq 1-CN^d\P[\nlr{}{\varphi\geq h}{B_{u(10N)}}{\partial B_{10N}}].
\end{equation}
At the same time, it follows  from Lemma~\ref{lem:sprinkling} that for any $N$ sufficiently large, one has
\begin{equation}
\label{eq:boundG}
\P[G]\geq 1-e^{- \Cr{c:bridging}'u(N)^\rho}.
\end{equation}
Define the collection
\begin{equation}
\label{eq:definiq1}
\mathcal{C} \stackrel{\textnormal{def.}}{=} \{\text{$C\subset B_{4N}$: $C$ a cluster in $\{ \varphi \geq h \} \cap B_{4N}$ intersecting $\partial B_{4N}$}\},
\end{equation}
and for any percolation configuration $\omega\in\{ 0,1\}^{\Z^d}$ with $\{\omega=1\}\supset \{\varphi \geq h\} $, let
\begin{equation}
\label{eq:equiv}
C  \sim_{\omega} C' \text{ if } \lr{}{\omega}{C}{C'}, \text{ for } C,C' \in \mathcal{C}. 
\end{equation}
The relation $\sim_{\omega}$ defines an equivalence relation on any $\tilde{\mathcal{C}} \subset \mathcal{C}$.
The elements of $\tilde{\mathcal{C}} / \sim_{\omega} $ thus form a partition of $\tilde{\mathcal{C}}$, whereby clusters of $ \tilde{\mathcal{C}}$ which are connected in the configuration $\omega$ get grouped. For every $0\leq i\leq \floor{2\sqrt{N}}$, let $V_i\stackrel{\textnormal{def.}}{=}B_{4N-i\sqrt{N}}$. We will study the sets
\begin{equation}
\label{eq:Uij}
\mathcal{U}_{i}(\omega)\stackrel{\textnormal{def.}}{=} \big\{C \in \mathcal{C} : C\cap V_{2i} \neq \emptyset  \, \big\} \big/ \sim_{\omega}
\end{equation}
for $0\leq i\leq \floor{ \sqrt{N}}$ and $\{\omega=1\}\supset \{\varphi \geq h\} $. We denote by ${U}_{i}(\omega)= |\mathcal{U}_{i}(\omega)|$ and will frequently rely on the fact that ${U}_{i}(\omega)$ is decreasing in both $\omega$ and \nolinebreak$i$, as apparent from \eqref{eq:Uij}. 
We will use $\mathscr{C}$ in the sequel to denote groups of clusters of $\mathcal{C}$, e.g.~elements of $\mathcal{U}_{i}(\omega)$, and more generally of $2^{\mathcal{C}}$. It will be convenient to write $\text{supp}(\mathscr{C})=\bigcup_{C \in \mathscr{C}} C \subset \Z^d$, for $\mathscr{C} \in 2^{\mathcal{C}}$. Now, for $0\leq i\leq \floor{\sqrt{N}}$, introduce the percolation configurations 
\begin{equation}
\label{omega_i}
\omega_0 \leq \omega_1 \leq \dots, \quad \text{ where } \omega_i =\omega_i (\varphi)\stackrel{\textnormal{def.}}{=} \begin{cases}
1_{\{\varphi \geq h\}}, & x \in V_{2i},\\
1_{\{\varphi \geq h-\varepsilon\}}, & x \notin V_{2i}, 
\end{cases}
\end{equation}
so $\omega_i \in\{ 0,1\}^{\Z^d}$ corresponds to a partial sprinkling outside of $V_{2i}$, and set 
\begin{equation}
\label{eq:Ui}
\mathcal{U}_{i}\stackrel{\textnormal{def.}}{=} \mathcal{U}_{i}(\omega_{i}), \quad U_i \stackrel{\textnormal{def.}}{=}|\mathcal{U}_i|, \quad 0\leq i\leq \floor{\sqrt{N}}. 
\end{equation}
Note that $U_i$ is decreasing in $i$. In view of \eqref{eq:defcross}, \eqref{eq:supercritA} and \eqref{eq:Uij}--\eqref{eq:Ui}, the event $\mathcal{E}(N,h,h-\varepsilon)$ occurs as soon as $A$ does and ${U}_{\lfloor\sqrt{N}\rfloor} =1$. Hence, \eqref{eq:boundA} and \eqref{eq:boundG} give that
\begin{multline}\label{eq:uniqueproof}
\P[\mathcal{E}(N,h,h-\varepsilon)^c] 
\\ \leq \P[A^c] + \P[G^c] + \P[A\cap G\cap\{ {U}_{\lfloor\sqrt{N}\rfloor}  >1\}] \\ 
 \leq CN^d  \P[\nlr{}{\varphi\geq h}{B_{u(10N)}}{\partial B_{10N}}] + e^{- \Cr{c:bridging}'u(N)^\rho} + \P[A\cap G\cap\{{U}_{\lfloor\sqrt{N}\rfloor} >1\}].
\end{multline} 
By the definition of $\tilde{h}$ and the monotonicity of the disconnection event with respect to $h$, the first two terms on the right-hand side of \eqref{eq:uniqueproof} converge to $0$ uniformly in $h\leq \tilde{h}-2\varepsilon$ along a subsequence $N_k\to\infty$. As a consequence, it suffices to prove that the last term tends to 0 uniformly in $h\leq \tilde{h}-2\varepsilon$ as $N\to\infty$ to conclude. 
The proof will be based on the following lemma.
\begin{lemma}
	\label{lem:uniqreduct} There exists a constant $c=c(\varepsilon)>0$ such that for any $h\leq \tilde{h}-2\varepsilon$, $N \geq 1$, any $a\in \mathbb{N}$ with $4\leq a \leq N^{1/4}$ and any  $0\leq i\leq \floor{\sqrt{N}}-a$,
	\begin{equation}
	\label{eq:uniqreduct}
	\P[ A\cap G\cap \{ U_{i+a} > 1\vee 2 U_i /a \} ] \leq \exp(-cN^{1/4}).
	\end{equation}
\end{lemma}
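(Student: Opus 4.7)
The plan is to run a coarse-grained Benjamini--Tassion-type argument across the $a$ disjoint annular shells $W_j:=V_{2j}\setminus V_{2(j+1)}$, $j=i,\ldots,i+a-1$, each of width $2\sqrt N$. Working throughout on $A\cap G$, I would fit inside each $W_j$ a family of $M_j$ pairwise disjoint balls $\Lambda_n^{(k)}:=B_{10\kappa L_n}(z_k^{(j)})$, $k=1,\ldots,M_j$, at an intermediate scale $L_n$ with $L_{n_0}\le L_n\ll\sqrt N$; choosing $L_n$ close to $L_{n_0}=u(10N)$ gives $M_j$ very large (a positive power of $N$). These balls will be the sites at which fresh $\varepsilon$-sprinkling connections can be produced to merge the equivalence classes forming $\mathcal U_j$.

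The key per-ball estimate is the following consequence of Lemma~\ref{lem:sprinkling}. On event $A$, every $L_{n_0}$-subball of $\Lambda_n^{(k)}$ is connected in $\{\varphi\ge h\}$ to $\partial B_{6N}$ and in particular exits $\Lambda_n^{(k)}$, so that after discarding microscopic inclusions the trace of any class $\mathscr C\in\mathcal U_j$ entering $\Lambda_n^{(k)}$ is admissible in the sense of Section~\ref{sec:bridges}. For any two distinct classes $\mathscr C_1,\mathscr C_2\in\mathcal U_j$ admissibly present in $\Lambda_n^{(k)}$, Lemma~\ref{lem:sprinkling} then provides a conditional merge probability at least $q:=\exp(-\Cr{C:bridging}(\log L_n)^2)$ for a $\{\varphi\ge h-\varepsilon\}$-connection inside $\Lambda_n^{(k)}$, given the information outside $\Lambda_n^{(k)}$. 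Since distinct balls involve disjoint portions of the driving noise $\mathbf Z$ (modulo boundary effects absorbed into $G$), these attempts are conditionally independent.

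To propagate this into a reduction of $U_j$, I would pair the classes of $\mathcal U_j$ arbitrarily and dedicate a disjoint batch of $\sim M_j/U_j$ balls in $W_j$ to each pair, giving per-pair merge probability at least $1-(1-q)^{M_j/U_j}$. A Chernoff bound over the $U_j/2$ pairs then yields $U_{j+1}\le 3U_j/4$ with probability $\ge 1-\exp(-cM_jq)$; when $U_j$ is already bounded, the same computation applied in a single dedicated batch of balls yields $U_{j+1}\le U_j-1$ with the same probability bound. Picking $L_n$ so that $M_jq$ exceeds a positive power of $N$ makes the per-shell failure probability at most $\exp(-cN^{1/4})$. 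Iterating this multiplicative decrease over the $a$ shells reaches $U_{i+a}\le 1\vee 2U_i/a$ within $O(\log a)\le O(\log N)$ successful shells, and a union bound over the $a\le N^{1/4}$ shells yields the required $\exp(-cN^{1/4})$ tail.

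The main obstacle is to make the ``independent attempts across disjoint balls'' argument rigorous. Lemma~\ref{lem:sprinkling} provides its lower bound only after conditioning on $\sigma(\mathbf 1_{\varphi_x\ge h};\,x\in S_1\cup S_2)\cap\sigma(\mathbf Z(\Lambda_n^c))$, so combining attempts within a shell requires revealing balls one at a time and absorbing the newly observed level-$h$ indicators and connection outcomes into the conditioning used for the next ball, while relying on the monotonicity of $\mathcal G(S_1,S_2)$ in $\mathbf Z(\Lambda_n)$ (and event $G$) to keep the good event valid throughout the exploration. A secondary combinatorial point, using event $A$ and the fact that every cluster in $\mathcal C$ reaches $\partial B_{4N}$, is to verify that the traces of the relevant classes inside each $\Lambda_n^{(k)}$ are indeed admissible after removal of microscopic components.
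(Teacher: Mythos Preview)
Your overall picture (sprinkling in many disjoint $L_{n_0}$-scale balls inside the shells, using Lemma~\ref{lem:sprinkling} in each, and controlling conditioning via $\sigma(\mathbf Z(\Lambda_n^c))$) is right, but the central step ``pair the classes of $\mathcal U_j$ arbitrarily and dedicate a disjoint batch of $\sim M_j/U_j$ balls to each pair'' does not work as stated. For Lemma~\ref{lem:sprinkling} to apply in a given ball $\Lambda_n^{(k)}$, \emph{both} members of the pair must be admissibly present in that ball. Event $A$ only guarantees that every $L_{n_0}$-subball is touched by \emph{some} cluster of $\mathcal C$, not by any prescribed class; two specific classes may meet in very few balls (think of two classes occupying opposite hemispheres of a thin shell and only touching along a low-dimensional equator). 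So you cannot allocate balls to pairs in advance, and the Chernoff step over $U_j/2$ pairs has no input.

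The paper handles this with two ideas you are missing. First, a pigeonhole reduction: since $U_i=U_{i+a}(\omega_i)+\sum_{j=i}^{i+a-1}U_{j,j+1}(\omega_i)$, there is some $j\in[i,i+a)$ with $U_{j,j+1}(\omega_j)\le U_i/a$, and it is enough to work in that \emph{single} shell. Second, on the bad event $\{U_{j+1}>1\vee(U_j/a+U_{j,j+1}(\omega_j))\}$ one shows there is a \emph{bipartition} $\widetilde{\mathcal U}=\widetilde{\mathcal U}_1\sqcup\widetilde{\mathcal U}_2$ with $|\widetilde{\mathcal U}_1|\le a$ and $\widetilde C_1,\widetilde C_2$ not connected in $\omega_{j+1}$. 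Because both sides cross the shell and (by $A$) their union is $L_{n_0}$-dense there, each of $k\asymp\sqrt N/L_{n_0}$ concentric surfaces in the shell carries an interface vertex, yielding $k$ disjoint balls in which both $\widetilde C_1$ and $\widetilde C_2$ are admissible. A union bound over the $\le(CN^{d-1})^a$ bipartitions with $|\widetilde{\mathcal U}_1|\le a$, times the iterated Lemma~\ref{lem:sprinkling} bound $(1-e^{-C(\log u(N))^2})^k$, then gives \eqref{eq:uniqreduct}. The bipartition-with-small-side device is what simultaneously solves your ``where do the paired classes meet'' problem and keeps the combinatorial cost under control; your pairing scheme has no analogue of either.
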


Admitting Lemma~\ref{lem:uniqreduct}, we first finish the proof of Proposition \ref{prop:uniq}. Observe that, if the event $\bigcap_{0\leq k < M } \{  U_{(k+1)a} \leq 1\vee \frac{2 U_{ka} }{a} \}$ occurs for some $M \geq 1$ and an $a$ as appearing in Lemma \ref{lem:uniqreduct} with $aM \leq \lfloor\sqrt{N}\rfloor$, then either 
\begin{equation}
\label{eq:Ubound1}
{U}_{\lfloor\sqrt{N}\rfloor}  \leq  U_{Ma} \leq \frac{2}{a} U_{(M-1)a} \leq \cdots \leq \Big(\frac{2}{a}\Big)^{M} U_0 = \Big(\frac{2}{a}\Big)^{M} |\mathcal{C}| \stackrel{\eqref{eq:definiq1}}{\leq} C  \Big(\frac{2}{a}\Big)^{M} N^{d-1},
\end{equation}
or $U_{(k+1)a} \leq 1$ for some $0\leq k < M $, in which case ${U}_{\lfloor\sqrt{N}\rfloor}  \leq U_{(k+1)a} \leq 1$ by monotonicity. Thus, letting $M = \lfloor(C' \log N / \log a)\rfloor$, with $C'=C'(d)$ chosen large enough so that the right-hand side of \eqref{eq:Ubound1} is bounded by $1$, we deduce that 
$A\cap G\cap\{{U}_{\lfloor\sqrt{N}\rfloor} >1\} $ implies the event $$ \bigcup_{0\leq k < M } A\cap G \cap  \{  U_{(k+1)a} \geq 1\vee 2 U_{ka} /a \}.$$ 

Applying a union bound over $k$, choosing say, $a=4$, \eqref{eq:uniqreduct} readily yields that 
\begin{equation}
\label{eq:reduct2}
 \P\big[A\cap G\cap\big\{{U}_{\lfloor\sqrt{N}\rfloor} >1\big\}\big] \leq  M \exp(-cN^{1/4}).
\end{equation}
Proposition~\ref{prop:uniq} then follows immediately from \eqref{eq:uniqueproof} and \eqref{eq:reduct2}.
\end{proof}

\begin{figure}[h!]
  \centering 
  \includegraphics[scale=0.44]{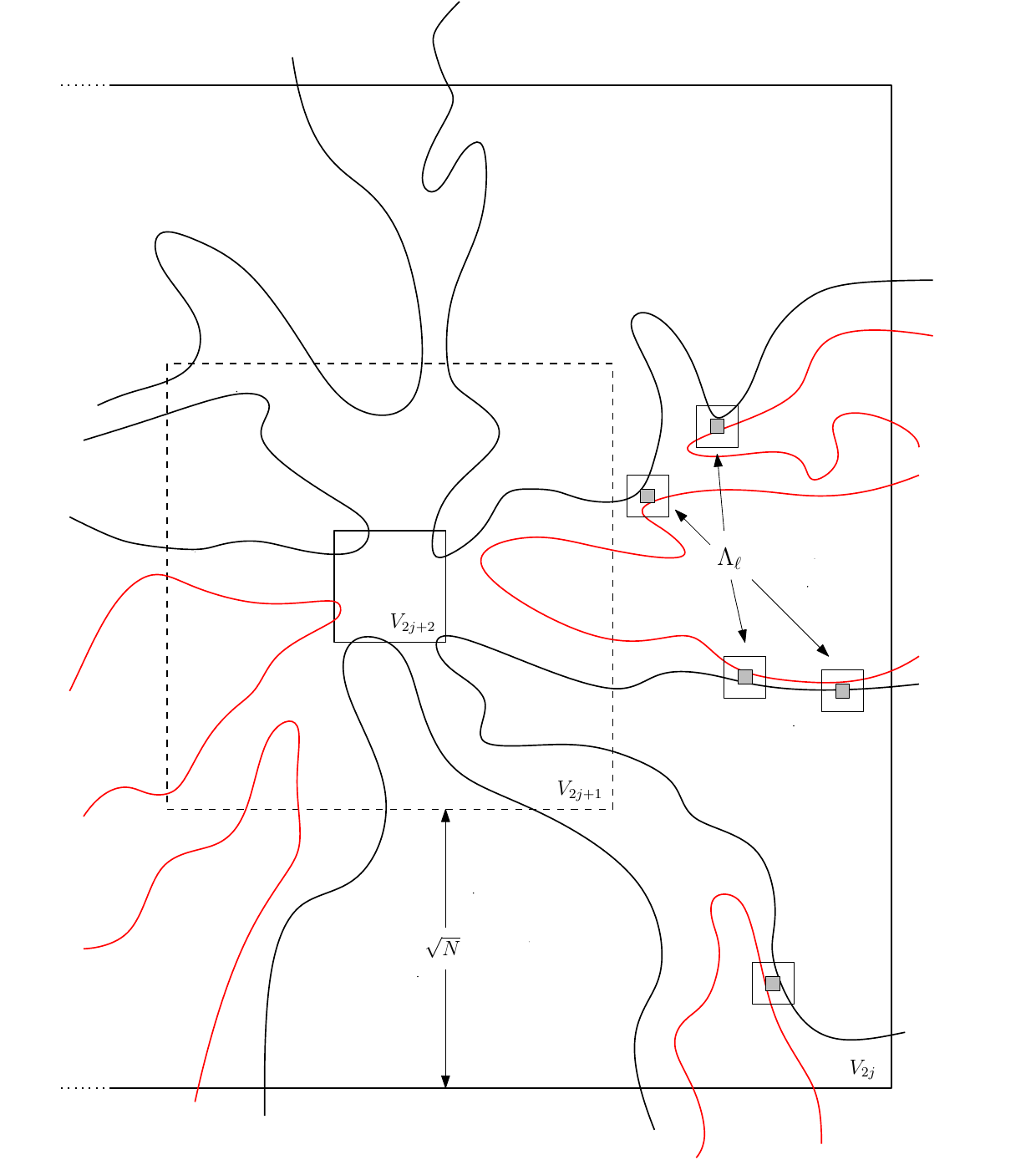}
  \caption{Some of the clusters in $\mathcal{C}$. On the event $A$, these clusters are $u(N)$-dense in the annulus $V_{2j}\setminus V_{2j+2}$. Each of the boxes $\tilde{\Lambda}_{\ell}$ (grey) is intersected by both the clusters in the support of $\widetilde{\mathcal{U}}_1$ (black) and $\widetilde{\mathcal{U}}_2$ (red), see \eqref{Upartition}; the picture corresponds to Case~2, i.e. $ |\mathcal{U}_{j+\frac12,j+1}(\omega_j)| \neq 0$ in the arguments following \eqref{Uconnect}--\eqref{Upartition}. When $G$ occurs, each box $\Lambda_{\ell}$ provides the opportunity to link two admissible pieces of $\widetilde{C}_1$ and $\widetilde{C}_2$ using a good bridge at a cost given by Lemma \ref{lem:sprinkling}.}
  \label{F:dense}
\end{figure}

It remains to give the proof of Lemma~\ref{lem:uniqreduct}. Roughly speaking, upon sprinkling in the corresponding annulus to increase from configuration $\omega_i$ to $\omega_{i+a}$, cf.~\eqref{omega_i}, the presence of the event $G$ in \eqref{eq:uniqreduct} will provide many opportunities for clusters (implied by the event $A$) crossing this annulus to merge at a preferential cost (supplied by Lemma~\ref{lem:sprinkling}), thus rendering it likely for $U_{i+a}$ to decrease significantly compared to $U_i$, as asserted in \eqref{eq:uniqreduct}.

\begin{proof}[Proof of Lemma~\ref{lem:uniqreduct}] We begin with a reduction step. For $\{\omega=1\}\supset \{\varphi \geq h \}$, $0\leq i < \floor{ \sqrt{N}}$ and $k\in \{0,\frac12\}$, let
\begin{equation}
\label{eq:U_ijbis}
\mathcal{U}_{i+k,i+1}(\omega) \stackrel{\textnormal{def.}}{=} \{ \mathscr{C} \in \mathcal{U}_i(\omega): \text{supp}(\mathscr{C}) \cap V_{2(i+1)} =\emptyset, \, \text{supp}(\mathscr{C}) \cap V_{2(i+k)} \neq \emptyset  \},   \end{equation}
and set $U_{i,i+1}(\omega) \stackrel{\textnormal{def.}}{=} |\mathcal{U}_{i,i+1}(\omega) |$. Note that $U_{i,i+1}(\omega)$ is decreasing in $\omega$. Moreover, since the elements of $ \mathcal{U}_i(\omega)\setminus \mathcal{U}_{i,i+1}(\omega) $ each contain a cluster $C \in \mathcal{C}$ intersecting $V_{2(i+1)}$, the map $\psi:  \mathcal{U}_i(\omega) \setminus \mathcal{U}_{i,i+1}(\omega)  \to \mathcal{U}_{i+1}(\omega)$ given by $ \psi(\mathscr{C})\stackrel{\textnormal{def.}}{=} \mathscr{C} \setminus \{ C \in \mathcal{C} : C\cap V_{2(i+1)} =\emptyset\} $ is a bijection. Hence, $U_{i} (\omega) = U_{i+1} (\omega) +{U}_{i,i+1}(\omega) $ and by iteration
$$
 {U}_{i+a }(\omega_i) + \sum_{j: \, i \leq j < i+a} {U}_{j,j+1}(\omega_i)  = {U}_{i }(\omega_i)=U_i,
$$
whence ${U}_{j,j+1}(\omega_i)  \leq U_i/a$ for some $j$ with $ i \leq j < i+a$. Together with a union bound, we see that \eqref{eq:uniqreduct} follows at once if we can show that 
\begin{equation}
\label{eq:uniqreduct_new1}
\P\big[ A\cap G\cap \big\{ U_{j+1}> 1\vee \big( U_{j}/a + {U}_{j, j+1}(\omega_j)   \big)\big\} \big] \leq \exp(-c'N^{1/4}),
\end{equation}
for all $0\leq i\leq \floor{\sqrt{N}}-a$ and $ i \leq j < i+a$. 
To see this, simply notice that $U_{i+a} = U_{i+a}(\omega_{i+a}) \leq  U_{j+1}(\omega_{j+1})=U_{j+1} $ by monotonicity since $i+a \geq j+1$, and similarly that $ U_i \geq U_j $ and $  {U}_{j,j+1}(\omega_i) \geq {U}_{j,j+1}(\omega_j)$, for all $i \leq j$.

We now prove \eqref{eq:uniqreduct_new1} for all $0\leq j< \floor{\sqrt{N}}$. Fix any such $j$ and let $E$ denote the event on the left-hand side of \eqref{eq:uniqreduct_new1}. Recalling \eqref{eq:U_ijbis}, we introduce
\begin{equation}
\label{Utilde}
\widetilde{\mathcal{U}}(\omega_j) \stackrel{\textnormal{def.}}{=}
\begin{cases}
 \mathcal{U}_{j}(\omega_j) \setminus \mathcal{U}_{j,j+1}(\omega_j), & \text{ if } \mathcal{U}_{j+\frac12,j+1}(\omega_j) =\emptyset, \\
( \mathcal{U}_{j}(\omega_j) \setminus \mathcal{U}_{j,j+1}(\omega_j)) \cup\{\widetilde{\mathscr{C}}\}, & \text{ otherwise, } 
\end{cases}
\end{equation}
where  $\widetilde{\mathscr{C}} \stackrel{\textnormal{def.}}{=} \{ C : C\in \mathscr{C} \text{ for some } \mathscr{C} \in \mathcal{U}_{j, j+1}(\omega_j)  \}$ is obtained by merging the elements of $\mathcal{U}_{j, j+1}(\omega_j) $. We drop the argument $\omega_j$ in the sequel and proceed to verify that $\widetilde{\mathcal{U}}$ has the following properties: on the event $E$,
\begin{align}
&\begin{array}{l}
\text{for $A_j=V_{2j}\setminus V_{2j+1}$ or $A_j=V_{2j+1}\setminus V_{2j+2}$, each of the} \\
\text{sets $\text{supp}(\mathscr{C})$, with $\mathscr{C} \in \widetilde{\mathcal{U}}$, crosses $A_j$ and their union}\\
\text{intersects all the balls of radius $L_{n_0}$ contained in $A_j$,}\\
\end{array} \label{Uconnect}\\
&\begin{array}{l}
\text{$\exists$ a non-trivial partition $\widetilde{\mathcal{U}}= \widetilde{\mathcal{U}}_1 \sqcup \widetilde{\mathcal{U}}_2$}\\\text{such that $|\{ \mathscr{C} : \mathscr{C} \in \widetilde{\mathcal{U}}_1  \}| \leq a$ and $\displaystyle \{\nlr{}{\omega_{j+1}}{\widetilde{C}_1}{\widetilde{C}_2}\}$}, 
\end{array} \label{Upartition}
\end{align}
where $ \textstyle \widetilde{C}_i = \bigcup_{\mathscr{C}\in \widetilde{\mathcal{U}}_i}\text{supp}(\mathscr{C}) $. We first check that \eqref{Uconnect} holds with the choice $A_j=V_{2j+1}\setminus V_{2j+2}$ when $|\mathcal{U}_{j+\frac12,j+1}| =0$ (henceforth referred to as Case~1) and $A_j=V_{2j}\setminus V_{2j+1}$ when $|\mathcal{U}_{j+\frac12,j+1}| \neq0$ (Case 2). Indeed, in either case 
each $\mathscr{C} \in \mathcal{U}_{j} \setminus \mathcal{U}_{j,j+1}$ contains a cluster $C$ crossing $V_{2j}\setminus V_{2(j+1)}$, see  \eqref{eq:Uij} and \eqref{eq:U_ijbis}. Moreover, the assumption  $|\mathcal{U}_{j+\frac12,j+1}| \neq 0$ of Case 2 implies that $\widetilde{\mathscr{C}} $ defined below \eqref{Utilde} contains a cluster $C$ crossing $V_{2j}\setminus V_{2j+1}$.

To conclude that \eqref{Uconnect} holds, it thus remains to check that all the $L_{n_0}$-balls in $A_j$ are intersected by the set $\bigcup_{\mathscr{C} \in \widetilde{\mathcal{U}}}\text{supp}(\mathscr{C})$. First note that on the event $E \subset A$ (recall \eqref{eq:supercritA}), by definition of $ \mathcal{U}_j$ each such ball is intersected by $\text{supp}(\mathscr{C})$, for some $\mathscr{C} \in \mathcal{U}_j$. Since each $\mathscr{C} \in \mathcal{U}_j$ belongs to a group of $\widetilde{\mathcal{U}}$ in Case 2, the claim immediately follows. In Case 1, the assumption $ |\mathcal{U}_{j+\frac12,j+1}(\omega_j)| =0$ implies that none of the sets $\text{supp}(\mathscr{C})$, $\mathscr{C} \in  \mathcal{U}_{j,j+1} $, intersects $V_{2j+1}\setminus V_{2j+2}=A_j$ and \eqref{Uconnect} follows as well.

We now argue that \eqref{Upartition} holds. It suffices to show that in either case,
\begin{equation}
\label{eq:Upidgeonhole}
\text{on $E$, } \quad |\widetilde{\mathcal{U}} /\sim_{\omega_{j+1}}| \geq \frac{|\widetilde{\mathcal{U}} |}{a},
\end{equation}
where, with hopefully transparent notation, we extend the relation \eqref{eq:equiv} by declaring $\mathscr{C}\sim_{\omega} \mathscr{C}'$ for arbitrary $\mathscr{C}, \mathscr{C}'\in 2^{\mathcal{C}}$ if $\lr{}{\omega}{C}{C'}$  for some $C \in  \mathscr{C}$ and $C' \in \mathscr{C}'$. Indeed if \eqref{eq:Upidgeonhole} holds then at least one element of $\widetilde{\mathcal{U}} /\sim_{\omega_{j+1}}$ is obtained by merging at most $a$ elements of $\widetilde{\mathcal{U}} $, and its clusters are not connected to their complement in $\widetilde{\mathcal{U}} $ by definition of $\sim_{\omega_{j+1}}$. In Case $1$ we simply use that $U_{j+1}\geq U_{j} /a$ on $E$ by \eqref{eq:uniqreduct_new1}, from which \eqref{eq:Upidgeonhole} follows because $ U_{j}=|\mathcal{U}_j| \geq |\widetilde{\mathcal{U}}|$, cf.~\eqref{Utilde}, and 
$$|\widetilde{\mathcal{U}} /\sim_{\omega_{j+1}}|=|\mathcal{U}_{j+1}(\omega_j) /\sim_{\omega_{j+1}} | =|\mathcal{U}_{j+1}(\omega_{j+1})|= U_{j+1}$$ (regarding the first of these equalities, see the discussion following \eqref{eq:U_ijbis} and the definition below \eqref{eq:Upidgeonhole}). In Case $2$ we deduce \eqref{eq:Upidgeonhole} from $U_{j+1}>  |\widetilde{\mathcal{U}}|/a + {U}_{j, j+1}(\omega_j) $, which holds on $E$ due to \eqref{eq:uniqreduct_new1} and \eqref{Utilde}, together with the inequality $|\widetilde{\mathcal{U}} /\sim_{\omega_{j+1}}| \geq U_{j+1} -  {U}_{j, j+1}(\omega_j)$. 

To see the latter, one thinks of $ \mathcal{U}_{j+1}(\omega_{j+1}) $, which has $U_{j+1}$ elements, as obtained from $\mathcal{U}_{j}(\omega_{j})$ by first forming $\mathcal{U}_{j}(\omega_{j}) /\sim_{\omega_{j+1}}$ and then removing the clusters $
C\in \mathcal{C}$ not intersecting $ V_{2(j+1)}$ from the resulting groups. As $\widetilde{\mathcal{U}} $ is formed from $\mathcal{U}_{j}(\omega_{j})$ by merging the elements of $\mathcal{U}_{j, j+1}(\omega_j)$, the quotient $\widetilde{\mathcal{U}} /\sim_{\omega_{j+1}}$ will cause at most $ {U}_{j, j+1}(\omega_j)$ of the elements in $\mathcal{U}_{j}(\omega_{j}) /\sim_{\omega_{j+1}}$ to merge, yielding the desired inequality.

\medskip
As a consequence of \eqref{Uconnect} and \eqref{Upartition}, we deduce that on $E$, there exist $k\stackrel{\textnormal{def.}}{=} \lceil \sqrt{N}/(100 \kappa L_{n_0}) \rceil$ disjoint balls $\Lambda_1, \dots, \Lambda_k$ of radius $10\kappa L_{n_0}$ centered in $\mathbb{L}_{n_0}$ and contained in $A_j$ such that each $\tilde\Lambda_\ell$ intersects both sets $\widetilde{C}_1$ and $\widetilde{C}_2$ defined in \eqref{Upartition}, where $\tilde{\Lambda}_\ell$ denotes the ball of radius $8\kappa L_{n_0}$ with the same center as $\Lambda_\ell$. One constructs the balls $\tilde\Lambda_\ell$, $1\leq \ell \leq k$, for instance as follows: consider the shells $\mathbb{S}_{\ell} \stackrel{\textnormal{def.}}{=} \partial B(V, \ell \cdot 50\kappa L_{n_0})$ with $V=V_{2j+1}$ or $V_{2j+2}$ depending on $A_j$, so that $\mathbb{S}_{\ell} \subset A_j$ for all $1\leq \ell \leq k$. Color a vertex $x\in \mathbb{S}_{\ell}$ black if $B_{L_{n_0}}(x)$ intersects $\widetilde{C}_1$ and red if it intersects $\widetilde{C}_2$. By \eqref{Uconnect}, each vertex in $\mathbb{S}_{\ell}$ is black or red (or both) and $\mathbb{S}_{\ell}$ contains at least one black and one red vertex (each possibly carrying the other color as well), as $\widetilde{C}_1$ and $\widetilde{C}_2$ both cross $\mathbb{S}_{\ell}$. In particular, there exists a pair of neighboring vertices in $\mathbb{S}_{\ell}$ carrying a different color. The ball $\tilde\Lambda_\ell$ centered at the closest vertex in $\mathbb{L}_{n_0}$ from this pair will then have the desired properties.

 Finally, we note that if $E\subset G$ happens, cf.~\eqref{eq:supercritG}, then each ball ${\Lambda}_\ell$ is good. Now, conditioning on the possible realizations $\{ \mathscr{C}\}$ of $\widetilde{\mathcal{U}}$ and applying a union bound on the partition $\{ \mathscr{C}\} = \{ \mathscr{C}\}_1 \sqcup \{ \mathscr{C}\}_2$ provided by \eqref{Upartition} (where $ \{ \mathscr{C}\}_i$ corresponds to the realization of $\widetilde{\mathcal{U}}_i$ on the event $\{ \widetilde{\mathcal{U}} =\{ \mathscr{C}\}\}$), we get, with ${C}_i = \bigcup_{ \mathscr{C} \in \{ \mathscr{C}\}_i}\text{supp}( \mathscr{C})$,
\begin{multline}
\label{eq:proofuniq1}
\P[E]\le \sum_{\{ \mathscr{C}\}} \P\big[\,\widetilde{\mathcal{U}}=\{ \mathscr{C}\}\big] \times \\ \sum_{\substack{\{ \mathscr{C}\} = \{ \mathscr{C}\}_1 \sqcup \{ \mathscr{C}\}_2 \\| \{ \mathscr{C}\}_1 |\leq a}} 
\P\Big[\bigcap_{\ell\leq k} \{\text{${\Lambda}_\ell$ is good}\} \cap \{\nlr{\Lambda_{\ell}}{\varphi\geq h-\varepsilon}{{C}_1}{{C}_2}\} ~\Bigl\vert~ \widetilde{\mathcal{U}}=\{ \mathscr{C}\} \Big].
\end{multline}
Notice that the subsets of $\Lambda_\ell$ defined by $S_i^\ell\stackrel{\textnormal{def.}}{=} B({C}_i,1)\cap {\Lambda}_\ell$, $i=1,2$, are admissible for all $1\leq \ell\leq k$. We deduce that
\begin{equation}
\label{eq:proofuniq2}
\begin{split}
&\P\Big[\bigcap_{\ell\leq k} \{\text{${\Lambda}_\ell$ is good}\} \cap \{\nlr{\Lambda_\ell}{\varphi\geq h-\varepsilon}{{C}_1}{{C}_2}\}\Bigl\vert \, \widetilde{\mathcal{U}}=\{ \mathscr{C}\} \Big]\\
& \leq  \P\Big[\bigcap_{\ell\leq k} \mathcal{G}(S_1^\ell, S_2^\ell) \cap \{\nlr{{\Lambda}_\ell}{\varphi\geq h-\varepsilon}{S^\ell_1}{S^\ell_2}\} \,\Bigl\vert\, \widetilde{\mathcal{U}}=\{ \mathscr{C}\} \Big] \\
& \leq  \P\Big[\bigcap_{\ell\leq k} \{\nlr{{\Lambda}_\ell}{\varphi\geq h-\varepsilon}{S^\ell_1}{S^\ell_2}\}\, \Bigl\vert\, \widetilde{\mathcal{U}}=\{ \mathscr{C}\}, \bigcap_{\ell\leq k} \mathcal{G}(S_1^\ell, S_2^\ell)\Big]\\
& =\prod_{\ell\leq k} \P\Big[\nlr{{\Lambda}_\ell}{\varphi\geq h-\varepsilon}{S^\ell_1}{S^\ell_2}\, \Bigl\vert\, \widetilde{\mathcal{U}}=\{ \mathscr{C}\}, \bigcap_{j\leq k} \mathcal{G}(S_1^j, S_2^j), \bigcap_{j<\ell} \{\nlr{{\Lambda}_j}{\varphi\geq h-\varepsilon}{S^j_1}{S^j_2}\}\Big]\\
& \leq (1-e^{-\Cr{c:bridging}(\log u(N))^2})^k,
\end{split}
\end{equation}
where in the last line we used Lemma~\ref{lem:sprinkling} in order to bound each one of the $k$ terms in the product by $1-e^{-\Cr{c:bridging}(\log u(N))^2}$ (one can easily check that the event in the conditioning can indeed be written as $D\cap E\cap \mathcal{G}(S_1^\ell, S_2^\ell)$, with $D\in \sigma(1_{\varphi_x\geq h};~x\in S^\ell_1\cup S^\ell_2)$ and $E\in \sigma({\mb Z}(\Lambda_\ell^c))$). Now combining \eqref{eq:proofuniq1}, \eqref{eq:proofuniq2} and the fact that, as $|\{ \mathscr{C}\}|\leq |\partial B_{4N}| \leq CN^{d-1}$, the number of partitions $\{ \mathscr{C}\} = \{ \mathscr{C}\}_1 \sqcup \{ \mathscr{C}\}_2$ with $| \{ \mathscr{C}\}_1|\leq a$ is at most $(CN^{d-1})^{a}$, we get
\begin{equation}
\label{eq:finalE}
\P[E]\le (CN^{d-1})^{a}(1-e^{-\Cr{c:bridging}(\log u(N))^2})^k \leq  \exp(-c'N^{1/4}),
\end{equation}
recalling that $k\geq c \sqrt{N}/ u(N)$ and $a \leq N^{1/4}$, as required by \eqref{eq:uniqreduct_new1}. This completes the proof of Lemma \ref{lem:uniqreduct}.
\end{proof}

\begin{remark}
\label{R:u_1}
The estimate \eqref{eq:finalE} imposes a constraint on the choice of $u(\cdot)$ in the definition of $\tilde h$ of the form $(\log u(N))^{2} \ll \log N$ (in particular, any choice $\log u(N) =(\log N)^{1/(2 +\delta)}$, $\delta > 0$ would be sufficient, but $u(N)$ needs to be subpolynomial). This constraint is indirectly caused by the lower bound derived in \eqref{eq:sprinkling}.
\end{remark}

\subsection{Renormalization}\label{subsec:renormalization}

In the sequel, consider a fixed $\varepsilon>0$ and any
$h \leq \tilde{h}-6\varepsilon$. We will eventually show that $\varphi$ strongly percolates at level $h$ to deduce Proposition \ref{prop:supercritical}. For $L_0 \geq 100$ and $x \in  \widetilde{\mathbb{L}}_0\stackrel{\textnormal{def.}}{=}L_0\Z^d$, define $x$ to be ($L_0$-)\textit{good} if (see \eqref{eq:defcross} for notation) the translate by $x$ of $\mathcal{E}(L_0,h+2\varepsilon,h+\varepsilon)$ occurs and $\sup_{B_{6L_0}(x)} |\varphi - \varphi^0 | \leq M$ with $M=M(L_0) \stackrel{\textnormal{def.}}{=} (\log L_0)^2$.
Whenever $x \in \widetilde{\mathbb{L}}_0$ is good, the set $B_{L_0}(x)$ will be called a ($L_0$)-\nolinebreak {\em good box}. Finally, let $\mathscr{S}_N$ be the $L_0$-neighborhood of the connected component of good vertices in $\widetilde{\mathbb{L}}_0(B_{2N})$ intersecting $B_{N/2}$ with largest diameter (if there is more than one such component, choose the smallest in some deterministic order). Notice that $\mathscr{S}_N$ is measurable with respect to the $\sigma$-algebra
\begin{equation}
\label{eq:sigma_F}
\begin{split}
\mathcal{F}\stackrel{\textnormal{def.}}{=} \sigma \big(
& \varphi_x-\varphi^0_x, \,1\{\varphi_x  \geq h +m\varepsilon\}, \, m=1,2, \, x\in  \Z^d \big).
\end{split}
\end{equation}
It will be important below that $\mathcal{F}$ does not completely determine $\varphi^0$, i.e.~$\varphi^0$ is not $\mathcal{F}$-measurable.

The following result asserts that with very high probability, $\mathscr{S}_N$ (under $\P$) is ubiquitous at a mesoscopic scale of order $N^{1/2}$ inside $B_N$, for all sufficiently large $N$. 

\begin{proposition}\label{prop:ubiq} For every $\varepsilon>0$, there exist constants $\rho=\rho(d)\in(0,\frac12)$, $L_0=L_0(d,\varepsilon)\geq100$ and $\Cl[c]{c:renorm}=\Cr{c:renorm}(d,\varepsilon)>0$  such that for all $h<\tilde{h}-6\varepsilon$ and $N \geq 1$,
\begin{align}
&\label{eq:uniqnew6}
\P\left[\begin{array}{c}\text{$\mathscr{S}_N$ intersects every connected set}\\ \text{$S \subset B_N$ with } \text{diam}(S) \geq N^{1/2} \end{array}\right] \geq 1- \exp(-\Cr{c:renorm}N^{\rho}).
\end{align}
\end{proposition}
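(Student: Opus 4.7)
The plan is to promote the single-scale control of Proposition~\ref{prop:uniq} into a multi-scale stretched-exponential bound via a renormalization argument, in the spirit of those developed for the GFF in \cite{RodriguezSznitman13, MR3390739, drewitz2018geometry}.

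\textbf{Step 1 (Seeding).} First pick $L_0$ large along a suitable subsequence. Applying Proposition~\ref{prop:uniq} at level $h+2\varepsilon$ (permissible since $h+2\varepsilon \leq \tilde h - 4\varepsilon \leq \tilde h - 2\varepsilon$) with sprinkling $\varepsilon$, we have $\P[\mathcal{E}(L_0, h+2\varepsilon, h+\varepsilon)] \geq 1-\delta/2$ uniformly in such $h$. Lemma~\ref{lem:def_infty} gives $\P[\sup_{B_{6L_0}}|\varphi - \varphi^0| \leq (\log L_0)^2] \geq 1 - \delta/2$ for all $L_0$ large. Hence $\P[0 \text{ is } L_0\text{-good}] \geq 1 - \delta$ for any prescribed $\delta > 0$.

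\textbf{Step 2 (Geometric glue).} If $x, y \in \widetilde{\mathbb{L}}_0$ are $L_0$-good with $|x-y|_\infty = L_0$, then the crossing cluster at $y$ from $B_{L_0}(y)$ to $\partial B_{6L_0}(y)$ in $\{\varphi \geq h+2\varepsilon\}$ necessarily crosses $B_{4L_0}(x) \setminus B_{2L_0}(x)$ (since $\partial B_{6L_0}(y)$ lies outside $B_{4L_0}(x)$ while $B_{L_0}(y) \subset B_{2L_0}(x)$); by the uniqueness part of $\mathcal{E}(L_0, h+2\varepsilon, h+\varepsilon)$ at $x$, it is connected in $\{\varphi \geq h+\varepsilon\}\cap B_{4L_0}(x)$ to the crossing cluster at $x$. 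Iterating along a nearest-neighbor path in $\widetilde{\mathbb{L}}_0$, any connected component of good sites of diameter $D$ carries a single connected cluster of $\{\varphi \geq h\}$ of diameter $\geq D$.

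\textbf{Step 3 (Cascade and conclusion).} Set $\tilde L_n := L_0 \ell_0^n$ and call $B_{\tilde L_n}$ \emph{$n$-bad} if it contains a connected component of bad sites of diameter $\geq \tilde L_n/10$. An $n$-bad box forces two well-separated $(n-1)$-bad sub-boxes, and writing $\varphi = \varphi^{\tilde L_{n-1}} + (\varphi - \varphi^{\tilde L_{n-1}})$ together with Lemma~\ref{lem:def_infty} on the residual turns this into a recursion of the form
\begin{equation*}
q_n \leq C q_{n-1}^2 + \exp\bigl(-c \tilde L_{n-1}^{(d-2)/4}\bigr)
\end{equation*}
for the probability $q_n$ of an $n$-bad event. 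For $\delta$ small enough, this recursion closes and yields $q_n \leq \exp(-c' \tilde L_n^{\rho_0})$ for some $\rho_0 \in (0,1)$. Choosing $n(N)$ so that $\tilde L_{n(N)} \asymp N^{1/2}$, outside of an event of probability $\exp(-c N^\rho)$ with $\rho \in (0, 1/2)$ all bad components in $B_{2N}$ have diameter $\leq N^{1/2}/10$, and hence the good sites in $\widetilde{\mathbb{L}}_0(B_{2N})$ form a single connected subgraph intersecting $B_{N/2}$ and touching every sub-box of diameter $\geq N^{1/2}/2$. By Step~2, its $L_0$-neighborhood $\mathscr{S}_N$ therefore intersects every connected $S \subset B_N$ with $\mathrm{diam}(S) \geq N^{1/2}$.

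\textbf{Main obstacle.} The cascade must cope with the long-range dependence of good events. The crucial device is the additive decomposition of $\varphi$ from Section~\ref{subsec:decomposition}: splitting the field into a finite-range part and a small residual allows one to decouple good events in distant boxes at a cost (controlled by Lemma~\ref{lem:def_infty}) that beats the combinatorial factor arising from the cascade. This is the same mechanism that underlies Lemma~\ref{lem:bridging} and Theorem~\ref{T:bridge1}, and one could alternatively phrase the present cascade in terms of the bridge framework of Section~\ref{sec:bridges}.
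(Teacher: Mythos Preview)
Your high-level plan matches the paper's: seed via Proposition~\ref{prop:uniq}, glue neighbouring good boxes via the uniqueness part of $\mathcal{E}$, and renormalize to get stretched-exponential decay. Step~2 is correct and is exactly how the paper uses the seed event geometrically.

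There is, however, a genuine gap in Step~3. The recursion $q_n \leq C q_{n-1}^2 + \exp(-c\tilde L_{n-1}^{(d-2)/4})$ does not follow from the decomposition $\varphi = \varphi^{\tilde L_{n-1}} + (\varphi - \varphi^{\tilde L_{n-1}})$ and Lemma~\ref{lem:def_infty} alone, because the $L_0$-good event is \emph{non-monotone} in $\varphi$: the uniqueness part of $\mathcal{E}(L_0,h+2\varepsilon,h+\varepsilon)$ is neither increasing nor decreasing, and the condition $\sup|\varphi-\varphi^0|\le M$ is two-sided. Knowing that the residual $|\varphi-\varphi^{\tilde L_{n-1}}|$ is small on two well-separated $(n-1)$-boxes therefore does \emph{not} let you replace the two correlated $(n-1)$-bad events by independent $\sigma(\varphi^{\tilde L_{n-1}})$-measurable surrogates of comparable probability; for monotone events a sprinkling would do this, but here it fails. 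One can attempt a cascade with scale-dependent parameters (shifting both levels in $\mathcal{E}$ and the threshold $M$ at every step), but then the recursion involves a different quantity at each scale and requires careful bookkeeping---that is not what you wrote.

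The paper circumvents this by engineering the seed to have finite range from the outset: it defines $F_{0,x}=F_x^{(1)}\cap F_x^{(2)}\cap F_x^{(3)}$ entirely in terms of $\varphi^{L_0}$ at further-sprinkled levels, and introduces independent increment events $H_{n,x}$ (measurable in $\varphi^{L_n}-\varphi^{L_{n-1}}$) so that $F_{0,x}\cap\bigcap_{n\ge1}H_{n,x}\subset\{x\text{ is good}\}$. These families satisfy conditions \eqref{C1}--\eqref{C3} exactly, and Corollary~\ref{cor:0bridge} then yields, with probability at least $1-2^{-2^n}$, a connected set of good $0$-boxes meeting any two admissible sets at scale $L_n\asymp N^{1/2}$. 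This is precisely the route you flag as an ``alternative'' in your final paragraph; in fact it is the fix for the gap in your primary argument, not a mere rephrasing.
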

\begin{proof}  Consider the event $F_{0,x}\stackrel{\textnormal{def.}}{=} F_{ x}^{(1)} \cap F_{ x}^{(2)} \cap F_{ x}^{(3)}$ defined for $x \in \widetilde{\mathbb{L}}_0$
, where
 \begin{align*}
 &F_{ x}^{(1)} =\big\{ \lr{}{\{ \varphi^{L_0} \geq h + 3 \varepsilon\}}{B_{L_0}(x)}{\partial B_{6L_0}(x)}\big\}, \\[0.3em]
 &F_{ x}^{(2)} =  \left\{\begin{array}{c}\text{all clusters in $\{\varphi^{L_0}\geq h+\frac53 \varepsilon\}\cap B_{4L_0}(x)$ crossing} \\ \text{ the annulus $B_{4L_0}(x)\setminus B_{2L_0}(x)$ are connected}\\ \text{to each other in $\{\varphi^{L_0}\geq h+\frac43\varepsilon\}\cap B_{4L_0}(x)$ } \end{array}\right\}, \\[0.3em]
 &F_{ x}^{(3)} =  \left\{ \textstyle \sup_{y\in B_{6L_0}(x)} |\varphi^{L_0}_y - \varphi^0_y | \leq  M(L_0)-1\right\}.
 \end{align*}
 These choices are motivated by \eqref{eq:F+H=good} below. The events $F_{0,x}$, $x\in \widetilde{\mathbb{L}}_0$, are typical, meaning that
\begin{equation}
 \label{eq:propuniqtohbar9}
\limsup_{L_0 \to \infty} \P[F_{0,0}] =1.
\end{equation}
Indeed, since $ h + 4\varepsilon < \tilde h-2\varepsilon$ and 
\begin{equation*}
\begin{split}
&F_{0}^{(1)}  \supset \mathcal{E}(L_0,h+4\varepsilon,h+3\varepsilon) \cap \{ \textstyle \sup_{B_{6L_0}}|\varphi-\varphi^{L_0}| \leq  \varepsilon \}, \\
&\textstyle F_{0}^{(2)} \supset \mathcal{E}(L_0,h+\frac{14}9\varepsilon,h+\frac{13}9\varepsilon) \cap \{ \textstyle \sup_{B_{6L_0}}|\varphi-\varphi^{L_0}| \leq  \frac{\varepsilon}{9} \},
\end{split}
\end{equation*}
(the first of these inclusions only requires the existence part of the event $\mathcal{E}$, the second only the uniqueness part, cf.~\eqref{eq:defcross})
 it follows using Proposition~\ref{prop:uniq} and Lemma \ref{lem:def_infty} that 
 $$\limsup_{L_0\to \infty} \P[ F_{0}^{(1)} \cap F_{0}^{(2)} ] =1.$$ The fact that $\P[ F_{0}^{(3)} ]$ tends to $1$ as $L_0 \to \infty$ follows immediately from a union bound and a standard Gaussian tail estimate, since $\E[(\varphi_0^{L_0} - \varphi_0^0)^2]$ is  bounded uniformly in $L_0$.
 
We now aim at applying Corollary \ref{cor:0bridge}. For a sequence of length scales $(L_n)_{n\geq 0}$ as in \eqref{eq:bridge1} (recall that $\ell_0$ has been fixed at the beginning of Section \ref{subsec:bridging}) and $\widetilde{\mathbb{L}}_n \stackrel{\text{def.}}{=} L_n\Z^d$, let
\begin{equation}
\label{eq:H_newnew}
H_{n,x}=\big\{\sup_{y\in B_{L_n}(x)}|\varphi_y^{L_n}-\varphi_y^{L_{n-1}}|\leq \frac{2\varepsilon}{(\pi n)^2}  \big\}, \ x \in \widetilde{\mathbb{L}}_n,\, n \geq 1.
\end{equation}
Together, \eqref{eq:H_newnew} and the definitions of $F_{ x}^{(1)}$, $F_{ x}^{(2)}$ and $F_{ x}^{(3)}$ yield that 
\begin{equation}
\label{eq:F+H=good}
\big(F_{0,x} \cap \bigcap_{n \geq 1}H_{n,x} \big)\subset \{x \text{ is good} \} , \quad x \in \widetilde{\mathbb{L}}_0, 
\end{equation}
with the notion of goodness introduced above \eqref{eq:sigma_F}.
Using Lemma~\ref{lem:def_infty} and \eqref{eq:propuniqtohbar9}, we then choose $L_0$ large enough such that the following hold: the probabilities of the events $H_{n,x}$ and of the seed events $F_{0,x}$ satisfy the bounds in \eqref{C3}. It follows that all the assumptions of Corollary~\ref{cor:0bridge} (with the notion of admissibility given by Remark \ref{R:bridges}, 2)) are in force. 
With all parameters fixed, consider any $N \geq L_1$ and choose $n\stackrel{\textnormal{def.}}{=} \max \{ k : L_{k+1} \leq N^{1/2}\}$, so that
\begin{equation}
\label{eq:uniqnew10}
\frac{N^{1/2}}{\ell_0^2} < L_n \leq \frac{N^{1/2}}{\ell_0}.
\end{equation}
We now define a random set $\tilde{\mathscr{S}}_N$ which will soon be shown to satisfy $\tilde{\mathscr{S}}_N \subset {\mathscr{S}}_N$ and to have similar connectivity properties as those required of $ {\mathscr{S}}_N$ in \eqref{eq:uniqnew6} with very high probability. For a vertex $x \in \widetilde{\mathbb{L}}_n$, we write $\text{Comp}(x)$ for the largest (in diameter) connected component in $\widetilde{\mathbb{L}}_0$ of vertices $y \in (\widetilde{\mathbb{L}}_0 \cap B(x,4\kappa L_n))$ such that $F_{0,y} \cap \bigcap_{n \geq 1}H_{n,y}$ occurs (if several such components exist, choose the one containing the smallest vertex for some given deterministic ordering of the vertices in $\widetilde{\mathbb{L}}_0$). We then set $\overline{\text{Comp}}(x)$ to be the connected component of $\text{Comp}(x)$ inside $\{ y \in \widetilde{\mathbb{L}}_0 \cap B(x, 10\kappa L_n) : \text{ $F_{0,y} \cap \bigcap_{n \geq 1}H_{n,y}$ occurs}  \}$ and define
\begin{equation}
\label{eq:uniqnew11}
\tilde{\mathscr{S}}_N = \bigcup_{x \in  \widetilde{\mathbb{L}}_n \cap B_N} \bigcup_{y \in \overline{\text{Comp}}(x)} B_{L_0}(y).
\end{equation}
Since $\tilde{\mathscr{S}}_N \subset B_{N+ 10\kappa L_n+L_0}$, it  follows on account of \eqref{eq:uniqnew10} that  $\tilde{\mathscr{S}}_N \subset B_{2N}$ whenever $N \geq C(h)$, which will be tacitly assumed.

We then consider, with $\mathcal{G}_{n,x}^0$ being the translate of $\mathcal{G}_{n}^0$ by $x$ (see \eqref{eq:bridge.GOOD_0} for the definition of $\mathcal{G}_n^0$),  
\begin{equation}
\label{eq:uniqnew12}
G_N\stackrel{\textnormal{def.}}{=} \bigcap_{x \in  \widetilde{\mathbb{L}}_n \cap B_N} \mathcal{G}_{n,x}^0 
\end{equation}
and proceed to verify that $G_N$ is contained in the event appearing on the left-hand side of \eqref{eq:uniqnew6} with $\tilde{\mathscr{S}}_N$ in place of ${\mathscr{S}}_N$. The lower bound asserted in \eqref{eq:uniqnew6} then follows by applying a union bound over $x$, using \eqref{eq:uniqnew10} and Corollary \ref{cor:0bridge}.

We now check the desired inclusion. First, by \eqref{eq:F+H=good} and \eqref{eq:uniqnew11}, $\tilde{\mathscr{S}}_N$ is the $L_0$-neighborhood of a set of good vertices. Moreover, the set $\tilde{\mathscr{S}}_N$ is connected on $G_N$, as we now explain. To this end, it suffices to argue that on $G_N$, for any two points $x,y \in\widetilde{\mathbb{L}}_n \cap B_N$ with $|x-y|=L_n$,
\begin{equation}
\label{eq:uniqnew13}
 \Big( \bigcup_{z \in \overline{\text{Comp}}(x)} B_{L_0}(z) \Big)\cap  \Big( \bigcup_{z \in \overline{\text{Comp}}(y)} B_{L_0}(z) \Big) \neq \emptyset.
\end{equation}
To see this, first note that $ \text{diam}(S_x ) \geq \kappa L_n$ where $S_x \stackrel{\textnormal{def.}}{=} \bigcup_{z \in \overline{\text{Comp}}(x)} B_{L_0}(z) $. Indeed, any two fixed opposite faces of the box $B_{\kappa L_n}(x)$ form two sets of diameter larger than $\kappa L_n$ in $B_{8\kappa L_n}(x)$, which, on account Remark~\ref{R:bridges},2), are connected on the event $G_N \subset \mathcal{G}_{n,x}^0$ by the $L_0$-neighborhood of a path consisting of vertices $v \in \widetilde{\mathbb{L}}_0 \cap B_{10\kappa L_n}(x)$ such that $F_{0,{v}} \cap \bigcap_{n \geq 1}H_{n,v}$ occurs. In particular, $\text{diam}(S_x \cap B_{4\kappa L_n}(x)) \geq \kappa L_n$. One deduces in the same way that $\text{diam}(S_y) \geq \kappa L_n$. Since both $S_x, S_y \subset B_{8\kappa L_n}(x)$, a similar reasoning using $\mathcal{G}_{n,x}^0$ implies \eqref{eq:uniqnew13}.

Now we show that every connected set $S \subset B_N$ with $\text{diam}(S) \geq N^{1/2}$ intersects $\tilde{\mathscr{S}}_N$. By \eqref{eq:uniqnew10}, $\text{diam}(S) \geq \ell_0 L_n \geq 10\kappa L_n$ (recall that $\ell_0\geq 10\kappa$), whence, by considering an $ x \in  \widetilde{\mathbb{L}}_n \cap B_N$ such that $B_{L_n}(x) \cap S \neq \emptyset$, it immediately follows that $B_{4 \kappa L_n}(x) \cap S$ has a connected component with diameter at least $\kappa L_n$. The event $\mathcal{G}_{n,x}^0$ then ensures as in the previous paragraph that $S \cap \bigcup_{y \in \overline{\text{Comp}}(x)} B_{L_0}(y) \neq \emptyset$. Thus $S$ intersects~$\tilde{\mathscr{S}}_N$. 

It is an easy consequence of the two previous paragraphs that $\mathscr{S}_N$ 
contains $\tilde{\mathscr{S}}_N$ and therefore intersects every connected set $S
\subset B_N$ with $\text{diam}(S) \geq N^{1/2}$. The claim \eqref{eq:uniqnew6} follows.
\end{proof}

\begin{remark}\label{R:hbar_alternative}
Following \cite{MR3417515}, one may define a weaker definition of $\bar h$ by considering instead of $\text{Unique}(R,\alpha)$ in \eqref{eq:UNIQUE1} the event $\text{Unique}(R,\alpha, \beta)$ which allows to connect the clusters of $\{\varphi\geq\alpha\}\cap B_{R}$ of interest with a sprinkling, i.e. in $\{\varphi\geq \beta\}\cap B_{2R}$, for $\alpha > \beta$.
 One then introduces a corresponding notion of strong percolation at levels $(\alpha, \beta)$ by requiring bounds analogous to \eqref{eq:barh1} and defines 
 $${\bar{h}}' \stackrel{\textnormal{def.}}{=} \sup\big\{h\in \R:~ \varphi \textrm{ strongly percolates at levels } (\alpha , \beta)~\textrm{for all } \beta < \alpha < h \big\}
.$$ 
Then Proposition \ref{prop:uniq} and the renormalization scheme from the proof of Proposition \ref{prop:ubiq} (using Corollary \ref{cor:0bridge}) readily yield the proof of Proposition \ref{prop:supercritical} for this alternative (and weaker) definition of $\bar h$. The additional arguments of Section \ref{subsec:supercriticalproof} are needed to deal with the (stronger) case $\alpha=\beta$.
\end{remark}

\subsection{Proof of Proposition~\ref{prop:supercritical}}\label{subsec:supercriticalproof}

In view of \eqref{eq:barh1} and \eqref{eq:barh3}, in order to conclude the proof of Proposition~\ref{prop:supercritical}, it suffices to show that there exists $c=c(\varepsilon) >0$ such that for all $h\leq \tilde{h}-6\varepsilon$,
	\begin{equation}
	\label{eq:propuniqtohbar1}
	\P[{\rm Exist}(N,h)^c] \leq \exp(-cN^{\rho})
	\quad\text{and}\quad
	\P[{\rm Unique}(N, h)^c] \leq \exp(-cN^{\rho}),
	\end{equation}
 	where $\rho$ is given by Proposition~\ref{prop:ubiq}. Fix  $h\leq \tilde{h}-6\varepsilon$ and let $A$ be the event on the left-hand side of \eqref{eq:uniqnew6}. As a consequence of the definition of good boxes, see above \eqref{eq:sigma_F}, for configurations in the event $A$, there is a cluster $\mathscr{C}$ of $\{\varphi\geq h+\varepsilon\}$ intersecting every $L_0$-box of $\mathscr{S}=\mathscr{S}_N$, which in turn intersects every connected subset of $B_N$ with diameter at least $N^{1/2}$. In particular one has $A\subset {\rm Exist}(N, h+\varepsilon)\subset {\rm Exist}(N, h)$, and the first inequality of \eqref{eq:propuniqtohbar1} follows directly from \eqref{eq:uniqnew6}. We now focus on the second one.
	
	By definition of good boxes, we have that
	$|\varphi_x-\varphi^0_x|\leq M$ for every $x\in \mathscr{S}$ (the parameter $L_0$ was fixed below \eqref{eq:F+H=good}). 
	We now argue that conditionally on $\mathcal{F}$ (defined in \eqref{eq:sigma_F}),
	the level set $\{\varphi\geq h\}$ is simply an independent site percolation with certain inhomogeneous parameters $\mathbf{p}=(\mathbf{p}_x)_{x\in\Z^d}$. Indeed, for every $x\in\Z^d$ conditionally on $\mathcal{F}$ we know the precise value of $\varphi_x-\varphi^0_x$ and that $\varphi^0_x$ lies in a prescribed interval (depending only on the value of $\varphi_x-\varphi^0_x$ and on whether $x$ is in $\{\varphi\geq h+m\varepsilon\},~m=1,2$, or not). Since $\varphi^0$ is an i.i.d.~field, the claim follows. Furthermore, on $A$ we know that $\mathscr{C}$ is in $\{\varphi\geq h+\varepsilon\}$ and $|\varphi_x-\varphi^0_x|\leq M$ for every $x\in\mathscr{S}$, so we easily infer that
	\begin{align}
\mathbf{p}_x=1 \text{ for all } x\in\mathscr{C} \text{ and } \label{eq:parameters}	&\mathbf{p}_x\geq \Cl[c]{FEiid}>0 \text{ for all } x\in\mathscr{S}, 
	\end{align}
	where 
	$$\Cr{FEiid}=\Cr{FEiid}(\varepsilon)\stackrel{\textnormal{def.}}{=} \inf_{|t|\leq M} \inf_{h \leq \tilde{h}} \P[\varphi^0_0\geq h-t ~|~ \varphi^0_0 < h+\varepsilon-t]>0.$$	
	By these observations and Proposition~\ref{prop:ubiq}, we obtain
	\begin{multline}
	\label{eq:algo11}
			\P[{\rm Unique}(N,h)^c] \leq \P[A^c]+\E\big[1_A \P[{\rm Unique}(N,h)^c | \mathcal{F}]\big] \\ \leq \exp(-\Cr{c:renorm}N^{\rho})+ \E\big[1_A {P}_{\mathbf{p}} [{\rm Unique} (N)^c]\big],
	\end{multline}
	where ${P}_{\mathbf{p}}$ represents the independent site percolation with parameters $\mathbf{p}$, and
	$
	{\rm Unique}(N)$ is the event that any two clusters in $B_{N}$ having diameter at least $N/10$ are connected to each other in $B_{2N}$. Thus, to complete the proof of \eqref{eq:propuniqtohbar1}, it is enough to show that on the event $A$,
	\begin{equation}
	\label{eq:algofinal}
	{P}_{\mathbf{p}} [{\rm Unique} (N)^c]\leq \exp(-cN^{\rho})
	\end{equation} 
	uniformly over all families of parameters $\mathbf{p}$ satisfying the properties  \eqref{eq:parameters} and all pair of sets $\mathscr{C}$ and $\mathscr{S}$ as above. To this end, first notice that
	\begin{equation} \label{eq:finalbdexplo1}
	{P}_{\mathbf{p}} [{\rm Unique} (N)^c]\leq \sum_{x\in B_N} {P}_{\mathbf{p}} \big[ \text{diam}(C(x))\geq N/10, C(x)\cap\mathscr{C}=\emptyset \big],
	\end{equation}
	where $C(x)$ denotes the cluster of $x$ in $B_N$ (under ${P}_{\mathbf{p}}$). 
	
	We now bound the summands on the right of \eqref{eq:finalbdexplo1} individually. In order to do that, we explore the cluster of $C(x)$ vertex by vertex starting from $x$ following a deterministic ordering of the vertices of $\Z^d$, i.e.~checking at each step the state of the unexplored vertex with smallest label neighboring the currently explored piece of $C(x)$. We do so until the first time $\tau_1$ we discover some vertex $y_1\in C(x)$ which lies in the outer boundary (for the $\ell^{\infty}$-norm) of some $L_0$-box $B_1\in\mathscr{S}$ (recall that $\mathscr{S}$ is determined since we have conditioned on $\mathcal{F}$; possibly $\tau_1 = \infty$). In case $\tau_1< \infty$, we explore the state of every vertex in $B_1$. By definition, $\mathscr{C}$ intersects $B_1$. We stop the exploration if at some point we discover that some vertex of $\mathscr{C} \cap B_1$ lies in $C(x)$. Otherwise we continue exploring $C(x)$ until the first time $\tau_2 (> \tau_1)$ we discover some vertex $y_2\in C(x)$ in the outer boundary of some $L_0$-box $B_2\in \mathscr{S}\setminus B_1$ which was not visited by the exploration yet. As before, we then explore the state of every vertex in $B_2$, stopping the exploration if $C(x)$ intersects $\mathscr{C}$ in that box and continuing otherwise.
	
We proceed like this until we either find that $C(x)\cap\mathscr{C}\neq\emptyset$ or we discover the whole cluster $C(x)$.
Attached to this exploration is an increasing sequence of stopping times $\tau_k$, $k \geq 1$ (by convention if $\tau_k = \infty$ for some $k \geq 1$ we set $\tau_l =\infty$ for all $l \geq k$). Let $C_k \, (\subset C(x))$ denote the set of open vertices connected to $x$ among those revealed until time $\tau_k$ union with the open vertices in $B_k$ connected to $x$. By construction, $C_k$ will intersect $\mathscr{C}$ if, for instance, all the vertices of $B_k$ belong to $C(x)$. Consequently, using \eqref{eq:parameters}, one obtains that on the event $\{ \tau_k < \infty\}$,
\begin{equation}
\label{eq:alg012}
{P}_{\mathbf{p}}[C_k\cap\mathscr{C}\neq \emptyset \, |\, \mathcal{F}_{\tau_k}] \geq \Cl[c]{c:algo_LB}(\varepsilon, L_0) \,(>0),
\end{equation}
where $\mathcal{F}_{\tau_k}$ refers to the $\sigma$-algebra of the $\tau_k$-past in the exploration process.

Let $\kappa = \sup \{ k \geq 1 : \tau_k < \infty\}$. Since $\mathscr{S}$ intersects every connected set of diameter at least $N^{1/2}$ (recall that we are on the event $A$ given by \eqref{eq:uniqnew6}, see \eqref{eq:algo11}), by considering successive annuli of width $N^{1/2}$ around $x$, one sees that on the event $\{\text{diam}(C(x))\geq N/10, C(x)\cap\mathscr{C}=\emptyset\}$ the exploration runs until fully discovering $C(x)$, and therefore $\kappa$ admits the deterministic lower bound $ \kappa \geq N^{1/2}/20\geq \lceil N^{\rho} \rceil =: k_0$, for $N \geq C$. Since $\bigcap_{1 \leq i < k} \{C_{i} \cap\mathscr{C}=\emptyset\}$ is $\mathcal{F}_{\tau_k}$-measurable, it follows by successive applications of the strong Markov property and \eqref{eq:alg012} that
\begin{multline*}
{P}_{\mathbf{p}} \big[ \text{diam}(C(x))\geq N/10,~ C(x)\cap\mathscr{C}=\emptyset \big]1_A
	\\
	\leq {P}_{\mathbf{p}} \big[C_k \cap\mathscr{C}=\emptyset, k \leq k_0, \kappa \geq k_0 \big]1_A
	\leq (1-\Cr{c:algo_LB})^{k_0},\end{multline*}
	thus giving \eqref{eq:algofinal}, as desired.
	
\begin{remark}
\label{R:exponent} As follows from \eqref{eq:propuniqtohbar1}, the exponent $\rho$ governing the rate of decay for the bound in \eqref{eq:barh1} and \eqref{eq:trunc2pt_decay}, which originates from Proposition \ref{prop:ubiq}, is in fact uniform in $h< \tilde h =\bar h$. 
\end{remark}

\section{Interpolation scheme}
\label{sec:comparison}
This section is devoted to the proof of Proposition~\ref{prop:comparison}, which will be split into several steps, as explained in the next paragraph. To lighten the notation we often use $E[X;  F]$ to denote the expectation $E[X 1_{F}]$ when $X$ is a random variable and  $F$ is an event. Throughout the whole section, we assume that $\tilde h < h_{**}$ and that $\varepsilon>0$ is chosen such that $6\varepsilon<h_{**}-\tilde h$. Constants $c,C$ may depend implicitly on $\varepsilon$ (and $d$). We recall the notation $L_n=\ell_0^nL_0$ from \eqref{eq:bridge1}, with $\ell_0$ as fixed at the beginning of Section \ref{subsec:bridging}. The parameter $L_0$ will be chosen following \eqref{eq:piv_coarse_piv2}.

We decompose this section into three subsections. In Section \ref{sec5.1}, we explain the proof of Proposition~\ref{prop:comparison} for $\delta=0$, i.e. the existence of $L=L(\varepsilon)$ large enough ($L$ will be of the form $L_n$ for some $n$) so that uniformly in $h \in (\tilde{h} + 3\varepsilon, 
	h_{**} - 3\varepsilon)$ and $r \geq 1$, $R \geq 2r$,
	\begin{align}
	 &\P[\lr{}{\varphi\geq h-\varepsilon}{B_{r}}{\partial B_{R}}]\geq \P[\lr{}{\varphi^{L}\ge h}{B_{r}}{\partial B_{R}}] - C \exp(-\e^{c(\log r)^{1/3}}),\,\mbox{and} \label{eq:comparison1delta=0} \\
	 &\P[\lr{}{\varphi\geq h+\varepsilon}{B_{r}}{\partial B_{R}}] \leq \P[\lr{}{\varphi^{L}\ge h}{B_{r}}{\partial B_{R}}] + C \exp(-\e^{c(\log r)^{1/3}}) \label{eq:comparison2delta=0},
	\end{align}
provided that one is given a certain decoupling result, see Lemma \ref{lem:piv_decoupling} below. The proof of this lemma is then presented separately in Section \ref{sec5.2}. This proof is the core of the section. It relies on a multi-scale analysis which is the most technical and innovative part of the paper. Finally, in Section \ref{sec5.3} we explain how to add the noise parameter $\delta>0$ into the game to obtain \eqref{eq:comparison1} and \eqref{eq:comparison2}.

\subsection{Setting of the proof}
\label{sec5.1}
The main difficulty in proving Proposition~\ref{prop:comparison} is the long-range dependence of $\varphi$. To overcome this problem,  we will go from $\omega = \{ \varphi \geq h\}$ to $\omega^L =  \{ \varphi^L \geq h\}$, cf.~\eqref{eq:phi^k}, step by step by interpolating between fields of \emph{comparable} ranges and allowing $h$ to vary slightly (we refer to this as the sprinkling). More precisely, extend our notation to non-integer $t$ by setting $L_t\stackrel{\textnormal{def.}}{=}L_{\ceil t}$ and
 define 
\begin{equation}\label{eq:def_chi}
\chi^t \stackrel{\textnormal{def.}}{=} \varphi^{L_{\lfloor t\rfloor}} + (t-\lfloor t\rfloor) \psi^t\quad\text{where}\quad\psi^t\stackrel{\textnormal{def.}}{=}\varphi^{L_t}-\varphi^{L_{\lfloor t\rfloor}}\,.
\end{equation}
By definition, $\chi^n=\varphi^{L_n}$ and $\chi^t$ interpolates between $\varphi^{L_n}$ and $\varphi^{L_{n+1}}$ when $n \leq t \leq n+1$. 
Introduce the function
\begin{align}
	\label{eq:theta_th}
	\theta(t,h,r,R)\stackrel{\textnormal{def.}}{=}\P[\lr{}{\chi^t\ge h}{B_r}{\partial B_R}]\,.
	\end{align}
	We now sketch the argument. Neglecting the additive error terms on the right of \eqref{eq:comparison1delta=0} and \eqref{eq:comparison2delta=0}, we roughly aim at proving that the functions 
$f_{\pm}(t)\stackrel{\textnormal{def.}}{=}\theta\big(t,h \pm Ce^{-t},r,R\big)$
are increasing (for $+$) and decreasing (for $-$), for $t$ large enough, a fact which is implied by
\begin{align}
\label{eq:comparison_truncated_partial}
|\partial_t \theta| \leq -Ce^{-t} \partial_h \theta\,.
\end{align}
Let us now look at the probabilistic 
statement that \eqref{eq:comparison_truncated_partial} 
corresponds to. The partial derivatives of $\theta$ exist for all $h$ and all non-integer $t$ and take the following form.  
\begin{lemma}\label{L:Russo}
For all $L_0 \geq 1$, $h \in \R$ and non-integer $t > 0$,
\begin{equation}
\label{eq:Russo}
\begin{split}
\partial_h \theta &= -\sum_{x \in \Z^d}\P[\piv_x | \chi^t_x = h]p_t(h),\\\quad\partial_t \theta &= -\sum_{x \in \Z^d}\E[\psi^{t}_x;\piv_x | \chi^t_x = h]p_t(h),
\end{split}
\end{equation}
where $\piv_x$ is the event that $B_r$ and $\partial B_R$ are connected in $\{\chi^t\ge h\}\cup \{x\}$ but not in 
$\{\chi^t\ge h\} \setminus \{x\}$, and $p_t(\cdot)$ denotes the density of $\chi^t_x$.
\end{lemma}

\begin{proof}
 First observe that $\chi^t$ is non-degenerate. Indeed, on account of \eqref{eq:phi^k}, \eqref{eq:white_noise_representation} and \eqref{eq:def_chi}, one has with $C(x,y)=\E[\chi^{t}_x \chi^{t}_y]$, $x,y \in \Z^d$ that, for all finitely supported $f:\Z^d\to \R$, 
\begin{equation}\label{eq:nondeg}
\langle f,Cf\rangle  \geq  \langle f,{Q}^0 f\rangle = 2^{-1} \Vert f \Vert_{\ell^2({\Z}^d)},
\end{equation}
 where ${Q}^{\ell} f ({z}) =\sum_{{z}'} {q}_{\ell}({z},{z}')f({z}')$ and we have used that $C-Q^0$ is positive semi-definite since each of ${Q}^{\ell}$, $\ell \geq 0$ is. 
 
Now define $\mathcal{S}_{\varepsilon} = \{ x \in B_R : h \leq \chi^t_x < h+\varepsilon \}$, for $\varepsilon >0$.
By non-degeneracy, as implied by \eqref{eq:nondeg}, a straightforward explicit calculation yields that $\P[h \leq \chi^t_x < h+\varepsilon , h \leq \chi^t_y < h+\varepsilon ] \leq C(h)\varepsilon^2$, hence
\begin{equation}
\label{eq:higher-order-Russo}
\varepsilon^{-1}\P[|\mathcal{S}_{\varepsilon}| \geq 2 ] \to 0 \text{ as } \varepsilon \downarrow 0.
\end{equation}
Next, abbreviating $A^h=\{ \lr{}{\chi^t\ge h}{B_r}{\partial B_R}\} $, which is decreasing in $h$ note that $(A^h \setminus A^{h+\varepsilon}) \cap \{\mathcal{S}_{\varepsilon}=x\}= \piv_x \cap \{\mathcal{S}_{\varepsilon}=x\}$. It follows that
\begin{multline} \label{eq:russo-eff}
\lim_{\varepsilon} \varepsilon^{-1}(  \P[A^h]- \P[A^{h+\varepsilon}]) \stackrel{\eqref{eq:higher-order-Russo}}{=}  \lim_{\varepsilon} \sum_x \varepsilon^{-1} \P[A^h \setminus A^{h+\varepsilon},  \, \mathcal{S}_{\varepsilon}=x ]\\
=  \lim_{\varepsilon} \sum_x \varepsilon^{-1} \P[\piv_x, \,  \mathcal{S}_{\varepsilon}=x ]  \stackrel{\eqref{eq:higher-order-Russo}}{=}  \lim_{\varepsilon} \sum_x \varepsilon^{-1} \P[\piv_x,  \, h \leq \chi^t_x < h+\varepsilon  ].
\end{multline}
Finally, observing that $\piv_x$ is measurable with respect to the $\sigma$-algebra generated by $\chi^t_y$, $y \neq x$, the first equality in \eqref{eq:Russo} follows readily from the last expression in \eqref{eq:russo-eff} upon conditioning on $\chi_x^t$ and letting $\varepsilon \to 0$, using that $\varepsilon^{-1} \P[ h \leq \chi^t_x < h+\varepsilon] \to p_t(h)$ and that the given regular conditional probability is continuous in $h$, along with a straightforward dominated convergence argument. The second formula is obtained similarly, by considering the set $\widetilde{\mathcal{S}}_{\varepsilon} = \{ x \in B_R : \chi^{t+\varepsilon}_x  \geq h>  \chi^t_x  \text{ or } \chi^{t}_x  \geq h>  \chi^{t+\varepsilon}_x \}$ instead. 
\end{proof}
A vertex $x$ for which $\piv_x$ occurs will be called {\em pivotal} in the sequel. The dependence of $\piv_x$ on the parameters $r$, $R$, $t$ and $h$ is omitted in order to lighten the notation and will always be obvious from the context. Note that the sums in \eqref{eq:Russo} are effectively over a finite set and that $(\partial_t \theta) (t,h) $ can be extended to a continuous function on any strip $\mathbb{S}_n \stackrel{\textnormal{def.}}{=}\{(t,h): h \in \R, n\leq t \leq n+1\}$, for $n \in \mathbb{N}$.

Returning to \eqref{eq:comparison_truncated_partial}, suppose for a moment that we were working with Bernoulli percolation. In this case, the pivotality at a vertex $x$ would be independent of the value of the field at $x$, so that 
\begin{align}
	\label{eq:piv_compare_ideal}
	\E[|\psi_x^t|; \piv_x \,| \chi^t_x = h]=\E[|\psi_x^t|\,| \chi^t_x = h]\,\P[\piv_x \,| \chi^t_x = h].
\end{align}
As a consequence, 
the proof would follow from the fact that $\E[|\psi_x^t|\,|\chi^t_x=h]$ is quite small and  that the quantity can be taken smaller than $\varepsilon e^{-t}$ by choosing $L$ large enough. 

In our case, the range of $\chi^t$ is $L_t$ so we must make several adjustments to the plan stated above. First of all, we might want to replace $\piv_x$ with a weaker ``coarse pivotality'' event that is supported outside $B_{L_{\ceil t}}(x)$, thus 
allowing us to achieve a {\em decoupling} with $|\psi_x^t|$ as in the 
last display. Then the task becomes, roughly speaking, to reconstruct a pivotal vertex from a coarse one. This is the content of Lemma \ref{lem:piv_decoupling} below. Its proof, which spans Section \ref{sec5.2}, will involve showing that conditionally on the coarse pivotality, the probability that there are pivotal vertices is not too small. Lemma \ref{lem:piv_decoupling} will then allow us to deduce a differential inequality similar to \eqref{eq:comparison_truncated_partial}, see \eqref{difinal} below. 

As we shall see in detail in Section \ref{sec5.2}, the estimate derived in Lemma~\ref{lem:piv_decoupling} hinges on \emph{a priori} lower bounds on the disconnection and connection probabilities 
for $\{\chi^t\ge h\}$ similar to those available for $\{\varphi \geq h\}$ when $h \in (\tilde h, h_{**})$ (hence the restriction on the value of $h$ in Proposition~\ref{prop:comparison}). Set 
\begin{multline}
\label{def:cm0}
\Cl[c]{piv_cm0}\stackrel{\textnormal{def.}}{=}\tfrac12\inf \big\{n^{d}\P[\nlr{}{\varphi \geq h}{B_{u(n)}}{B_n}],\, \P[\lr{}{\varphi \geq h}{B_{\lceil n/2 \rceil}}{\partial B_{n}}]: \\n\geq 1,\, h \in (\tilde h + \varepsilon, h_{**} - \varepsilon)\big\}, 
\end{multline}
which is strictly positive on account of \eqref{eq:h_**} and \eqref{eq:tildeh}. We introduce the convenient notation
\begin{equation}
\label{def:q_N}
q_N(t,h)\stackrel{\textnormal{def.}}{=}\inf \big\{n^{d}\P[\nlr{}{\chi^t \geq h}{B_{u(n)}}{B_n}],\, \P[\lr{}{\chi^t \geq h}{B_{\lceil n/2\rceil}}{\partial B_{n}}]: 1\le n\le N\big\}.
\end{equation} 
Note that $q_N$ is decreasing in $N$. We now state the main technical result. 

\begin{lemma}[Decoupling]
\label{lem:piv_decoupling}
For any $\varepsilon>0$, there exist positive constants $\Cl{piv_cm100}, \Cl{piv_cm1}, \Cl[c]{piv_cm2}$ and $L_0$ (depending on $\varepsilon$ and $d$ only) such that, for every $h \in (\tilde h + 2\varepsilon, h_{**} - 
2\varepsilon)$, $\Cr{piv_cm100} \le r\le R/2$ and $t , R \geq 1$ such that $q_R(t,h)\ge \Cr{piv_cm0}$, 
\begin{multline*}
\sum_{x \in \Z^d}\E[f(\psi^t_x);\piv_x\, | \chi^t_x = h]
\\ \leq \E[f(\psi^t_0) | \chi^t_0 = h] \Big(\e^{\Cr{piv_cm1}t^{18}}\sum_{x \in \Z^d}\P[\piv_x | \chi^t_x = h] +\exp[\Cr{piv_cm1}t^{3}- 
r^{\Cr{piv_cm2}}\e^{-\Cr{piv_cm1}t^{3}}]  \Big),\,
\end{multline*}
for any non-negative function $f$ such that $ \E[f(\psi^t_0) | \chi^t_0 = h] < \infty$.
\end{lemma}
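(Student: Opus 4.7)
The plan is to introduce a \emph{coarse pivotality} event $\widetilde{\piv}_x$, measurable with respect to $\chi^t$ restricted to $\Z^d \setminus B_{\ell}(x)$ for a mesoscopic radius $\ell=\ell(t,r)$ satisfying $L_t \ll \ell \ll r$, and to carry out the bound in three moves: coarsening ($\piv_x \subset \widetilde{\piv}_x$), decoupling ($\widetilde{\piv}_x$ is independent of $\psi^t_x$ conditionally on $\chi^t_x=h$), and reconstruction (genuine pivotals can be recovered from coarse ones at a controlled cost). The event $\widetilde{\piv}_x$ asks that removing a thickening of $x$ at scale $\ell$ disconnects $B_r$ from $\partial B_R$ in $\{\chi^t\geq h\}$, so that on this event there exist two connected sets $S_1,S_2\subset\Z^d\setminus B_\ell(x)$ reaching $B_r$ and $\partial B_R$ respectively, whose only possible mutual connections in $\{\chi^t\geq h\}$ pass through $B_\ell(x)$.

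For the decoupling move, since $\chi^t$ and $\psi^t$ both have range $L_t$, \eqref{eq:phi_L_indep} ensures that $\psi^t_x$ is independent of $\sigma(\chi^t_y : |y-x|\geq 2L_t)$, and this independence is preserved by conditioning on the single Gaussian observable $\chi^t_x=h$ (Gaussian regression). Choosing $\ell\geq 2L_t$ then yields
\[
\E[f(\psi^t_x);\widetilde{\piv}_x \mid \chi^t_x=h] = \E[f(\psi^t_x)\mid\chi^t_x=h]\, \P[\widetilde{\piv}_x \mid\chi^t_x=h],
\]
and translation invariance of the joint law of $(\chi^t,\psi^t)$ gives $\E[f(\psi^t_x)\mid\chi^t_x=h] = \E[f(\psi^t_0)\mid\chi^t_0=h]$, matching the prefactor in the statement.

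The reconstruction step is the crux of the proof. On $\widetilde{\piv}_x$ the admissible sets $S_1,S_2$ each reach the boundary of $B_\ell(x)$, and the goal is to link them by a connection in $\{\chi^t\geq h\}$ through $B_\ell(x)$, which furnishes a vertex $y\in B_\ell(x)$ satisfying the strict pivotality $\piv_y$. The plan is to run the multi-scale bridging machinery of Section \ref{sec:decompose_GFF} with $\chi^t$ in place of $\varphi$: by \eqref{eq:def_chi}, $\chi^t$ is still a sum of the independent finite-range Gaussian fields $\xi^\ell$, with the top block $\ell \in \{L_{\lfloor t\rfloor}+1,\dots,L_t\}$ rescaled by $t-\lfloor t\rfloor\in[0,1]$, so the construction of good bridges in Section \ref{sec:bridges} transfers essentially verbatim once the ``good events'' \eqref{eq:gluinggood1}--\eqref{eq:gluinggood2} are adjusted for the altered variances. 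The a priori inputs that replace the assumption $h<h_{**}$ are supplied by the hypothesis $q_R(t,h) \geq \Cr{piv_cm0}$: the crossing bound feeds the tail estimate on the seed events $F_{0,x}$, and the infimum over $n$ yields the two-point lower bound \eqref{eq:twopointsbound} for $\chi^t$ at level $h$. Each successful bridge then connects $S_1$ and $S_2$ through $B_\ell(x)$ with probability $\geq \exp(-C(\log L_{\lceil t\rceil})^2)=\exp(-Ct^2)$ by the adapted Lemma \ref{lem:sprinkling}, and one extracts $y$ as a pivotal vertex along the bridging path. Running order $r^{\Cr{piv_cm2}}$ independent such attempts in disjoint mesoscopic corridors of $B_\ell(x)$ (independence afforded by the finite-range decomposition) produces the doubly exponential failure probability $\exp(Ct^3 - r^{\Cr{piv_cm2}}e^{-Ct^3})$ in the error term, while the multiplicative factor $e^{Ct^{18}}$ aggregates the cost of bridging at all scales $L_0,\dots,L_{\lceil t\rceil}$, the ratio of conditional densities linking a pivotal at $x$ to one at $y$, and a polynomial bookkeeping over choices of reconstruction paths.

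The main obstacle is ensuring that the multi-scale framework of Section \ref{sec:bridges} truly transfers to $\chi^t$ uniformly in $t$: the fractional rescaling of the top block of $\xi^\ell$'s modifies the variance estimates \eqref{eq:bound.decomp} and hence the quantitative form of the large-field goodness events, so one must re-derive \eqref{C3} with constants uniform in $t$. A second delicate point is the passage from a pivotal at $y$ back to a pivotal at $x$: summation over $x$ risks overcounting contributions of a given reconstructed $y$, and handling this requires a careful slot-wise bookkeeping of the mesoscopic regions around each $x$, which is ultimately responsible for the polynomial-in-$t$ exponents $t^{18}$ and $t^3$ appearing in the statement.
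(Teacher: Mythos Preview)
Your three-move plan (coarsen, decouple, reconstruct) matches the paper's architecture, and the decoupling move is actually cleaner than what the paper does: since your $\widetilde{\piv}_x$ is measurable with respect to $(\chi^t_y)_{|y-x|>\ell}$ and the pair $(\psi^t_x,\chi^t_x)$ is independent of that $\sigma$-algebra, the factorisation $\E[f(\psi^t_x);\widetilde{\piv}_x\mid\chi^t_x=h]=\E[f(\psi^t_0)\mid\chi^t_0=h]\,\P[\widetilde{\piv}_x]$ is immediate. So far so good.

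The gap is in the reconstruction. Your event $\widetilde{\piv}_x$ only asserts that $B_r$ and $\partial B_R$ are disconnected in $\{\chi^t\ge h\}\setminus B_\ell(x)$; it does \emph{not} rule out that they are already connected in $\{\chi^t\ge h\}$ through several open paths in $B_\ell(x)$. In that case the clusters $S_1,S_2$ you describe coincide, and ``building a bridge between $S_1$ and $S_2$'' produces no pivotal vertex at all---there may simply be none. The paper's reconstruction (Lemma~\ref{lem:good_bridge}) crucially uses the \emph{stronger} event $\mathrm{CoarsePiv}_x$, which requires $B_r\not\leftrightarrow\partial B_R$ in the full configuration $\{\chi^t\ge h\}$, so that the explored clusters $\mathcal C_{B_r}$ and $\mathcal C_{\partial B_R}$ are genuinely disjoint. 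To pass from a containing event that is measurable away from $x$ to one where the clusters are disjoint, the paper inserts a \emph{disconnection} event $E_x=\{B_{2L_t}(x)\not\leftrightarrow\partial B_{L_T-L_t}(x)\}$ via conditional FKG (both $E_x$ and the coarse-pivotality event are decreasing once one conditions on $\mathbf Z(\{x\}\cup B_{L_T}(x)^c)$), paying a factor $L_T^d$ justified by the hypothesis $q_R(t,h)\ge \Cr{piv_cm0}$. Without this step your reconstruction does not go through.

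Your account of the constants is also off, and this reflects a missing structural ingredient. The error term $\exp[\Cr{piv_cm1}t^3-r^{\Cr{piv_cm2}}e^{-\Cr{piv_cm1}t^3}]$ is \emph{not} the outcome of ``$r^{\Cr{piv_cm2}}$ independent attempts in corridors of $B_\ell(x)$''; it comes from the term $\e^{-c(|x|/L_T)^\rho}$ in Lemma~\ref{lem:piv_compare}, summed over $x\in B_R\setminus B_{r-1}$, where $L_T$ is the \emph{smallest} scale with $u(L_T)\ge 20\kappa L_t$ (so $L_T\approx\exp(ct^3)$). The factor $\e^{\Cr{piv_cm1}t^{18}}$ does not arise from bridging at scales up to $L_{\lceil t\rceil}$; it is the reconstruction cost $\e^{C(\log L_{\overline T})^2}$ at the second iterated scale $\overline T$ (defined by $u(L_{\overline T})\ge 20\kappa L_T$, hence $\overline T\approx t^9$). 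These iterated scales appear because the reconstruction is not done at a single fixed $\ell$: one must search for the \emph{first good scale} $n\ge T$ at which the bridging event $\mathcal G_{n,x}$ occurs, decoupling the failure $\mathcal G_{n,x}^c$ at smaller scales from $\mathrm{CoarsePiv}$ by yet another disconnection/FKG argument (Lemma~\ref{lem:piv_find_goodscale}). Your single mesoscopic scale $\ell$ with $L_t\ll\ell\ll r$ does not capture this, and that is why the exponents in your sketch do not match the statement.
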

\begin{remark}
\label{remark:decoupling}
The proof of Lemma~\ref{lem:piv_decoupling} entails a construction like in Section~\ref{subsec:bridging}; hence the ``additive'' error term $\exp[\Cr{piv_cm1}t^{3}- 
r^{\Cr{piv_cm2}}\e^{-\Cr{piv_cm1}t^{3}}]$ (cf. 
Lemma~\ref{lem:bridging} or Lemma~\ref{lem:sprinkling}), which has contributions from all the vertices in the annulus 
$B_R \setminus B_{r-1}$. The ``correction'' term $\e^{\Cr{piv_cm1}t^{18}}$, on the other hand, appears only to offset for the vertices close to $B_r$.
\end{remark}

We postpone the proof of this lemma until the next section and first show how to obtain \eqref{eq:comparison1delta=0} and \eqref{eq:comparison2delta=0}. 

\begin{proof}[Proofs of \eqref{eq:comparison1delta=0} and \eqref{eq:comparison2delta=0}]
Let $L_0$ be given by Lemma~\ref{lem:piv_decoupling}. Throughout the proof, we tacitly assume that $r$ and $R$ satisfy $\Cr{piv_cm100} \le r\le R/2$. The remaining cases, i.e.~$r < \Cr{piv_cm100}$, can  be accommodated by adapting the constant $C$ in \eqref{eq:comparison1delta=0} and \eqref{eq:comparison2delta=0}. Recalling the definition of $\chi^t$ from \eqref{eq:def_chi} and noting that 
\begin{equation}
\label{def:alphat'}
\var(\varphi^{L_{\lfloor t \rfloor}}_0)\geq 1/2\text{\ \ and\ \ }\var(\psi^t_0)\leq L_{t}^{-\frac{d-2}{2}}\leq \exp[-(\tfrac{1}{2}\log \ell_0)\,t]
\end{equation} (where $\ell_0\geq1000>\e^6$), a standard Gaussian bound gives
\begin{align}
\label{def:alphat}
\alpha(t)\stackrel{\textnormal{def.}}{=}\sup\{\E[|\psi_0^t|;|\psi_0^t| \geq e^{-t}\,| \chi^t_0 = h] \,:\; h\in(\tilde h,h_{**})\}\le 
C\exp(-L_t^{1/6}).
\end{align}
We can therefore fix $\Cr{t_0}(d,\varepsilon)$ large enough such that for all $t \geq \Cl{t_0}$, all $h\in(\tilde h+2\varepsilon,h_{**}-2\varepsilon)$ and every $r\ge1$,
\begin{align}
\label{eq:fix_t_1}\e^{\Cr{piv_cm1}t^{18}}\alpha(t)&\le \e^{-t} \leq \varepsilon/2,\\
\label{eq:fix_t_2}\exp[\Cr{piv_cm1}t^{3}- 
r^{\Cr{piv_cm2}}\e^{-\Cr{piv_cm1}t^{3}}]\alpha(t)p_t(h)&\le \exp[-\e^{\Cl[c]{piv_cm4}(\log r)^{1/3}}]\,\e^{-t}.
\end{align}

\medskip

Now recalling the formulas from \eqref{eq:Russo}, we can write, for any integer $n$ and $(t,h)\in \mathbb{S}_n$, 
\begin{multline}
\label{eq:derivative_bnd}
|\partial_t \theta(t,h,r,R)|
\\\leq \sum_{x \in \Z^d}\big(\,\E[|\psi_x^t|1_{|\psi_x^t| \leq e^{-t}};\piv_x \,| \chi^t_x = h]+ \E[|\psi_x^t|1_{|\psi_x^t| \geq e^{-t}};\piv_x \,| \chi^t_x = h]\,\big)p_t(h)  \\
\leq e^{-t}\,(-\partial_h \theta) + \sum_{x \in \Z^d}\E[|\psi_x^t|1_{|\psi_x^t| \geq e^{-t}};\piv_x \,| \chi^t_x = h]p_t(h).
\end{multline}
In order to bound the second term in terms of $-\partial_h \theta$ (up to an additive error), we will apply Lemma~\ref{lem:piv_decoupling} to the function $f(x) = |x| 
1_{|x|\ge  e^{-t}}$. But we are only allowed to do so as long as $q_R(t, h) 
\geq \Cr{piv_cm0}$. To this end let us define 
\begin{multline*}
t_* = t_*(R) \stackrel{\textnormal{def.}}{=} \sup\{t\ge \Cr{t_0}: q_{R}(t,h) < \Cr{piv_cm0}, \,\, \\ \mbox{for some} \,\, h \in (\tilde h + 2\varepsilon + 2\e^{-t},\, h_{**} -2\varepsilon - 
2\e^{-t})\}\end{multline*} (with the convention $\sup\emptyset= \Cr{t_0}$). Note that $t_*$ is finite since, by the weak convergence 
of $\chi^t$ to $\varphi$ on $B_R$ as $t \to \infty$, there exists $t$ such that 
$q_{R}(t,h)\ge \Cr{piv_cm0}$ for all $h \in (\tilde h + 
2\varepsilon , h_{**} -2\varepsilon)$ (see \eqref{def:cm0}). Moreover, by the definition of 
$t_*$, $q_R(t, h) \geq \Cr{piv_cm0}$ for all $t > t_*$ and $h \in (\tilde h + 2\varepsilon + 2\e^{-t},\, h_{**} 
-2\varepsilon - 2\e^{-t})$. Therefore, we can apply Lemma~\ref{lem:piv_decoupling} in this region of the $(t, h)$-plane to the function $|x|1_{|x| \geq \e^{-t}}$ to obtain (recall the definition of $\alpha(t)$ from \eqref{def:alphat})
\begin{multline*}
\sum_{x \in \Z^d}\E[|\psi_x^t|1_{|\psi_x^t| \geq e^{-t}};\piv_x \,| \chi^t_x = h]p_t(h) \\\leq \e^{{\Cr{piv_cm1}t^{18}}}\alpha(t)(-\partial_h \theta)+\exp[{\Cr{piv_cm1}}t^3- 
r^{\Cr{piv_cm2}}\e^{-{\Cr{piv_cm1}t^3}}]\alpha(t)p_t(h).\end{multline*}
Combined with \eqref{eq:derivative_bnd} as well as the bounds in \eqref{eq:fix_t_1} and \eqref{eq:fix_t_2}, this leads to the 
following differential inequalities (one for each $n\in \mathbb{N}$), which are valid for $\{(t,h): t > t_*, h \in (\tilde h + 2\varepsilon + 2\e^{-t},\, h_{**} 
-2\varepsilon - 2\e^{-t})\}\cap \mathbb{S}_n$:
\begin{align}
	\label{difinal}
|\partial_t \theta(t,h,r,R)|&\le -2e^{-t}\partial_h \theta(t,h,r,R) + \exp[-\e^{\Cr{piv_cm4}(\log r)^{1/3}}]\,\e^{-t}.
\end{align}

Integrating this family of inequalities along $\gamma_{\pm}:s  \mapsto h \pm 2(\e^{-s}-\e^{-t})$ between $(t, h)$ and $(\infty, h \mp 2 \e^{-t})$, see Fig.~\ref{F:interpolation}, where $t > t_*$ and $h \in 
(\tilde h + 2 \varepsilon + 2\e^{-t}, h_{**} - 2 \varepsilon - 2\e^{-t})$ (chopping to this effect $\gamma_{\pm}$ into its pieces intercepted by each of the strips $\mathbb{S}_n$) yields that 
	\begin{equation}
		\label{eq:comparison}
		\begin{split}
	 &\P[\lr{}{\varphi\geq h- 2\e^{-{t}}}{B_{r}}{\partial B_{R}}]\ge \P[\lr{}{\chi^{t}\ge h}{B_{r}}{\partial B_{R}}] - \exp[-\e^{\Cr{piv_cm4}(\log r)^{1/3}}]\,\e^{-t}, \\
	 &\P[\lr{}{\varphi\geq h+ 2\e^{-t}}{B_{r}}{\partial B_{R}}] \leq \P[\lr{}{\chi^{ t}\ge h}{B_{r}}{\partial B_{R}}] + \exp[-\e^{\Cr{piv_cm4}(\log r)^{1/3}}]\,\e^{-t},
	 \end{split}
	\end{equation}
for all $t > t_*(R)$.
	\begin{figure}[h!]
  \centering 
  \includegraphics[width=8cm, height=5cm]{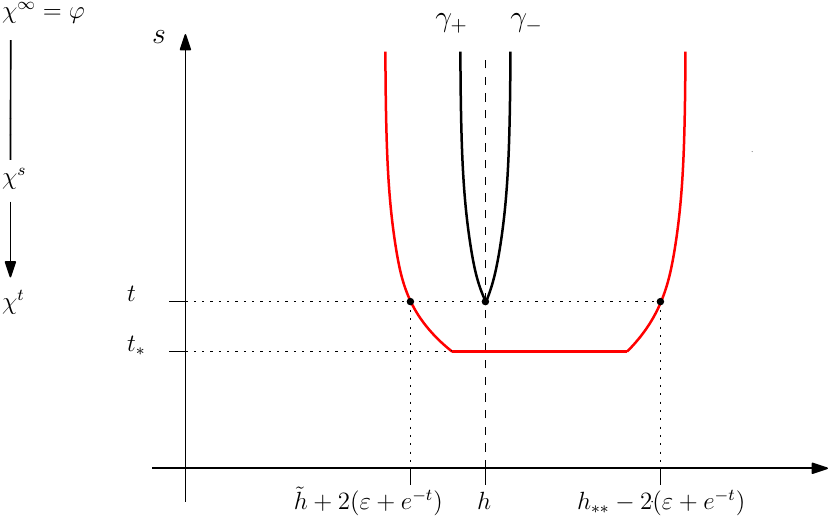}
  \caption{The two interpolation curves used in \eqref{eq:comparison}. The red curve demarcates the boundary of the region in which the family of differential inequalities \eqref{difinal} hold.}
  \label{F:interpolation}
\end{figure}

Now let $t_{**} \geq \Cr{t_0}$ be the minimum number satisfying
\begin{equation}
\label{eq:u-constraint2}
\sup\{a \geq 1: a^d\exp[-\e^{\Cr{piv_cm4}(\log u(a))^{1/3}}]\}\e^{-t_{**}} \leq \frac{\Cr{piv_cm0}}{2}\,,
\end{equation}
which, in particular, is independent of $R$. We will now argue that $t_* = t_*(R)\leq t_{**}$ for all $R$. If this holds, then since $2\e^{-t} \leq \varepsilon$ by \eqref{eq:fix_t_1}, \eqref{eq:comparison1delta=0} and \eqref{eq:comparison2delta=0} follow from \eqref{eq:comparison} by choosing $L=L_{t_{**}}$ and confining $h$ to the interval $(\tilde h + 3\varepsilon , h_{**} 
-3\varepsilon)$.

Assume on the contrary, that $t_* = t_*(R) > t_{**}$ for some $R$. Let $q_N(h)$ denote the quantity defined below \eqref{def:cm0} with $\varphi$ in place of $\chi^t$ and note that $q_N(h) \geq 2\Cr{piv_cm0}$ for all $h \in (\tilde h + \varepsilon, h_{**} - \varepsilon) \eqqcolon I$. Then \eqref{eq:comparison} and \eqref{eq:u-constraint2} together imply that uniformly over $t > t_*(>t_{**})$ and $h \in (\tilde h + 2 \varepsilon + 2\e^{-t}, h_{**} - 2 \varepsilon - 2\e^{-t})$,
\begin{align*}
	q_R(t, h)\geq \inf_{h' \in I}q_R(h') - R^d\,\exp[-\e^{\Cr{piv_cm4}(\log u(R))^{1/3}}]\e^{-t_{**}} \ge \frac{3\Cr{piv_cm0}}{2}.
	\end{align*}
On the other hand, from the definition of $t_*$ it follows that for all $t < t_*$,
\begin{align*}
\inf\{q_R(t, h):  h \in (\tilde h + 2\varepsilon + 2\e^{-t},\, h_{**} -2\varepsilon - 
2\e^{-t})\} \leq \Cr{piv_cm0}.
\end{align*}
However, the previous two displays violate the (joint) continuity of $\theta(\cdot, \cdot)$, cf.~the discussion following \eqref{eq:Russo}.
\end{proof}

\begin{remark}\label{R:u_2} The uniform bound on the error term in \eqref{eq:u-constraint2} yields a condition on the function $u(\cdot)$ entering the definition of $\tilde h$ (cf.~\eqref{eq:tildeh}) not to grow too slowly. Another such condition will arise from the competing prefactors $\e^{\Cr{C_piv_compare}(\log L_n)^2}$ and $\e^{-\Cr{c_piv_compare}u^*(L_n)^{\rho}}$ in the estimate \eqref{eq:piv_compare} below.
\end{remark}

\subsection{Proof of Lemma~\ref{lem:piv_decoupling}}
\label{sec5.2}
Throughout this subsection, we fix all the parameters $r,R,t,h$ and assume tacitly that $h \in (\tilde h + 2 \varepsilon, h_{**} - 2 \varepsilon)$, $t \geq1$,
\begin{equation}
\label{conditionr}
\Cr{piv_cm100}\stackrel{\textnormal{def.}}{=} 10^2\kappa L_0 \leq r\leq R/2
\end{equation}
 and $(t, R)$ satisfy $q_R(t,h)\ge \Cr{piv_cm0}$ 
as in Lemma~\ref{lem:piv_decoupling}, where $L_0$ will be given by Lemma~\ref{lem:piv_compare} below. 
Although they depend on $t$, we will write 
 $\chi$ and $\psi$ instead of $\chi^t$ and $\psi^t$. 
We set $T$ to be the smallest integer such that $u(L_T)\geq 20\kappa L_t$ and $\overline{T}$ the smallest integer such that $u(L_{\overline{T}})\geq 20\kappa L_T$. Note that it follows directly from the definition of $u(\cdot)$ that $T\leq C(L_0) t^3$ and $\overline{T}\leq C(L_0) t^9$.
We then define 
\begin{equation}
\label{eq:u^*cond}
u^*(L_m)\stackrel{\textnormal{def.}}{=}
\begin{cases} 
0 & \text{if $m\leq \overline{T}$},\\
u(L_m) & \text{if $m>\overline{T}$}.
\end{cases}\,
\end{equation}
The following lemma is a key step in proving Lemma~\ref{lem:piv_decoupling}. For $N\ge1$, let $\mathrm{CoarsePiv}_x(N)$ denote the event that $B_r$ and $\partial B_R$ are connected in $\{\chi\ge h\} \cup B_{N}(x)$ but 
not in $\{\chi\ge h\}$.
\begin{lemma}
	\label{lem:piv_compare}
For every $\varepsilon>0$ and $L_0 \geq \Cl{minL_0}(d,\varepsilon)$ there exist positive constants $\Cl{C_piv_compare}=\Cr{C_piv_compare}(L_0,\varepsilon,d)$, $\Cl[c]{c_piv_compare}=\Cr{c_piv_compare}(L_0,\varepsilon,d)$, $\rho=\rho(d)$ such that for all $x \in  B_R \setminus B_{r-1}$, 
\begin{multline}
\label{eq:piv_compare}
\P[\mathrm{CoarsePiv}_x(L_T)] \\ \leq \e^{-\Cr{c_piv_compare}(|x| /L_T)^{\rho}}+\sum_{n\geq T}\e^{\Cr{C_piv_compare}(\log L_n)^2-\Cr{c_piv_compare}u^*(L_n)^{\rho}}\Big(\sum_{y\in B_{10\kappa L_n}(x)}\P[\piv_y | \chi_y = h]\Big).
\end{multline}
\end{lemma}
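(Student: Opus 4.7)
The plan is to reconstruct a true pivotal vertex from a coarse pivotal at scale $L_T$ through a multi-scale bridging argument, in the spirit of Section~\ref{sec:supercritical}. Each term in the sum over $n$ on the right-hand side of \eqref{eq:piv_compare} will correspond to a reconstruction at scale $L_n$: the factor $\e^{\Cr{C_piv_compare}(\log L_n)^2}$ is the cost of the bridging in Lemma~\ref{lem:sprinkling} (more precisely, its unsprinkled version from Remark~\ref{remark:sprinkling}), whereas $\e^{-\Cr{c_piv_compare}u^*(L_n)^\rho}$ will quantify the probability of the ``bad'' event forcing the reconstruction to happen at scale $n$ rather than at a smaller one.

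On $\mathrm{CoarsePiv}_x(L_T)$ we will extract two disjoint connected subsets $\mathcal{C}_L, \mathcal{C}_R$ of $\{\chi \geq h\}$, with $\mathcal{C}_L$ linking $B_r$ to $\partial B_{L_T+1}(x)$, $\mathcal{C}_R$ linking $\partial B_R$ to $\partial B_{L_T+1}(x)$, and $\nlr{}{\chi \geq h}{\mathcal{C}_L}{\mathcal{C}_R}$. For $n \geq T$ and $x$ at distance $\gtrsim L_n$ from both $B_r$ and $\partial B_R$, the traces $\mathcal{C}_L \cap B_{10\kappa L_n}(x)$ and $\mathcal{C}_R \cap B_{10\kappa L_n}(x)$ must each cross $B_{10\kappa L_n}(x)$ and are thus admissible in the sense of Remark~\ref{R:bridges},~2). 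We then plan to invoke Remark~\ref{remark:sprinkling} (no sprinkling, so as to remain at level $h$), conditionally on the $\sigma$-algebra generated by $\mathbf{Z}(B_{10\kappa L_n}(x)^{\mathsf c})$ and by $\{1_{\varphi_z\geq h}: z\in \mathcal{C}_L \cup \mathcal{C}_R\}$, together with the associated bridging event: this will produce a connection between $\mathcal{C}_L$ and $\mathcal{C}_R$ within $\{\chi \geq h\} \cap B_{10\kappa L_n}(x)$ and, glued with the already existing halves, a full connection from $B_r$ to $\partial B_R$. Any vertex $y \in B_{10\kappa L_n}(x)$ on this connection whose removal undoes it is then pivotal, and integrating over $\chi_y$ near $h$, using the independence and continuous distribution of the $\xi^0$-contribution $\mathbf{Z}_0(y)/\sqrt{2}$, will convert the reconstruction probability into $\sum_{y\in B_{10\kappa L_n}(x)} \P[\piv_y \mid \chi_y = h]\,p_t(h)$, with prefactor $\e^{\Cr{C_piv_compare}(\log L_n)^2}$ arising from the bridging cost.

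To assemble the full bound, the next step will be to decompose $\mathrm{CoarsePiv}_x(L_T)$ according to the smallest scale $n \geq T$ at which the reconstruction succeeds, that is, the smallest $n$ for which the bridging event $\mathcal{G}_{n,x}$ supplied by Theorem~\ref{T:bridge1} occurs, with seed scale taken as $u(L_n)$ rather than the fixed $L_0$ used in Section~\ref{subsec:bridging}. The failure of this event for all scales $T \leq k < n$ will force a bad local configuration of $\chi$ inside a mesoscopic region of size $u(L_n)$; Theorem~\ref{T:bridge1} with this choice of seed scale then gives $\P[\mathcal{G}_{n,x}^{\mathsf c}] \leq \e^{-c u(L_n)^\rho}$, which is the origin of the factor $\e^{-\Cr{c_piv_compare}u^*(L_n)^\rho}$. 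For $n \leq \overline T$ the seed scale $u(L_n)$ is at most comparable to $L_T$ and no such gain is available, which is precisely encoded by the convention $u^*(L_n)=0$ in \eqref{eq:u^*cond}.

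The additive term $\e^{-\Cr{c_piv_compare}(|x|/L_T)^\rho}$ will handle the residual case where $|x|$ is so large compared to $L_T$ that the iteration must be pushed to scales $L_n$ for which $B_{10\kappa L_n}(x)$ already contains $B_r$. In this regime the multi-scale reconstruction is no longer available and we will bound the coarse pivotal event directly by iterating bad events across the intermediate annuli around $x$, using the uniform bounds supplied by $q_R(t,h) \geq \Cr{piv_cm0}$; the compounded decay will yield the claimed stretched-exponential factor in $|x|/L_T$. The principal technical difficulty will be to simultaneously ensure the admissibility of $\mathcal{C}_L \cap B_{10\kappa L_n}(x)$ and $\mathcal{C}_R \cap B_{10\kappa L_n}(x)$ and to disentangle the conditioning so that the bridging construction does not reveal too much about $\mathbf{Z}_0(y)$ --- only this independence will allow us to interpret the bridging connection as a contribution to $\P[\piv_y \mid \chi_y = h]$ for a specific $y$.
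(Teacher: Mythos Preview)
Your high-level plan --- decompose $\mathrm{CoarsePiv}_x(L_T)$ according to the first scale $N \geq T$ at which a good bridging event $\mathcal G_{N,x}$ occurs, then use the bridge to reconstruct a true pivotal --- matches the paper's two-lemma structure (Lemmas~\ref{lem:good_bridge} and~\ref{lem:piv_find_goodscale}). However, your proposed mechanism for the factor $\e^{-\Cr{c_piv_compare}u^*(L_n)^\rho}$ is where the argument breaks.

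You suggest taking the seed scale in Theorem~\ref{T:bridge1} to be $u(L_n)$ rather than the fixed $L_0$, and claim this yields $\P[\mathcal G_{n,x}^c]\le \e^{-cu(L_n)^\rho}$. This is incoherent: with a seed scale depending on $n$, the events $\mathcal G_{n,x}$ at different $n$ are built on different renormalization hierarchies and are no longer nested, so the ``first good scale'' decomposition collapses. More fundamentally, with the \emph{fixed} $L_0$ the paper actually uses, Theorem~\ref{T:bridge1} already gives the much stronger bound $\P[\mathcal G_{n,x}^c]\le \e^{-cL_n^\rho}$; the difficulty is not bounding $\P[\mathcal G_{n,x}^c]$ in isolation but \emph{decoupling} it from $\mathrm{CoarsePiv}_x(8\kappa L_N)\cap\mathcal G_{N,x}$, since these events share the region $B_{10\kappa L_n}(x)$. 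The paper resolves this (Lemma~\ref{lem:piv_find_goodscale}) by a disconnection trick: for $N\ge\overline T$ one introduces the auxiliary scale $n$ with $L_n\approx u(L_N)$, inserts the event $\{\nlr{}{\chi\ge h}{B_{20\kappa L_n}(x)}{\partial B_{4\kappa L_N}(x)}\}$ at multiplicative cost $\Cr{piv_cm0}^{-1}L_N^d$ --- this is precisely where the hypothesis $q_R(t,h)\ge\Cr{piv_cm0}$ enters --- and observes that on this disconnection the triple $\{\mathrm{CoarsePiv}_x(8\kappa L_N),\text{disconnection},\mathcal G_{N,x}\}$ becomes $\mathbf Z(B_{20\kappa L_n}(x)^c)$-measurable, hence independent of $\mathcal G_{n,x}$. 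That last measurability requires a \emph{weakened} version of \textbf{G2} (only $1\vee m\le j\le n$, see below~\eqref{def:Fcompare}), so that $\mathcal G_{N,x}$ does not depend on $H_{j,\cdot}$ for $j>N$; you do not mention this modification, and without it the decoupling fails.

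Your account of the additive term is also inverted. The term $\e^{-\Cr{c_piv_compare}(|x|/L_T)^\rho}$ is not produced by ``iterating bad events across intermediate annuli''. It absorbs two residuals: when $S_x\le T$ (so $|x|\lesssim L_T$ and the term is of order~$1$) one simply bounds $\P[\mathrm{CoarsePiv}_x(L_T)]\le 1$; when $S_x>T$, it absorbs $\P[\mathcal G_{S_x,x}^c]\le \e^{-cL_{S_x}^\rho}\le \e^{-c'|x|^\rho}$ from the event that no scale up to $S_x$ is good. The division by $L_T$ in the exponent is only there to accommodate the first case.
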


We first assume Lemma \ref{lem:piv_compare} to hold and give the proof of Lemma~~\ref{lem:piv_decoupling}.

\begin{proof}[Proof of Lemma~\ref{lem:piv_decoupling}]

Let $L_0=\Cr{minL_0}(d,\varepsilon)$ be given by Lemma~\ref{lem:piv_compare}. All constants $C,c$ below may depend on $L_0$ (and therefore on $\varepsilon$ and $d$). Assume first that $R \geq 8L_T$. Let us introduce the event
$E_x$ that $B_{2L_t}(x)$ and $\partial B_{L_T-L_t}(x)$ are not connected in $\{\chi\ge h\}$, and 
$F_x$ the event that $B_r$ and $B_{R}$ are connected in $\{\chi\ge h\} 
\cup B_{L_T}(x)$ but not in $\{\chi\ge h\} \setminus \{x\}$, see Fig.~\ref{F:decoupling}. 

\begin{figure}[h!]
  \centering 
  \includegraphics[scale=0.5]{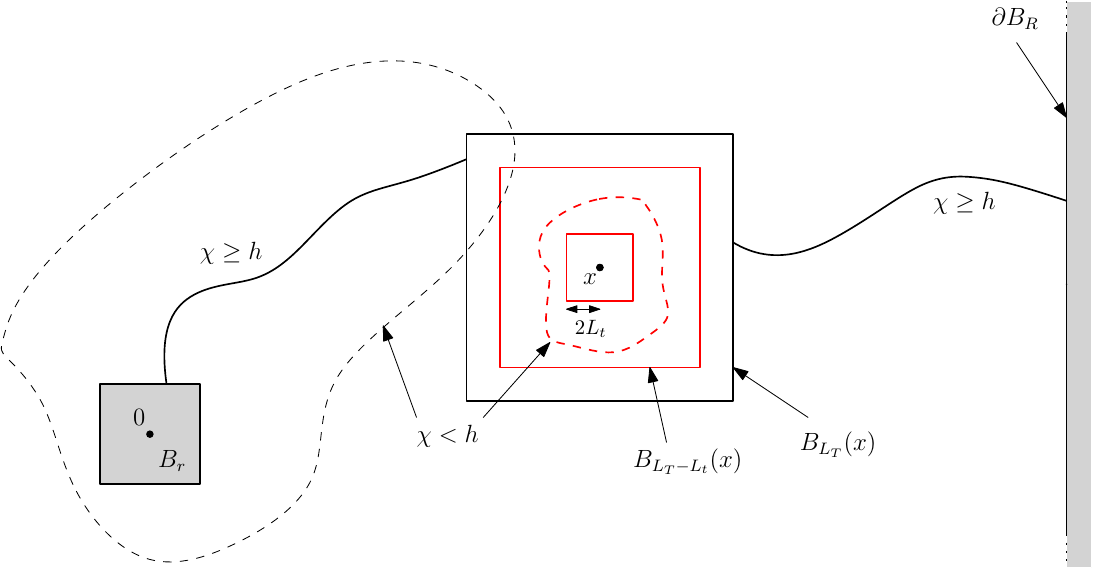}
  \caption{Decoupling in the proof of Lemma~\ref{lem:piv_decoupling}. By forcing the event $E_x$ (in red), which is independent of $ \mb Z(\L_x)$ and not too costly since $h> \tilde h$, $F_x$ and $f(\psi_x)$ decouple.}
  \label{F:decoupling}
\end{figure}

Observe that, conditionally on $\mb Z(\L_x)$ where $\L_x \stackrel{\textnormal{def.}}{=} \{x\}\cup B_{L_T}(x)^c$ (recall the definition in \eqref{Z_Lambda}), the 
event $F_x$ is decreasing in all the variables belonging to $\mb Z(\Z^d) \setminus \mb Z(\L_x)$, and so is $E_x$. From the FKG inequality for independent random variables, we deduce that
\begin{equation}
\label{eq:pivtilde_disconnect_fkg}
\begin{split}
\E[f(\psi_x);E_x\cap F_x\, |\, \mb Z(\L_x)] &= f(\psi_x)\P[E_x\cap F_x\, |\, \mb Z(\L_x)] \\
&\geq f(\psi_x)\P[E_x\, |\, \mb Z(\L_x)]\,\P[F_x\, |\, \mb Z
(\L_x)]\\
& \geq \Cr{piv_cm0}L_T^{-d}\E[f(\psi_x);F_x\, |\, \mb Z(\L_x)]\,,
\end{split}
\end{equation}
where in the final step we used the lower bound $q_R(t,h)\ge \Cr{piv_cm0}$ from the hypothesis of Lemma~\ref{lem:piv_decoupling} (note that $u(L_T-L_t) > 2L_t$ by definition of $T$) together with the fact that the event $E_x$ is independent of $\mb Z(\L_x)$ by \eqref{eq:phi_L_indep}. Since $\chi _x$ is measurable with respect to $\mb Z(\L_x)$, integrating with respect to $\mb Z(\L_x)$ gives  
\begin{align}
\label{eq:pivtilde_disconnect_fkg1}
&\E[f(\psi_x);F_x|\chi_x=h] \le \Cr{piv_cm0}^{-1}L_T^{d}\,\E[f(\psi_x);E_x\cap F_x
|\chi_x=h]\,\end{align}
(to obtain \eqref{eq:pivtilde_disconnect_fkg1}, one first integrates \eqref{eq:pivtilde_disconnect_fkg} against a set of the form $\{h\leq  \chi_x < h + \delta\}$, normalizes 
suitably and takes the limit $\delta \to 0$).
%
%
%
Since  $R - r \geq 4 L_T$ (recall that $R \geq 8L_T$ by assumption) and consequently 
$B_{L_T}(x)$ cannot intersect both $B_r$ and $\partial B_R$, the event $E_x\cap F_x$
is 
independent of $(\chi_x, \psi_x)$ as the range of $\chi$ is 
$L_t$. From this observation, we deduce that
\begin{align}\label{eq:diff_ineq3}
\begin{split}
\E[f(\psi_x);E_x\cap F_x
|\chi_x=h] &= \E[f(\psi_x)|\chi_x=h]\P[E_x\cap F_x]\\ 
&\le \E[f(\psi_x)|\chi_x=h]\P[\mathrm{CoarsePiv}_x(L_T)]\,,
\end{split}
\end{align}
where in the second step we used the fact that $E_x\cap F_x \subset\mathrm{CoarsePiv}_x(L_T)$ when 
$R - r \geq 4 L_T$. Now, Lemma~\ref{lem:piv_compare} gives that
\begin{align}
\label{eq:piv_coarse_piv2}
\sum_{x \in B_R \setminus B_{r-1}}\P[\mathrm{CoarsePiv}_x(L_T)] \leq L_T^{C} \e^{-c(r /  L_T)^{\rho}} + \e^{C(\log L_{\overline{T}})^{2}}\sum_{x\in \Z^d}\P[\piv_x | \chi_x = h],
\end{align}
where we used that (see \eqref{eq:u^*cond} for $u^*(\cdot)$) 
	\begin{align*}
	& \quad \ \sum_{n \geq T}|B_{10\kappa L_n}|\e^{{\Cr{C_piv_compare}}(\log L_n)^2-\Cr{c_piv_compare}u^*(L_n)^{\rho}} \le \e^{C(\log L_{\overline{T}})^2},\\
	& \sum_{x \in B_R\setminus B_{r-1}}\e^{-\Cr{c_piv_compare}(|x| / L_T)^{\rho}}  \leq L_T^{C} \e^{-c(r / L_T)^{\rho}},
\end{align*}
which follow by considering separately the cases $T \leq n \leq \overline{T}$ and $n \geq  \overline{T}$ in the first line and the cases $|x|\leq (L_T\vee r)$ and $|x|> (L_T\vee r)$ in the second.
Lemma~\ref{lem:piv_decoupling} now follows in the case $R \geq 8L_T$ from \eqref{eq:pivtilde_disconnect_fkg1}, \eqref{eq:diff_ineq3} and  \eqref{eq:piv_coarse_piv2} since $F_x \supset \text{Piv}_x$ and $T\leq C(L_0) t^3$, $\overline{T}\leq C(L_0) t^9$.

On the other hand if $R < 8L_T$, we simply bound
\begin{align}
\label{eq:diff_ineq1_case1}
\sum_{x \in \Z^d}\E[f(\psi_x); \piv_x\, | \chi_x = h] &\leq \sum_{x \in B_R}\E[f(\psi_x) | \chi_x = h]\le c'L_T^d\,\E[f(\psi_0)|\chi_0=h]. \end{align}
The proof is thus concluded by noting that, since $r\le4 L_T$ and  $T\leq C(L_0) t^3$, one can find $\Cr{piv_cm1}$ large enough (depending on $d,\varepsilon, L_0$ only) such that $c'L_T^d\leq \exp[\Cr{piv_cm1}t^{3}- 
r^{\Cr{piv_cm2}}\e^{-\Cr{piv_cm1}t^{3}}]$.
	\end{proof}

\bigskip

\noindent We now turn to the proof of Lemma~\ref{lem:piv_compare}. 
Roughly speaking, we would like to show that conditionally on the event $\mathrm{CoarsePiv}_x(L_T)$, some $\piv_y$ occurs in the box $B_{L_T}(x)$ with not too small probability. A natural strategy consists in trying to create paths between the clusters of $B_r$ and $\partial B_R$, which must necessarily intersect $B_{L_T}(x)$. However, the fact that the range of dependence of $\chi$ is $L_t$ presents a potential barrier for constructing these 
paths, for instance by forcing the field to be quite large. In order to poke through this barrier, we will use a good bridge -- in the sense mentioned below -- connecting the clusters of $B_r$ and $\partial B_R$ in 
$\{\chi\ge h\} \cap B_R$ so we can apply a result akin to Lemma~\ref{lem:sprinkling} (see 
Remark~\ref{remark:sprinkling} and \eqref{eq:yz} below) to construct open paths. 

To begin with, let	
\begin{equation}
\label{def:Fcompare}
\begin{split}
&F_{0, y} \stackrel{\textnormal{def.}}{=} \left\{\varphi^{L_0}_z  - \varphi^0_z \geq -M +\varepsilon,\, \varphi_z^0 \geq -M, ~~\forall z\in B_{L_0}(y)\right\}, \text{ for $y \in \mathbb{L}_0$},\\
&H_{n, y} \stackrel{\textnormal{def.}}{=} \{\varphi^{L_n}_z  - \varphi^{L_{n-1}}_z \geq -\frac{6\varepsilon}{(\pi n)^2},~~\forall z\in  B_{2L_n}(y)\}, \text{ for $n \geq 1$ and $y \in \mathbb{L}_n$,}
\end{split}
\end{equation}
with $M=M(L_0)$ chosen large enough (eg.~$M=\log L_0$) so that the bound in \eqref{C3} holds when $L_0 \geq \Cr{minL_0}(d,\varepsilon) $. We call a bridge from Definition \ref{def:bridge} \emph{good} if it satisfies Definition~\ref{def:goodbridge}, except that we only require \eqref{G2} to hold for all $j$ satisfying $1\vee m \leq j \leq n$ (which is a weaker condition). With a slight abuse of notation, we define $\mathcal G_{n, x}\stackrel{\textnormal{def.}}{=} \tau_x\mathcal{G}_n$ for $x \in \Z^d$, where $\mathcal G_n$ denotes the event from \eqref{eq:bridge.GOOD} corresponding to this weaker notion of good bridge, and with the choice $\Lambda_n = B_{10 \kappa L_n}\setminus B_{\kappa 
L_n}$ in \eqref{eq:annuli}. 
 For later reference, we also define $\mathcal{G}(S_1, S_2)$ as in \eqref{eq:defG} for any pair of admissible subsets $S_1, S_2$ of $\Lambda_n$. 

In view of \eqref{def:Fcompare} and 
Theorem~\ref{T:bridge1}, for all $L_0 \geq \Cr{minL_0}(d,\varepsilon)$ (which we will henceforth tacitly assume), there exist constants $\Cl[c]{c_piv_goodness}=\Cr{c_piv_goodness}(L_0), \rho=\rho(d) >0$ such that for every $n\geq0$ and $x\in\mathbb Z^d$, 
\begin{equation}\label{eq:nicelikely2}
\P [\G_{n, x}]\geq 1 -e^{-\Cr{c_piv_goodness}L_n^\rho}\,.
\end{equation}
We henceforth fix $L_0$ as above. All the constants $C,c$ below may depend on $L_0$, $\varepsilon$ and $d$.

 By definition, $\G_{n, x}$ guarantees the existence of a good bridge between the clusters of $B_r$ and $\partial B_R$ in $\{\chi\ge h\} \cap B_R$ {\em 
provided} they are both admissible in $\Lambda_n$, i.e.~they both intersect $\partial B_{10\kappa L_n}(x)$ as well as $B_{8\kappa L_n}(x)$ -- cf.~above \eqref{eq:bridge.GOOD}. But the latter 
condition is satisfied on the event $\mathrm{CoarsePiv}_x(8\kappa L_{n})$ 
when $B_r\not\subset B_{10\kappa L_n}(x)$. Putting these two observations together and using \eqref{def:Fcompare}, one ends up with the following lemma, whose proof is postponed for a few lines.
\begin{lemma}[Creating pivotals from coarse pivotals]
	\label{lem:good_bridge}
For all $x \in B_R\setminus B_{r-1}$ and $n \geq T$ such that $B_r\not\subset B_{10\kappa L_n}(x)$, we have
\begin{equation}
\label{eq:piv_good_bridge}
\P[\mathrm{CoarsePiv}_x(8\kappa L_{n}), { \G_{n, x}}] \leq \e^{{C}(\log L_n)^{2}} \sum_{y\in B_{10 \kappa L_n}(x)} \P[{\piv}_y | \chi_y = h]\,.
\end{equation}
\end{lemma}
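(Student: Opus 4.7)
The plan is to convert the coarse pivotality at $x$ together with the bridge structure supplied by $\G_{n,x}$ into an actual pivotal vertex inside $B_{10\kappa L_n}(x)$. The presence of the prefactor $e^{C(\log L_n)^{2}}$ on the right-hand side of~\eqref{eq:piv_good_bridge}, matching the one in Lemma~\ref{lem:sprinkling}, strongly suggests running the construction from the proof of Lemma~\ref{lem:sprinkling} (in the sprinkling-free form of Remark~\ref{remark:sprinkling}) along a path through the good bridge.

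First I would decompose $\P[\mathcal{E}]$, where $\mathcal{E}:=\mathrm{CoarsePiv}_x(8\kappa L_n)\cap\G_{n,x}$, according to the two explored clusters in $\{\chi\ge h\}$. On $\mathrm{CoarsePiv}_x(8\kappa L_n)$ there exist disjoint $\{\chi\ge h\}$-clusters $\mathcal{C}_r\supset B_r$ and $\mathcal{C}_R\supset\partial B_R$, each reaching $B_{8\kappa L_n}(x)$. The geometric assumptions $B_r\not\subset B_{10\kappa L_n}(x)$ and $r\le R/2$ imply that $|x|_{\infty}>10\kappa L_n-R$, so both $B_r$ and $\partial B_R$ also contain points outside $B_{10\kappa L_n}(x)$; consequently $\mathcal{C}_r$ and $\mathcal{C}_R$ both cross $\partial B_{10\kappa L_n}(x)$ outward, and their traces $S_1,S_2\subset\Lambda_n$ (pruned to components that cross the annulus) are admissible in the sense of Section~\ref{sec:bridges}. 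The event $\G_{n,x}$ then provides a good bridge $\mathcal{B}$ between $S_1$ and $S_2$.

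Next I would rerun the FKG-decoupling argument from \eqref{eq:gluing1}--\eqref{eq:gluing3} along a reference path $\pi=(x_B,y_B)_{B\in\mathcal{B}}$ through the boxes of $\mathcal{B}$. For each candidate $y\in\pi$ one forces $\{\chi\ge h\}$ to follow $\pi$ inside each box $B\in\mathcal{B}$ not containing $y$ --- paying at worst $\Cr{ctwopoints}L_m^{-\Cr{Ctwopoints}}$ per $m$-box by Lemma~\ref{lem:twopointsbound}, which aggregates to a cost $e^{-C(\log L_n)^{2}}$ via the complexity bound \textbf{B4} --- while conditioning on $\chi_y=h$, which makes $y$ pivotal for the $B_r\leftrightarrow \partial B_R$-connection in the resulting configuration. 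The FKG inequality for the independent underlying Gaussians $\mb Z$, conditional on $\mb Z(\Lambda_n^{\,\mathrm c}\cup S_1\cup S_2)$, then decouples the bridge-events from both the explored information and the local event at $y$, yielding schematically
\begin{equation*}
\P\bigl[\mathcal{E}\cap\{S_1=s_1,S_2=s_2\}\bigr]\le e^{C(\log L_n)^{2}}\sum_{y\in \pi}\P[\piv_y\mid\chi_y=h].
\end{equation*}
Summing over $(s_1,s_2)$ and extending the inner sum from $\pi$ to $B_{10\kappa L_n}(x)$ would give~\eqref{eq:piv_good_bridge}.

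The main obstacle will lie in executing this FKG decoupling cleanly: the local pivotality at $y$ depends on the \emph{global} geometry of $\{\chi\ge h\}$, which through Gaussian correlations is coupled to the bridge events, and these two pieces of information must be disentangled while keeping the total cost at most $e^{C(\log L_n)^{2}}$. The key tool will be, exactly as in the proof of Lemma~\ref{lem:sprinkling}, the FKG inequality applied to the underlying i.i.d.\ Gaussians $\mb Z$ after conditioning on the sub-$\sigma$-algebra generated by the exploration of $\mathcal{C}_r,\mathcal{C}_R$ and by $\mb Z(\Lambda_n^{\,\mathrm c})$, together with Lemma~\ref{lem:def_infty} to pass between $\chi$ and the finite-range fields $\varphi^{L_m}$ carrying the definition of the good bridge.
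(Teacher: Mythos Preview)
Your outline captures the right strategy---decompose over the cluster realizations, use $\G_{n,x}$ to find a good bridge, and pay the $e^{C(\log L_n)^2}$ cost for forcing open paths through the bridge boxes---but the mechanism you describe for producing $\P[\piv_y\mid\chi_y=h]$ is not the one that works, and there is a genuine gap. The pivotal vertex does not sit in the interior of the bridge path $\pi$; the bridge produces an \emph{open} path in $\{\chi\ge h\}$ joining points $y\in\partial_{\mathrm{out}}C_1$ and $z\in\partial_{\mathrm{out}}C_2$ (these are the analogues of $s_1,s_2$ in Remark~\ref{remark:sprinkling}). Both $y$ and $z$ are closed, so neither is automatically pivotal: one must split into the case where $z$ is already pivotal and the case where $y$ becomes pivotal once $z$ is opened. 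The paper handles the second case by exploiting that $\varphi^0_z$ is independent of everything else involved, so one can force $\varphi^0_z\ge M+h$ at constant cost; your FKG outline does not address this two-endpoint issue.

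Second, you cannot ``condition on $\chi_y=h$'' inside the bridge argument and arrive directly at $\P[\piv_y\mid\chi_y=h]$: this is a density-zero event and its insertion into the FKG/decoupling step is not well-defined. The paper instead proves an \emph{unconditional} bound of the form $\P[\mathcal{E}]\le e^{C(\log L_n)^2}\sum_y\P[\piv_y,\,|\chi_y-\varphi^0_y|\le M']$, where the control on $\chi_y-\varphi^0_y$ comes for free from the goodness event $\mathcal{H}_y$ at the bridge endpoint. Only afterwards, in a separate and short step, is this converted to $\P[\piv_y\mid\chi_y=h]$ by observing that the pair $(\piv_y,\,\chi_y-\varphi^0_y)$ is independent of $\varphi^0_y$ and integrating against the (bounded below on $[-M',M']$) density of $\varphi^0$. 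Without this two-step structure, your schematic inequality $\P[\mathcal{E}\cap\{S_1=s_1,S_2=s_2\}]\le e^{C(\log L_n)^2}\sum_{y\in\pi}\P[\piv_y\mid\chi_y=h]$ does not follow from the tools you cite.
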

In order to prove Lemma~\ref{lem:piv_compare}, we then find the \emph{first} scale $L_n$ at which the event $\G_{n, x}$ 
occurs so that we can apply the previous lemma, which is the content of Lemma \ref{lem:piv_find_goodscale} below.

For $x\in B_R\setminus B_{r-1}$, let $S_x$ denote the largest integer such that i) $B_r\not\subset B_{10\kappa L_{S_x}}(x)$, and ii) $B_{10\kappa L_{S_x}}(x)$ has empty intersection 
with at least one of $B_r$ and $\partial B_R$. Recall that we used a condition similar to ii) to derive 
\eqref{eq:diff_ineq3} (this was ensured by the assumption $R\geq 8L_T$ in the argument leading to \eqref{eq:diff_ineq3}). The quantity $S_x$ is well-defined, i.e.~$S_x \geq 0$ by condition \eqref{conditionr}. Moreover, $L_{S_x} \geq c |x|$, as can be readily deduced from the following: if $d(x,B_r) > \tfrac{|x|}{4}$, the ball $B(x,\frac{|x|}{5})$ does not intersect $B_r$, whereas for $d(x,B_r) \leq \frac{|x|}{4}$, 
the ball $B(x, \frac{|x|}{4})$ does not intersect $\partial B_R$ as $R \geq 2r$. 

%
%
\begin{lemma}[Finding the first good scale]
	\label{lem:piv_find_goodscale}
For all $x \in B_R\setminus B_r$ such that $S_x>T$, the following holds:
	\begin{align*}
		\P[\mathrm{CoarsePiv}_x(L_T)] \leq \sum_{n=T}^{S_x} \e^{-cu^{*}(L_{n})^\rho}\,\P[\mathrm{CoarsePiv}_x(8 \kappa L_{n}),{\G_{n, x}}] + \P[\G_{S_x, x}^c].
	\end{align*}
\end{lemma}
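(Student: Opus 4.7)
My plan is to decompose the event $\mathrm{CoarsePiv}_x(L_T)$ according to the smallest scale $n\ge T$ at which the good-bridge event $\G_{n,x}$ occurs. Setting
\[
N^* \coloneqq \inf\{n\ge T : \G_{n,x}\text{ occurs}\}\in\{T,T+1,\ldots\}\cup\{\infty\},
\]
one has
\[
\mathrm{CoarsePiv}_x(L_T)\subset \{N^*>S_x\}\cup\bigcup_{n=T}^{S_x}\bigl(\mathrm{CoarsePiv}_x(L_T)\cap\{N^*=n\}\bigr).
\]
Since $\{N^*>S_x\}\subset\G_{S_x,x}^c$, the first part yields the summand $\P[\G_{S_x,x}^c]$ in the statement of the lemma. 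For each $n\in\{T,\ldots,S_x\}$ I would establish
\[
\P[\mathrm{CoarsePiv}_x(L_T),\,N^*=n]\le e^{-cu^*(L_n)^\rho}\,\P[\mathrm{CoarsePiv}_x(8\kappa L_n),\,\G_{n,x}],
\]
which, summed over $n$, gives the main term on the right-hand side of the lemma.

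The trivial monotonicity $\mathrm{CoarsePiv}_x(L_T)\subset\mathrm{CoarsePiv}_x(8\kappa L_n)$ (valid since $L_T\le 8\kappa L_n$ for $n\ge T$) already yields the previous display with prefactor $1$ in place of $e^{-cu^*(L_n)^\rho}$; this is sufficient when $n\le \overline T$, since then $u^*(L_n)=0$ by definition \eqref{eq:u^*cond}. For $n>\overline T$, the extra decay must be extracted from $\{N^*=n\}$. To this end, I would select an integer $k=k(n)$ with $T<k<n$ and $L_k\simeq u(L_n)$; this is possible precisely because, by definition of $\overline T$, one has $u(L_n)\ge 20\kappa L_T$ whenever $n>\overline T$. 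On $\{N^*=n\}$ the event $\G_{k,x}^c$ occurs, so by \eqref{eq:nicelikely2},
\[
\P[\G_{k,x}^c]\le e^{-\Cr{c_piv_goodness}L_k^\rho}\le e^{-cu(L_n)^\rho}=e^{-cu^*(L_n)^\rho}.
\]

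The main obstacle, and the delicate point of the argument, is the factorization
\[
\P[\mathrm{CoarsePiv}_x(8\kappa L_n),\,\G_{n,x},\,\G_{k,x}^c]\le \P[\G_{k,x}^c]\cdot\P[\mathrm{CoarsePiv}_x(8\kappa L_n),\,\G_{n,x}].
\]
My strategy is to exploit the spatial supports of the three events with respect to the underlying white noise $\mb Z$. By the choice of $k$, the event $\G_{k,x}$ is measurable with respect to $\mb Z$ on a bounded enlargement of $B_{10\kappa L_k}(x)\subset B_{u(L_n)}(x)$; the event $\G_{n,x}$ is measurable with respect to $\mb Z$ on a bounded enlargement of $B_{10\kappa L_n}(x)\setminus B_{\kappa L_n}(x)$, which is disjoint from the former region since $10\kappa L_k\ll \kappa L_n$; and $\mathrm{CoarsePiv}_x(8\kappa L_n)$ depends only on $\chi$ on $\Z^d\setminus B_{8\kappa L_n}(x)$, as connectivity inside $B_{8\kappa L_n}(x)$ is irrelevant on this event. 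Because $\chi=\chi^t$ has range $L_t\ll L_n$, this means $\mathrm{CoarsePiv}_x(8\kappa L_n)$ is measurable with respect to $\mb Z$ outside $B_{8\kappa L_n-L_t}(x)$, again disjoint from the region determining $\G_{k,x}$. Independence of $\mb Z$ on disjoint sets then yields the desired factorization (in fact, an equality). Carefully verifying these measurability and disjointness statements, and checking that the parameters $(T,\overline T,k(n),S_x)$ fit together to make the argument go through uniformly in $x$ and $n$, is the technical heart of the proof.
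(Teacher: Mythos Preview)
Your overall strategy matches the paper's: decompose on the smallest scale $n\ge T$ at which $\G_{n,x}$ occurs, handle the range $n\le\overline T$ trivially via $u^*(L_n)=0$, and for $n>\overline T$ extract the decay $e^{-cu(L_n)^\rho}$ from the failure of $\G_{k,x}$ at some intermediate scale $k$ with $L_k\asymp u(L_n)$. The paper does exactly this (with the roles of your $n,k$ played by their $N,n$).

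The gap is in your factorization step. You claim that ``$\mathrm{CoarsePiv}_x(8\kappa L_n)$ depends only on $\chi$ on $\Z^d\setminus B_{8\kappa L_n}(x)$, as connectivity inside $B_{8\kappa L_n}(x)$ is irrelevant on this event.'' This is false. Recall that $\mathrm{CoarsePiv}_x(N)$ is the intersection of $\{B_r\leftrightarrow\partial B_R$ in $\{\chi\ge h\}\cup B_N(x)\}$ with $\{B_r\not\leftrightarrow\partial B_R$ in $\{\chi\ge h\}\}$. The first event is indeed determined by $\chi$ outside $B_N(x)$, but the second event---non-connection in the full configuration $\{\chi\ge h\}$---depends on $\chi$ everywhere in $B_R$, in particular inside $B_{10\kappa L_k}(x)$. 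A path from $B_r$ to $\partial B_R$ in $\{\chi\ge h\}$ might pass through this small box, and whether or not it does is governed by the very noise you want to factor out. So $\G_{k,x}^c$ is \emph{not} independent of $\mathrm{CoarsePiv}_x(8\kappa L_n)\cap\G_{n,x}$, and the inequality you assert does not follow from disjoint supports.

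The paper repairs this by inserting the disconnection event $D:=\{B_{20\kappa L_k}(x)\not\leftrightarrow\partial B_{4\kappa L_n}(x)$ in $\{\chi\ge h\}\}$ as a ``dual surface''. Two observations make this work. First, conditionally on $\mb Z(\Lambda_x)$ with $\Lambda_x=B_{10\kappa L_k}(x)\cup B_{6\kappa L_n}(x)^c$, both $D$ and $\mathrm{CoarsePiv}_x(8\kappa L_n)$ are decreasing in the remaining noise (the ``connected in $\{\chi\ge h\}\cup B_{8\kappa L_n}(x)$'' part of CoarsePiv is already $\mb Z(\Lambda_x)$-measurable), so FKG yields
\[
\P[\mathrm{CoarsePiv}_x(8\kappa L_n)\mid\mb Z(\Lambda_x)]\le \Cr{piv_cm0}^{-1}L_n^d\,\P[\mathrm{CoarsePiv}_x(8\kappa L_n),\,D\mid\mb Z(\Lambda_x)],
\]
the cost $\Cr{piv_cm0}^{-1}L_n^d$ coming from the standing hypothesis $q_R(t,h)\ge\Cr{piv_cm0}$ (this is where $h>\tilde h$ enters). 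Second, on $D$ no path in $\{\chi\ge h\}$ can cross the annulus, so $\mathrm{CoarsePiv}_x(8\kappa L_n)\cap D\cap\G_{n,x}$ becomes measurable with respect to $\mb Z(B_{20\kappa L_k}(x)^c)$---this is also why the paper weakens \textbf{G2} to $j\le n$---and now $\G_{k,x}^c$ genuinely factors out. The polynomial loss $L_n^d$ is harmless since $L_n\le\exp[C(\log L_k)^3]$ and is absorbed into $e^{-cL_k^\rho}$.
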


Lemma~\ref{lem:piv_compare} now follows readily by combining \eqref{eq:nicelikely2} with Lemmas \ref{lem:good_bridge} and \ref{lem:piv_find_goodscale} in case $S_x > T $ (this requires $L_0$ to be large enough, cf. above \eqref{eq:nicelikely2}), and simply bounding $\P[\mathrm{CoarsePiv}_x(L_T)]$ by 1 otherwise. The latter is accounted for by the first term on the right-hand side of  \eqref{eq:piv_compare} due to the factor $1/L_T$ appearing in the exponent and the fact that $L_{S_x} \geq c |x|$. 

\medskip

We now turn to the proofs of Lemmas~\ref{lem:good_bridge} and \ref{lem:piv_find_goodscale}.

\begin{proof}[Proof of Lemma~\ref{lem:good_bridge}]
The proof is divided into two steps. In the first step, we prove an {\em unconditional} version of \eqref{eq:piv_good_bridge}, namely
\begin{align}
\label{eq:piv_good_bridge2}
\P[\mathrm{CoarsePiv}_x(8\kappa L_{n}), { \G_{n, x}}] \leq \e^{C(\log L_n)^{2}} \sum_{y\in \Lambda_n(x)} \P[{\piv}_y, |\chi_y - \varphi^0_y| \leq M'],\,
\end{align}
where $M'\stackrel{\textnormal{def.}}{=}h+\varepsilon+M$ and $\Lambda_n(x)\stackrel{\textnormal{def.}}{=}\Lambda_n+x$. 
In the second step, we transform the unconditional probability into a conditional one: 
\begin{align}
	\label{eq:piv_uncon_con}
	\P[{\piv}_y, |\chi_y - \varphi^0_y| \leq M'] \leq C\,\P[{\piv}_y | \chi_y = h]\,. 
	\end{align}
It is clear that \eqref{eq:piv_good_bridge} follows from these two bounds as $\Lambda_n(x)\subset B_{10\kappa L_n}(x)$.

\medskip

Let us first prove \eqref{eq:piv_good_bridge2}. To this end, consider any pair of disjoint subsets $C_1$ and $C_2$ of $B_R$ such that $\mathcal C(C_1, C_2)  \stackrel{\textnormal{def.}}{=} \{ {\mathcal{C}}_{B_r} = C_1,\, {\mathcal C}_{\partial B_R} = C_2\} \subset \mathrm{CoarsePiv}_x(8\kappa L_{n})$, where $\mathcal C_A$ denotes the cluster of $A$ in $B_R\cap 
\{\chi \geq h\}$ (observe that $\mathrm{CoarsePiv}_x$ is measurable relative to the pair of random sets ${\mathcal C}_{B_r}$ and 
${\mathcal C}_{\partial B_R}$). Taking the union over all possible choices of pairs $(C_1, C_2)$ (call this collection of pairs $\mathscr C$) yields the decomposition
\begin{align}
\label{eq:containment}
\bigcup_{(C_1, C_2) \in \mathscr{C}}\mathcal C(C_1, C_2) = \mathrm{CoarsePiv}_x(8\kappa L_{n}).\,
\end{align}
By \eqref{eq:containment}, the sets $C_1$ and $C_2$ are admissible in $\Lambda_n(x)$ for any pair $(C_1, C_2) \in \mathscr{C}$ -- cf.~below \eqref{eq:nicelikely2} -- and so are $\overline{C}_1$ and $\overline{C}_2$, where $\overline{C}=(C \cup \partial_{\text{out}}C)\cap \Lambda_n(x)$, for $C\subset \Z^d$. 
We will use a good bridge to create a (closed) pivotal point $y$ in $(\partial_{\rm out}C_1 \cup \partial_{\rm out}C_2)\cap\Lambda_n(x)$, cf.~Fig.~\ref{F:reconstruction}. 

Now, similarly to the bound \eqref{eq:finite_energy_wo_sprinkle} derived in Section~\ref{sec:decompose_GFF} (recall that $\mathcal{N}(x)\stackrel{\textnormal{def.}}{=}\{y \in \Z^d: |y-x|_1 \leq 1\}$), we can prove that
\begin{equation}\label{eq:yz}
	\P\Big[\bigcup_{\substack{y,z}} \{\lr{\Lambda_n(x)\setminus (\overline C_1\cup \overline C_2)}{\chi\ge h}{\mathcal N(y)}{\mathcal N(z)}\}\cap \mathcal H_y\cap
	\mathcal H_z ~\Big\vert~ \begin{array}{c} \mathcal C(C_1, C_2),\\ \mathcal{G}(\overline{C}_1, \overline{C}_2)\end{array} \Big] \geq  e^{-C(\log  L_n)^2},
\end{equation}
where the union ranges over $y \in \partial_{\rm out}C_1$, $z \in \partial_{\rm out}C_2$ with $y,z\in\Lambda_n(x)$ and $\mathcal H_v =\{ \chi_v-\varphi_v^0\geq -M\text{ and } \varphi_v^0\geq -M\}$. We now explain the small adjustments to the proof of Lemma~\ref{lem:sprinkling} (or of \eqref{eq:finite_energy_wo_sprinkle}) needed in order to accommodate the different setup implicit in \eqref{eq:yz}. First, \eqref{eq:niceness} is replaced by the following: for each box $B = B_{L_m}(y) \in \mathcal{B}$, where $\mathcal{B}$ is any good bridge in $\Lambda_n(x)$, 
\begin{align}\label{eq:niceness2}
	\begin{split}
		&\chi_z-\varphi_z^0\geq -M\text{ and } \varphi_z^0\geq -M ~~~\forall z\in B_{L_0}(y),  ~~\text{when $m=0$},\\
		&\chi_z-\varphi_z^{L_m}\geq -\varepsilon ~~~\forall z\in \tilde{B}= B_{2L_m}(y), ~~\text{when $1\leq m\leq t$},
	\end{split}
\end{align}
as follows from \eqref{def:Fcompare} and our (weaker) version of \eqref{G2} (see below \eqref{def:Fcompare}). For each $B=B_{L_m}(y)\in \mathcal{B}$, one then redefines the event $A_B$ (see \eqref{eq:defA_B}) in the proof of Lemma~\ref{lem:sprinkling} as follows:
\begin{equation*} 
A_B\stackrel{\textnormal{def.}}{=}
\begin{cases} 
\{\lr{B_{L_0}(y)}{\varphi^{0}\geq h+ M + \varepsilon}{x_B}{y_B}\} & \text{if $m=0$},\\
\{\lr{B_{2L_m(y)}}{\varphi^{L_m}\geq h + \varepsilon}{x_B}{y_B}\} & \text{if $1\le m\le t$}, \\
\{\lr{B_{2L_m}(y)}{\chi\ge h}{x_B}{y_B}\} & \text{if $m>t$},
\end{cases}\,
\end{equation*}
and observes that, due to \eqref{eq:niceness2}, $x_B$ and $y_B$ are connected in $\{\chi\ge h\}$ whenever $B\in \mathcal{B}$ and $A_B$ occurs (the points $x_B$ and $y_B$ are chosen like in the paragraph above \eqref{eq:defA_B}). In view of the constraint $h\in(\tilde{h}+2\varepsilon, h_{**}-2\varepsilon)$ and the lower bound $q_R(t,h)\geq\Cr{piv_cm0}$ from the hypothesis of 
Lemma \ref{lem:piv_decoupling}, which are in force (see the beginning of this subsection), Lemmas \ref{lem:def_infty} and \ref{lem:twopointsbound} (see also Remark~\ref{rem:twopointsbound}) together imply that $\P[A_B]\geq  L_m^{-C'}$ for all $m \geq 1$ satisfying 
$L_m \leq R$. The rest of the proof of 
Lemma~\ref{lem:sprinkling} then follows as before, yielding~\eqref{eq:yz}. 

Rewriting \eqref{eq:yz} as an inequality involving the corresponding unconditional probabilities, using that $\G_{n, x} \subset \mathcal{G}(\overline{C}_1, \overline{C}_2)$ (see \eqref{eq:bridge.GOOD}), and subsequently 
summing over all possible choices of pairs $(C_1, C_2)\in \mathscr{C}$, we obtain
\begin{multline}\label{eq:bridging_comp1}
\P\big[\bigcup_{y, z \in \Lambda_n(x)} E(y,z)\cap \mathcal H_y\cap \mathcal H_z \big] \\ \geq\, \e^{-C(\log  L_n)^2} \sum_{(C_1, C_2) \in \mathscr C} \P[\G_{n, x},\, \mathcal C(C_1,  C_2)]\\ \stackrel{\eqref{eq:containment}}{=} \e^{-C(\log  L_n)^2}\P[\G_{n, x},\, \mathrm{CoarsePiv}_x(8\kappa L_{n})],
\end{multline}
where
\begin{equation}\label{eq:bridging_comp2}
E(y,z)\stackrel{\textnormal{def.}}{=}\big\{\lr{}{\chi\geq h}{B_r}{}\lr{}{\chi\ge h}{\mathcal N(y)}{\mathcal N(z)}\lr{}{\chi\ge h}{}{\partial B_R},\, \nlr{}{\chi\ge h}{B_r}{\partial B_R},\, \chi_y<h\big\}.
\end{equation}
Splitting into whether $z\in\partial_{\rm out}\mathcal{C}_{B_r}$ or $z\notin\partial_{\rm out}\mathcal{C}_{B_r}$, one can easily verify that $E(y,z)\subset E_1(z)\cup E_2(y,z)$, where $E_1(z)\stackrel{\textnormal{def.}}{=}\piv_z\cap\{\chi_z<h\}$  and 
$$E_2(y,z)\stackrel{\textnormal{def.}}{=}\big\{ \text{$y$ is pivotal in } \{\chi\geq h\}\cup\{z\}\big\}\cap \{\chi_y<h\}.$$ Also notice that 
$$\{\chi_z<h\}\cap\mathcal H_z \subset \{\chi_z < h, \, \chi_z - \varphi_z^0 \geq - M, \, \varphi_z^0 \geq -M \} \subset \{|\chi_z - \varphi_z^0| \leq M'\}.$$ Altogether we have 
$$E_1(z)\cap\mathcal{H}_z\subset \piv_z\cap\{|\chi_z-\varphi^0_z|\leq M'\}.$$
Therefore a simple union bound gives
\begin{align}\label{eq:bridging_comp4}
\begin{split}
&\P\Big[\bigcup_{y, z \in \Lambda_n(x)} E(y,z)\cap \mathcal H_y\cap \mathcal H_z \big\}\Big] \\
&\leq |\Lambda_n|\sum_{z\in\Lambda_(x)}\P[{\piv}_z, |\chi_z - \varphi^0_z| \leq M']
+ \sum_{y,z\in\Lambda_n(x)} \P[E_2(y,z)\cap\mathcal{H}_y\cap\mathcal{H}_z].
\end{split}
\end{align}
Now note that $E_2(y,z)\cap\mathcal{H}_y\cap\mathcal{H}_z$ is independent of $\varphi^0_z$, therefore
\begin{align}\label{eq:bridging_comp5}
\begin{split}
\P[E_2(y,z)\cap\mathcal{H}_y\cap\mathcal{H}_z]=\, &C(L_0)\P[E_2(y,z)\cap\mathcal{H}_y\cap\mathcal{H}_z\cap\{\varphi^0_z\geq M+h\}]\\
\leq\, &C(L_0)\P[{\piv}_y, |\chi_y - \varphi^0_y| \leq M'],
\end{split}
\end{align}
where $C(L_0)\stackrel{\textnormal{def.}}{=}\P[\varphi^0_0\geq M+h]^{-1}$. In the last inequality we used that $E_2(y,z)\cap\mathcal{H}_y\cap\mathcal{H}_z\cap\{\varphi^0_z\geq M+h\}\subset {\piv}_y\cap \{|\chi_y - \varphi^0_y| \leq M'\}$, which can be easily verified.
The desired inequality \eqref{eq:piv_good_bridge2} then follows directly from \eqref{eq:bridging_comp1}, \eqref{eq:bridging_comp4} and \eqref{eq:bridging_comp5} together.

\medskip
We now turn to the proof of \eqref{eq:piv_uncon_con}. Below, for a stationary Gaussian process $\Phi$ indexed by $\Z^d$, we write $p_{\Phi}$ for the density of $\Phi_0$. The key observation is that the pair $(\piv_{y}, \chi_y - \varphi^0_y)$ is independent of 
$\varphi^0_y$ ($\piv_{y}$ is measurable with respect to $\chi_z$, $z\neq y$, which is independent of $\varphi^0_y$). Consequently,
\begin{align*}
\P[\piv_y |\, \chi_y - \varphi^0_y = h_1, \varphi^0_y =  h_2] = \P[\piv_y |\, \chi_y - \varphi^0_y = h_1]
\end{align*}
for $(h_1, h_2) \in \R^2$, which leads to
\begin{align}
\label{eq:piv_uncon_con_uncon}
\P[\piv_y, |\chi_y - \varphi^0_y| \leq M'] = \int_{-M'}^{M'} \P[\piv_y |\, \chi_y - \varphi^0_y = h_1]p_{\chi - \varphi^0}(h_1) dh_1\,,
\end{align}
and also
\begin{multline}\label{eq:piv_uncon_con_con}
p_{\chi}(h)\P[{\piv}_y, |\chi_y - \varphi^0_y| \leq M' \;|\; \chi_y = h]\\
= \int_{-M'}^{M'} \P[\piv_y |\, \chi_y - \varphi^0_y = h_1] p_{\chi - \varphi^0}(h_1)p_{\varphi^0}(h - h_1) dh_1.
\end{multline}
Since $\varphi^0$ is a centered Gaussian variable, we have
$$\inf_{|h_1| \leq M' }p_{\varphi^0}(h - h_1) \geq c(L_0)>0$$
and \eqref{eq:piv_uncon_con} now follows from the displays \eqref{eq:piv_uncon_con_uncon} and \eqref{eq:piv_uncon_con_con}. This completes the proof.
\end{proof}

\begin{figure}[h!]
  \centering 
  \includegraphics[scale=0.6]{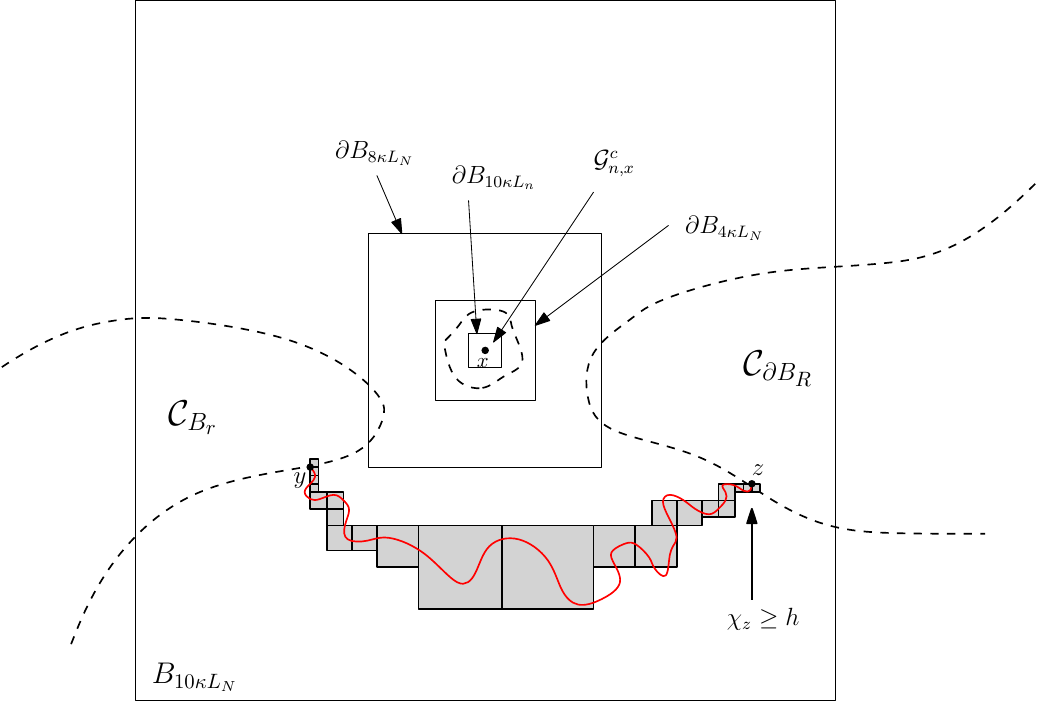}
  \caption{Finding a good scale and reconstructing. On the event $\mathrm{CoarsePiv}_x(8\kappa L_N)\cap\G_{N, x}$, one constructs a path (red) in $\{ \chi \geq h\}$ connecting the boundaries (dotted) of the clusters of $B_r$ and $B_R$. The point $z$ is flipped to open and $y$ becomes a  pivotal point (Lemma \ref{lem:good_bridge}). The occurrence of $\G_{n,x}^c$, decoupled by a dual surface, balances the reconstruction cost from the bridge (Lemma \ref{lem:piv_find_goodscale}).} 
  \label{F:reconstruction}
\end{figure}

\begin{proof}[Proof of Lemma~\ref{lem:piv_find_goodscale}]
In the proof below, we will consistently use the letters $N$ and $n$ as follows: $n$ 
is the largest integer such that $u(L_N)\ge 20\kappa L_n$. Together with the definition of $\overline{T}$ (see above  \eqref{eq:u^*cond}), this implies $n > T$ whenever $N \geq \overline{T}$.
Now, decomposing on the smallest good scale from $T$ to $S_x$ gives
\begin{align*}
	1 &= \sum_{N=T+1}^{ S_x} \big(1_{\mathcal G_{N, x}} \prod_{ k=T+1}^{  N-1}1_{\mathcal G_{k, x}^c}\big)  + \,1_{\G_{S_x, x}^c} \\ &\leq \sum_{N=T+1}^{\overline{T}-1}1_{\G_{N,x}} + \sum_{N= \overline{T}}^{S_x} 1_{\G_{N, x} \cap \G_{n, x}^c} + \,1_{\G_{S_x, x}^c},
	\end{align*}
which in turn implies
\begin{equation}
	\label{eq:coarse_to_point_decomposition}
\begin{split}
	\P[\mathrm{CoarsePiv}_x(L_T)]\leq &\sum_{N = T}^{\overline{T}-1} \P[\mathrm{CoarsePiv}_x(8\kappa L_N),\G_{N, x}] \\
	+ &\sum_{N = \overline{T}}^{S_x} \P[\mathrm{CoarsePiv}_x(8\kappa L_N),\G_{N, x},\G_{n, x}^c]+   \P[\G_{S_x, x}^c],
	 \end{split}
\end{equation}
where we also used the monotonicity of $\mathrm{CoarsePiv}_x(L)$ in $L$. In view of \eqref{eq:u^*cond}, the first sum in the right-hand side of \eqref{eq:coarse_to_point_decomposition} is accounted for in the statement of Lemma~\ref{lem:piv_find_goodscale}. As for the second sum, we now decouple the events $\G_{n,x}^c$ using a similar technique as in the proof of Lemma~\ref{lem:piv_decoupling}, cf. also Fig.~\ref{F:decoupling} and \ref{F:reconstruction}. The event $\G_{N, x}\cap \G_{n, x}^c$ is measurable relative to $\mb Z(\Lambda_x)$ where $\Lambda_x \stackrel{\textnormal{def.}}{=} B_{10\kappa L_{n}}(x) \cup B_{6\kappa L_N}(x)^c$; see the paragraph below \eqref{def:Fcompare} and \eqref{eq:annuli}--\eqref{eq:bridge.GOOD}. In particular,
\begin{align*}
&\P[\mathrm{CoarsePiv}_x(8\kappa L_N),\G_{N, x}, \G_{n, x}^c, \nlr{}{\chi\ge h}{B_{20\kappa L_{n}}(x)}{\partial B_{4\kappa L_N}(x)}\,|\, \mb Z(\Lambda_x)] \\
&= 1_{\G_{N, x} \cap \G_{n, x}^c} \P[\mathrm{CoarsePiv}_x(8\kappa L_N), \nlr{}{\chi\ge h}{B_{20\kappa L_{n}}(x)}{\partial B_{4\kappa L_N}(x)}\,|\, \mb Z(\Lambda_x)].
\end{align*}
Since our choice of $(n,N)$ guarantees that $u(L_N) \geq 20 \kappa L_n$ and $10 \kappa L_n$ exceeds the range of $\chi$, the standing assumption $q_R(t, h) \geq \Cr{piv_cm0}$ implies a lower bound of the form $\Cr{piv_cm0}L_N^{-d}$ for the above 
disconnection probability under $\P[\, \cdot \, | \,\mb Z(\Lambda_x)]$ for all $N \leq S_x$. Now, applying the same argument as for \eqref{eq:pivtilde_disconnect_fkg} with $E_x$ and $F_x$ replaced by the above disconnection and coarse pivotality events, respectively, we deduce that
\begin{multline*}
\P[\mathrm{CoarsePiv}_x(8\kappa L_N)| \mb Z(\Lambda_x)]\\\le \Cr{piv_cm0}^{-1}L_N^d\P[\mathrm{CoarsePiv}_x(8\kappa L_N),\, \nlr{}{\chi \geq h}{B_{20\kappa L_n}(x)}{\partial B_{4\kappa L_N}(x)}| \mb Z(\Lambda_x)].
\end{multline*}
Plugging this inequality into the previous display and integrating with respect to $\mb Z(\Lambda_x)$ gives
\begin{multline*}
\P[\mathrm{CoarsePiv}_x(8\kappa L_N),\G_{N, x}, \G_{n, x}^c] 
\\\leq  \Cr{piv_cm0}^{-1} L_N^{d }\,\P[\mathrm{CoarsePiv}_x(8\kappa L_N),\nlr{}{\chi\ge h}{B_{20\kappa L_{n}}(x)}{\partial B_{4\kappa L_N}(x)},\G_{N, x}, \G_{n, x}^c].
\end{multline*}
However, since $N\le S_x$ (recall its definition from the discussion preceding the statement of Lemma \ref{lem:piv_find_goodscale}), $B_{10\kappa L_N}(x)$ has empty intersection with at least one of $B_r$ or $\partial B_R$. We deduce from this fact that the event
$$\displaystyle{\{\mathrm{CoarsePiv}_x(8\kappa L_N),\,\nlr{}{\chi\ge h}{B_{20\kappa L_{n}}(x)}{\partial B_{4\kappa L_N}(x)},\,\G_{N, x}\}}$$
is measurable with respect to $\mb Z( B_{20\kappa L_{n}}(x)^c)$ and thus independent of 
$\G_{n, x}$ by  
\eqref{eq:phi_L_indep} (our modification of property \eqref{G2}, cf.~below \eqref{def:Fcompare}, is geared towards this decoupling). 
Using this observation to factorize the right-hand side of the previous displayed inequality and subsequently bounding $\P[\G_{n, x}^c]$ by \eqref{eq:nicelikely2}, we obtain for $\overline{T}\leq N \leq S_x$ that
\begin{multline*}
\P[\mathrm{CoarsePiv}_x(8\kappa L_N) \cap \G_{N, x} \cap \G_{n, x}^c]   \\ \leq \Cr{piv_cm0}^{-1}L_N^{d}\e^{-\Cr{c_piv_goodness}L_{n}^\rho}\P[\mathrm{CoarsePiv}_x(8\kappa L_N),\G_{N, x}]\\
 \leq \e^{- cL_{n}^\rho}\P[\mathrm{CoarsePiv}_x(8\kappa L_N),\G_{N, x}],
\end{multline*}
where in the final step we used the fact that $L_N \leq \exp[C(\log L_{n})^3]$. Substituting this bound into \eqref{eq:coarse_to_point_decomposition} and using that $20\kappa\ell_0 L_{n}>u(L_N)$ completes the proof. \end{proof}

\subsection{Adding the noise}
\label{sec5.3}

We extend the bounds in \eqref{eq:comparison1delta=0} and \eqref{eq:comparison2delta=0} from $\delta = 0$ to some positive $\delta$ depending {\em only} on $\varepsilon$, with $L=L(\varepsilon)$ now fixed such that the conclusions of Proposition \ref{prop:comparison} hold for $\delta=0$, see \eqref{eq:comparison1delta=0} and \eqref{eq:comparison2delta=0}. It is enough to compare $\{\varphi^L\geq h\}$ to $T_{\delta}\{\varphi^L\geq h\pm\varepsilon\}$. To this end, consider some $\delta > 0$ and define for every $t \in [0, 1]$, the percolation process $ \omega^{t,h}\stackrel{\textnormal{def.}}{=}T_{t\delta}\,\{\varphi^L \geq h \}$ (recall $T_{\delta}$ from below \eqref{eq:intro_phi_L}) as well as 
$
	\theta(t, h) \stackrel{\textnormal{def.}}{=} \P[\lr{}{\omega^{t,h}}{B_r}{\partial B_R}],
$
for $r\le R/2$. The analogue of \eqref{eq:Russo} in this case reads
\begin{align*}
&\partial_h \theta = -(1 - \delta)\sum_{x \in \Z^d}\P[\piv_x | \varphi^L_x = h]p(h)\ \text{ and } \\ 
&\partial_t \theta = \frac{\delta}{2}\sum_{x \in \Z^d}\big(\P[\piv_x, \omega^{t,h}_x=0] - \P[\piv_x, \omega^{t,h}_x=1]\big),\,
\end{align*}
where $p(\cdot) $ is the density of $\varphi^L_0$. Notice that 
\begin{align}
\label{eq:deriv_noise}
|\partial_t \theta|	 \leq \frac{\delta}{2}\sum_{x \in \Z^d}\P[\piv_x]\,.
	\end{align}	
Define $\tilde{c_7}$ and $\tilde{q}_N(t,h)$ similarly to \eqref{def:cm0} and \eqref{def:q_N}, but replacing $\varphi$ by $\varphi^L$ and $\{\chi^t\geq h\}$ by $\omega^{t,h}$, respectively. One then follows the proof of Lemma~\ref{lem:piv_decoupling} -- which is actually slightly simpler -- to obtain that, under the condition that $\tilde{q}_R(t,h)\geq\tilde{c_7}$, the following inequality holds
\begin{align*}
\sum_{x \in \Z^d}\P[\piv_x]
\leq C(L)\Big(\sum_{x \in \Z^d}\P[\piv_x | \varphi^L_x = h] +\exp[-c(L)r^{\Cr{piv_cm2}}] \Big).
\end{align*}
Then, the proof follows similar lines of reasoning as the proof of  \eqref{eq:comparison1delta=0} and \eqref{eq:comparison2delta=0} from Lemma~\ref{lem:piv_decoupling} at the end of Section \ref{sec5.1}, choosing the prefactor $\delta$ appearing in \eqref{eq:deriv_noise} suitably small (recall that $L=L(\varepsilon)$ is fixed) to obtain an analogue of the differential inequality \eqref{difinal}. We omit further details.

\section{Proof of Proposition~\ref{prop:sharptruncated}}
\label{sec:sharpness}
Throughout this section, for a fixed $L\geq0$ and $\delta \in (0, 1)$, we set 
\begin{equation}
\omega= \{ \omega_h : h \in \R \}, \text{ where }
\omega_h\stackrel{\textnormal{def.}}{=} T_\delta \{\varphi^L\ge h\}, \ h \in \R
\end{equation}
(recall $T_{\delta}$ from the paragraph preceding the statement of Proposition~\ref{prop:sharptruncated}). To be specific, we assume that $\omega$ is sampled 
in the following manner. There exists a collection of i.i.d.~uniform random variables ${\rm U} =\{{\rm U}_x: x \in \Z^d\}$ independent of the process~$\mb Z$ (recall the definition from Section~\ref{subsec:decomposition}) under $\P$ such that, for $h \in \R$,
\begin{equation}
\label{def:tempered}
\omega_h(x) = \begin{cases}
0 &  \mbox{if } {\rm U}_x \leq \delta/2\,,\\
1_{\varphi_x ^L \geq h} & \mbox{if } {\rm U}_x \in (\delta/2, 1 - \delta/2)\,,\\
1 & \mbox{if } {\rm U}_x \geq 1 - \delta/2.\,
\end{cases}
\end{equation}

 We proceed to verify that $\omega$ satisfies the following properties:
 \begin{enumerate}[(a)]
\item {\em Lattice symmetry. }  For all $h \in \R$, the law of $\omega_h$ is invariant with respect to translations of $\Z^d$, reflections with respect to hyperplanes and rotations by $\pi/2$.

\item {\em Positive association. }  For all $h \in \R$, the law of $\omega_h$ is positively associated, i.e.~any pair of increasing events satisfies the FKG-inequality.

\item {\em Finite-energy. } There exists $c_{{\rm FE}} \in (0,1)$ such that 
for any $h \in \R$, $$\P[\omega_h(x) = 0 \,|\, \omega_h(y) \mbox{ for all }y \neq x] \in (c_{{\rm FE}}, 1 - c_{{\rm FE}})\,.$$

\item {\em Bounded-range i.i.d.~encoding. } Let $\{{\rm V}(x) : x \in \Z^d\}$ denote a family of i.i.d.~random variables (e.g.~uniform in $[0,1]$). Then, for every $h \in \R$, there exists a (measurable) function $g=g_h: \R^{\Z^d} \to \{0, 1\}^{\Z^d}$ such that the law of $g(({\rm V}(x))_{x \in \Z^d})$ is the same as that of $\omega_h$ and, for any $x \in \Z^d$, $g_x((v_y)_{y \in \Z^d})$ depends only on $\{v_y: 
y \in B_{L}(x)\}$. Thus, in particular, $\omega_h$ is an $L$-dependent process.

\end{enumerate}

Property~(a) is inherited from corresponding symmetries of the laws of $\rm U$ and $\varphi^L$. In view of \eqref{def:tempered}, \eqref{eq:white_noise_representation} and \eqref{eq:phi^k}, $\omega_h$ is an increasing function of the independent collection $({\rm U}, \mathcal{Z})$, and Property~(b) follows by the FKG-inequality for independent random variables. Still by  \eqref{def:tempered}, \eqref{eq:white_noise_representation} and \eqref{eq:phi^k}, Properties~(c) and~(d) hold: for the former, take $c_{{\rm FE}}=\delta/2$; for the latter one can use ${\rm V}(x)$ to generate the independent random variables ${\rm U}_x$ and $\mathcal{Z}_{\ell} (\tilde z)$, for all $0\leq \ell \leq L$ and $\tilde{z}\in \{ x, x+\frac12 e_i, 1\leq i \leq d\}$ (this can always be arranged, e.g.~by considering the dyadic expansion of ${\rm V}(x)$ and looking at the digits corresponding to residue classes modulo $M$ for suitably large~$M$, which give rise to $M$ independent uniform random variables).
We will use another property of $\omega_h$, whose proof is more involved. We therefore state it as a separate lemma.

\begin{lem} The field $\omega$ satisfies the following: 
{\rm \begin{enumerate}
\item[(e)] {\em Sprinkling property. }For every pair $h < h' \in \R$, 
there exists $\varepsilon =\varepsilon(h,h') > 0$ such that 
$\omega_h$ {\em stochastically 
dominates} $\omega_{h'} \vee \eta_\varepsilon$, where $\eta_{\varepsilon}$ is a Bernoulli percolation with density $\varepsilon$ independent of 
$\omega_{h'}$. Henceforth we will denote this domination by $\omega_h \succ \omega_{h'} \vee \eta_{\varepsilon}$.
\end{enumerate}}
\end{lem}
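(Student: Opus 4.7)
Properties (a)--(d) are essentially immediate from the definitions and standard Gaussian bounds, so I focus on the sprinkling property (e), which is strictly stronger than the obvious monotonicity $\omega_h\succ\omega_{h'}$. The natural coupling via \eqref{def:tempered} with common $(U,\varphi^L)$ already yields $\omega_h\ge\omega_{h'}$ pointwise, so the genuine content of (e) is to extract from the ``excess'' an i.i.d.\ Bernoulli component that is \emph{truly independent} of $\omega_{h'}$. The pointwise excess is $\{x:U_x\in(\delta/2,1-\delta/2),\,\varphi^L_x\in[h,h')\}$, of positive density $\alpha:=(1-\delta)\P[\varphi_0^L\in[h,h')]>0$, but it is correlated through $\varphi^L$, so one cannot simply take $\eta_\varepsilon$ to be its indicator.

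My plan is to exploit the built-in re-sampling noise of $T_\delta$ through a two-stage decomposition. Write $T_\delta=T_{\delta_1}\circ T_{\delta_2}$ with $(1-\delta_1)(1-\delta_2)=1-\delta$, and set $\omega_h^\circ:=T_{\delta_2}\{\varphi^L\ge h\}$, so that $\omega_h=T_{\delta_1}(\omega_h^\circ)$ and analogously for $h'$. The natural coupling on the inner step gives $\omega_h^\circ\ge\omega_{h'}^\circ$ pointwise, with gap density $\alpha':=(1-\delta_2)\P[\varphi_0^L\in[h,h')]>0$. The outer step $T_{\delta_1}$ is driven by randomness independent of $(\varphi^L,U^{(2)})$; I would couple its action on the two fields using a common re-sampling indicator $R_x\sim\mathrm{Bern}(\delta_1)$ together with a site-wise bivariate coupling of the re-sampled values $(Y_x^h,Y_x^{h'})$ designed to (i) preserve $\omega_h\ge\omega_{h'}$ pointwise, (ii) give the correct Bernoulli$(1/2)$ marginals for both, and (iii) produce the configuration $(Y_x^h,Y_x^{h'})=(1,0)$ with strictly positive probability precisely at ``gap'' sites. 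The candidate sprinkling field is then constructed from the joint randomness of $R_x$ and the $Y$'s.

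\emph{Main obstacle.} The crucial difficulty is that the gap-indicator $\mathbf{1}_{\{\omega_h^\circ(x)>\omega_{h'}^\circ(x)\}}$ depends on $\varphi^L$, so the candidate $\eta_\varepsilon$ built above is only \emph{conditionally} independent of $\omega_{h'}$ (given $\varphi^L$), not marginally. To upgrade this to a genuine i.i.d.\ Bernoulli$(\varepsilon)$ field independent of $\omega_{h'}$, one applies a Liggett--Schonmann--Stacey-type domination principle for finitely dependent fields: by (d) the candidate is $L$-dependent, and by (c) its one-site conditional densities are uniformly bounded below, so its conditional law given $\omega_{h'}$ stochastically dominates an i.i.d.\ Bernoulli$(\varepsilon)$ field with $\varepsilon=\varepsilon(\alpha,\delta,L,c_{\mathrm{FE}})>0$. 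Strassen's theorem then realizes the desired pointwise coupling $\omega_h\ge\omega_{h'}\vee\eta_\varepsilon$ on a joint probability space. I expect the bulk of the technical work to go into verifying the uniform conditional lower bound in the presence of the long-range Gaussian dependence of $\varphi^L$, which is precisely where the finite-energy property (c) becomes indispensable.
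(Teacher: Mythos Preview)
Your proposal has a concrete bug and a larger gap that is precisely the crux of the lemma.

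\emph{The two-stage coupling is internally inconsistent.} With a common re-sampling indicator $R_x$, whenever $R_x=1$ you have $\omega_h(x)=Y_x^h$ and $\omega_{h'}(x)=Y_x^{h'}$, so preserving $\omega_h\ge\omega_{h'}$ pointwise forces $Y_x^h\ge Y_x^{h'}$ almost surely. But two $\{0,1\}$-valued variables with identical Bernoulli$(1/2)$ marginals and $Y_x^h\ge Y_x^{h'}$ a.s.\ must coincide a.s.\ (their difference is nonnegative with zero mean), so your requirement~(iii) cannot be met. The decomposition $T_\delta=T_{\delta_1}\circ T_{\delta_2}$ generates no usable excess and should be discarded.

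\emph{Property~(c) does not give the needed conditional lower bound.} Dropping the two-stage idea, the natural route---and the one the paper takes---is a Holley-type verification: show that for every $x$ with $\sigma'(x)=0$,
\[
\P\bigl[\omega_h(x)=1\ \big|\ \omega_{h'}=\sigma',\ \omega_h(y)=\sigma(y)\ \forall y\ne x\bigr]\ \ge\ \varepsilon.
\]
On $\{\omega_{h'}(x)=0\}$ one has $\{\omega_h(x)=1\}=\{{\rm U}_x\in(\delta/2,1-\delta/2),\,\varphi^L_x\in[h,h')\}$. Property~(c) controls $\omega_h$ conditioned on $\omega_h$ alone; it says nothing about the law given the \emph{pair} $(\omega_h,\omega_{h'})$. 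The actual decoupling mechanism is the noise-triggering event $\mathcal U(x)=\bigcap_{y\in B_L(x)\setminus\{x\}}\{{\rm U}_y\notin(\delta/2,1-\delta/2)\}$: on $\mathcal U(x)$ the values of $(\omega_h,\omega_{h'})$ in $B_L(x)\setminus\{x\}$ are functions of ${\rm U}$ only and carry no information about $\varphi^L$. The obstruction to applying this on the full lattice is that whenever the conditioning specifies $(\sigma(y),\sigma'(y))=(1,0)$ for some $y\in B_L(x)\setminus\{x\}$, one necessarily has ${\rm U}_y\in(\delta/2,1-\delta/2)$, so $\mathcal U(x)$ has conditional probability zero and the argument breaks down. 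Your proposal identifies this as ``the bulk of the technical work'' but does not supply an argument.

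\emph{How the paper resolves it.} The paper first proves the weaker domination $\omega_{\mathsf h}\succ\omega_{\mathsf h+\kappa}\vee\eta_\varepsilon^{\mathbb L}$ with $\eta_\varepsilon^{\mathbb L}$ supported on a \emph{sparse} sublattice $\mathbb L$, a translate of $10L\,\Z^d$. In the Holley verification one orders the sites so that those in $\mathbb L$ come first; when treating $x_k\in\mathbb L$, all previously revealed $\omega_{\mathsf h}$-sites lie in $\mathbb L$, hence outside $B_L(x_k)$, so no $(1,0)$-constraint arises nearby and $\mathcal U(x_k)$ retains conditional probability at least $(\delta/2)^{|B_L|}$. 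On $\mathcal U(x_k)$ the conditional probability of $\{\omega_{\mathsf h}(x_k)=1\}$ is computed explicitly. Iterating over the finitely many translates of $10L\,\Z^d$ with step $\kappa=(h'-h)/|\mathcal L|$ then yields $\omega_h\succ\omega_{h'}\vee\eta_\varepsilon$. No LSS domination and no two-stage decomposition are used.
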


\begin{proof}
We assume by suitably extending the underlying probability 
space that there exists $\eta = \{ \eta_{\varepsilon} :\varepsilon \in (0,1)\}$
with $\eta$ independent of $\omega$ and $ \eta_{\varepsilon} $ distributed as i.i.d.~Bernoulli variables with density $\varepsilon$. We will progressively replace the threshold $h$ with $h'$ in a finite 
number of steps. To this end we claim that for any $\kappa > 0$ 
there exists $\varepsilon > 0$ such that for any $\mathsf h  \in [h, h']$ and $\mathbb{L} \in \mathcal L$, where $\mathcal L$ comprises all the (finitely many) translates of the sub-lattice $10L\,\Z^d$, we have
\begin{align}
	\label{eq:weak_domination}
	\omega_{\mathsf h} \succ \omega_{\mathsf h + \kappa} \vee \eta_{\varepsilon}^{\mathbb{L}},
\end{align}
where $\eta_{\varepsilon}^{\mathbb{L}}(x)= \eta_{\varepsilon}(x)$ if $x \in \mathbb{L}$ and $\eta_{\varepsilon}^{\mathbb{L}}(x)=0$ otherwise. Let us first explain how to derive 
property~(e) from \eqref{eq:weak_domination}: choosing $\kappa \stackrel{\textnormal{def.}}{=} \tfrac{h' - h}{|\mathcal L|}$  gives $\omega_{h} \succ \omega_{h + \kappa} \vee 
\eta_{\varepsilon}^{\mathbb L}$, for suitable $\varepsilon 
=\varepsilon (h,h',L,d)>0$ and any choice $\mathbb{L}$, so that  iterating this over the lattices in $\mathcal L$ gives
\begin{equation*}
	\omega_{h} \succ \omega_{h + |\mathcal L| \kappa} \vee  \bigvee_{\mathbb L \in \mathcal L} \eta_{\varepsilon}^{\mathbb L} = \omega_{h'} \vee \eta_{\varepsilon},\,
\end{equation*}
as desired. 

By approximation, it suffices to verify \eqref{eq:weak_domination} for all fields restricted to a finite set $\L \subset \Z^d$. Let $\omega_h(S)\stackrel{\textnormal{def.}}{=} \{\omega_h(x): x \in S \}$
for $S\subset \Z^d$ and define similarly $ \eta_\varepsilon(S)$,  $\eta_\varepsilon^{\mathbb{L}}(S)$. Notice that $\omega_{\mathsf h}=\omega_{\mathsf 
h+\kappa} \vee \omega_{\mathsf h }$ and $\omega_{\mathsf h + 
\kappa} \vee \eta_\varepsilon^{\mathbb{L}}$ are both increasing functions 
of $(\omega_{\mathsf h + \kappa}, \omega_{\mathsf h})$ and 
$(\omega_{\mathsf h + \kappa}, \eta_\varepsilon^{\mathbb{L}})$ respectively. 
Therefore it suffices to show that, conditionally on any realization of $\omega_{\mathsf h + \kappa}$, the field $\omega_{\mathsf h}(\L)$ 
stochastically dominates $\eta_{\varepsilon}^{\mathbb{L}}(\Lambda)$. To this end we fix an ordering $\{x_1, x_2, \dots\}$ of the 
vertices in $\L$ such that all the vertices in $\mathbb{L} \cap \L$ appear 
before all the vertices in $\mathbb{L}^c \cap \L$. In view of \cite[Lemma~1.1]{LigSchSta97}, it then suffices to show that for any $k\geq 1$, with $\Lambda_k= \{ x_1,\dots, x_{k-1}\}$ ($\Lambda_0 =\emptyset$),
\begin{align}
	\label{eq:weak_domination_cond}
	\P \text{-a.s., } \ \ \P[\, \omega_{\mathsf h}(x_{k }) = 1 \,|\, A,\, A'\,] \geq \varepsilon \cdot \mathbf{1}_{x_{k} \in \mathbb{L}}, \,\,\,\, 
\end{align}
where $A\stackrel{\textnormal{def.}}{=}\{\omega_{\mathsf h}(x)=\sigma(x) , x \in \Lambda_k\}$, $A'\stackrel{\textnormal{def.}}{=}\{  \omega_{\mathsf h + \kappa}(x) = \sigma'(x),x \in \Lambda\}$, for arbitrary $\sigma, \sigma' \in \{ 0,1\}^{\Z^d}$ with $\sigma \geq \sigma'$ (as $\omega_{\mathsf h} \geq \omega_{\mathsf h + 
\kappa}$). We now show  \eqref{eq:weak_domination_cond} and assume that $x_{k} \in \mathbb{L}$ (the other cases are trivial). 
To this end let us first define, for any $x \in \Z^d$, the pair of events
\begin{equation}
\label{eq:Ulastsection}
\mathcal{T}(x) \stackrel{\textnormal{def.}}{=}  \{{\rm U}_x \notin (\delta/2, 1 - \delta/2)\}\ \,\,\,\mbox{and}\,\,\,\  \mathcal U(x) \stackrel{\textnormal{def.}}{=} \bigcap_{y \in B_L(x) \setminus \{x\}} \mathcal{T}(y).
\end{equation}
Notice that, in view of {\eqref{def:tempered}}, $\mathcal{T}(x)$ corresponds to the event that the noise at $x$ is \textit{triggered}. We then write, with $\mathcal{U}=\mathcal{U}(x_k)$,
\begin{equation}
\label{eq:cond_prob_decomp}
\begin{split}
\P[&\omega_{\mathsf h}(x_{k }) = 1 \,|\, A, A'\,] \geq \P[\,\omega_{\mathsf h}(x_{k }) = 1 \,|\, \mathcal{U},\, A,\, A'\,] \cdot \P[\, \mathcal{U} \,|\,A,\, A'\,]. 
\end{split}
\end{equation}
We will bound the two probabilities on the right-hand side separately from below. It follows from the definition of $\omega_h$ in \eqref{def:tempered} that $\omega_{\mathsf h + \kappa}(y) =\zeta(y) $ on the event $\mathcal{T}(y)$, where $  \zeta(y)= 
1_{{\rm U}_y \geq 1 - \delta/2}$. Consequently, decomposing $A'$, we obtain, with $A'_1\stackrel{\textnormal{def.}}{=}\{ \omega_{\mathsf h + \kappa}=\sigma' \text{ on }  \L \cap B_L(x_{k })^c\}$ and $A'_2\stackrel{\textnormal{def.}}{=}\{\zeta=\sigma' \text{ on }  B_L(x_{k }) \setminus \{x_{k}\}\}$, that
\begin{multline*}
p \stackrel{\textnormal{def.}}{=}  \P[\,\omega_{\mathsf h}(x_{k }) = 1 \,|\, \mathcal{U},\, A,\, A'\,] \\= \P[\,\omega_{\mathsf h}(x_{k }) = 1 \,|\, \mathcal{U},\, A,\, \omega_{\mathsf h + \kappa}(x_{k })=\sigma'(x_k),\, A_1', A_2'\,]\,.
\end{multline*}
However, in view of definition \eqref{def:tempered}, \eqref{eq:Ulastsection} and the fact that $\mathbb{L}$ is $10L$-separated, it follows that both events $\{\omega_{\mathsf h}(x_{k }) = 1, \omega_{\mathsf h + \kappa}(x_{k })=\sigma'(x_k)\}$ and $\{ \omega_{\mathsf h + \kappa}(x_{k })=\sigma'(x_k) \}$ are independent of $\mathcal{U} \cap A \cap A_1'\cap A_2'$, leading to 
\begin{equation*}
p = \P[\omega_{\mathsf h}(x_{k }) = 1 \,|\,\omega_{\mathsf h + \kappa}(x_{k }) =\sigma'(x_{k})]\,.
\end{equation*}
Now, $p = 1$ when $\sigma'(x_{k }) = 1$. On the other hand, when $\sigma'(x_{k }) = 0$, we have, using \eqref{def:tempered}, \eqref{eq:Ulastsection},
\begin{align*}
p &\geq \P[\omega_{\mathsf h}(x_{k }) = 1 \,|\,\mathcal{T}(x_k)^c, \omega_{\mathsf h + \kappa}(x_{k }) = 0]\,\P[\mathcal{T}(x_k)^c \,|\, \omega_{\mathsf h + \kappa}(x_{k }) = 0]\\
&= \P[\varphi_{x_{k}}^L \geq \mathsf h \,|\varphi_{x_{k}}^L < \mathsf h + \kappa\,]\,\P[\mathcal{T}(x_k)^c] \, \P[ \varphi_{x_{k}}^L < \mathsf h + \kappa]/ \P[\omega_{\mathsf h + \kappa}(x_{k }) = 0]  \ge \varepsilon', 
\end{align*}
where $\varepsilon'>0$ depends only on $\delta, \mathsf{h}, \kappa, L$ and $d$.  As to bounding the second term on the right of \eqref{eq:cond_prob_decomp}, we write, with $A''(\rho)\stackrel{\textnormal{def.}}{=}\{1_{\varphi_{y}^L \geq \mathsf h+\kappa}=\rho(y), y \in B_L(x_k) \setminus \{x_k\} \}$,
\begin{multline*}
 \P[\, \mathcal{U} \,|\,A,\, A'\,]  \geq \inf_{\rho}  \P[\, \mathcal{U} \,|\,A,\, A',\, A''(\rho)]\\
 = \inf_{\rho} \prod_{y \in B_L(x_{k}) \setminus \{x_{k}\}}\P[\, \mathcal{T}(y) \,|\,1_{\varphi_y^L \geq \mathsf h + \kappa} =\rho(y), \omega_{\mathsf h + \kappa}(y)=\sigma'(y)] \ge (\delta / 2)^{|B_L|}
\end{multline*}
where the equality in the second line follows by \eqref{def:tempered} and \eqref{eq:Ulastsection} upon conditioning on $\omega_{\mathsf h}(x)$, $x\in \Lambda_k$, $\omega_{\mathsf h + \kappa}(y)$, $y \in (\Lambda \setminus B_L(x_k))\cup\{x_k\}$ and $1_{\varphi_y^L \geq \mathsf h + \kappa}$, $y \in  B_L(x_k) \setminus \{x_k\}$, and the final lower bound follows by distinguishing whether $\rho(y)\neq \sigma'(y)$ (in which case the given conditional probability equals $1$) or not, and using \eqref{def:tempered}. Overall, the right-hand side of \eqref{eq:cond_prob_decomp} is thus bounded from below by $\varepsilon \stackrel{\textnormal{def.}}{=} \varepsilon' (\delta / 2)^{|B_L|}$, which implies \eqref{eq:weak_domination_cond} and completes the verification of~(e).
\end{proof}


\medskip

The rest of this section is devoted to deriving Proposition~\ref{prop:sharptruncated} from Properties~(a)--(e). We prove in two parts that $\tilde 
h(\delta,L) \ge h_*(\delta,L)$ and $h_*(\delta,L) = h_{**}(\delta,L) \ge \tilde h(\delta,L)$ which together imply $\tilde h(\delta, 
L) = h_*(\delta,L) = h_{**}(\delta,L)$, as asserted. 

\medskip

We first argue that $\tilde h(\delta,L) \ge h_*(\delta,L)$. As a consequence of Properties~(a)--(c) and (e), one can adapt the argument in \cite{GriMar90} in the context of Bernoulli percolation --- see also \cite[Chap.~7.2]{Gri99a}  --- to deduce that $\omega_h$ percolates in ``slabs'', i.e. for every $h < h_*(\delta,L)$, there exists 
$M \in \N$ such that 
\begin{align}
\label{eq:slab}
\P\big[\lr{\Z^2 \times \{0,\dots,M\}^{d-2}}{\omega_h}{0}{\infty}\big] > 0 .
\end{align}
Now, fix $h<h_*(\delta,L)$ and $M$ such that the above holds. Set $\mathbb S\stackrel{\textnormal{def.}}{=}\Z^2 \times \{0,\dots,M\}^{d-2}$ and $x_i\stackrel{\textnormal{def.}}{=}(0,\dots,0,(M+L)i)$, and observe that since the range of $\omega_h$ is $L$, it follows that for every $R \geq C(h)$, 
\begin{align*}
\P[\nlr{}{\omega_h}{B_{u(R)}}{B_R}] \stackrel{}\leq \prod_{i<u(R)/(M+L)}\P[\nlr{x_i+\mathbb S}{\omega_h}{x_i}{\infty}] \stackrel{\eqref{eq:slab}}{\leq}\P[\nlr{\mathbb S}{\omega_h}{0}{\infty}]^{u(R)/(M+L)} \,.
\end{align*}
But this implies $h \le \tilde h(\delta,L)$, as desired.

\medskip

The proof of $h_*(\delta,L) = h_{**}(\delta,L) \ge \tilde 
h(\delta,L)$ on the other hand follows directly from the exponential decay of $\omega_h$ in the subcritical regime. More precisely, we claim that for every $h > h_*(\delta,L)$, there exists $c = c(h)> 0$ such that  
\begin{align}
\label{eq:exp_decay}
\P[\lr{}{\omega_h}{0}{\partial B_R}] \leq \exp(-c R), \quad \text{for every $R\ge0$},
\end{align}
which implies that $h_*(\delta,L) = h_{**}(\delta,L)$, cf. \eqref{eq:h_**} (recall that $ h_{**}(\delta,L) \ge \tilde 
h(\delta,L)$, as explained below \eqref{eq:tildeh} for $\delta=0$ and $L=\infty$). We therefore focus on the proof of \eqref{eq:exp_decay}. For each $h\in\mathbb R$, consider the family of processes $\gamma_{\varepsilon} \stackrel{\textnormal{def.}}{=} \omega_h \vee \eta_{\varepsilon}$ indexed by $\varepsilon \in 
[0, 1]$ where $\eta_{\varepsilon}$ is a Bernoulli percolation with density $\varepsilon$ independent of $\omega_h$. Let $\varepsilon_c=\varepsilon_c(h) \in [0, 
1]$ denote the critical parameter of the family of percolation processes 
$\{\gamma_\varepsilon : \varepsilon \in [0, 1]\}$ and suppose for a moment that for every $0 \leq \varepsilon < \varepsilon_c(h)$, there exists $c=c(h,\varepsilon)>0$ such that for every $R\ge1$,
\begin{equation}
\label{eq:subcritical_ep}
\theta_R(\varepsilon) \stackrel{\textnormal{def.}}{=} \P[\lr{}{\gamma_{\varepsilon}}{0}{\partial B_R}] \leq \exp(-cR).
\end{equation}
Then \eqref{eq:exp_decay} follows immediately by taking $\varepsilon=0$ in case $\varepsilon_c (h)>0$ for all $h>h_*(\delta,L)$. But the latter holds by the following reasoning. From Property (e), we see that whenever $h>h_*(\delta,L)$, choosing $h'= (h_*(\delta,L)+h)/2$, one has $\omega_{h'} \succ \omega_{h} \vee 
\eta_{\varepsilon'}$ for some $\varepsilon'>0$, whence $ \varepsilon' \leq \varepsilon_c(h)$ as $\omega_{h'}$ is subcritical. The remainder of this subsection is devoted to proving \eqref{eq:subcritical_ep}. 

For this purpose, we use the strategy developed in the series of papers \cite{DumRaoTas17b, 
DumRaoTas17a, DumRaoTas17c} to prove subcritical sharpness using decision trees. This strategy consists of two main parts. In the first part, one bounds the variance $\theta_R(1 - 
\theta_R)$ using the OSSS inequality from \cite{OSSS} by a weighted sum of influences where the weights are given by revealment probabilities of a randomized algorithm 
(see \eqref{eq:reveal} below). Then in a second part one relates these influence terms to the derivative of $\theta_R(\varepsilon)$ with respect to $\varepsilon$ to deduce a system of differential inequalities of the following form:
\begin{equation}
\label{eq:osssdiff_ineq}
\theta_R' \geq  \beta\,\frac{R}{\Sigma_R}\,\theta_R(1 - \theta_R),
\end{equation}
where $\Sigma_R \stackrel{\textnormal{def.}}{=} \sum_{r = 0}^{R-1}\theta_r$ and $\beta=\beta(\varepsilon):(0,1)\rightarrow(0,\infty)$ is a continuous function. Using purely analytical arguments, see \cite[Lemma~3.1]{DumRaoTas17b}, one then obtains \eqref{eq:subcritical_ep} for $\varepsilon<\varepsilon_c$.

In our particular context, we apply the OSSS inequality for product measures (see \cite{DumRaoTas17b} for a more general version) and for that we use the encoding of $\omega_h$ in terms of $\{{\rm 
V}(x): x \in \Z^d\}$ provided by Property~(d). In view of this, $1_{\mathcal E_R}$ with $\mathcal E_R = \{\lr{}{\gamma_{\varepsilon}}{0}{\partial B_R}\}$ can now be written as a function of independent 
random variables $({\rm V}(x): x \in B_{R + L})$ and $(\eta_\varepsilon(x): x \in B_{R})$. Using a randomized algorithm very similar to the one used in  \cite[Section~3.1]{DumRaoTas17c} (see the discussion after the proof of Lemma~\ref{lem:influence_compare} below for a more detailed exposition), we get
\begin{equation}
\label{eq:OSSSvarbnd_avgpart}
\sum_{x \in B_{R+ L}} \Inf_{{\rm V}(x)} + \sum_{x \in B_{R}} \Inf_{\eta_{\varepsilon}(x)} \geq {c}L^{-(d-1)}\frac{R}{\Sigma_R}\,\theta_R(1 - \theta_R) \,,
\end{equation}
where
$
	\Inf_{{\rm V}(x)} \stackrel{\textnormal{def.}}{=} \P[1_{\mathcal E_R}({\rm V}, \eta_{\varepsilon} ) \neq 1_{\mathcal E_R}({\tilde {\rm V}}, \eta_{\varepsilon} )]\,$
with $\tilde V$ being the same as $V$ for every vertex except at $x$ where it is {\em resampled 
independently}, and $\Inf_{{\eta}_\varepsilon(x)}$ is defined similarly. In order to derive \eqref{eq:osssdiff_ineq} from \eqref{eq:OSSSvarbnd_avgpart}, we use the following lemma.
\begin{lemma}
	\label{lem:influence_compare}
	There exists a continuous function $\alpha:(0, 1)\rightarrow(0,\infty)$ such that for all $R \geq 1$,
	\begin{align}
		\label{eq:influence_compare}
\theta'_R(\varepsilon)	\geq \alpha(\varepsilon) \,\Big(\sum_{x \in B_{R + L}} \Inf_{{\rm V}(x)} + \sum_{x \in B_{R}} \Inf_{\eta_{\varepsilon}(x)}\Big).
		\end{align}
	\end{lemma}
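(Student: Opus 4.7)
I will decompose $\theta_R'(\varepsilon)$ via Russo's formula applied to the i.i.d.\ Bernoulli field $\eta_\varepsilon$, and then use the finite range of the encoding from Property~(d) together with a short finite-energy argument to compare the influence of each ${\rm V}(x)$ with the influences of the $\eta_\varepsilon(y)$ for $y$ close to $x$.

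The easy step is Russo's formula. Since $\gamma_\varepsilon = \omega_h\vee\eta_\varepsilon$, and hence $\mathcal E_R$, is monotone in $\eta_\varepsilon$, applying Russo's formula conditionally on $V$ and then integrating gives
\[
\theta_R'(\varepsilon) \,=\, \sum_{y\in B_R}\P[y\text{ pivotal for }\mathcal E_R]\,=\,\frac{1}{2\varepsilon(1-\varepsilon)}\sum_{y\in B_R}\Inf_{\eta_\varepsilon(y)},
\]
which already accounts for the $\eta_\varepsilon$ contribution to \eqref{eq:influence_compare}.

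The main step is the pointwise bound, for each $x\in B_{R+L}$,
\[
\Inf_{{\rm V}(x)} \,\le\, C(\varepsilon,L)\sum_{y\in B_L(x)\cap B_R}\P[y\text{ pivotal for }\mathcal E_R];
\]
summing this over $x$ and observing that each $y\in B_R$ lies in at most $|B_L|$ of the balls $B_L(x)$ with $x\in B_{R+L}$ would yield $\sum_x\Inf_{{\rm V}(x)} \le C(\varepsilon,L)|B_L|\,\theta_R'(\varepsilon)$, so the lemma would follow with $\alpha(\varepsilon)=[2\varepsilon(1-\varepsilon)+C(\varepsilon,L)|B_L|]^{-1}$. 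To prove the pointwise bound, I will first note that by Property~(d), $\omega_h(V)$ and $\omega_h(\tilde V)$ can only differ on $B\coloneqq B_L(x)\cap B_R$. On the event $\mathcal A_x\coloneqq\{\mathcal E_R(V,\eta_\varepsilon)=1,\,\mathcal E_R(\tilde V,\eta_\varepsilon)=0\}$, which accounts for half of $\Inf_{{\rm V}(x)}$ by the symmetry of the joint law of $(V,\tilde V)$, setting $\eta_\varepsilon(y)=1$ for all $y\in B$ would make $\gamma_\varepsilon(\tilde V,\cdot)$ dominate $\gamma_\varepsilon(V,\eta_\varepsilon)$ on $B_R$, hence force $\mathcal E_R(\tilde V,\cdot)$ to occur. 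Fixing an arbitrary ordering $y_1,\dots,y_k$ of $B$ (with $k\le|B_L|$) and interpolating from $\eta_\varepsilon$ to this all-ones configuration one coordinate at a time therefore identifies a random first index $Y=y_i$ whose flip triggers $\mathcal E_R(\tilde V,\cdot)$ under the intermediate configuration $\eta^{(i-1)}$, namely $\eta_\varepsilon$ with $\eta(y_j)$ reset to $1$ for $j<i$.

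The finite-energy transfer then proceeds as follows. The event that $y_i$ is pivotal for $\mathcal E_R(\tilde V,\cdot)$ under $\eta^{(i-1)}$ depends only on $\tilde V$ and on $\eta_\varepsilon$ at sites in $B^c\cup\{y_j:j>i\}$, hence is independent of $\eta_\varepsilon(y_j)$ for $j<i$; intersecting with the event $\{\eta_\varepsilon(y_j)=1\text{ for all }j<i\}$, which has probability $\varepsilon^{i-1}\ge\varepsilon^{|B_L|}$, makes $\eta^{(i-1)}$ coincide with $\eta_\varepsilon$ and so converts this pivotality into pivotality of $y_i$ for $\mathcal E_R(\tilde V,\eta_\varepsilon)$. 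Combining these observations yields $\P[\mathcal A_x,\,Y=y_i]\le \varepsilon^{-|B_L|}\P[y_i\text{ pivotal for }\mathcal E_R(\tilde V,\eta_\varepsilon)]$; summing over $i$ and using the distributional identity $(\tilde V,\eta_\varepsilon)\stackrel{d}{=}(V,\eta_\varepsilon)$ gives the desired bound with $C(\varepsilon,L)=2\varepsilon^{-|B_L|}$, which is continuous on $(0,1)$. The main obstacle is precisely this transfer step, which requires a careful orchestration of the interpolation and the finite-energy resampling; crucially, the boundedness $|B|\le|B_L|$ afforded by the finite-range encoding keeps the cost uniform in $x$ and $R$ and thereby ensures that the resulting $\alpha(\varepsilon)$ depends only on $\varepsilon$.
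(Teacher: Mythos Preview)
Your proof is correct and follows the same overall strategy as the paper: express $\theta_R'$ via Russo's formula for the Bernoulli coordinate $\eta_\varepsilon$, bound each $\Inf_{{\rm V}(x)}$ by the pivotality of the box $B_L(x)$ using the finite-range encoding of Property~(d), and then reduce box pivotality to a sum of point pivotalities by opening the sites of the box one at a time and paying a finite-energy cost in $\eta_\varepsilon$ to realize the intermediate configuration. The only notable difference is that the paper first intersects the box-pivotality event with $\{\gamma_\varepsilon\equiv 0\text{ on }B_L(x)\}$ at a cost $(c_{{\rm FE}}(1-\varepsilon))^{-c'L^d}$ via Property~(c) before interpolating, whereas you interpolate directly from the resampling event $\mathcal A_x$ (observing that filling $B$ with $\eta$-ones already forces $\mathcal E_R(\tilde V,\cdot)$) and so use only the finite energy of the i.i.d.\ field $\eta_\varepsilon$; this makes your argument marginally more elementary and yields a constant independent of $c_{{\rm FE}}$, but the two routes are otherwise identical in spirit.
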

\begin{proof}
Since the derivative is with respect to the parameter of the 
Bernoulli component of the process, it follows from standard computations that
\begin{align*}
\theta_R' = \P [\omega_h(0) = 0] \sum_{x \in B_R} \P[\piv_x]\,.
\end{align*}
where $\piv_x$ is the same event as in Section~\ref{sec:comparison} (see below \eqref{eq:Russo}) with $\chi^t$ replaced by $\gamma_{\varepsilon}$. Now, on the one hand, $	
	\Inf_{\eta_{\varepsilon}(x)}  \leq  \P[\piv_x]$. 
On the other hand, since ${\rm V}(x)$ affects the states of vertices only in $B_{L}(x)$ by Property~(d), one immediately gets
$	
	\Inf_{V(x)}  \leq  \P[\piv(B_{L}(x))]$,
where $\piv(B_{L}(x))$ -- called the {\em pivotality of the box} $B_{L}(x)$ -- is the event that $0$ is connected to $\partial B_R$ in $\gamma_\varepsilon\cup B_{L}(x)$ where as it is not in $\gamma_\varepsilon\setminus B_{L}(x)$. In fact, due to finite-energy property of $\omega_h$, we can write
\begin{align*}
	\P[\piv(B_{L}(x))] \leq (c_{{\rm FE}}(1 -\varepsilon))^{-{c'}L^d}\P[\piv(B_{L}(x)), \gamma_{\varepsilon}(y) = 0 \mbox{ for all } y \in B_{L}(x)]\,.
\end{align*}
If we now start on the event on the right-hand side and open the vertices
inside $B_{L}(x)$ one by one in $ \gamma _{\varepsilon}$ until $0$ gets
connected to $\partial B_{R}$, which must happen by the definition of
$\mathrm{Piv}(B_{L}(x))$, then the last vertex to be opened is pivotal for the
resulting configuration. This implies that
\begin{align*}
	\P[\piv(B_{L}(x)), \eta_{\varepsilon}(y) = 0 \mbox{ for all } y \in B_{L}(x)] \leq  \varepsilon^{-{c'}L^d}\sum_{y \in B_{L}(x)}\P[\piv_y]\,.
\end{align*}
Combining all these displays yields \eqref{eq:influence_compare}.
\end{proof}

\bigskip

To conclude, we now give a full derivation of \eqref{eq:OSSSvarbnd_avgpart} for sake of completeness. To this end let us define a \emph{randomized} algorithm $\mathsf T$ which takes $({\rm V}(x): x 
\in B_{R + L})$ and $(\eta_\varepsilon(x): x \in B_{R})$ as inputs, and determines the event $\mathcal E_R$ by revealing ${\rm V}(x), \eta_\varepsilon(x)$ one by one as follows:
\begin{defn}[Algorithm $\mathsf T$]
	Fix a deterministic ordering of the vertices in $B_R$ and choose $j \in \{1, \dots, R\}$ uniformly and independently of $({\rm V}, \eta_{\varepsilon})$. Now set $x_0$ to be the smallest $x \in \partial B_j$ in 
	the ordering and reveal $\eta_{\varepsilon}(x_0)$ as well as 
	${\rm V}(y)$ for all $y \in B_{L}(x_0)$. Set $E_0 \stackrel{\textnormal{def.}}{=}\{ x_0 \}$ and $C_0\stackrel{\textnormal{def.}}{=}\{ x_0\}$ if $\gamma_{\varepsilon}(x_0)=1$ and $C_0\stackrel{\textnormal{def.}}{=}\emptyset$ otherwise. At each step $t \geq 1$, assume that $E_{t-1}$ and $C_{t-1}$ have been defined. 
	Then,
	\begin{itemize}
		\item If the intersection of $B_R$ with the set $(\partial_{\text{out}}C_{t-1} \cup \partial B_j)\setminus E_{t-1}$ is non-empty, let $x$ be the smallest vertex in this intersection and set $x_t \stackrel{\textnormal{def.}}{=} x$, $E_t\stackrel{\textnormal{def.}}{=} E_{t-1} \cup \{x_t \}$. Reveal $\eta_{\varepsilon}(x_t)$ as well as 
	${\rm V}(y)$ for all $y \in B_{L}(x_t)$ and set $C_t\stackrel{\textnormal{def.}}{=} C_{t-1}\cup \{x_t\}$ if $\gamma_{\varepsilon}(x_t)=1$ and $C_t\stackrel{\textnormal{def.}}{=} C_{t-1}$ otherwise.
		\item If the intersection is empty, halt the algorithm.
	\end{itemize}
\end{defn}
In words, the algorithm $\mathsf T$ explores the clusters in $\gamma_{\varepsilon}$ of the vertices in $\partial B_j$.
By applying the OSSS inequality \cite{OSSS} to $1_{\mathcal E_R}$ and the algorithm $\mathsf T$, we now get 
\begin{align}
	\label{eq:OSSSvar_bnd}
	\var[1_{\mathcal E_R}] = \theta_R (1 - \theta_R) \leq \sum_{x \in B_{R + L}}\theta_{{\rm V}(x)}(\mathsf T)\,\Inf_{{\rm V}(x)}  + \sum_{x \in B_{R}}\theta_{{\eta_\varepsilon(x)}}(\mathsf T)\,\Inf_{\eta_\varepsilon(x)},\,
	\end{align}
where the function $\theta_{\cdot}(\mathsf{T})$, called the revealment of the respective variable, is defined as
\begin{align}
	\label{eq:reveal}
	\theta_{{\rm V}(x)}(\mathsf{T}) &\stackrel{\textnormal{def.}}{=} \pi[\mathsf{T} \text{ reveals the value of }{\rm V}(x)],
\end{align}
and $\theta_{\eta_\varepsilon(x)}(\mathsf{T})$ is defined in a similar way. Here $\pi$ denotes the probability governing the extension of $({\rm V}, \eta_{\varepsilon})$ which accommodates the random choice of layer $j$, which is independent of $({\rm V}, \eta_{\varepsilon})$.

\smallskip

Let us now 
bound the revealments for ${\rm V}(x)$ and $\eta_{\varepsilon}(x)$. Notice that since ${\rm V}(x)$ affects the state of vertices only in $B_{L}(x)$ by Property~(d), ${\rm 
V}(x)$ is revealed only if there is an explored vertex $y 
\in B_{L}(x) \cap B_R$. The vertex $y$, on the other hand, is explored 
only if $y$ is connected to $\partial B_j$ in $\gamma_{\varepsilon}$. We deduce that
\begin{equation*}
\label{eq:v_revealment_bnd}
\theta_{{\rm V}(x)}(\mathsf T) \leq \frac{1}{R}\sum_{j=1}^R\P[\lr{}{\gamma_{\varepsilon}}{(B_{L}(x) \cap B_R)}{\partial B_j}] \leq \frac{1}{R}\sum_{j=1}^R\,\sum_{y \in 
 B_{L}(x) \cap B_R}  \P[\lr{}{\gamma_{\varepsilon}}{y}{\partial B_j}]\,,
\end{equation*}
where in the last step we used a naive union bound. For $\eta_{\varepsilon}(x)$, it is even simpler:
\begin{equation*}
\label{eq:revealment_bnd}
\theta_{\eta_{\varepsilon}(x)}(\mathsf T) \leq \frac{1}{R}\sum_{j=1}^R\P[\lr{}{\gamma_{\varepsilon}}{x}{\partial B_j}]\,.
\end{equation*}
Now \eqref{eq:OSSSvarbnd_avgpart} follows by plugging the previous two displays into 
\eqref{eq:OSSSvar_bnd} combined with the translation invariance of $\gamma_{\varepsilon}$ implied by Property~(a) and the triangle inequality.

\subsection*{Acknowledgments.}

We thank Aran Raoufi and Augusto Teixeira for interesting discussions at various stages of this project. We are grateful to Alain-Sol Sznitman for stimulating discussions on the subject of this work and for his comments on an earlier draft of this manuscript. We thank two anonymous referees for their comments on a prior version of this article. We thank Augusto Teixeira for pointing out that our multiscale bridges should really be called \textit{croissants}.

\medskip
This research was supported by an IDEX grant from Paris-Saclay, the European Research Council (ERC) project CriBLaM, a grant from the Swiss National Science Foundation (FNS), and the National Centres for Competence in Research (NCCR) SwissMAP. SG's research was also partially supported by the SERB grant SRG/2021/000032, a grant from the Department of Atomic Energy, Government of India under project 12--R\&D--TFR--5.01--0500 and a grant from the Infosys Foundation as a member of the Infosys-Chandrasekharan virtual center for Random Geometry. PFR thanks the Research Institute for Mathematical Sciences (RIMS) in Kyoto for its hospitality.

\end{document}